\let\Q\QQ
\newcommand{\Q}{\mathbb Q}
\pgfplotsset{compat=1.18}
\title[On algebraic vector bundles of rank $2$ over smooth affine fourfolds]{On algebraic vector bundles of rank $2$ over smooth affine fourfolds}
\author{Thomas Brazelton}
\address[T. Brazelton]{Harvard University}
\email{tbraz@math.harvard.edu}
\author{Morgan Opie}
\address[M. Opie]{Northwestern University}
\email{mpopie@northwestern.edu}
\author{Tariq Syed}
\address[T. Syed]{Heinrich-Heine-Universit\"at D\"usseldorf}
\email{tariq.syed@gmx.de}
\date{}           
\begin{document}

\begin{abstract}
    The classification of algebraic vector bundles of rank 2 over smooth affine fourfolds is a notoriously difficult problem. Isomorphism classes of such vector bundles are not uniquely determined by their Chern classes, in contrast to the situation in lower dimensions. Given a smooth affine fourfold over an algebraically closed field of characteristic not equal to $2$ or $3$, we study cohomological criteria for finiteness of the fibers of the Chern class map for rank $2$ bundles. As a consequence, we give a cohomological classification of such bundles in a number of cases. For example, if $d\leq 4$, there are precisely $d^2$ non-isomorphic algebraic vector bundles over the complement of a smooth hypersurface of degree $d$ in $\mathbb P^4_{\mathbb C}$.
\end{abstract}
\maketitle

\tableofcontents

\section{Introduction}

Suppose $k$ is an algebraically closed field and $R$ a smooth affine $k$-algebra of dimension $4$ over $k$. The main goal of this paper is to study isomorphism classes of rank $2$ finitely generated projective $R$-modules. We do this using tools from Morel--Voevodsky $\A^1$-homotopy theory, a research area that in recent years has seen the resolution of major open problems about projective modules such as Suslin's cancellation conjecture \cite{fasel2021suslins} and Murthy's corank $1$ splitting conjecture in characteristic zero \cite[Section 7.1]{Freudenthal}.

The interplay between topology and the study of finitely generated projective modules has a long history. Indeed, over affine schemes, J.-P. Serre's seminal work characterized finitely generated projective modules over commutative rings as the algebro-geometric analogue of topological vector bundles  \cite{S1}, allowing for the adaptation of a host of topological tools to the setting of projective modules. 
For example, A. Grothendieck axiomatized S.-S. Chern's eponymous invariants for complex topological vector bundles over complex manifolds, leading to the notion of Chern classes for algebraic vector bundles over smooth schemes \cite{C,G}. These characteristic classes are powerful and computable invariants in both the topological and algebraic setting, so a natural question is to what extent the Chern classes of a vector bundle determine its isomorphism class.

 Let $X$ be a smooth affine $k$-variety of dimension $d$ and $\mathcal{E}$ a rank $r$ algebraic vector bundle on $X$. The Chern classes $c_{i}(\mathcal{E}) \in \CH^{i}(X)$, $1 \leq i \leq r$, are elements of the Chow ring of $X$.  If $\mathcal{V}_{r}(X)$ denotes the set of isomorphism classes of algebraic vector bundles of rank $r$ over $X$,
the Chern classes induce a natural map
\begin{center}
$(c_{1},...,c_{r})\: \mathcal{V}_{r}(X) \rightarrow \prod_{i=1}^{r} \CH^{i}(X).$
\end{center}
By Serre's splitting theorem \cite[Th{\'e}or{\`e}me 1]{S2} and Suslin's cancellation theorem \cite[Theorem]{Su1}, any vector bundle $\mathcal{E}$ of rank $r > d$ over $X$ can be written uniquely as the direct sum of a vector bundle of rank $d$ and a trivial vector bundle over $X$. One can therefore focus on the maps with $1 \leq r \leq d$ above. 

If $r=1$ and $d$ is arbitrary, the map
\begin{center}
$c_{1}\: \mathcal{V}_{1}(X) \rightarrow \CH^{1}(X)$
\end{center}
is precisely the classical isomorphism between the Picard group and the divisor class group, which shows that the algebraic line bundles over $X$ are uniquely determined by their first Chern class. 

If $d=2$, the map
\begin{equation}\label[empty]{map:c1c2}
(c_{1},c_{2})\: \mathcal{V}_{2}(X) \rightarrow \CH^{1}(X) \times \CH^{2}(X)
\end{equation}
is also a bijection as a consequence of \cite[Theorem 1]{MS} and the relationship between algebraic $K$-theory and algebraic cycles. By continuing the study of algebraic cycles, N. Mohan Kumar and M. P. Murthy proved that if $d=3$ the map
\[(c_{1},c_{2},c_{3})\: \mathcal{V}_{3}(X) \rightarrow \CH^{1}(X) \times \CH^{2}(X)\times \CH^{3}(X)
\]
is a bijection and $(c_1,c_2)$ is at least surjective in rank $2$ \cite[Theorem 2.1]{MKM}. 
At the time, it seemed impossible with existing methods to determine if $(c_1,c_2)$ was also injective, and the question remained unresolved until the advent of $\A^1$-homotopy theory many years later. 

 F. Morel's $\mathbb{A}^1$-homotopy classification of algebraic vector bundles (cf. \cite[Theorem 7.1]{Morel}, \cite[Theorem 1]{AHW1}) provided a striking algebro-geometric analogue of 
Steenrod's homotopy classification of topological vector bundles (cf. \cite[\S 19.3]{St}) and  allowed methods of obstruction theory to be applied to algebraic vector bundles. Building on these ideas, A. Asok and J. Fasel achieved a breakthrough in \cite[Theorem 1]{AF-3fold}, proving that $(c_1,c_2)$ is indeed bijective for rank $2$ bundles if $d=3$ and the characteristic of the base field is not $2$. This completed the classification of algebraic vector bundles over smooth affine varieties over algebraically closed fields of characteristic not equal to $2$ in all dimensions less than or equal to $3$: in low dimensions at least, the isomorphism class of an algebraic vector bundle is uniquely determined by its rank and its Chern classes!

Already in the 1980s, N. Mohan Kumar had produced examples of stably trivial non-trivial algebraic vector bundles of rank $2$ over smooth affine fourfolds over algebraically closed fields \cite{MK}. 
Thus, this marks the first case in low dimensions in which algebraic vector bundles are {\em not} determined up to isomorphism by their Chern classes and rank. The classification of vector bundles of rank $2$ over smooth affine fourfolds is therefore considered an extremely difficult and subtle problem.

The main goal of this paper is to study the fibers of $(c_1,c_2)$ as in \Cref{map:c1c2}
for $X$ a smooth affine fourfold over an algebraically closed field and, in particular, to give cohomological criteria for finiteness of the fibers and for injectivity of the map. We prove:
\begin{theorem}\label{theorem-1} Let $X$ be a smooth affine variety of dimension $4$ over an algebraically closed field $k$ of characteristic not equal to $2$ or $3$. The non-empty fibers of the map \[(c_{1},c_{2}): \mathcal{V}_{2}(X) \rightarrow \CH^{1}(X) \times \CH^{2}(X)\] are singletons (resp. finite) if the motivic cohomology group $\Hmot^5(X,\Z/2(3))$ and the mod $2$ Chow group $\CH^{3}(X)/2$ are trivial (resp. finite). 
\end{theorem}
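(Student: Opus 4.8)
The plan is to attack the problem using $\A^1$-homotopy theory and obstruction theory, following the strategy pioneered by Morel and developed by Asok--Fasel. By Morel's classification, rank $2$ vector bundles on a smooth affine $X$ of dimension $d$ are classified by homotopy classes of maps $[X, BSL_2 \times B\mathbb G_m]_{\A^1}$ (twisting by line bundles), or after fixing a determinant $\mathcal L$, by $[X, BSL_2]_{\A^1}$; since $\dim X = 4$, these coincide with $[X, BSL_3]_{\A^1}$-classes only up to the first obstruction, so the real content lies in the Postnikov/Moore--Postnikov tower of the map $BSL_2 \to BSL_3$ (or $BSL_2 \to BGL$). First I would reduce, via the determinant fibration and the fact that the Picard group is detected by $c_1$, to comparing two rank $2$ bundles $\mathcal E, \mathcal E'$ with the same determinant $\mathcal L$ and the same $c_2 \in \CH^2(X)$; these then correspond to lifts against the Moore--Postnikov tower of $BSL_2^{\mathcal L} \to BSL_3^{\mathcal L}$ up to a suitable stage, and the obstruction to them agreeing lives in a cohomology group built from the relative homotopy sheaves.

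The key computational input is the identification of the relevant $\A^1$-homotopy sheaves. One knows $\pi_2^{\A^1}(BSL_2) = \mathbf K^{MW}_2$ (Morel), and the fiber of $BSL_2 \to BSL_3$ has bottom homotopy sheaf related to $\mathbf K^{MW}_3/\eta$ or to a sheaf of the form $\mathbf S'_3$-type (symmetric powers / $\pi_3^{\A^1}(\A^3 \setminus 0)$-type sheaves appearing in Asok--Fasel's work on the third stage); in any case, over a field of dimension $4$, the obstruction group for uniqueness of lifts will be $H^4_{\Nis}(X, \pi_4)$ of the relevant fiber and the obstruction group for existence/indeterminacy at the top will involve $H^3$ and $H^4$ of sheaves whose cohomology I want to control. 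The main steps are: (i) set up the Moore--Postnikov tower of $BSL_2 \to BSL_3$ relative to $\mathcal L$; (ii) compute the relevant homotopy sheaves in the range $\leq 4$, using Morel's computations together with the fiber sequence $\A^3\setminus 0 \to BSL_2 \to BSL_3$ and known results on $\pi_3^{\A^1}(\A^3\setminus 0)$; (iii) reinterpret the Nisnevich cohomology of these sheaves in degrees $3$ and $4$ in terms of motivic cohomology and Chow groups — this is where $\Hmot^5(X,\Z/2(3))$ and $\CH^3(X)/2$ enter, via comparison of the relevant sheaf with (a $\Z/2$-version of) $\mathbf K^M_3$ or $\mathbf I^j$-sheaves and the Bloch--Kato / Milnor conjecture identifications, plus the fact that $X$ has cohomological dimension $4$ so that higher motivic cohomology vanishes for degree reasons; (iv) conclude that the fiber is a torsor under a group that is a subquotient of something built from $\Hmot^5(X,\Z/2(3))$ and $\CH^3(X)/2$, hence trivial (resp. finite) under the stated hypotheses.

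The main obstacle I expect is step (iii): precisely pinning down which cohomology of which $\A^1$-homotopy sheaf governs the fiber, and matching it to the motivic cohomology group $\Hmot^5(X,\Z/2(3))$ rather than to some a priori larger $\KMW$- or $\mathbf I$-power group. Concretely, the relative homotopy sheaf at the top of the tower will be something like $\mathbf K^{M}_3/2$ twisted, or an extension involving $\mathbf I^4$ and $\mathbf K^M_4$, and I need a clean short exact sequence of sheaves (e.g. $0 \to \mathbf I^{j+1} \to \mathbf K^{MW}_j \to \mathbf K^M_j \to 0$ and its $\Z/2$-variants, together with Bockstein sequences) that isolates exactly the $\Z/2(3)$-motivic part in degree $5$. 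The Chow group $\CH^3(X)/2$ contribution should come from the "last" obstruction/indeterminacy once $\eta$ has been inverted or once one passes to the $\mathbf K^M$-quotient; the $2$-torsion and divisibility constraints (valid because $\operatorname{char} k \neq 2,3$, ensuring the relevant sheaves are strictly $\A^1$-invariant and that $\rho = [-1] = 0$ for $k$ algebraically closed kills off contributions from powers of $\rho$) are what collapse the general obstruction group down to just these two pieces. A secondary subtlety is ensuring that "non-empty fiber" really is a torsor — i.e., that the group acting transitively on a non-empty fiber is the same in all fibers — which follows from the general theory of obstruction groups for Moore--Postnikov towers but should be remarked on explicitly.
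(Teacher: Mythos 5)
Your general strategy --- classify bundles by $\A^1$-homotopy classes of maps and run obstruction theory against a Moore--Postnikov tower, then identify the obstruction/indeterminacy groups with $\Hmot^5(X,\Z/2(3))$ and $\Ch^3(X)$ --- is the right one, and your guess that the first of these groups enters through a sheaf of the form $\KM_3/2$ (equivalently $\I^3$, via the sequence $0\to\I^4\to\I^3\to\KM_3/2\to0$ and the vanishing of $\I^5$ on a fourfold) is essentially what happens. But there are two genuine gaps.

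First, you factor through the wrong tower. The fibers of $(c_1,c_2)\colon \mathcal V_2(X)\to\CH^1(X)\times\CH^2(X)$ are not the fibers of the stabilization map $\mathcal V_2(X)\to\mathcal V_3(X)$, so working with the Moore--Postnikov tower of $\BSL_2\to\BSL_3$ (fiber $\A^3\setminus 0$, bottom homotopy sheaf $\KMW_3$) only compares bundles with \emph{isomorphic stabilizations}; two rank $2$ bundles with equal $c_1,c_2$ need not stabilize to isomorphic rank $3$ bundles on a fourfold, and justifying that reduction is itself a nontrivial classification problem for rank $3$ bundles. The paper instead takes the Moore--Postnikov tower of the Chern class map itself, $(c_1,c_2)\colon\BGL_2\to K(\KM_1,1)\times K(\KM_2,2)$, whose lifts literally parameterize bundles with prescribed Chern data; the three successive indeterminacy groups are (quotients of) $\Hnis^2(X,\I^3(\mathcal L))$, $\Hnis^3(X,\piA{2}{\A^2\setminus 0}(\mathcal L))$, and $\Hnis^4(X,\piA{3}{\A^2\setminus 0}(\mathcal L))$, with the second controlled by $\Ch^3(X)$ via $\KSp_3\cong\GW_3^2$ and the exact sequence $\Ch^2(X)\to\Ch^3(X)\to\Hnis^3(X,\GW_3^2(\mathcal L))\to 0$, not by ``inverting $\eta$.''

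Second, and more seriously, your step (iv) assumes the top obstruction group is a subquotient of something built from $\Hmot^5(X,\Z/2(3))$ and $\Ch^3(X)$. It is not, a priori: the group $\Hnis^4(X,\piA{3}{\A^2\setminus 0}(\mathcal L))$ involves the uncomputed unstable homotopy sheaf $\piA{3}{\A^2\setminus 0}$, and the theorem only holds because this group vanishes unconditionally. Proving that vanishing is the technical heart of the paper (its Theorem 1.3): it requires the spectral sequence of the symplectic filtration $\Sp_2\subset\Sp_4\subset\cdots$, the divisibility and torsion-freeness of $\CH^4(X)$, Fasel's divisibility results for $\Hnis^{3}(X,\KMW_4)$, a rationalization argument using $\Omega c_2\colon\SL_2\to K(\Z(2),3)$ to show the relevant cohomology is torsion, and finally the identification of a composite $\KM_4\to\KSp_4\to\KM_4$ with multiplication by $12$ (whence the hypothesis $\operatorname{char}k\neq 2,3$). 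Without an argument of this kind your proof cannot close, since nothing in the hypotheses bounds this last group.
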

We obtain many such examples in \Cref{sec:examples}, for instance:

\begin{example}[see {\Cref{thm:p4-complement-threefold-fourfold}}] Suppose $k= \mathbb C$ and $X$ is the complement of a smooth hypersurface $D$ of degree $d \leq 4$ in $\P^4$. Then there are exactly $d^2$ isomorphism classes of algebraic vector bundles of rank $2$ over $X$, determined uniquely by their first two Chern classes in $\CH^1 (X) \times \CH^2 (X) \cong (\Z/d)^{\times2}$.
\end{example}

We prove \Cref{theorem-1} using obstruction theory involving the Moore--Postnikov factorization of the Chern class map in $\mathbb{A}^1$-homotopy theory. 
To do so, we must analyze Nisnevich cohomology of relevant twisted $\mathbb A^1$-homotopy sheaves of the
$\mathbb{A}^1$-homotopy fiber of $(c_1,c_2)$. The cohomology groups of interest can be identified with 
$\Hmot^{5}(X, \mathbb{Z}/2(3))$, $\Hnis^{3}(X,\pi^{\mathbb{A}^1}_{2}(\mathbb{A}^2 \setminus 0)(\mathcal{L}))$ and $\Hnis^{4}(X,\pi^{\mathbb{A}^1}_{3}(\mathbb{A}^2 \setminus 0)(\mathcal{L}))$, where $\mathcal L$ is a line bundle on $X$.

For the purpose of computability, we note that the group $\Hmot^{5}(X, \mathbb{Z}/2(3))$
 is isomorphic to the Nisnevich cohomology group 
 $\Hnis^{2}(X,\I^{3} (\mathcal{L}))$, where $\I^3$ denotes the sheaf associated to the third power of the fundamental ideal in the Witt ring. Moreover, there is an epimorphism from the fourth unramified cohomology group $\Hnr^{4}(X, \mu_{2}^{\otimes 4})$ to 
$ \Hnis^2 (X, \I^3 (\mathcal L)) $ with finite kernel. 

The sheaves $\piA{2}{\A^2 \setminus 0}$ and $\piA{3}{\A^2 \setminus 0}$ are $\A^1$-homotopy sheaves of the motivic sphere $\A^2 \setminus 0$. 
The computation of classical homotopy groups of spheres is a notoriously difficult problem in topology and the computation of homotopy sheaves of motivic spheres can certainly be regarded as even more difficult. We show that the group 
$\Hnis^{3}(X,\piA{2}{\A^2 \setminus 0}(\mathcal{L}))$ is a quotient of the significantly more computable group $\CH^3 (X)/2$. 
For this, we use a description of the sheaf $\piA{2}{\A^2 \setminus 0}$ due to Asok--Fasel \cite[Theorem 3]{AF-3fold}. However, we perform our cohomological computations with the added difficulty of working in dimension $4$. The bulk of our cohomological computations is dedicated to the group 
$\Hnis^{4}(X,\piA{3}{\A^2 \setminus 0}(\mathcal{L}))$. 
The difficulty of computing this cohomology group stems from the fact that the unstable $\mathbb A^1$-homotopy sheaf $\piA{3}{\A^2 \setminus 0}$ has not even remotely been computed! 
Nonetheless, we prove the following vanishing theorem, which is certainly of independent interest:
\begin{theorem}[see {\Cref{thm:stage3}}]\label{theorem-2} Let $X$ be a smooth affine variety of dimension $4$ over an algebraically closed field $k$ of characteristic not equal to $2$ or $3$. Then $\Hnis^{4}(X,\piA{3}{\A^{2}\setminus 0}(\mathcal{L})) = 0$ for any line bundle $\mathcal{L}$ on $X$.
\end{theorem}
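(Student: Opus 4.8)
The plan is to prove the vanishing without ever computing $\piA{3}{\A^{2}\setminus 0}$: instead I would fit this sheaf into a short exact sequence of strictly $\A^1$-invariant sheaves whose other terms are either known (symplectic $K$-theory sheaves, the first motivic stable stem, Morel's $\KMW$-sheaves) or at least have provably trivial top Nisnevich cohomology over a smooth affine fourfold over $\overline{k}$, and then run the long exact cohomology sequences.

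Concretely, I would exploit the $\A^1$-weak equivalences $\A^{2}\setminus 0\simeq\SL_2=\Sp_2$ together with the $\A^1$-fibre sequences $\Sp_{2n}\to\Sp_{2n+2}\to\A^{2n+2}\setminus 0$ coming from the transitive action of $\Sp_{2n+2}$ on $\A^{2n+2}\setminus 0$ (the stabiliser of a vector being $\A^1$-equivalent to $\Sp_{2n}$ via a unipotent quotient). Since $\A^{2n+2}\setminus 0$ is $\A^1$-$2n$-connected, only $n=1$ and $n=2$ affect $\piA{\le 4}{-}$, so $\piA{3}{\Sp_4}\cong\piA{3}{\Sp}=\piA{4}{B\Sp}$, and the fibre sequence $\Sp_2\to\Sp_4\to\A^{4}\setminus 0$ yields an exact sequence
\[
\piA{4}{\Sp_4}\to\piA{4}{\A^{4}\setminus 0}\xrightarrow{\ \partial\ }\piA{3}{\A^{2}\setminus 0}\to\piA{3}{\Sp_4}\xrightarrow{\ p\ }\piA{3}{\A^{4}\setminus 0}\cong\KMW_4 .
\]
Thus $\piA{3}{\A^{2}\setminus 0}$ is an extension $0\to A\to\piA{3}{\A^{2}\setminus 0}\to B\to 0$ with $A$ a quotient of $\piA{4}{\A^{4}\setminus 0}$ and $B=\ker(p)$ a subsheaf of $\piA{3}{\Sp_4}=\piA{4}{B\Sp}$. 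Here $\piA{3}{\A^{4}\setminus 0}=\KMW_4$ by Morel; $\piA{4}{\A^{4}\setminus 0}$ lies in the stable range (the $\mathbb P^1$-stabilisation $\piA{4}{\A^{4}\setminus 0}\to\piA{5}{\A^{5}\setminus 0}\to\cdots$ is an isomorphism, the relevant EHP term vanishing because $\A^{8}\setminus 0$ is $6$-connected), so by the computation of $\underline\pi_1(\mathbf 1)$ of R\"ondigs, Spitzweck, and \O{}stv\ae{}r it is built from sheaves $\KM_n/24$ and $\KM_n/2$ (equivalently quotients of the $\I^j$), with no free summand; and $\piA{4}{B\Sp}$, the degree-$4$ homotopy sheaf of hermitian/symplectic $K$-theory, is by Panin--Walter/Schlichting a sheaf of low $\mathbb G_m$-weight (built from $\GW$, the $\I^j$, and $\KM_n$ with $n\le 2$), consistent with its realising to the symplectic Borel class $c_2$.

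The computational heart is a vanishing toolkit for $\Hnis^4(X,-)$ over $X$: using the Rost--Schmid complex one has $\Hnis^{\ge 5}(X,-)=0$ and $\Hnis^4(X,\mathcal F(\mathcal L))=\coker(C^3\to C^4)$ with $C^4=\bigoplus_{x\in X^{(4)}}(\mathcal F_{-4})(k)$ since $k=\overline k$. From this: $\Hnis^4(X,\KM_n/m(\mathcal L))=0$ for all $n,m$ (for $n<4$ and for $n>4$ one has $C^4=0$, using that $\KM_{\ge 1}(k)$ is divisible; for $n=4$ it is $\CH^4(X)/m=\CH_0(X)/m=0$ because $\CH_0$ of a smooth affine variety over an algebraically closed field is divisible---any $0$-cycle is supported on a smooth affine curve, whose group of $0$-cycles is divisible); $\Hnis^4(X,\I^j(\mathcal L))=0$ for all $j$ (for $j\ge 5$ directly, then by descending induction along $0\to\I^{j+1}\to\I^j\to\KM_j/2\to 0$); $\Hnis^{>0}(X,\underline A)=0$ for constant $A$ (trivial contractions), whence $\Hnis^4(X,\mathbf W(\mathcal L))=0=\Hnis^4(X,\GW(\mathcal L))$ and $\Hnis^4(X,\KMW_n(\mathcal L))=0=\Hnis^4(X,\KM_n(\mathcal L))$ for $n\le 3$. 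Feeding in the previous paragraph, $\Hnis^4(X,A(\mathcal L))$ is a quotient of $\Hnis^4(X,\piA{4}{\A^{4}\setminus 0}(\mathcal L))=0$, and $\Hnis^4(X,B(\mathcal L))$ is squeezed in $\Hnis^3(X,C(\mathcal L))\to\Hnis^4(X,B(\mathcal L))\to\Hnis^4(X,\piA{3}{\Sp_4}(\mathcal L))=0$ (the last vanishing by the weight-$\le 2$ structure), so it remains to show the contribution of $C=\operatorname{im}(p)=\ker(\partial)\subseteq\KMW_4$ in degree $3$ dies; for this one identifies $\partial\colon\KMW_4\to\piA{2}{\A^{2}\setminus 0}$ (using the Asok--Fasel description of $\piA{2}{\A^{2}\setminus 0}$) well enough to see that $C$ lies in the $\I$-filtration of $\KMW_4$, so that $\Hnis^3(X,C(\mathcal L))$ is covered by $\Hnis^3$ of $\I^j$-sheaves with $j$ large, which vanishes since $\I^{j}$ of the residue fields of codimension-$3$ points of $X$ vanishes for $j$ exceeding their $2$-cohomological dimension ($\le 1$). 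Concluding $\Hnis^4(X,\piA{3}{\A^{2}\setminus 0}(\mathcal L))=0$.

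The hard part will be exactly this last interface: pinning down $\piA{4}{B\Sp}$ and the clutching map $\partial$ of the fibration $\Sp_2\to\Sp_4\to\A^{4}\setminus 0$ precisely enough to control $C$ and its degree-$3$ cohomology---an essentially $\eta$-flavoured attaching-map computation---together with the $\mathbb G_m$-weight bookkeeping needed to keep every sheaf produced along the way inside the range where the toolkit of the previous paragraph forces $\Hnis^4$ (and the relevant $\Hnis^3$) to vanish; minor additional care is needed for the identification of $\piA{4}{\A^{4}\setminus 0}$ with the weight-$4$ part of $\underline\pi_1(\mathbf 1)$. (An alternative to the stabilisation approach would be to pass to the $\A^1$-$2$-connected cover of $\A^2\setminus 0$ and compute $\piA{3}{\A^{2}\setminus 0}$ as an $\A^1$-homology sheaf via the Hurewicz theorem and a Serre spectral sequence from the Asok--Fasel computation of $\piA{2}{\A^{2}\setminus 0}$, but this seems to reintroduce the very sheaf one wants to avoid.)
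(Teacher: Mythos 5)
Your skeleton coincides with the paper's: both arguments run the $\A^1$-fibre sequence $\Sp_2\to\Sp_4\to\A^4\setminus 0$, decompose $\piA{3}{\A^2\setminus 0}$ using the image of the boundary map and the kernel/image of $\phi_4\:\KSp_4\to\KMW_4$, and dispose of the $\piA{4}{\A^4\setminus 0}$ contribution in degree $4$ (the paper cites \cite[Theorem 3.0.3]{fasel2021suslins} for that vanishing). However, the two vanishing claims at the ends of your squeeze are both incorrect, and they are precisely where the substance of the proof lies.

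First, $\Hnis^4(X,\KSp_4(\mathcal L))$ is not zero in general. The sheaf $\KSp_4\cong\GW_4^2$ is not of ``low $\mathbb G_m$-weight'': its fourth contraction is $\GW_0^{-2}\cong\Z$, and \Cref{prop:HnGWn2n} identifies $\Hnis^4(X,\GW_4^2)\cong\CH^4(X)$ via the hyperbolic map. For $X$ obtained by deleting an ample divisor from a smooth projective fourfold over $\C$ with $H^0(\Omega^4)\neq 0$, the group $\CH^4(X)=\CH_0(X)$ is a nonzero $\Q$-vector space, so this term genuinely survives. The paper neutralizes it not by vanishing but by a torsion argument: $\Omega c_2\:\SL_2\to K(\Z(2),3)$ is a rational $\A^1$-equivalence, so $\Hnis^4(X,\piA{3}{\SL_2})$ is all torsion (\Cref{torsion_sheaves}), while $\CH^4(X)$ is torsion-free by \Cref{thm:chow-divisibility}; hence the outgoing map is zero. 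Second, the sheaf you call $C=\im(p)$ does not sit deep in the $\I$-filtration of $\KMW_4$: the composite $\KM_4\to\K_4^{Q}\to\KSp_4\xrightarrow{\phi_4}\KMW_4\to\KM_4$ is multiplication by $12$, so the image of $\im(p)$ in $\KM_4$ contains $12\KM_4$, and in fact $\Hnis^3(X,\im(\phi_4))\cong\Hnis^3(X,\KMW_4)\cong\Hnis^3(X,\KM_4)$ (\Cref{prop:H3XD}), a uniquely divisible group with no reason to vanish. The paper instead proves that the \emph{preceding} map $\Hnis^3(X,\KSp_4)\to\Hnis^3(X,\KM_4)$ is surjective, using exactly the multiplication-by-$12$ identity above together with divisibility of $\Hnis^3(X,\KM_4)$ away from the characteristic (\Cref{lem:fasel-KMW-divisibility}); this is where the hypothesis $\operatorname{char}(k)\neq 2,3$ enters. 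Without substitutes for these two steps your argument does not close.
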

Under the assumptions of \Cref{theorem-1,theorem-2}, the third-named author gave a cohomological criterion for all stably trivial vector bundles of rank $2$ over $X$ to be trivial \cite{Sy1}. 
In particular, the third-named author proved that all stably trivial vector bundles of rank $2$ over $X$ are trivial if $\Hnis^2(X, \textbf{I}^{3}) = \CH^{3} (X) = \CH^{4} (X) = 0$ \cite[Corollary 3.20]{Sy1}. \Cref{theorem-1} strengthens this criterion significantly. 
Furthermore, we remark that the map
\[
(c_{1},c_{2})\: \mathcal{V}_{2}(X) \rightarrow \CH^{1}(X) \times \CH^{2}(X)\]
also fails to be surjective in general. 
The image of the map was analyzed in \cite[Theorem 2.2.2]{AFH-alg19} 
and can be described in terms of motivic Steenrod operations. 

\Cref{theorem-2} actually goes \textit{beyond} the analysis of the fibers of the Chern class map above.
 If $\mathcal{E}$ is an algebraic vector bundle of rank $r \leq 4$ over a smooth affine fourfold 
$X$ as in the theorems above, it is natural to ask for a sufficient cohomological criterion for 
$\mathcal{E}$ to split off a trivial vector bundle of rank $1$. 
If $r=4$, M. P. Murthy's celebrated work shows that $\mathcal{E}$ 
splits off a trivial vector bundle of rank $1$ if and only if $c_{4}(\mathcal{E}) = 0 \in \CH^{4}(X)$ \cite[Theorem 3.7]{Mu}. 
Similarly, if $r=3$, A. Asok and J. Fasel shows that $\mathcal{E}$ splits off a trivial vector bundle of rank $1$ if and only if $c_{3}(\mathcal{E}) = 0 \in \CH^{3}(X)$ \cite[Theorem 2]{AF-splitting}. From an obstruction-theoretic viewpoint, 
the question becomes only harder for rank $2$ bundles, since more 
cohomological obstructions to splitting off a trivial line bundle occur. The vanishing theorem above allows us to prove the following result:

\begin{theorem}[see {\Cref{thm:ch3-zero-determined-euler}}]\label{intro-ch3-zero-euler} Let $X$ be a smooth affine fourfold over an algebraically closed field $k$ of characteristic not equal to $2$ or $3$, and suppose that $\CH^3(X)/2=0$. Let $\mathcal{L}$ be a line bundle over $X$ and let $\Vect^{\mathcal L}_2(X)$ denote the set of isomorphism classes of $\mathcal{L}$-oriented algebraic vector bundles of rank $2$ on $X$ (see \Cref{def:L-or}). Then, by taking the Euler class, we obtain a bijection
\[e_{\mathcal L}\: \Vect^{\mathcal L}_2(X) \xto{\sim} \CHW^2(X, \mathcal L).\]
In particular, an algebraic vector bundle $\mathcal{E}$ of rank $2$ splits off a trivial vector bundle of rank $1$ if and only if the Euler class of $\mathcal E$ is zero in the Chow--Witt group $\CHW^{2}(X, \mathcal L)$.
\end{theorem}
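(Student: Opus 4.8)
The plan is to identify $e_{\mathcal L}$ with the map on $\A^1$-homotopy classes induced by the first $\A^1$-Postnikov truncation of $\BSL_2$, and then to run obstruction theory along the resulting relative Postnikov tower, feeding in \Cref{theorem-2} and the hypothesis $\CH^3(X)/2 = 0$. First I would invoke Morel's $\A^1$-homotopy classification of vector bundles together with affine representability \cite{Morel, AHW1}: for $X$ smooth affine there is a natural bijection between $\Vect^{\mathcal L}_2(X)$ and the set of $\A^1$-homotopy classes of maps from $X$ into the $\mathcal L$-twisted form of $\BSL_2$, under which $e_{\mathcal L}$ is post-composition with the first $\A^1$-Postnikov truncation $\BSL_2 \to \EM{\KMW_2(\mathcal L)}{2}$, whose target represents $\CHW^2(X,\mathcal L) = \Hnis^2(X,\KMW_2(\mathcal L))$. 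Here one uses that the first-column projection $\SL_2 \to \A^2\setminus 0$ is an $\A^1$-weak equivalence (a Zariski-locally trivial $\A^1$-bundle), so that $\BSL_2$ is $\A^1$-simply connected with $\piA{i+1}{\BSL_2} \cong \piA{i}{\A^2\setminus 0}$ for all $i$, and $\piA{2}{\BSL_2} \cong \KMW_2$ in particular.

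Next I would pass to the relative $\A^1$-Postnikov tower of $\BSL_2 \to \EM{\KMW_2(\mathcal L)}{2}$ and use that Nisnevich sheaves on smooth $k$-schemes have cohomological dimension at most $\dim X = 4$: all but two stages then become irrelevant, since the higher $k$-invariants contribute to $\htpyA{X}{\BSL_2}$ only through groups $\Hnis^{n}(X,-)$ and $\Hnis^{n+1}(X,-)$ with $n \ge 5$, which vanish, whence $\htpyA{X}{\BSL_2} = \htpyA{X}{\tau_{\le 4}\BSL_2}$. For $\alpha \in \CHW^2(X,\mathcal L)$, the obstruction to lifting $\alpha$ to $\tau_{\le 3}\BSL_2$ is the image of $\alpha$ under the first $k$-invariant and lies in $\Hnis^4(X,\piA{2}{\A^2\setminus 0}(\mathcal L))$; once a lift is chosen, the obstruction to lifting further to $\tau_{\le 4}\BSL_2$ lies in $\Hnis^5(X,\piA{3}{\A^2\setminus 0}(\mathcal L)) = 0$; and the sets of lifts at the two stages are acted on transitively by $\Hnis^3(X,\piA{2}{\A^2\setminus 0}(\mathcal L))$ and $\Hnis^4(X,\piA{3}{\A^2\setminus 0}(\mathcal L))$ respectively.

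Injectivity of $e_{\mathcal L}$ is now immediate: two $\mathcal L$-oriented rank $2$ bundles with the same Euler class have classifying maps that are both lifts of the same $\alpha$, and since $\Hnis^3(X,\piA{2}{\A^2\setminus 0}(\mathcal L))$ is a quotient of $\CH^3(X)/2 = 0$ (via the description of $\piA{2}{\A^2\setminus 0}$ from \cite{AF-3fold}, as in the proof of \Cref{theorem-1}) and $\Hnis^4(X,\piA{3}{\A^2\setminus 0}(\mathcal L)) = 0$ by \Cref{theorem-2}, the lift is unique up to $\A^1$-homotopy, so the two bundles are isomorphic as $\mathcal L$-oriented bundles. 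For surjectivity the only obstruction to realizing a given $\alpha$ is the one in $\Hnis^4(X,\piA{2}{\A^2\setminus 0}(\mathcal L))$ (the next obstruction, in $\Hnis^5$, vanishing for dimension reasons); using the Asok--Fasel description of $\piA{2}{\A^2\setminus 0}$ and the Gersten complex -- whose degree-$4$ term involves only the values of the relevant Milnor(--Witt) $K$-theory sheaves at residue fields of closed points, all equal to $k$ -- one checks this group vanishes over an algebraically closed field, so every class in $\CHW^2(X,\mathcal L)$ is an Euler class and $e_{\mathcal L}$ is a bijection.

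The splitting criterion then follows formally and in fact uses only injectivity: $\mathcal O_X \oplus \mathcal L$ carries the evident nowhere-vanishing section, so $e_{\mathcal L}(\mathcal O_X \oplus \mathcal L) = 0$, while conversely $e_{\mathcal L}(\mathcal E) = 0$ forces $\mathcal E \cong \mathcal O_X \oplus \mathcal L$ by injectivity, which splits off a trivial line bundle. The substantive input to all of this is \Cref{theorem-2}, the vanishing $\Hnis^4(X,\piA{3}{\A^2\setminus 0}(\mathcal L)) = 0$, which is hard and genuinely new and is established separately; granting it and the elementary top-degree vanishings above, the present argument is essentially an assembly, the points needing care being the bookkeeping of $\mathcal L$-twists through the tower, the convergence of the relative Postnikov tower (which legitimates the reduction to two stages), and the verification that $e_{\mathcal L}$ coincides exactly with the first $\A^1$-Postnikov truncation, so that the obstruction and indeterminacy groups really are the three groups highlighted in the introduction.
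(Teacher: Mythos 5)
Your overall architecture is the same as the paper's: identify $e_{\mathcal L}$ with the map induced by the second (Moore--)Postnikov truncation, reduce to four stages by cohomological dimension, get injectivity from the vanishing of $\Hnis^3(X,\piA{2}{\A^2\setminus 0}(\mathcal L))$ (a quotient of $\Ch^3(X)=0$) together with \Cref{theorem-2}, and get surjectivity from the vanishing of the obstruction group $\Hnis^4(X,\piA{2}{\A^2\setminus 0}(\mathcal L))$. The injectivity half and the splitting criterion are fine as written.

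There is, however, a genuine gap in your surjectivity step. You assert that $\Hnis^4(X,\piA{2}{\A^2\setminus 0}(\mathcal L))=0$ because ``the degree-$4$ term of the Gersten complex involves only the values of the relevant sheaves at residue fields of closed points, all equal to $k$, and one checks this group vanishes.'' That is not true at the level of the complex. Using the Asok--Fasel extension $0\to \Tsheaf{4}\to \piA{2}{\A^2\setminus 0}\to \KSp_3\to 0$, the $\KSp_3=\GW_3^2$ piece does die termwise, since $(\GW_3^2)_{-4}=0$; but the $\Tsheaf{4}$ piece does not. On a fourfold $\Tsheaf{4}$ agrees cohomologically with $\KM_4/12$, whose $4$-fold contraction is $\KM_0/12=\Z/12$, so the degree-$4$ Rost--Schmid term is $\bigoplus_{x\in X^{(4)}}\Z/12\neq 0$ even over an algebraically closed field. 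The cohomology there is $\Hnis^4(X,\Tsheaf{4})\cong \CH^4(X)/12$, and its vanishing is \emph{not} a residue-field computation: it requires the divisibility of $\CH^4(X)$ for a smooth affine fourfold over an algebraically closed field (Srinivas's theorem, \Cref{thm:chow-divisibility}). This is a substantive geometric input about zero-cycles that your argument omits; without it the surjectivity obstruction is not known to vanish. The paper's proof of \Cref{lem:every-class-is-euler-class-of-bundle} supplies exactly these two ingredients ($(\GW_3^2)_{-4}=0$ and $\CH^4(X)/12=0$), so once you replace your residue-field claim with this two-term analysis, your proof coincides with the paper's.
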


As another considerable consequence of \Cref{theorem-2}, we prove a cancellation theorem for symplectic vector bundles of rank $2$ over smooth affine fourfolds (\Cref{thm:sp-inj}). Finally, if $X$ is a smooth affine variety of dimension $4$ over $\mathbb{C}$, we denote by $\mathcal{V}_{2}^{\top}(X)$ the set of isomorphism classes of complex topological vector bundles of rank $2$ over the complex manifold $X (\mathbb{C})$. \Cref{theorem-2} also allows us to prove some statements about the fibers of the map $\mathcal{V}_{2}(X) \rightarrow \mathcal{V}_{2}^{\top}(X)$ induced by complex realization.

We remark that the classification of complex topological vector bundles via Chern classes has a long history itself (cf. \cite{AR,MilnorStasheff, Switzer2}). More recent work by the second-named author and others provides insight on how to distinguish non-isomorphic bundles with the same Chern classes (cf. \cite{Opie, Hu, CHO24, Opie2, Yang23}). These works rely on an obstruction-theoretic argument, which implies that, over any compact manifold (or finite cell complex), there are only finitely many isomorphism classes of complex topological vector bundles with prescribed Chern classes; a key element in the argument is the fact that the homotopy fiber of the topological universal Chern class map has finite homotopy groups.  Motivated by the analogy between algebraic and topological vector bundles, it is natural to seek cohomological criteria implying finiteness for algebraic vector bundles with fixed Chern classes. Notably, motivic obstruction theory does not seem to yield any general finiteness statement. Failure of such an argument can be related to the fact that there are no known finiteness results for motivic homotopy sheaves of spheres. The additional challenges in understanding to what extent algebraic Chern classes determine the isomorphism class of an algebraic bundle are therefore related to fundamental differences between the topological and algebraic categories.

\subsection{Acknowledgements} We would like to thank Aravind Asok, Patrick Brosnan, Jean Fasel, James Hotchkiss, Michael J. Hopkins, Danny Krashen, Stefan Schreieder, Brian Shin, Burt Totaro and Kirsten Wickelgren for conversations and correspondences around the ideas in this paper. The authors would also like to thank PCMI for the program on motivic homotopy theory, where much of this work was done. The first-named author is supported by NSF DMS-2303242. The second-named author was partially supported by NSF DMS-2202914. The third-named author was partially funded by the Deutsche Forschungsgemeinschaft (DFG, German Research Foundation) - Project numbers 461453992; 544731044.

\subsection{Conventions}In what follows, let $X$ be a smooth separated scheme of finite type over a field $k$ and $\mathcal X$ be a motivic space over $k$. Throughout:
\begin{itemize}
\item $k$ denotes an algebraically closed field.
\item $\Sm_k$ denotes the category of smooth, separated, finite-type $k$-schemes. We write $\mathcal H(k)$ for the homotopy category of motivic spaces and $\mathcal H_{\bullet}(k)$ for the pointed version.
\item $\piA{n}{\mathcal X}$ denotes the $n$-th  $\mathbb A^1$-homotopy sheaf of $\mathcal X$.
\item Given motivic spaces $\mathcal X$ and $\mathcal Y$, we write $[\mathcal X, \mathcal Y]$ for $\mathbb A^1$-homotopy classes of maps from $\mathcal X$ to $\mathcal Y$, i.e., for the set of morphisms from $\mathcal X$ to $\mathcal Y$ in $\mathcal H(k)$. We use an analogous notation for morphisms in $\mathcal H_{\bullet}(k)$ between pointed motivic spaces.
\item By a ``homotopy class", we mean an $\mathbb A^1$-homotopy class.
\item All sheaves are understood to be Nisnevich sheaves unless otherwise stated.
\item Given a Nisnevich sheaf of abelian groups $\textbf{A}$ on $\Sm_k$, we write $\Hnis^\ast( X,\textbf{A})$ for the Nisnevich cohomology of $X$ with coefficients in $\textbf{A}$.  We will also use Zariski (resp. \'etale) cohomology of Zariski (resp. \'etale) sheaves, occasionally. We will always write $\Hzar^\ast$ (resp. $\Het^\ast$) in this case. 
\item We write $\Hmot^{i}(X,\Z(j))$ (resp. $\Hmot^i(X,\Z/2(j))$) for motivic cohomology in degree $i$ of the complex of sheaves $\Z(j)$ (resp. $\Z/2(j)$).
\item $\CH^m( X)$ denotes the $m$-th Chow group of $X$ of codimension $m$ cycles and $\Ch^m(X)$ for $\CH^m(X)/2$.
\end{itemize}

\section{Background and tools}

We assume some familiarity with unstable motivic homotopy theory. For example, we do not introduce the category of {\em motivic spaces} over a field $k$, by which we mean the category of $\A^1$-invariant Nisnevich sheaves (of spaces) on the category of smooth $k$-schemes \cite{MV99}. Most of the computations in this paper take place in the homotopy category of motivic spaces over $k$ and involve analysis of $\A^1$-homotopy classes of maps between specific motivic spaces.

The reader should also be familiar with {\em affine representability} for algebraic vector bundles (\cite[\S 8.1]{Morel} and \cite[Theorem 1]{AHW1}), which gives a bijection between rank $r$ algebraic vector bundles on a smooth affine variety over $k$ and the set of $\mathbb A^1$-homotopy classes of maps from $X$ to the classifying space $\BGL_r$. 
Some knowledge of Moore--Postnikov towers in classical or motivic homotopy theory is also recommended (cf. \cite[Appendix~B]{Morel} and \cite[\S 6.1]{AF-splitting}), although we summarize some aspects of the theory in \Cref{subsec:def-twists}. 

Below, we briefly summarize key concepts as needed for this paper, using this as an opportunity to clarify notation and establish conventions. We discuss properties of strictly $\A^1$-invariant sheaves (\Cref{subsec:strictly}), contractions and twists of sheaves (\Cref{subsec:contraction} and \Cref{subsec:actions}), the Rost--Schmid complex (\Cref{subsec:rost-schmidt}), and motivic Moore--Postnikov theory (\Cref{subsec:def-twists}). 
We also summarize some useful cohomological facts in \Cref{subsec:useful}.

\subsection{Strictly $\mathbb A^1$-invariant sheaves of abelian groups}\label{subsec:strictly} If $\textbf{A}$ is a (pre)sheaf of abelian groups on $\Sm_k$, we say that it is $\A^1$\emph{-invariant} if the projection $X \times \A^1 \to X$ induces an isomorphism $\textbf{A}(X) \to \textbf{A}(X \times \A^1)$ for every $X\in \Sm_k$. If $\textbf{A}$ is a Nisnevich sheaf of abelian groups on $\Sm_k$, we say it is \textit{strictly $\A^1$-invariant} if $\Hnis^{n}(-,\textbf{A})$ is $\A^1$-invariant for each $n\ge 0$. A prototypical example of a strictly $\A^1$-invariant sheaf is the homotopy sheaf $\piA{n}{\mathcal{X}}$ associated to a motivic space $\mathcal{X}$ when $n\ge 2$.\footnote{Historically, distinctions were made between {\em strongly} and {\em strictly} $\mathbb A^1$-invariant sheaves. Due to work of Morel, strongly $\mathbb A^1$-invariant Nisnevich sheaves of abelian groups on $\Sm_k$ are actually also strictly $\mathbb A^1$-invariant. See also \cite{bachmannstrictlystrongly}.} 
Strictly $\A^1$-invariant Nisnevich sheaves of abelian groups assemble to form an abelian category \cite[6.24]{Morel}.

Key examples of strictly $\A^1$-invariant sheaves include \textit{Milnor $K$-theory}, denoted $\KM_n$, \textit{Milnor--Witt $K$-theory}, denoted $\KMW_n$, and the \textit{fundamental ideal} $\I$ as well as its powers $\I^n$. We have canonical short exact sequences
\begin{equation}\label[sequence]{eqn:M-MW-I-sequence}
    0 \to \I^{n+1}\to \KMW_n \to \KM_n \to 0
\end{equation}
and
\begin{equation}\label[sequence]{eqn:I-I-M}
0\to \I^{n+1}\to \I^n \to \KM_n/2 \to 0.
\end{equation}

Other important examples of strictly $\A^1$-invariant sheaves are \textit{higher Grothendieck Witt sheaves} $\GW_i^j$, which are the Nisnevich sheafifications of \textit{higher Grothendieck Witt groups} $GW^i_j (X)$ (cf. \cite{Schlichting-a,Schlichting-b,Schlichting17}; we refer the reader to \cite[Section 4]{AF-3fold} for a concise treatment). For any $i,j \in \mathbb{Z}$, $GW^i_j (X)$ are $4$-periodic in $j$.
For $i \geq 0$ and $X$ affine, the groups $GW_i^j (X)$ can be identified with Hermitian $K$-theory groups as defined by M. Karoubi \cite{Karoubi73}. In particular:

\begin{proposition}[{\cite[Corollary A.2]{Schlichting17}}]\label{GW-KSp}
For $i \geq 0$, let $\KO_i$ and $\KSp_{i}$ denote the Nisnevich sheafifications of the $i$-th orthogonal and symplectic $K$-theory groups, respectively. There are isomorphisms
    \begin{align*}
        \KO_i &\cong \GW_i^0\\
        \KSp_i &\cong \GW_i^2
    \end{align*}
of strictly $\A^1$-invariant sheaves.
\end{proposition}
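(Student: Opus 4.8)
The plan is to reduce the statement to Schlichting's comparison between Grothendieck--Witt theory and Karoubi's hermitian $K$-theory, and then sheafify. By construction, $\GW_i^j$ is the Nisnevich sheafification of the presheaf $X \mapsto GW_i^j(X)$ on $\Sm_k$ (the $i$-th homotopy group of Schlichting's Grothendieck--Witt spectrum $GW^{[j]}$ of the exact category of vector bundles on $X$), while $\KO_i$ (resp. $\KSp_i$) is by definition the Nisnevich sheafification of $X \mapsto KO_i(X)$ (resp. $X \mapsto KSp_i(X)$), the orthogonal (resp. symplectic) hermitian $K$-groups of Karoubi \cite{Karoubi73}. Since Nisnevich sheafification is exact and functorial, and since a morphism of Nisnevich sheaves on $\Sm_k$ is an isomorphism as soon as it restricts to one on smooth affine schemes (these forming a basis of the Nisnevich topology), it suffices to produce natural isomorphisms of presheaves
\[
KO_i(X) \;\cong\; GW_i^0(X), \qquad KSp_i(X) \;\cong\; GW_i^2(X)
\]
on smooth affine $k$-schemes, for each $i \geq 0$. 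Granting these as isomorphisms of Nisnevich sheaves, the fact that they are isomorphisms of \emph{strictly} $\A^1$-invariant sheaves is automatic, since strict $\A^1$-invariance of $\KO_i$ and $\KSp_i$ is then inherited from that of $\GW_i^0$ and $\GW_i^2$, which is part of the general theory (using $1/2 \in k$).

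For the presheaf comparison, first I would pin down conventions: on a smooth affine $X = \Spec R$ one equips the exact category of finitely generated projective $R$-modules with the duality $P \mapsto P^\vee = \operatorname{Hom}_R(P,R)$, the periodicity index $j \equiv 0 \pmod 4$ recording the symmetric (orthogonal) duality and $j \equiv 2 \pmod 4$ the alternating (symplectic) one, with $GW_i^j(X)$ the $i$-th homotopy group of the corresponding Grothendieck--Witt spectrum; in low degrees these recover the classical Grothendieck--Witt groups $GW(R)$ and $GW^-(R)$ together with their higher orthogonal and symplectic analogues. Karoubi's theory is built instead from the hermitian $Q$-construction (equivalently the $S_\bullet$-construction, or the $+$-construction on the infinite orthogonal resp. symplectic groups) applied to the category of symmetric resp. alternating inner product spaces over $R$. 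The key input is then \cite[Corollary A.2]{Schlichting17}: since $1/2 \in R$, the natural comparison map identifies Karoubi's orthogonal (resp. symplectic) hermitian $K$-theory space with the connective cover of Schlichting's Grothendieck--Witt spectrum $GW^{[0]}$ (resp. $GW^{[2]}$), naturally in $R$. Passing to $\pi_i$ for $i \geq 0$ and using the identification of these homotopy groups with $KO_i(R)$, $GW_i^0(R)$, $KSp_i(R)$, $GW_i^2(R)$ yields the required natural presheaf isomorphisms on smooth affine $k$-schemes, and hence, after Nisnevich sheafification, the isomorphisms of the proposition.

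The one genuinely substantive step is the invocation of \cite[Corollary A.2]{Schlichting17}, which carries essentially all of the mathematical content: matching Karoubi's combinatorial $Q$- and $S_\bullet$-models against Schlichting's derived-category-theoretic Grothendieck--Witt spectra, and, crucially, verifying that the two theories agree precisely in non-negative degrees --- the Grothendieck--Witt spectrum being a non-connective delooping that can differ from Karoubi's theory in negative degrees, where Karoubi's fundamental theorem and the associated Bott-type sequences intervene. On our side the only things to get right are the bookkeeping of which $4$-periodic duality index corresponds to the orthogonal versus the symplectic flavor, and the routine check that sheafification and the inheritance of strict $\A^1$-invariance behave as claimed; I expect the duality-index bookkeeping to be the most error-prone point, although it is settled once one fixes Schlichting's conventions.
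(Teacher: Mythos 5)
Your proposal is correct and matches the paper's treatment: the paper gives no proof beyond the citation of \cite[Corollary A.2]{Schlichting17}, which is exactly the substantive input you identify, with the passage to Nisnevich sheaves and the inheritance of strict $\A^1$-invariance being the routine formal steps you describe.
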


\begin{rmk} Grothendieck--Witt groups have many interesting properties and have been extensively studied. For example, for $i > 0$ the negative Grothendieck--Witt groups coincide with triangular Witt groups defined by P. Balmer \cite{Balmer} via the formula $GW_{-i}^{j} (X) = W^{i+j}(X)$. By \cite[\S 3]{Hornbostel}, the Grothendieck-Witt sheaves $\GW_i^j$ are the homotopy sheaves $\piA{i}{\mathcal{GW}^j}=\GW_i^j$ of a motivic space $\mathcal{GW}^j$.\end{rmk}

\subsection{Contractions of strictly $\A^1$-invariant sheaves}\label{subsec:contraction}

We will be concerned with a few elementary constructions on sheaves and presheaves, which we briefly recall here.
\begin{const}[Contraction of presheaves]\label{def:contraction} Let $\textbf{A}$ be a sheaf of abelian groups or pointed sets on $\Sm_k$. The {\em contraction} of $\textbf{A}$, denoted $\textbf{A}_{-1}$, is the sheaf on $\Sm_k$ defined by \[\textbf{A}_{-1}(U)=\op{ker}\left(\textbf{A}(\mathbb G_m \times U) \xto{(1\times \op{id})^*} \textbf{A}(U)\right) .\] 

We define the {\em$i$-fold contraction } $\textbf{A}_{-i}$ inductively for $i \in \Z_{\geq 2}$ by 
\[\textbf{A}_{-i}=(\textbf{A}_{-(i-1)})_{-1}.\]
\end{const}
By \cite[Lemmas 5.32 and 7.33]{Morel}, $\textbf{A} \mapsto \textbf{A}_{-i}$ is functorial on strictly $\mathbb A^1$-invariant sheaves and preserves exact sequences. We record the following simple descriptions of contractions of some important strictly $\A^1$-invariant sheaves: 

\begin{proposition}\label{prop:contractions-1} 
For any $i$ and $j$, we have the following.
\begin{enumerate}
\item\label[empty]{KM-contractions}  $(\KM_i)_{-1}\cong \begin{cases} \KM_{i-1} & i \geq 1\\ 0& i=0.\end{cases}$
\item\label[empty]{KMW-I-contractions} $(\I^n)_{-1}\cong \I^{n-1}$ and $(\KMW_i)_{-1}\cong \KMW_{i-1}$.
    \item\label[empty]{GW-contractions}
    $
        \left( \GW_i^j \right)_{-1}\cong \GW_{i-1}^{j-1}.$
    \item\label[empty]{GW-vanishing}
    If $j>i+1$ then $  \left( \GW_{i+1}^i \right)_{-j} = 0.$
\end{enumerate}
These results are standard, but can be found for instance in \cite[Lemma 2.7, Proposition 2.9]{asok2014algebraic}, \cite[Proposition 4.4]{AF-3fold}, and \cite[Proposition 3.4.3]{AF-splitting}, respectively.
\end{proposition}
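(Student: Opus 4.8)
The plan is to verify each of the four formulas by reducing to known contraction computations. Parts (a) and (b) are essentially bookkeeping: the contraction $(\KM_i)_{-1} \cong \KM_{i-1}$ for $i \geq 1$ follows from the fact that $\KM_i$ has an explicit presentation by symbols and that multiplication by a unit in the $\mathbb G_m$-coordinate splits off a copy of $\KM_{i-1}$ (with $(\KM_0)_{-1} = (\Z)_{-1} = 0$ since the constant sheaf is insensitive to the $\mathbb G_m$ factor). The isomorphism $(\KMW_i)_{-1} \cong \KMW_{i-1}$ is proved by Morel via the Milnor--Witt symbol presentation; then $(\I^n)_{-1} \cong \I^{n-1}$ follows either directly from the analogous symbolic description of the powers of the fundamental ideal, or by contracting the short exact sequences \Cref{eqn:M-MW-I-sequence} and \Cref{eqn:I-I-M} and using exactness of contraction together with the already-established cases.

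For part (c), I would use \Cref{GW-KSp} together with the $4$-periodicity in $j$ to reduce $\left(\GW_i^j\right)_{-1} \cong \GW_{i-1}^{j-1}$ to statements about $\KO$ and $\KSp$ sheaves, or invoke directly the contraction computation in \cite[Proposition 4.4]{AF-3fold}. The cleanest route is probably to cite the standard fiber-sequence description of the Grothendieck--Witt spectrum and the fact that contraction corresponds to $\mathbb G_m$-looping, which shifts the bidegree as claimed; the periodicity in $j$ then takes care of wraparound. Part (d) follows from iterating part (c): starting from $\GW_{i+1}^i$ and contracting $j$ times gives $\GW_{i+1-j}^{i-j}$, and one then needs to know that $\GW_{n}^{m}$ vanishes (as a sheaf) when $n < -1$, i.e.\ when $i+1-j < -1$, which is exactly the condition $j > i+2$; the boundary case $j = i+2$ giving $\GW_{-1}^{-2} = W^{-3}$-type Witt sheaves also vanishes, and one should double-check the indexing so that the stated bound $j > i+1$ is correct. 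In fact the safest approach for (d) is simply to cite \cite[Proposition 3.4.3]{AF-splitting} directly rather than rederive the vanishing range.

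The main obstacle is purely notational: the various sources (Morel, Asok--Fasel, Schlichting) use slightly different indexing conventions for the Grothendieck--Witt sheaves $\GW_i^j$, the $4$-periodicity, and the identification with Balmer--Witt groups, so the real work is checking that the bidegree shift $(i,j) \mapsto (i-1, j-1)$ and the vanishing range in (d) are stated consistently with the conventions fixed in \Cref{subsec:strictly}. Since all four statements are explicitly available in the cited references under compatible conventions, I would keep the proof to a sentence or two per item, citing \cite{asok2014algebraic} for (a) and (b), \cite[Proposition 4.4]{AF-3fold} for (c), and \cite[Proposition 3.4.3]{AF-splitting} for (d), and noting that (b) can alternatively be deduced from (a) by contracting \Cref{eqn:I-I-M}.
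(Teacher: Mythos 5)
Your proposal is correct and matches the paper, which gives no argument beyond citing \cite[Lemma 2.7, Proposition 2.9]{asok2014algebraic}, \cite[Proposition 4.4]{AF-3fold}, and \cite[Proposition 3.4.3]{AF-splitting} for the respective items --- exactly the citations you settle on. Your sketched derivations are sound (modulo the small indexing wobble you already flag in (d): the $j$-fold contraction for $j>i+1$ lands in $\GW^{i-j}_{i+1-j}\cong \mathbf{W}^{-1}\cong \mathbf{W}^{3}=0$, with $j=i+1$ giving the nonvanishing $\GW^{-1}_{0}$, so the stated bound is sharp), but none of this extra detail is needed beyond what the references provide.
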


\subsection{Actions and twisted cohomology}\label{subsec:actions}

Given a sheaf of abelian groups $\textbf{A}$, a sheaf of groups $\mathbf{G}$ acting on $\textbf{A}$,  a smooth scheme $X \in \Sm_k$, and a Nisnevich $\mathbf{G}$-torsor $\mathcal P$ over $X$, cohomology of $X$ with coefficients in $\textbf{A}$ can be twisted by $\mathcal P$ as follows.

\begin{construction}[Twisted cohomology of Nisnevich sheaves]\label{const:twists} With set-up as above the $\mathcal P$-twisted cohomology of $X$ with coefficients in $\textbf{A}$ is defined as the cohomology of the sheaf $\Z[\mathcal P] \otimes_{\Z[\mathbf{G}]}\textbf{A}$ on the small Nisnevich site of $X$.
\end{construction}

We summarize how to compute twisted cohomology groups in \Cref{subsec:rost-schmidt}. Below, we give examples of sheaves of abelian groups with group actions that we will use extensively.
\begin{example}\label{ex:contract-act}
Contracted sheaves carry a natural action of $\mathbb G_m$ (see, e.g., \cite[\S 2.4]{AF-splitting}). The action of $\mathbb G_m$ arises from an action of $(\KMW_0)^{\times}$ on contracted sheaves, which is then restricted along the natural map $\mathbb G_m \to \KMW_0$ given by $a \mapsto 1 + \eta [a]$ over any field $F$ over $k$ and $a \in F^{\times}$. In particular, $\mathbb G_m$ acts on $\KMW_i$ for each $i$; this $\mathbb G_m$-action coincides with the $\mathbb G_m$-action coming from the product morphism $\KMW_0 \times \KMW_i \to \KMW_i$.
\end{example}

\begin{example}\label{ex:I-act} From \Cref{ex:contract-act}, we get a $\mathbb G_m$-action on $\I^j$. This can be seen either from the fact that it is an ideal in $\KMW_j$ and therefore closed under the ($\KMW_0)^\times$-action or from \Cref{prop:contractions-1}\Cref{KMW-I-contractions}. Unraveling definitions, one checks these actions agree. \end{example}

\begin{example}\label{ex:KM-act} By \Cref{eqn:M-MW-I-sequence} or \Cref{prop:contractions-1}\Cref{KM-contractions}, we get a $\mathbb G_m$-action on $\KM_i$ for each $i$ induced by the action of $\KMW_i$; this action is trivial. \end{example}

\begin{rmk}\label{rmk:act-compatibility} By definition, the sequences \Cref{eqn:M-MW-I-sequence} and \Cref{eqn:I-I-M} are compatible with $\mathbb G_m$-actions and induce short exact sequences of twisted sheaves and long exact sequences on twisted cohomology. \end{rmk}

\begin{example}\label{ex:c1xc2-act} Given any fibration in motivic spaces with $\A^1$-simply connected fiber, then the $\A^1$-fundamental group of the base acts on the $\A^1$-homotopy sheaves of the fiber. For example, we will later consider the fibration
\[\mathcal F \to \BGL_2 \xrightarrow{c_1\times c_2} K(\KM_1,1) \times K(\KM_2,2),\]
coming from \Cref{eq:c1c2}.
This induces an action of $\piA{1}{\piA{1}{K(\KM_1,1) \times K(\KM_2,2)}} \cong\piA{1}{\BGL_2} \cong \mathbb G_m$ on $\piA{2}{\mathcal F} \cong \I^3$. By \cite[Proposition 6.3]{AF-3fold}, this agrees with the action on $\I^3$ discussed in \Cref{ex:I-act} above.
\end{example}

\subsection{The Rost--Schmid complex}\label{subsec:rost-schmidt}

We briefly recall Rost--Schmid complexes, which are the main computational tool for computing (twisted) Nisnevich cohomology of strictly $\A^1$-invariant sheaves. Our main reference is \cite[\S 5]{Morel} and we refer the reader to \cite[Section 2]{AF-3fold} or \cite[Section 6]{bachmannstrictlystrongly} for a concise discussion.

A strictly $\A^1$-invariant sheaf $\textbf{A}$ of abelian groups admits an associated Rost--Schmid complex $C^{\ast}_{RS} (X, \textbf{A})$ computing Nisnevich cohomology of a smooth $k$-scheme $X$ with coefficients in $\textbf{A}$. The terms of this complex take the form
\[C^i_{RS}(X,\textbf{A})=\bigoplus_{\substack{x \in X^{(i)}}} \textbf{A}_{-i}(x,\omega_{x/X})\]
where $X^{(i)}$ denotes the set of codimension $i$ points of $X$ \cite[Definition 5.7]{Morel}. The cohomology of the Rost--Schmid complex $C^{\ast}_{RS} (X, \textbf{A})$ coincides with the Nisnevich (and Zariski) cohomology groups of the sheaf $\textbf{A}$ in each degree \cite[Corollary 5.43]{Morel}.

Now let $X \in \Sm_k$ and $\mathcal{L}$ be a line bundle over $X$. Then any contraction $\textbf{A}_{-1}$ of a strictly $\A^1$-invariant sheaf of abelian groups $\textbf{A}$ has a canonical $\mathbb{G}_{m}$-action by Example \ref{ex:contract-act}. This allows to define the twisted sheaf $\textbf{A}_{-1}(\mathcal{L})$ over the small Nisnevich site over $X$. One can then analogously define the twisted Rost--Schmid complex $C^{\ast}_{RS}(X, \mathcal{L}; \textbf{A}_{-1})$ with terms of the form
\[\bigoplus_{\substack{x \in X^{(i)}}} \textbf{A}_{-(i+1)}(x,\omega_{x/X} \otimes \mathcal{L})\]
in degree $i$. The cohomology groups of the complex $C^{\ast}_{RS}(X, \mathcal{L}; \textbf{A}_{-1})$ compute the Nisnevich cohomology groups of the sheaf $\textbf{A}_{-1}(\mathcal{L})$ in a given degree \cite[Remark 5.14.2]{Morel}. For completeness, we note some easy consequences of the definition of the Rost--Schmid complex that will be useful to us throughout what follows:
\begin{lemma}\label{prop:gersten-complex-properties} \textit{(Elementary properties of Rost--Schmid complexes)}
\begin{enumerate}
    \item\label[empty]{contract-isom} Suppose that $\mathcal{F}\to \mathcal{G}$ is a map of strictly $\A^1$-invariant Nisnevich sheaves of abelian groups that induces an isomorphism after $n$-fold contraction. For any $X$ and any line bundle $\mathcal L$ over $X$, the induced map
\[ \Hnis^i\left(X,\mathcal{F}(\mathcal L)\right) \to \Hnis^i\left(X,\mathcal{G}(\mathcal L)\right)\] is an isomorphism for $i\ge n+1$.
    \item\label[empty]{sheaf-contracts-to-zero}
    If $(\mathcal{F})_{-n}$ restricts to zero on the small Nisnevich site over $X$, then $
        \Hnis^i(X,\mathcal{F}) = 0$
    for $i\geq n$.
\end{enumerate}
\end{lemma}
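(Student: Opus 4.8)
The plan is to deduce both assertions directly from the structure of the (twisted) Rost--Schmid complex, exploiting the single observation that its term in cohomological degree $i$ involves only the $i$-fold contraction of the coefficient sheaf. Throughout I use \cite[Corollary 5.43, Remark 5.14.2]{Morel}: for a strictly $\A^1$-invariant sheaf $\textbf{A}$ equipped with its canonical $\mathbb G_m$-action (\Cref{ex:contract-act}) and a line bundle $\mathcal L$ on $X$, the group $\Hnis^i(X,\textbf{A}(\mathcal L))$ is the $i$-th cohomology of a complex with degree-$i$ term $\bigoplus_{x\in X^{(i)}}\textbf{A}_{-i}(x,\omega_{x/X}\otimes\mathcal L)$, the contraction $\textbf{A}_{-i}$ carrying its canonical $\mathbb G_m$-action; in the untwisted case one simply omits $\mathcal L$.

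For the first assertion, recall that contraction is functorial on strictly $\A^1$-invariant sheaves and preserves exact sequences \cite[Lemmas 5.32, 7.33]{Morel}, hence preserves isomorphisms. Writing $\mathcal F_{-m}=(\mathcal F_{-n})_{-(m-n)}$, a map that becomes an isomorphism after $n$-fold contraction becomes one after $m$-fold contraction for every $m\ge n$, and these isomorphisms are compatible with the canonical $\mathbb G_m$-actions, hence with the twists by $\omega_{x/X}\otimes\mathcal L$ occurring in the Rost--Schmid complex. Therefore the induced map of complexes is an isomorphism in every degree $\ge n$. Since the $i$-th cohomology of a cochain complex is a subquotient determined by the terms in degrees $i-1$, $i$, $i+1$, the induced map on $\Hnis^i$ is an isomorphism as soon as $i-1\ge n$, i.e.\ for $i\ge n+1$, which is the claim.

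For the second assertion, again $\mathcal F_{-i}=(\mathcal F_{-n})_{-(i-n)}$ is a contraction of $\mathcal F_{-n}$ for every $i\ge n$. Since $\mathcal F_{-n}$ restricts to zero on the small Nisnevich site over $X$, so do its contractions (by the same functoriality together with the explicit description of contractions), and in particular the groups $\mathcal F_{-i}(x,\omega_{x/X})$ attached to points $x\in X^{(i)}$ vanish for all $i\ge n$. Hence the Rost--Schmid complex computing $\Hnis^\ast(X,\mathcal F)$ is identically zero in degrees $\ge n$, so its cohomology vanishes there, giving $\Hnis^i(X,\mathcal F)=0$ for $i\ge n$. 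Note that here we obtain an honestly zero complex rather than merely an isomorphism of complexes, which is why the conclusion reaches degree $n$, one better than in the first assertion.

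I do not anticipate a serious obstacle, since both statements are formal once the description of the Rost--Schmid complex is in hand; the only point requiring care is the bookkeeping for contractions common to both parts, namely that passing from statements about $n$-fold contractions to statements about $m$-fold contractions for $m\ge n$ is compatible with the $\mathbb G_m$-twists built into the Rost--Schmid differentials. This is handled by functoriality of contraction \cite[Lemmas 5.32, 7.33]{Morel} together with the naturality of the canonical $\mathbb G_m$-action on contracted sheaves recalled in \Cref{ex:contract-act}.
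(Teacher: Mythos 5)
The paper supplies no proof of this lemma at all --- it is introduced as a collection of ``easy consequences of the definition of the Rost--Schmid complex'' --- so I can only judge your argument on its own terms. Your proof of part (a) is correct and is surely the intended one: functoriality of contraction promotes the isomorphism $\mathcal F_{-n}\xto{\sim}\mathcal G_{-n}$ to isomorphisms $\mathcal F_{-j}\xto{\sim}\mathcal G_{-j}$ for all $j\ge n$, compatibly with the canonical $\mathbb G_m$-actions, so the induced map of twisted Rost--Schmid complexes is an isomorphism in every degree $\ge n$, and hence an isomorphism on $\Hnis^i$ for $i\ge n+1$ (what one actually uses is that the map of complexes is an isomorphism in degrees $i-1$ and $i$ and injective in degree $i+1$; all three hold here).

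Part (b), however, has a genuine gap, in two places. First, the claim that contractions of a sheaf vanishing on the small Nisnevich site over $X$ again vanish there is false: the formula $(\mathbf{A})_{-1}(U)=\ker\bigl(\mathbf{A}(\mathbb G_m\times U)\to\mathbf{A}(U)\bigr)$ evaluates $\mathbf{A}$ on $\mathbb G_m\times U$, which is not an object of the small site of $X$, so the hypothesis gives no control over it. Concretely, for $X=\mathbb G_m$ over $\C$ the sheaf $\I^2$ restricts to zero on the small Nisnevich site (since $I^2(F)=0$ for the relevant fields, all of cohomological dimension $1$), yet $(\I^2)_{-1}\cong\I$ does not: the Pfister form $\langle 1,-t\rangle$ is a nonzero section of $\I$ over $\mathbb G_m$. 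Second, even granting vanishing of $\mathcal F_{-i}$ on the small site of $X$, the degree-$i$ term of the Rost--Schmid complex is $\bigoplus_{x\in X^{(i)}}\mathcal F_{-i}(k(x),\omega_{x/X})$, and for $i>0$ the residue field $k(x)$ is not (a filtered limit of) objects of the small Nisnevich site of $X$; so the vanishing you need at those fields does not follow from vanishing on the small site either.

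What your argument actually requires is vanishing of $\mathcal F_{-n}$ on all finitely generated fields of transcendence degree at most $\dim X-n$ over $k$ (note $\mathcal F_{-i}(k(x))\hookrightarrow \mathcal F_{-n}\bigl(k(x)(t_1,\dots,t_{i-n})\bigr)$, a field of transcendence degree $\dim X-n$ when $x\in X^{(i)}$), or more simply vanishing of $\mathcal F_{-n}$ as a sheaf on all of $\Sm_k$. Under either of these strengthened hypotheses your ``the complex is identically zero in degrees $\ge n$'' argument goes through verbatim, and the stronger hypothesis is the one actually available where the paper invokes this kind of vanishing (e.g.\ $(\GW_3^2)_{-4}=0$ by \Cref{prop:contractions-1}). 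As written, with the hypothesis placed only on the restriction to the small site of $X$, I do not see how to complete the proof along these lines, and I would flag the statement itself as needing the stronger hypothesis.
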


\subsection{Motivic Moore--Postnikov theory}\label{subsec:def-twists}

We give a brief summary of twisted Eilenberg--Mac Lane spaces and Moore--Postnikov theory, following \cite[\S 6.1]{AF-splitting} to which we refer the reader for more details. 

Let $\textbf{G}$ be a sheaf of groups and $\textbf{A}$ a sheaf of abelian groups with a $\textbf{G}$-action. Then $\textbf{G}$ also acts on $K(\textbf{A},n)$ for each $n$ and we define\[ K^{\textbf{G}}(\textbf{A},n)=EG \times^{\textbf{G}} K(\textbf{A},n).\] The space $K^G(\textbf{A},n)$ admits a split morphism to $\mathrm{B}\textbf{G}$ induced by projection onto the first factor.

Consider a morphism $f\:\mathcal E \to \mathcal B$ of pointed motivic spaces, where its homotopy fiber $\mathcal F:=\fib(f)$ is $\A^1$-simply connected and both $\mathcal E$ and $\mathcal B$ are $\A^1$-connected. The $\mathbb A^1$-homotopy sheaves of $\mathcal F$ then carry a natural action of $\piA{1}{\mathcal B}$. By \cite[Theorem 6.1.1]{AF-splitting}, the Moore--Postnikov tower for $\mathcal E \xto{f} \mathcal B$ takes the form

\begin{equation}\label[diagram]{diag:post_general} \begin{tikzcd}[column sep=4em, row sep=2em]
& & \vdots \\
  & &  \mathcal E^{(4)} \ar[d]\\
 &&\mathcal E^{(3)}\ar[d] \ar[r,"k_4"]& K^{\piA{1}{\mathcal B}}(\piA{4}{\mathcal F},5) \\
 && \mathcal E^{(2)}\ar[d] \ar[r,"k_3"]& K^{\piA{1}{\mathcal B}}(\piA{3}{\mathcal F},4) \\
   \mathcal E \ar[uuurr,"\,\,g^{(4)}" {near end, below}]\ar[rr,"g^{(1)}" {near end, below}]\ar[urr,"g^{(2)}"{near end, below}]\ar[uurr,"\,g^{(3)}" {near end, below}] && \mathcal E^{(1)}= \mathcal B \ar[r,"k_2"]& K^{\piA{1}{\mathcal B}}(\piA{2}{\mathcal F},3).
\end{tikzcd} \end{equation}
The indicated morphisms $\mathcal E \to \mathcal E^{(i)}$ induce an equivalence between $\mathcal E$ and the homotopy limit of the system $\{g^{(i)}\}$.  Thus, given a smooth $k$-scheme $X \in \Sm_k$, a morphism $f\: X \to \mathcal E$ is equivalent to a compatible system of morphisms $f^{(i)}\: X \to \mathcal E^{(i)}$.

Given $f^{(i)}\: X \to \mathcal E^{(i)}$, let $\mathcal P$ denote the $\piA{1}{\mathcal B}$-torsor over $X$ associated to the composite 
\[X \xrightarrow{f^{(i)}} \mathcal E^{(i)} \xrightarrow{k_{i+1}} K^{\piA{1}{\mathcal B}}({\piA{i+1}{\mathcal F}},i+2) \to B{\piA{1}{\mathcal B}}.\] A lift of $g^{(i)}$ to $\mathcal E^{(i+1)}$ exists if and only if an element associated to $k_i\circ g^{(i)}$ in the group
\begin{equation}\label[empty]{eq:obst1} \Hnis^{i+2}(X,\piA{i+1}{\mathcal F}(\mathcal P)).\end{equation}
is zero. Moreover, the group
\begin{equation}\label[empty]{eq:choice1} \Hnis^{i+1}(X,\piA{i+1}{\mathcal F}(\mathcal P))\end{equation} acts transitively on choices of lifts of $g^{(i)}$ to $\mathcal{E}^{(i+1)}$. The formalism above implies that for $X$ a smooth affine variety of Nisnevich cohomological dimension at most $d$, there is a bijection
\[[X,\mathcal E] \cong[X,\mathcal E^{(d)}].\]
\begin{rmk}[The special case of Postnikov towers]\label{rmk:postnikov} To recover the Postnikov tower of motivic space $\mathcal X$, we consider the Moore--Postnikov tower of the morphism $\mathcal X \to *$. See \cite[Section 6]{AF-3fold} to see discussion of the formalism in this case. 
\end{rmk}
\begin{rmk}[Comparing Moore--Postnikov towers]\label{rmk:canonicity} Consider a homotopy commutative square of motivic spaces

\begin{equation}\label[diagram]{tower-comparison}\begin{tikzcd} \mathcal E'\ar[d,"e"] \ar[r,"g"]& \mathcal B'\ar[d,"b"] \\
\mathcal E \ar[r,"f"] &\mathcal B,
\end{tikzcd}\end{equation}
where all spaces indicated are $\mathbb A^1$-connected and the vertical homotopy fibers are $\mathbb A^1$-simply connected. Let $\mathcal F'$ denote the homotopy fiber of $g$ and $\mathcal F$ that of $f$. 

Given a Moore--Postnikov tower for $g$ and for $f$, for each $i$ we have comparison maps 
\[e^{(i)}\:{ \mathcal E'}^{(i)}\to \mathcal E^{(i)}\] and 
\[r^{(i)}\:K^{\piA{1}{\mathcal B'}}(\piA{i+1}{\mathcal F'},i+2) 
\to K^{\piA{1}{\mathcal B}}(\piA{i+1}{\mathcal F},i+2)\] fitting into a homotopy commutative diagram
\begin{equation}\label[diagram]{eq:ri}
\begin{tikzcd}
\mathcal E'^{(i)} \ar[r,"{k}'_{i+1}"] \ar[d,"e^{(i)}"]&K^{\piA{1}{\mathcal B'}}(\piA{i+1}{\mathcal F'},i+2) \ar[d,"r^{(i)}"]\\
\mathcal E^{(i)} \ar[r,"{k}_{i+1}"] &
K^{\piA{1}{\mathcal B}}(\piA{i+1}{\mathcal F'},i+2)
\end{tikzcd}
\end{equation}
and making various other diagrams involving the maps $f,g,e,b$ homotopy commutative. 

Let $\mathcal X$ be a motivic space. Given a homotopy commutative diagram
\[\begin{tikzcd}
\mathcal X \ar[r,"h^{\prime(i)}"]\ar[dr,"h^{(i)}" below left]& \mathcal E'^{(i)}\ar[d,"e^{(i)}"] \\
& \mathcal E^{(i)},
\end{tikzcd}
\]
the map on cohomology induced by $r^{(i)}$ from \Cref{eq:ri} induces a map on cohomology with the property that the image of the obstruction to lifting (resp., choices of lift of) $h^{\prime(i)}$ to $\mathcal{E}^{\prime(i+1)}$ maps to the the obstruction to lifting (resp., choices of lift of) $h^{(i)}$ to $\mathcal{E}^{(i+1)}$. 
\end{rmk}

We will be interested in understanding \Cref{diag:post_general} above when $\mathcal E=\BGL_2$, $\mathcal B=K(\KM_1,1)\times K(\KM_2,2)$, and $f=(c_{1}, c_{2}): \BGL_2 \to K(\KM_1,1)\times K(\KM_2,2)$ is the product of universal first and second Chern class morphisms. We will discuss this in detail in Section 3.

\subsection{Vanishing and divisibility results and an exact sequence}\label{subsec:useful}

The sheaves $\I^j$, $\KM_n$, $\KMW_n$, and $\GW_i^j$ introduced in \Cref{subsec:strictly} will appear throughout our paper. We collect some facts about these sheaves that will be repeatedly used in what follows.

\begin{proposition}[{\cite[5.1,~5.2]{AF-3fold}}]\label{prop:Ij-vanishing} 
 Let $X$ be a smooth scheme of dimension $d\ge2$ over an algebraically closed field $k$, and let $\mathcal L$ be a line bundle on $X$.
\begin{enumerate}
    \item\label[empty]{a-Ij-vanishing} The restriction of the sheaf $\I^j(\mathcal{L})$ to the small Nisnevich site over $X$ is identically zero for $j \ge d+1$.
    \item Both $\Hnis^d(X,\I^j(\mathcal{L}))$ and $\Hnis^{d-1}(X,\I^j(\mathcal{L}))$ vanish for $j\ge d$.
\end{enumerate}
\end{proposition}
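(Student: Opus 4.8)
The plan is to use the Rost--Schmid complex of $\I^j(\mathcal L)$, together with the contraction formula $(\I^j)_{-1}\cong\I^{j-1}$ from \Cref{prop:contractions-1}\Cref{KMW-I-contractions} and the basic fact that over an algebraically closed field the sheaf $\I^m$ vanishes on fields of transcendence degree $< m$ (equivalently, $\I$ is trivial on such fields since the Witt ring of an algebraically closed field, or more generally any field of cohomological dimension zero in the relevant sense, carries no nontrivial elements of the fundamental ideal). The key input is that for a point $x\in X^{(i)}$ the residue field $\kappa(x)$ has transcendence degree $d-i$ over $k$, so $\I^m(\kappa(x))=0$ whenever $m>d-i$.

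\emph{Part (a).} I would argue that the restriction of $\I^j(\mathcal L)$ to the small Nisnevich site over $X$ is the zero sheaf when $j\ge d+1$. It suffices to check the stalks at points $x\in X$, which are computed on essentially smooth local schemes; by the description of sections of $\I^j(\mathcal L)$ via the Rost--Schmid/Gersten resolution (or directly by Morel's identification of the stalk with $\I^j$ of the residue field, suitably twisted, injecting into $\I^j$ of the function field), the stalk at a point of codimension $i$ involves $(\I^j)_{-i}(\kappa(x))\cong\I^{j-i}(\kappa(x))$ with $\operatorname{trdeg}_k\kappa(x)=d-i$. Since $j\ge d+1$ forces $j-i\ge d+1-i>d-i$, this group is zero; as every contribution vanishes, the sheaf restricts to zero. (Twisting by $\mathcal L$ does not affect vanishing, since locally $\mathcal L$ is trivial and $\I^m(\kappa(x),\mathscr{L})\cong\I^m(\kappa(x))$ non-canonically.) This is essentially a citation to \cite[5.1]{AF-3fold}, so I would keep it brief.

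\emph{Part (b).} For the cohomological vanishing I would write down the twisted Rost--Schmid complex
\[
C^\bullet_{RS}(X,\mathcal L;\I^j):\quad \bigoplus_{x\in X^{(0)}}\I^j(\kappa(x),\mathscr{L}_x)\to\cdots\to\bigoplus_{x\in X^{(i)}}\I^{j-i}(\kappa(x),\omega_{x/X}\otimes\mathscr{L})\to\cdots
\]
using $(\I^j)_{-i}\cong\I^{j-i}$. The group $\Hnis^i(X,\I^j(\mathcal L))$ is the cohomology at the degree-$i$ spot. For $i=d$: the term in degree $d$ is $\bigoplus_{x\in X^{(d)}}\I^{j-d}(\kappa(x),\ldots)$ with $\kappa(x)$ algebraic over $k$, hence of transcendence degree $0$, so $\I^{j-d}(\kappa(x))=0$ as soon as $j-d\ge 1$, i.e. $j\ge d+1$; and for $j=d$ we have $\I^0=\GW$... no: I need $j\ge d$, so the borderline case $j=d$ gives $\I^0(\kappa(x))$ which is \emph{not} zero. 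So the naive term-vanishing only handles $i=d$, $j\ge d+1$; for $j=d$, $i=d$ I instead use that the differential \emph{into} degree $d$ — coming from $\bigoplus_{X^{(d-1)}}\I^1(\kappa(y),\ldots)$ — has cokernel computing $\Hnis^d$, and here I would invoke the divisibility/surjectivity statement that for a $1$-dimensional local situation over an algebraically closed field the relevant boundary map in the Rost--Schmid complex for $\I^1=\I$ is surjective onto $\I^0$-classes supported at closed points, or more cleanly cite the comparison of $\I^j$-cohomology with (a quotient of) étale/motivic cohomology and use $\operatorname{cd}_2(k)=0$. Similarly for $i=d-1$, $j\ge d$: the degree-$(d-1)$ term is $\bigoplus_{X^{(d-1)}}\I^{j-d+1}(\kappa(x),\ldots)$ with $\operatorname{trdeg}=1$, which vanishes when $j-d+1\ge 2$ i.e. $j\ge d+1$; the boundary case $j=d$ gives $\I^1(\kappa(x))$, $\operatorname{trdeg}\kappa(x)=1$, which is generally nonzero, so again I need the surjectivity of the next differential out of degree $d-1$ (namely from $\I^1$-terms at codimension $d-1$ points onto $\I^0$-terms at codimension $d$ points), i.e. exactness at spot $d-1$. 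Rather than reprove these surjectivities by hand, I would cite \cite[5.2]{AF-3fold} directly, noting that the proof there reduces to the fact that for $K$ a field of transcendence degree $\le 1$ over an algebraically closed field, $\I^2(K)=0$ and the residue maps $\I(K(t))\to\bigoplus\I^0$ are surjective.

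\emph{Main obstacle.} The genuinely non-formal point is precisely the boundary case $j=d$ in part (b): there the relevant Rost--Schmid terms are \emph{not} zero, and vanishing of cohomology instead relies on exactness of the Rost--Schmid complex in the top two spots, which in turn uses the triviality of $\I^2$ over transcendence-degree-$\le 1$ extensions of $k$ together with surjectivity of the appropriate geometric residue maps (a form of the fact that $\mathrm{Pic}$ and higher Chow groups in codimension equal to the dimension behave well over algebraically closed fields). I expect to handle this by citing \cite[5.1,~5.2]{AF-3fold} rather than reproving it, since the statement is labeled there; the expository cost is just making sure the dimension bookkeeping ($\operatorname{trdeg}_k\kappa(x)=d-\operatorname{codim} x$ and the index shift $(\I^j)_{-i}=\I^{j-i}$) is stated cleanly so the reduction to Asok--Fasel's computation is transparent.
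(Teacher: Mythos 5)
The paper offers no proof of this proposition at all — it is quoted from \cite[Lemmas 5.1 and 5.2]{AF-3fold} — and since your argument also ultimately defers to that citation, the two approaches coincide; moreover your supplementary sketch (termwise vanishing of the Rost--Schmid complex by transcendence-degree counting, with the only genuine content in the boundary case $j=d$) correctly locates where the work lies. One caveat on that boundary case: the needed input is not a local surjectivity of residue maps but the identification $\I^d(\mathcal L)|_X\cong \KM_d/2|_X$ (from part (a) and \Cref{eqn:I-I-M}), so that $\Hnis^d(X,\I^d(\mathcal L))\cong \Ch^d(X)$ and $\Hnis^{d-1}(X,\I^d(\mathcal L))\cong \Hnis^{d-1}(X,\KM_d/2)$, whose vanishing rests on the global divisibility of $\CH^d(X)$ and on Artin vanishing for \'etale cohomology — both of which require $X$ to be \emph{affine} (the hypothesis present in \cite[Lemma 5.2]{AF-3fold} but silently dropped in the paper's statement and in your sketch; for $X=\P^d$ one has $\Hnis^d(X,\I^d)\cong\Z/2\ne 0$).
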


We will later use the following divisibility results for cohomology with coefficients in $\KMW_d$ and for Chow groups:

\begin{lemma}[{\cite[Lemmas 4.0.3 and 4.0.5]{fasel2021suslins}}]\label{lem:fasel-KMW-divisibility}  Let $X$ be a smooth affine scheme of dimension $d\geq 4$ over an algebraically closed field $k$. Then $\Hnis^{d-1}(X, \KMW_d)$ is uniquely divisible prime to the characteristic of $k$ and $\Hnis^{d-2}(X,\KMW_d)$ is divisible prime to the characteristic of $k$.
\end{lemma}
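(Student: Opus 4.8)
The plan is to peel off the Milnor--Witt part, translate the surviving groups into motivic cohomology via a generalized Bloch formula, and then deduce divisibility from the Beilinson--Lichtenbaum comparison together with Artin vanishing. First I would dispose of the Witt-theoretic contribution: since $\dim X = d$, \Cref{prop:Ij-vanishing}\Cref{a-Ij-vanishing} (applied with $j=d+1$ and trivial twist) shows that $\I^{d+1}$ restricts to zero on the small Nisnevich site of $X$, so $\Hnis^i(X,\I^{d+1})=0$ for all $i$; feeding this into the long exact sequence of \Cref{eqn:M-MW-I-sequence} with $n=d$ gives $\Hnis^i(X,\KMW_d)\cong\Hnis^i(X,\KM_d)$ for every $i$. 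Thus it suffices to prove that $\Hnis^{d-1}(X,\KM_d)$ is uniquely divisible prime to $\operatorname{char}k$ and that $\Hnis^{d-2}(X,\KM_d)$ is divisible prime to $\operatorname{char}k$.

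Next I would identify $\Hnis^{d-1}(X,\KM_d)\cong\Hmot^{2d-1}(X,\Z(d))$ and $\Hnis^{d-2}(X,\KM_d)\cong\Hmot^{2d-2}(X,\Z(d))$. For this, use the truncation triangle $\tau_{\le d-1}\Z(d)\to\Z(d)\to\KM_d[-d]\to$, valid because $\KM_d$ is the top cohomology sheaf of the motivic complex $\Z(d)$; it is then enough to check $\Hmot^{j}(X,\tau_{\le d-1}\Z(d))=0$ for $j=2d-2,2d-1,2d$. The cohomology sheaves $\mathcal H^t(\Z(d))$ ($t\le d-1$) are homotopy-invariant Nisnevich sheaves with transfers, hence strictly $\A^1$-invariant with Rost--Schmid complexes, and their contractions satisfy $(\mathcal H^t(\Z(d)))_{-p}\cong\mathcal H^{t-p}(\Z(d-p))$; evaluating on a codimension-$p$ residue field yields $\Hmot^{t-p}(\kappa(x),\Z(d-p))$, which vanishes once $t-p\le 0$ (for the weights $0$ and $1$ this is elementary since $\Z(0)=\Z$ and $\Z(1)=\mathbb G_m[-1]$, and $\Hmot^0(-,\Z(n))=0$ for $n\ge 1$ — and these are the only cases that arise for $t\ge d-2$). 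Hence the Rost--Schmid complex of $\mathcal H^t(\Z(d))$ on $X$ is concentrated in degrees $\le t-1$, so $\Hnis^i(X,\mathcal H^t(\Z(d)))=0$ for $i\ge t$. Plugging this into the Nisnevich hypercohomology spectral sequence of $\tau_{\le d-1}\Z(d)$ (whose nonzero $E_2$-entries have $s\le d$ and $t\le d-1$) kills every contribution in total degrees $\ge 2d-2$, which yields the two isomorphisms.

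It then remains to show the motivic cohomology groups $\Hmot^{2d-1}(X,\Z(d))$ and $\Hmot^{2d-2}(X,\Z(d))$ are uniquely divisible, resp.\ divisible, prime to $\operatorname{char}k$. For $\ell$ prime to $\operatorname{char}k$, the Bockstein triangle $\Z(d)\xrightarrow{\ell}\Z(d)\to\Z/\ell(d)\to$ reduces this to $\Hmot^{2d-1}(X,\Z/\ell(d))=0$ and $\Hmot^{2d-2}(X,\Z/\ell(d))=0$ (the first controls the cokernel of $\ell$ on $\Hmot^{2d-1}$ and, together with the second, its kernel; the second also controls the cokernel of $\ell$ on $\Hmot^{2d-2}$). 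Here is the key collapse: by the norm residue theorem, $\Z/\ell(d)\simeq\tau_{\le d}R\epsilon_\ast\mu_\ell^{\otimes d}$ with $\epsilon$ the change of topology; but on the small Nisnevich site of $X$ the sheaves $R^t\epsilon_\ast\mu_\ell^{\otimes d}=\mathcal H^t_{\et}(\mu_\ell^{\otimes d})$ vanish for $t>d$, since their Bloch--Ogus--Gabber resolution embeds them into $\bigoplus_{x\in X^{(0)}}\Het^t(k(x),\mu_\ell^{\otimes d})$ and $\operatorname{cd}(k(X))=d<t$ over the algebraically closed $k$. Therefore $\Z/\ell(d)|_X\simeq R\epsilon_\ast\mu_\ell^{\otimes d}|_X$, so $\Hmot^{j}(X,\Z/\ell(d))\cong\Het^{j}(X,\mu_\ell^{\otimes d})$ for all $j$, and since $X$ is affine of dimension $d$ over an algebraically closed field, Artin's vanishing theorem gives $\Het^{j}(X,\mu_\ell^{\otimes d})=0$ for $j>d$. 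As $2d-2>d$ for $d\ge 3$ (and a fortiori for $d\ge 4$), both groups vanish; letting $\ell$ range over all integers prime to $\operatorname{char}k$ completes the argument.

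The conceptual heart is this last collapse: Beilinson--Lichtenbaum is in general only a comparison in degrees below the weight, but in top weight over an algebraically closed base the obstruction sheaves vanish on $X$ for cohomological-dimension reasons, so the comparison with étale cohomology holds in all degrees and meets Artin vanishing head-on. I expect the most delicate bookkeeping to be the generalized Bloch formula of the second step — one must verify that the relevant cohomology sheaves of $\Z(d)$ have Rost--Schmid complexes concentrated in low enough degree, and this (rather than anything deeper) is where the constraint $d\ge 4$, as opposed to merely $d\ge 3$, is comfortably met.
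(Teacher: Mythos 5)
The paper contains no proof of this lemma at all: it is imported verbatim from Fasel's cancellation paper, so there is no internal argument to compare against. Your proposal is a correct, essentially self-contained reconstruction along the standard lines: kill the Witt-theoretic part using $\I^{d+1}|_X=0$ (\Cref{prop:Ij-vanishing} fed into \Cref{eqn:M-MW-I-sequence}), which is exactly the reduction the paper itself performs later in \Cref{Tariq-lem1}; identify $\Hnis^{d-1}(X,\KM_d)$ and $\Hnis^{d-2}(X,\KM_d)$ with $\Hmot^{2d-1}(X,\Z(d))$ and $\Hmot^{2d-2}(X,\Z(d))$ via the truncation triangle for $\Z(d)$; and kill the mod-$\ell$ groups by Beilinson--Lichtenbaum, cohomological-dimension vanishing of the obstruction sheaves on the small site of $X$, and Artin vanishing. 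One structural remark: the paper uses this lemma as a black box to \emph{deduce} the vanishing of $\Hnis^{d-1}(X,\KM_d/n)$ and $\Hnis^{d-2}(X,\KM_d/n)$ (\Cref{tariq-lem0,tariq-lem2}), whereas your route effectively establishes those vanishings first (in the guise of $\Hmot^{2d-1}(X,\Z/\ell(d))=\Hmot^{2d-2}(X,\Z/\ell(d))=0$) and reads off divisibility from the Bockstein sequence; this also sidesteps any separate handling of the torsion subsheaf $\KM_d[n]$.

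One intermediate step is overstated, although the instances you actually use are sound. You claim that for $t\le d-1$ the Rost--Schmid complex of $\mathcal H^t(\Z(d))$ on $X$ is concentrated in degrees $\le t-1$, so that $\Hnis^i(X,\mathcal H^t(\Z(d)))=0$ for all $i\ge t$, asserting that only weights $0$ and $1$ occur. For $t=d-2$ this is not justified: the codimension-$(d-2)$ term is $\bigoplus_{x\in X^{(d-2)}}\Hmot^0(\kappa(x),\Z(2))$, and the vanishing of $\Hmot^0(F,\Z(2))$ is the weight-two case of the Beilinson--Soul\'e conjecture, not an elementary fact. Fortunately the hypercohomology spectral sequence only requires $E_2^{s,t}=0$ for $s+t\in\{2d-2,\,2d-1\}$ with $t\le d-1$, hence only for $s\in\{d-1,d\}$; in those Gersten degrees the weights occurring are $1$ and $0$ and the cohomological degrees are $\le 0$, so every relevant term vanishes for elementary reasons ($\Hmot^{\le 0}(F,\Z(1))=0$ and $\Hmot^{<0}(F,\Z)=0$). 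Restated at that level of precision, the step is airtight and the proof goes through. A last cosmetic point: nothing in your argument actually uses $d\ge 4$ rather than $d\ge 3$; that hypothesis is simply inherited from the cited source.
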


\begin{theorem}[{\cite{Srinivas}}]
\label{thm:chow-divisibility}
Let $X$ be a smooth affine domain of dimension $d\ge 3$ over an algebraically closed field. Then $\CH^d(X)$ is divisible and torsion-free.
\end{theorem}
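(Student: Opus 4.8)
The plan is to identify $\CH^d(X)$ with the group $\CH_0(X)$ of zero-cycles on $X$ modulo rational equivalence and to treat divisibility and torsion-freeness separately; in both cases the decisive step is to reduce statements about cycles on $X$ to statements about cycles on smooth subvarieties of dimension $1$ or $2$, where they become accessible through Jacobians and Albanese varieties.

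For divisibility, given a zero-cycle $z$ on $X$ and an integer $n$, I would choose a smooth irreducible affine curve $C \subseteq X$ containing $\operatorname{supp}(z)$; such a curve exists since $X$ is smooth affine of dimension $\ge 2$, e.g. a generic complete intersection curve through the finitely many points of $\operatorname{supp}(z)$ (smooth by a Bertini argument, after re-embedding $X$ by a sufficiently positive linear system if necessary, and irreducible by Bertini irreducibility). Then $z$ defines a class in $\CH_0(C) \cong \operatorname{Pic}(C)$; if $\bar C$ denotes the smooth projective completion and $\emptyset \neq S = \bar C \setminus C$, then $\operatorname{Pic}(C)$ is a quotient of $\operatorname{Pic}^0(\bar C) = J(\bar C)(k)$, the $k$-points of the Jacobian. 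Since $k$ is algebraically closed and multiplication by $n$ on an abelian variety is a surjective isogeny, $J(\bar C)(k)$ is divisible, hence so is $\operatorname{Pic}(C)$; thus $z$ is $n$-divisible in $\CH_0(C)$ and therefore in $\CH_0(X)$.

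Torsion-freeness is the crux, and it is here that $d \ge 3$ is used. Suppose $nz = 0$ in $\CH_0(X)$; this relation is witnessed by finitely many integral curves $C_1, \dots, C_r \subset X$ and rational functions $f_j \in k(C_j)^{\times}$ with $\sum_j \operatorname{div}_{C_j}(f_j) = nz$. After re-embedding $X$ as a closed subvariety of some affine space $\A^N$ with $N$ large, I would choose --- by a Bertini-type argument, using that $Z := \operatorname{supp}(z) \cup \bigcup_j C_j$ has dimension $1$ and $1 + 2 \le d$ --- a generic complete intersection $S = X \cap H_1 \cap \cdots \cap H_{d-2}$ of affine hyperplanes containing $Z$; this is a smooth affine surface (a smooth surface may freely contain the possibly singular curves $C_j$). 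Because $X$ is closed in $\A^N$ and the $H_i$ are affine hyperplanes, $S$ is closed in an affine space, so its boundary $\bar S \setminus S$ is a hyperplane section --- hence an ample divisor --- of the projective surface $\bar S$; passing to a resolution of singularities $\widetilde S \to \bar S$ that is an isomorphism over $S$, I obtain a smooth projective surface with $S = \widetilde S \setminus \widetilde D$, where $\widetilde D$ is the preimage of an ample curve. The torsion relation lives entirely on $S$, so $nz = 0$ already in $\CH_0(S)$, and the localization sequence gives $\CH_0(S) = \coker\bigl(\CH_0(\widetilde D) \to \CH_0(\widetilde S)\bigr)$. Roitman's theorem identifies the torsion of $\CH_0(\widetilde S)$ with the torsion of $\operatorname{Alb}(\widetilde S)(k)$ and shows the Albanese kernel of $\widetilde S$ is torsion-free; and since $\widetilde D$ maps onto an ample divisor, weak Lefschetz makes $\operatorname{Alb}(\widetilde D) \to \operatorname{Alb}(\widetilde S)$ surjective, hence surjective on torsion. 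A diagram chase through the localization sequence then yields that $\CH_0(S)$ is torsion-free, so $z = 0$ in $\CH_0(S)$ and therefore in $\CH_0(X)$.

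The main obstacle is this last step. The delicate geometric points are arranging Bertini so that $S$ is genuinely smooth despite being forced to contain the curves $C_j$ --- hence the re-embedding by a positive enough linear system --- and the final diagram chase, where one must control not only the torsion of $\operatorname{Alb}(\widetilde S)$ but also the saturation of the image of $\CH_0(\widetilde D)$ inside the large but torsion-free Albanese kernel. An alternative route to torsion-freeness is cohomological: $\CH^d(X) = \Hzar^d(X, \KM_d)$, so the $n$-torsion of $\CH^d(X)$ is a quotient of $\Hzar^{d-1}(X, \KM_d/n)$, and one identifies $\KM_d/n$ with $\mathcal H^d_{\et}(\mu_n^{\otimes d})$ away from $\operatorname{char} k$ (the norm residue theorem) and with a logarithmic de Rham--Witt sheaf at the characteristic; Artin vanishing for affine $k$-varieties then kills the prime-to-characteristic part in all dimensions $\ge 2$, while the characteristic-$p$ part is subtler and is precisely where one needs $d \ge 3$ (the analogous statement for smooth affine surfaces in positive characteristic is considerably more delicate). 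Either way, it is the torsion-freeness --- and within it the positive-characteristic phenomena --- that I expect to be the real difficulty, since it has no counterpart in the divisibility argument.
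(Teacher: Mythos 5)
The paper offers no proof of this statement: it is imported wholesale from the literature (Srinivas's work on torsion zero-cycles, together with the classical divisibility argument via Jacobians), so there is nothing internal to compare against except the citation. Your overall strategy is in fact the strategy of the cited source: divisibility by pushing a zero-cycle onto a smooth affine curve and using divisibility of $J(\bar C)(k)$, and torsion-freeness by cutting down to a smooth affine surface, compactifying, and combining Roitman--Milne with weak Lefschetz for the Albanese. The divisibility half is correct as written, and your final diagram chase does close: if $n\tilde z = i_*w$ with $w \in \CH_0(\widetilde D)$, divisibility of $J(\widetilde D)(k)$ plus a degree count lets you replace $w$ by $nw'$, so $\tilde z - i_*w'$ is torsion in $\CH_0(\widetilde S)$, hence by Roitman--Milne lies in the image of $\mathrm{Alb}(\widetilde D)(k)_{\mathrm{tors}}$ and therefore of $\CH_0(\widetilde D)$; the Albanese kernel never actually intervenes.

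The genuine gap is the parenthetical claim that ``a smooth surface may freely contain the possibly singular curves $C_j$.'' It may not: if $S$ is smooth of dimension $2$ at $p$, then $T_pS$ is $2$-dimensional and must contain the Zariski tangent space at $p$ of every curve through $p$ lying on $S$, so no smooth surface contains a curve with a point of embedding dimension $\ge 3$ (e.g.\ the monomial curve $t\mapsto(t^3,t^4,t^5)$ in $\A^3$, or three concurrent coordinate axes). The integral curves $C_j$ furnished by the Gersten/Rost--Schmid presentation of the rational equivalence can a priori be exactly this bad, so the Bertini step fails as stated, and since your argument needs $C_j\subseteq S$ in order for the relation $nz=\sum_j \operatorname{div}_{C_j}(f_j)$ to live on $S$, this is not cosmetic. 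The known repairs are precisely the technical content of the reference: first normalize the data so that the union of the $C_j$ has embedding dimension $\le 2$ everywhere (by moving-lemma-type arguments or generic projection), or cut down one general hypersurface at a time, re-presenting the rational equivalence at each stage. Two further points you gloss over are also where real work sits: after resolving $\bar S$ the boundary is only big and nef rather than an ample hyperplane section, so weak Lefschetz for the Albanese needs a separate justification; and, as you correctly anticipate, the $p$-primary torsion in characteristic $p$ is the substantive case, handled via Milne's extension of Roitman and logarithmic de Rham--Witt cohomology rather than by Artin vanishing.
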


The following exact sequence relates the cohomology of some Grothendieck-Witt sheaves to algebraic cycles:

\begin{proposition}[{\cite[Proposition 4.16]{AF-3fold}}]\label{GW-SES} 
 For $X$ a smooth scheme over $k$ with $2 \in k^{\times}$, there is an exact sequence
  \[
        \Ch^{2}(X) \xrightarrow{\Sq^2_\mathcal{L}} \Ch^3(X) \to \Hnis^3(X, \GW_3^{2}(\mathcal{L})) \to 0.
  \]
\end{proposition}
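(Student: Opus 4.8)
The plan is to reduce the computation of $\Hnis^3(X,\GW_3^2(\mathcal L))$ to the top degree of a Rost--Schmid complex and then compare that complex with the one computing $\Ch^3(X)$. By \Cref{GW-contractions} the iterated contractions are $(\GW_3^2)_{-i}\cong\GW_{3-i}^{2-i}$, and since $\GW_3^2=\GW_{i+1}^{i}$ with $i=2$, \Cref{GW-vanishing} gives $(\GW_3^2)_{-j}=0$ for $j\ge4$. As contraction commutes with twisting by a line bundle, the degree $4$ term of the twisted Rost--Schmid complex, $\bigoplus_{x\in X^{(4)}}(\GW_3^2)_{-4}(\kappa(x),\omega_{x/X}\otimes\mathcal L)$, vanishes (this also reproves $\Hnis^{\ge4}(X,\GW_3^2(\mathcal L))=0$ via \Cref{sheaf-contracts-to-zero}). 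Hence $\Hnis^3(X,\GW_3^2(\mathcal L))$ is the cokernel of the last differential
\[
d_{\GW}\colon\bigoplus_{x\in X^{(2)}}\GW_1^{0}(\kappa(x),\omega_{x/X}\otimes\mathcal L)\longrightarrow\bigoplus_{x\in X^{(3)}}\GW_0^{-1}(\kappa(x),\omega_{x/X}\otimes\mathcal L),
\]
using $(\GW_3^2)_{-2}\cong\GW_1^0$ and $(\GW_3^2)_{-3}\cong\GW_0^{-1}$.

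Next I would identify these heavily contracted Grothendieck--Witt sheaves with mod $2$ Milnor $K$-theory. The crucial point is that $\GW_0^{-1}\cong\KM_0/2=\Z/2$: its further contraction $(\GW_0^{-1})_{-1}\cong\GW_{-1}^{-2}$ is a shifted Balmer--Witt sheaf $\mathbf W^{\ast}$ in a degree where the Witt groups of all fields vanish, so $\GW_0^{-1}$ is supported in cohomological degree $0$ and is identified with $\Z/2$. Because the $\mathbb G_m$-action on $\KM_n/2$ is trivial (\Cref{ex:KM-act}), the twist disappears on this term, so the degree $3$ Rost--Schmid group of $\GW_3^2(\mathcal L)$ is canonically $\bigoplus_{x\in X^{(3)}}\Z/2$, which is precisely the degree $3$ term $\bigoplus_{x\in X^{(3)}}(\KM_3/2)_{-3}(\kappa(x))$ of the complex computing $\Ch^3(X)=\Hnis^3(X,\KM_3/2)$. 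The hyperbolic comparison maps on contracted sheaves, $\KM_{3-i}/2\to\GW_{3-i}^{2-i}$ for $i=2,3$, are compatible with the residue differentials, and on the third contraction the map $\Z/2\to\GW_0^{-1}$ is the above isomorphism. Chasing the resulting commuting square $\psi_{-3}\,d_{\KM}=d_{\GW}\,\psi_{-2}$ shows $\im d_{\KM}\subseteq\im d_{\GW}$, so there is a canonical surjection
\[
\Ch^3(X)=\coker d_{\KM}\twoheadrightarrow\coker d_{\GW}=\Hnis^3(X,\GW_3^2(\mathcal L)),
\]
whose kernel is $\im d_{\GW}/\im d_{\KM}$, the image under $d_{\GW}$ of $\coker\!\big(\psi_{-2}\colon\KM_1/2\to\GW_1^0\big)$ on codimension $2$ points.

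It remains to identify this kernel with the image of $\Sq^2_{\mathcal L}\colon\Ch^2(X)\to\Ch^3(X)$, which is the heart of the argument. I would first pin down the excess $\coker(\psi_{-2})$: the contraction bookkeeping gives $\coker(\psi_{-2})_{-1}=0$ (as $\psi_{-3}$ is an isomorphism), so this cokernel is again supported in cohomological degree $0$ and is a copy of $\Z/2=\KM_0/2$; consequently its codimension $2$ Rost--Schmid group is $\bigoplus_{x\in X^{(2)}}\Z/2=\bigoplus_{x\in X^{(2)}}(\KM_2/2)_{-2}(\kappa(x))$, the top term of the complex computing $\Ch^2(X)$. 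The induced boundary therefore descends to a map $\Ch^2(X)\to\Ch^3(X)$, and the final task is to recognize it as $\Sq^2_{\mathcal L}$.

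The hard part will be this last identification. My approach is to match the boundary with the standard description of the motivic Steenrod square as the composite of the Bockstein-type connecting homomorphisms attached to the fundamental-ideal short exact sequences \Cref{eqn:I-I-M} (which are compatible with the $\mathcal L$-twist by \Cref{rmk:act-compatibility}); concretely, one realizes both the $\GW_1^0$-differential and $\Sq^2$ through the same residue/Bockstein on $\I^{\ast}$-cohomology and checks they agree. By contrast, the first two steps are essentially formal, following from the contraction calculus of \Cref{prop:contractions-1} together with the vanishing of the degree $4$ term of the Rost--Schmid complex.
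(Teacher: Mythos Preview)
The paper does not prove this proposition at all: it is quoted verbatim from \cite[Proposition~4.16]{AF-3fold} and the only surrounding text is a pointer to \cite[Theorem~4.17]{AF-3fold} for the definition of $\Sq^2_{\mathcal L}$. So there is no ``paper's own proof'' to compare against; the result is treated as a black box input to \Cref{prop:h3k3sp-finite-if-ch3-is}.

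That said, your outline follows the same strategy as the original Asok--Fasel argument: reduce to the last two terms of the Rost--Schmid complex via the contraction calculus, identify the degree~$3$ term with $\bigoplus_{x\in X^{(3)}}\Z/2$, and then analyse the excess in degree~$2$. Two points deserve comment. First, your identification $\GW_0^{-1}\cong\Z/2$ is correct but the justification you give is slightly indirect; in practice one reads it off the Bott/Karoubi sequence $\GW_0^2\to\K_0\to\GW_0^3\to W^1=0$ over a field, where the first map has image $2\Z$. Second, and more importantly, the passage you flag as ``the hard part'' really is the entire content of the proposition: you have produced a surjection $\Ch^3(X)\twoheadrightarrow\Hnis^3(X,\GW_3^2(\mathcal L))$ and described its kernel as the image of some differential coming from $\coker(\psi_{-2})$, but you have not actually shown this map \emph{is} $\Sq^2_{\mathcal L}$. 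Your proposed route (matching the boundary with the connecting maps of the $\I^n$-sequences \Cref{eqn:I-I-M}) is the right idea and is essentially what \cite[\S4]{AF-3fold} does, but it requires genuine work with the explicit Gersten/Pardon-type complexes and the twisted Steenrod operations, not just contraction bookkeeping. As written, your proposal establishes exactness at $\Hnis^3(X,\GW_3^2(\mathcal L))$ and surjectivity from $\Ch^3(X)$, but leaves the exactness at $\Ch^3(X)$ (i.e.\ the identification of the kernel with $\operatorname{im}\Sq^2_{\mathcal L}$) as an unproven claim.
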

Here the homomorphism $\Sq^2_{\mathcal{L}}$ is the twisted Steenrod operation; we refer the reader to \cite[Theorem 4.17]{AF-3fold} for more details.

\section{The Moore--Postnikov tower for a Chern class map}\label{sec:Moore-Post-Chern}

Let $X$ be a smooth affine fourfold over an algebraically closed field $k$ of characteristic not equal to $2$ or $3$. We analyze the set of isomorphism classes of algebraic vector bundles of rank $2$ over $X$ with fixed Chern data by use of an appropriate Moore--Postnikov tower. The first consequence of the this will be a proof of \Cref{theorem-1}. 

Following \cite[2.1.2]{AFH-alg19}, for each $i$ we consider the first non-trivial Postnikov section in the Postnikov tower of $K(\Z(i),2i)$ (for a brief discussion of Postnikov towers in $\A^1$-homotopy theory, see  \Cref{rmk:postnikov} above): this is a morphism $K(\Z(i),2i)\to K(\KM_i,i)$. Precomposing this map with the universal $i$-th Chern class morphism $\BGL_i \to K(\Z(i),2i)$, we obtain maps \[c_i\colon \BGL_i \to K(\KM_i,i)\] representing the Chow-valued $i$-th Chern class in the unstable motivic homotopy category $\mathcal H(k)$. Consider the product of the first two Chern classes
\begin{equation}\label[diagram]{eq:c1c2}
    (c_{1},c_{2})\: \BGL_2 \to K(\KM_1,1) \times K(\KM_2,2).
\end{equation}
We study the Moore--Postnikov tower of \Cref{eq:c1c2}, referring the reader to \Cref{subsec:def-twists} for a brief overview of motivic Moore--Postnikov theory and to \cite[\S 6.1]{AF-splitting} for details. Since $\piA{1}{\BGL_2}\cong \mathbb G_m$, this is a tower of principal fibrations over twisted Eilenberg--MacLane spaces of the form $K^{\mathbb G_m}(\piA{i+1}{\mathcal F},i+2)$, where $\mathcal F$ is the $\A^1$-homotopy fiber of $(c_{1},c_{2})$. By \cite[Proposition 2.2.1]{AFH-alg19}, we obtain a tower of twisted principal fibrations:
\begin{equation}\label[diagram]{diag:post_GL} \begin{tikzcd}[column sep=small]
  & &  \BGL_2^{(4)} \ar[d]\\
 && \BGL_2^{(3)}\ar[d] \ar[r,"k_4"]& K^{\mathbb G_m}(\piA{4}{\BGL_2},5) \\
 && \BGL_2^{(2)}\ar[d] \ar[r,"k_3"]& K^{\mathbb G_m}(\piA{3}{\BGL_2},4) \\
   \BGL_2 \ar[rr]\ar[urr]\ar[uurr] && K(\KM_1,1) \times K(\KM_2,2) \ar[r,"k_2"]& K^{\mathbb G_m}(\I^3,3).\\
\end{tikzcd} \end{equation}
In the above, $\I^3$ is the third power of the fundamental ideal sheaf, fitting into a short exact sequence of sheaves $0\to \I^3 \to \KMW_2 \to \KM_2\to 0$, as noted in \Cref{eqn:M-MW-I-sequence}. For $X$ a smooth affine variety of Nisnevich cohomological dimension at most $4$,
$$[X,\BGL_2] \cong [X,\BGL_2^{(4)}].$$
Thus \Cref{diag:post_GL} shows that we may build a lift of a morphism $X \to K(\KM_1,1) \times K(\KM_2,2)$ to a morphism $X \to \BGL_2$ in three stages. The obstructions and choices of lifts at each stage in the tower are controlled by Nisnevich cohomology with twisted coefficient sheaves $\mathbf{I}^3$ or $\piA{i}{\BGL_2}$ for $i=3$ or $i=4$, as appropriate. By working with these twisted cohomology groups we will obtain the following:
\begin{enumerate}
\item[(Stage 1)] We study lifts of a given morphism $X\to K(\KM_1,1)\times K(\KM_2,2)$ to $\BGL_2^{(2)}$. We show that lifts up to homotopy are a torsor for a quotient of $\Hmot^5(X,\Z/2(3))$. We obtain conditions under which the set of lifts is finite (resp. a singleton) in \Cref{sec:stage1}.
\item[(Stage 2)] We study the set of lifts of a given morphism $X\to BGL_2^{(2)}$ to $\BGL_2^{(3)}$.  Lifts up to homotopy are a torsor for a group $\Hnis^3(X, \piA{3}{\BGL_2}(\mathcal{L}))$, where $\mathcal L$ is a line bundle on $X$. We show that $ \Hnis^3(X, \piA{3}{\BGL_2}(\mathcal{L}))$ is a quotient of $\Ch^3(X)$ in \Cref{sec:stage2}.
\item[(Stage 3)] We study the set of lifts of a given morphism $X\to  BGL_2^{(3)}$ to $ \BGL_2^{(4)}$. Lifts up to homotopy are a torsor for a quotient of $\Hnis^4(X, \piA{4}{\BGL_2}(\mathcal{L}))$, where $\mathcal L$ is a line bundle on $X$. We prove that $\Hnis^4(X, \piA{4}{\BGL_2}(\mathcal{L}))$ is zero in \Cref{sec:stage3}.
\end{enumerate}
Assuming the stage-by-stage analysis outlined above, we can deduce our first theorem:
\begin{proof}[Proof of \Cref{theorem-1}] If $\Hmot^5(X,\Z/2(3))$ and $\Ch^3(X)$ are trivial (resp. finite), then any given pair in $(a,b)\in \CH^1(X) \times \CH^2(X)$ represent the first and second Chern classes of at most one (resp. at most finitely many) rank $2$ algebraic vector bundles on $X$, since trivial (resp. finite) groups bound the choices of lift at each stage in the Moore--Postnikov tower that parameterizes lifts of $(a,b)\: X \to K(\KM_1,1) \times K(\KM_2,2)$ to a map $X \to \BGL_2^{(4)}$ representing a rank $2$ vector bundle on $X$.
\end{proof}

\subsection{The first Moore--Postnikov stage}\label{sec:stage1}
We now analyze finiteness of lifts
 \begin{equation}\label[diagram]{diag:lift2}\begin{tikzcd}& \BGL_2^{(2)}\ar[d]\\
X \ar[ur,dashed]\ar[r,"{(c_{1}, c_{2})}" below] & K(\KM_1,1) \times K(\KM_2,2)
\end{tikzcd} \end{equation} up to homotopy. Appealing to \Cref{diag:post_GL} and \Cref{subsec:def-twists}, it is sufficient to study finiteness of $\Hnis^2(X,\I^3(\mathcal L))$. If the latter group is finite (resp. trivial), then there are only finitely many lifts (resp. a unique lift). Our main result in this section is the following:
\begin{proposition}\label{lem:stage1} 
Let $X$ be a smooth affine fourfold over an algebraically closed field $k$ with $2$ invertible in $k$. Let $\mathcal L$ be a line bundle on $X$.
\begin{enumerate}
\item[]
\begin{enumerate}
    \item\label[empty]{item:H2KM3} $\Hnis^2(X,\KM_3/2)\cong \Hnis^2(X,\I^3(\mathcal L))$.
    \item\label[empty]{item:mot53} $\Hmot^{5}(X,\Z/2(3)) \cong \Hnis^2(X,\I^3(\mathcal L))$. 
    \item\label[empty]{item:unr} There is an epimorphism $\Hnr^4(X,\mu_2^{\otimes 4}) \to \Hnis^2(X,\I^3(\mathcal L))$ with finite kernel.
\end{enumerate}
\end{enumerate}
\end{proposition}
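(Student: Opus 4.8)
The strategy is to route all three statements through the single group $\Hnis^2(X,\KM_3/2)$, computed by means of coniveau (Bloch--Ogus) spectral sequences together with the norm residue isomorphism and Beilinson--Lichtenbaum. For \Cref{item:H2KM3}, twist the weight-$3$ instance of \Cref{eqn:I-I-M} by $\mathcal L$ to obtain
\[0\longrightarrow \I^4(\mathcal L)\longrightarrow \I^3(\mathcal L)\longrightarrow \KM_3/2\longrightarrow 0,\]
which is legitimate since the twist is compatible with the $\mathbb G_m$-actions (\Cref{rmk:act-compatibility}) and $\KM_3/2(\mathcal L)=\KM_3/2$, the action there being trivial (\Cref{ex:KM-act}). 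The long exact sequence then gives the desired isomorphism once $\Hnis^2(X,\I^4(\mathcal L))=\Hnis^3(X,\I^4(\mathcal L))=0$; the degree-$3$ vanishing is the assertion $\Hnis^{d-1}(X,\I^j(\mathcal L))=0$ for $j\ge d$ from \Cref{prop:Ij-vanishing}, with $j=4=\dim X$. For degree $2$, \Cref{prop:Ij-vanishing} also gives $\I^5(\mathcal L)|_{X_{\Nis}}=0$, so the weight-$4$ instance of \Cref{eqn:I-I-M} identifies $\Hnis^2(X,\I^4(\mathcal L))$ with $\Hnis^2(X,\KM_4/2)$; by the norm residue isomorphism $\KM_4/2\cong\mathcal H^4_{\et}(\mu_2^{\otimes4})$, so this is the entry $E_2^{2,4}$ of the Bloch--Ogus spectral sequence $E_2^{p,q}=\mathcal H^p_{\Zar}(X,\mathcal H^q_{\et}(\mu_2^{\otimes4}))\Rightarrow\Het^{p+q}(X,\mu_2^{\otimes4})$. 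That entry vanishes: it supports no nonzero differentials (in this spectral sequence $E_2^{p,q}=0$ unless $0\le p\le q\le\dim X=4$, since the Gersten complex of $\mathcal H^q_{\et}$ has length $q$), and it is a subquotient of $\Het^6(X,\mu_2^{\otimes4})$, which is zero by Artin's bound $\operatorname{cd}_2(X)\le\dim X=4$ for affine varieties over an algebraically closed field.

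For \Cref{item:mot53}, I would apply Beilinson--Lichtenbaum to write $\Z/2(3)\simeq\tau_{\le 3}R\epsilon_*\mu_2^{\otimes3}$ on the small Nisnevich site; its cohomology sheaves are $\mathcal H^q_{\et}(\mu_2^{\otimes3})$ for $q\le 3$ and zero above, the top one being $\KM_3/2$ by Nesterenko--Suslin--Totaro. In the hypercohomology spectral sequence $E_2^{p,q}=\Hnis^p(X,\mathcal H^q(\Z/2(3)))\Rightarrow\Hmot^{p+q}(X,\Z/2(3))$ the nonzero entries again lie in $0\le p\le q\le 3$, so the unique one with $p+q=5$ is $E_2^{2,3}=\Hnis^2(X,\KM_3/2)$, and it supports no differentials for the same numerical reasons. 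Hence $\Hmot^5(X,\Z/2(3))\cong\Hnis^2(X,\KM_3/2)$, which is $\Hnis^2(X,\I^3(\mathcal L))$ by \Cref{item:H2KM3}.

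For \Cref{item:unr}, work once more in the Bloch--Ogus spectral sequence above, in which $\Hnr^4(X,\mu_2^{\otimes4})=\mathcal H^0_{\Zar}(X,\mathcal H^4_{\et}(\mu_2^{\otimes4}))$ is precisely the entry $E_2^{0,4}$. I propose to take the asserted map to be the differential
\[d_2\colon \Hnr^4(X,\mu_2^{\otimes4})=E_2^{0,4}\longrightarrow E_2^{2,3}\cong\Hnis^2(X,\KM_3/2)\cong\Hnis^2(X,\I^3(\mathcal L)),\]
the identifications on the right coming from norm residue and \Cref{item:H2KM3}. Since $E_2^{2,3}$ admits no outgoing and no higher incoming differentials, $E_\infty^{2,3}=\coker(d_2)$; but $E_\infty^{2,3}$ is a subquotient of $\Het^5(X,\mu_2^{\otimes4})=0$, so $d_2$ is surjective. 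Its kernel is $E_3^{0,4}=E_\infty^{0,4}$, a subquotient of $\Het^4(X,\mu_2^{\otimes4})$, and this group is finite by the finiteness theorem for étale cohomology of finite-type schemes over a separably closed field with coefficients of order prime to the characteristic.

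The deep ingredients (norm residue, Beilinson--Lichtenbaum, Artin vanishing, finiteness of $\Het^\ast(X,\Z/2)$) enter only as black boxes; the real work is bookkeeping in two spectral sequences. The one place where $\dim X=4$ genuinely bites is in ensuring that the ``Gersten length $=q$'' bound together with $\operatorname{cd}_2(X)\le 4$ leaves $E_2^{2,3}$ (resp.\ $E_2^{2,4}$) as the only surviving entry in total degree $5$ (resp.\ $6$); I expect that verifying this non-interference of differentials --- and recognizing in \Cref{item:unr} that the desired epimorphism is literally a $d_2$ --- is the only step needing real care.
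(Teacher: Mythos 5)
Your proposal is correct and follows essentially the same architecture as the paper's proof: everything is funnelled through $\Hnis^2(X,\KM_3/2)$, with \Cref{item:H2KM3} coming from the twisted sequence $0\to\I^4(\mathcal L)\to\I^3(\mathcal L)\to\KM_3/2\to 0$ plus vanishing of the $\I^4(\mathcal L)$-terms, and \Cref{item:unr} coming from the $d_2\colon E_2^{0,4}\to E_2^{2,3}$ differential in the Bloch--Ogus spectral sequence, exactly as in \Cref{lem:fourfold-compare-terms}. The only genuine divergences are local and harmless: for the degree-$3$ vanishing in \Cref{item:H2KM3} you invoke \Cref{prop:Ij-vanishing} directly for $\Hnis^3(X,\I^4(\mathcal L))$, whereas the paper routes both degrees through $\I^4\cong\KM_4/2$ and the Bloch--Ogus vanishing of $E_2^{2,4}$ and $E_2^{3,4}$; and for \Cref{item:mot53} you run the Beilinson--Lichtenbaum hypercohomology spectral sequence by hand, whereas the paper cites Totaro's exact sequence, which packages the same computation --- your version is slightly more self-contained but relies on identical inputs.
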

\begin{remark}\label{rmk:finiteness-stage1} The group $\Hnr^4(X,\mu_2^{\otimes 4})$ is the fourth unramified cohomology of $X$ with coefficients in $\mu_2^{\otimes 4}$ and will be defined below in Definition \ref{def:unramified-cohomology}. The previous proposition implies that if any one of the groups $\Hnis^2(X,\KM_3/2),$ $\Hmot^{5}(X,\Z/2(3))$, $\Hnr^4(X,\mu_2^{\otimes4})$, or $\Hnis^2(X,\I^3(\mathcal L))$ is finite, then all the others are also finite. 
\end{remark} 
We first make some reductions.

\begin{lemma}\label{lem:01}  Let $X$ be a smooth affine fourfold over an algebraically closed field $k$ with $2$ invertible in $k$. Then $\Hnis^2(X,\KM_3/2) \cong \Hmot^{5}(X,\Z/2(3))$. 
\end{lemma}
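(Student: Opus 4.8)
The plan is to pass from $\Hmot^5(X,\Z/2(3))$ to the far more computable group $\Hnis^2(X,\KM_3/2)$ by resolving the motivic complex $\Z/2(3)$ by its Nisnevich cohomology sheaves and carefully tracking degrees. First I would recall that $\Z/2(3)$, viewed as a complex of Nisnevich sheaves on $\Sm_k$, is concentrated in cohomological degrees $\le 3$, and that by the Beilinson--Lichtenbaum property --- a consequence of the norm residue theorem, needed here only in weights $\le 3$ and hence available from Merkurjev--Suslin and Rost --- its cohomology sheaves vanish in degrees $>3$ and satisfy $\mathcal H^q(\Z/2(3)) \cong R^q\epsilon_*\mu_2^{\otimes 3}$ for $0 \le q \le 3$, where $\epsilon$ denotes the change-of-topology map from the \'etale to the Nisnevich site. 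Since $k$ is algebraically closed, the Galois action on $\mu_2$ is trivial, so $\mu_2^{\otimes 3} \cong \mu_2^{\otimes q}$ as \'etale sheaves; the sheafified norm residue isomorphism in weight $q$ then gives $\mathcal H^q(\Z/2(3)) \cong R^q\epsilon_*\mu_2^{\otimes q} \cong \KM_q/2$ for each $0 \le q \le 3$ (with $\KM_0/2 = \Z/2$). In particular the top cohomology sheaf of $\Z/2(3)$ is $\KM_3/2$.

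Next I would record the vanishing statement that drives the argument. Since contraction is exact, applying $(-)_{-1}$ to the right-exact sequence $\KM_q \xrightarrow{2} \KM_q \to \KM_q/2 \to 0$ and using $(\KM_i)_{-1} \cong \KM_{i-1}$ for $i \ge 1$ together with $(\KM_0)_{-1} = 0$ (from \Cref{prop:contractions-1}\Cref{KM-contractions}) yields $(\KM_q/2)_{-1} \cong \KM_{q-1}/2$ for $q \ge 1$; iterating, $(\KM_q/2)_{-(q+1)} = 0$. By \Cref{prop:gersten-complex-properties}\Cref{sheaf-contracts-to-zero} this forces $\Hnis^p(X,\KM_q/2) = 0$ for all $p \ge q+1$.

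I would then assemble these two facts using the truncation triangle $\tau_{\le 2}\Z/2(3) \to \Z/2(3) \to \KM_3/2[-3] \to \tau_{\le 2}\Z/2(3)[1]$, whose second arrow is the projection of $\Z/2(3)$ onto its top cohomology sheaf. In the hypercohomology spectral sequence $E_2^{p,q} = \Hnis^p(X,\mathcal H^q(\tau_{\le 2}\Z/2(3))) \Rightarrow \mathbb H^{p+q}(X,\tau_{\le 2}\Z/2(3))$ one has $\mathcal H^q(\tau_{\le 2}\Z/2(3)) \cong \KM_q/2$ for $q \le 2$ and $0$ otherwise, so by the previous paragraph $E_2^{p,q} = 0$ unless $0 \le p \le q \le 2$; in particular every term with $p+q \ge 5$ vanishes, so $\mathbb H^m(X,\tau_{\le 2}\Z/2(3)) = 0$ for all $m \ge 5$. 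Feeding this into the long exact hypercohomology sequence of the triangle around degree $5$,
\[ 0 = \mathbb H^5(X,\tau_{\le 2}\Z/2(3)) \to \Hmot^5(X,\Z/2(3)) \to \Hnis^2(X,\KM_3/2) \to \mathbb H^6(X,\tau_{\le 2}\Z/2(3)) = 0, \]
produces the desired isomorphism $\Hmot^5(X,\Z/2(3)) \cong \Hnis^2(X,\KM_3/2)$.

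The only genuinely non-formal ingredient is the identification of the cohomology sheaves of $\Z/2(3)$ in the first step: one must invoke the norm residue theorem correctly and, crucially, use the hypothesis $k = \bar k$ to strip the Tate twist off $\mu_2^{\otimes 3}$, so that each $\mathcal H^q(\Z/2(3))$ genuinely equals $\KM_q/2$ rather than a twisted \'etale cohomology sheaf. After that the argument is pure degree bookkeeping, powered by the contraction estimate $(\KM_q/2)_{-(q+1)}=0$; the point to watch is merely that $\tau_{\le 2}\Z/2(3)$ has no hypercohomology in degrees $5$ or $6$, which is exactly what $\Hnis^p(X,\KM_q/2)=0$ for $p \ge q+1$ delivers.
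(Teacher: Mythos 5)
Your argument is correct and is in substance the same as the paper's: the paper invokes Totaro's exact sequence $\Hmot^{i+j}(X,\Z/2(j-1)) \to \Hmot^{i+j}(X,\Z/2(j)) \to \Hzar^i(X,\mathcal H_{\et}^j(\Z/2)) \to \Hmot^{i+j+1}(X,\Z/2(j-1))$ with $(i,j)=(2,3)$, which (via Beilinson--Lichtenbaum) is precisely the long exact sequence of your truncation triangle, together with the vanishing $\Hmot^{n}(X,\Z/2(2))=0$ for $n=5,6$, which is exactly what your hypercohomology spectral sequence plus the bound $\Hnis^p(X,\KM_q/2)=0$ for $p>q$ deliver. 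The only difference is that you re-derive these two cited inputs (Totaro's sequence and the vanishing range for weight-$2$ motivic cohomology) from scratch rather than quoting them; the underlying mathematics is identical.
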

\begin{proof} By \cite[Theorem 1.3]{totaro03}, we have an exact sequence
\begin{equation}\label[sequence]{seq:tot} H_{\mot}^{i+j}(X,\Z/2(j-1)) \to H_{\mot}^{i+j}(X,\Z/2(j)) \to H_{\Zar}^i(X,\mathcal H_{\et}^j(\mathbb Z/2))  \to  H_{\mot}^{i+j+1}(X,\Z/2(j-1)).\end{equation}
We note a few facts:
\begin{itemize}
\item  By the Milnor conjecture \cite{Voevodsky-mod2}, $\mathcal H_{\et}^j(\Z/2) \simeq \KM_j/2$. 
\item By \cite[Theorem 19.3]{MVW06}, $\Hmot^{n}(X,\Z/2(m))=0$ whenever $n>2m$.
\item By \cite[Theorem 13.10]{MVW06},  $\Hnis^2(X,\KM_3/2)\cong \Hzar^2(X,\KM_3/2)$.
\end{itemize}
Using these facts and taking $i=2,j=3$ in \Cref{seq:tot} yields an exact sequence:
\[0 \to \Hmot^{5}(X,\Z/2(3)) \to \Hnis^2(X,\KM_3/2)  \to  0\]
which completes the proof.
\end{proof}

By \Cref{lem:01}, to prove \Cref{lem:stage1}\Cref{item:H2KM3}, \Cref{item:mot53} it suffices to prove only \Cref{item:H2KM3}. We reduce \Cref{lem:stage1}\Cref{item:H2KM3} as follows.

\begin{lemma}\label{prop:reduce-KMI} Let $X$ be a smooth affine fourfold over an algebraically closed field $k$. Then, if $\Hnis^2(X,\KM_4/2)=\Hnis^3(X,\KM_4/2)=0$, one has $\Hnis^2(X,\KM_3/2)\cong \Hnis^2(X,\I^3(\mathcal L))$.
\end{lemma}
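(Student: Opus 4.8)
The plan is to play the two canonical short exact sequences \Cref{eqn:I-I-M} for $n=3$ and $n=4$ against the vanishing results for powers of the fundamental ideal sheaf recorded in \Cref{prop:Ij-vanishing}. Since $\KM_n/2$ carries the trivial $\mathbb G_m$-action (\Cref{ex:KM-act}), twisting \Cref{eqn:I-I-M} by the $\mathbb G_m$-torsor associated to $\mathcal L$ and invoking \Cref{rmk:act-compatibility} produces, for each $n$, a short exact sequence of Nisnevich sheaves on the small site of $X$
\[0 \to \I^{n+1}(\mathcal L) \to \I^n(\mathcal L) \to \KM_n/2 \to 0,\]
and hence an associated long exact sequence on Nisnevich cohomology. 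Note that the $\KM_n/2$ term is genuinely untwisted here, which is what makes a comparison with the statement possible.

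First I would take $n = 3$. The resulting long exact sequence contains the segment
\[\Hnis^2(X,\I^4(\mathcal L)) \to \Hnis^2(X,\I^3(\mathcal L)) \to \Hnis^2(X,\KM_3/2) \to \Hnis^3(X,\I^4(\mathcal L)),\]
so the claimed isomorphism $\Hnis^2(X,\I^3(\mathcal L)) \cong \Hnis^2(X,\KM_3/2)$ will follow as soon as I show that both $\Hnis^2(X,\I^4(\mathcal L))$ and $\Hnis^3(X,\I^4(\mathcal L))$ vanish. The group $\Hnis^3(X,\I^4(\mathcal L))$ vanishes immediately by the second part of \Cref{prop:Ij-vanishing} (with $d = 4$ and $j = 4 = d$); alternatively, one can feed the hypothesis $\Hnis^3(X,\KM_4/2) = 0$ into the $n=4$ sequence discussed next, using that $\I^5(\mathcal L)$ has vanishing cohomology.

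For $\Hnis^2(X,\I^4(\mathcal L))$ I would use the $n = 4$ instance of the sequence above, $0 \to \I^5(\mathcal L) \to \I^4(\mathcal L) \to \KM_4/2 \to 0$. Since $5 = d+1$, the first part of \Cref{prop:Ij-vanishing} says that $\I^5(\mathcal L)$ restricts to the zero sheaf on the small Nisnevich site of $X$, so all of its Nisnevich cohomology vanishes. The long exact sequence then embeds $\Hnis^2(X,\I^4(\mathcal L))$ into $\Hnis^2(X,\KM_4/2)$, which is zero by hypothesis, giving $\Hnis^2(X,\I^4(\mathcal L)) = 0$ and completing the argument.

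There is no genuine obstacle once the sequences are set up — the proof is a short diagram chase. The only points that require care are bookkeeping ones: verifying that the $\mathbb G_m$-twist of \Cref{eqn:I-I-M} remains short exact and leaves the $\KM_n/2$ term untwisted (both covered by \Cref{ex:KM-act} and \Cref{rmk:act-compatibility}), and correctly matching the index thresholds $j \ge d$ and $j \ge d+1$ in \Cref{prop:Ij-vanishing} against the fact that $d = 4$, so that precisely the two hypotheses on $\KM_4/2$ are what is needed to kill $\Hnis^2$ and $\Hnis^3$ of $\I^4(\mathcal L)$.
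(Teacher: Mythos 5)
Your proposal is correct and follows essentially the same route as the paper: both use the twisted sequences \Cref{eqn:I-I-M} for $n=3$ and $n=4$ together with \Cref{prop:Ij-vanishing} to kill $\Hnis^2$ and $\Hnis^3$ of $\I^4(\mathcal L)$ (the paper phrases the $n=4$ step as the sheaf identification $\I^4(\mathcal L)\cong\KM_4/2$ on the small site, which is equivalent to your long-exact-sequence argument since $\I^5(\mathcal L)|_X=0$). Your observation that $\Hnis^3(X,\I^4(\mathcal L))=0$ also follows directly from the second part of \Cref{prop:Ij-vanishing} is a harmless shortcut that the paper does not use but which is consistent with its hypotheses.
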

\begin{proof} The sheaf $\I^5|_{X}(\mathcal L)$ is zero by \Cref{prop:Ij-vanishing} so, by \Cref{eqn:I-I-M}, we obtain an isomorphism of sheaves on the small Nisnevich site of $X$
\[ \I^4 \cong \I^4/\I^5\cong \KM_4/2.\] This sequence is compatible with twists. Now consider the short exact sequence of sheaves $
 0\to   \I^4|_{X}(\mathcal L) \to \I^3|_{X}(\mathcal L) \to \KM_3/2|_X\to 0.$
The associated long exact sequence on cohomology and the fact that $\KM_4/2(\mathcal L) \simeq \KM_4/2$ imply the result.
\end{proof}
So, to prove  \Cref{lem:stage1} \Cref{item:H2KM3} it suffices to argue that $\Hnis^2(X,\KM_4/2)$ and $\Hnis^3(X,\KM_4/2)$ are trivial. This follows from a spectral sequence argument that we set up in \Cref{subsec:bloch-ogus}. We prove in \Cref{subsec:c-proof} that $\Hnis^2(X,\KM_3/2)$ is finite if and only if $H_\nr^4(X,\mu_2^{\otimes4})$ is finite.

\subsubsection{Bloch--Ogus sheaves and vanishing of $\Hnis^2(X,\KM_4/2)$}\label{subsec:bloch-ogus}
Given a suitable cohomology theory on a variety, the \textit{coniveau} filtration, dating back to work of Grothendieck, filters cohomology classes by the codimension of their support. This filtration gives rise to the \textit{Bloch--Ogus spectral sequence} (see \cite[\S 1]{colliotetc}). 
The case of interest to us computes \'etale cohomology, and can be written as
\begin{equation}\label[sseq]{eqn:coniveau-SS-E1}
\begin{aligned}
    E_1^{p,q} = \bigoplus_{x\in X^{(p)}}  \Het^{q-p}(k(x), \mu_2^{\otimes j - p}) \Rightarrow \Het^{p+q}(X, \mu_2^{\otimes j}),
\end{aligned}
\end{equation}
where $X^{(p)}$ denotes codimension $p$ points of $X$.\footnote{%
Historically, the Bloch--Ogus spectral sequence is set-up in the Zariski site. In our situation, we can use Nisnevich cohomology instead (see \cite[7.5.3]{colliotetc} for a precise statement).
}
By \cite[Corollary 6.1]{Bloch-Ogus}, the cohomology $E_2$-page of \Cref{eqn:coniveau-SS-E1} takes the form
\begin{equation}\label[sseq]{E2-page}
    \Hnis^p(X,\sHet^q(j)) \implies \Het^{p+q}(X,\mu_2^{\otimes j}),
\end{equation}
where we define $\sHet^q(j)$ to be the Nisnevich sheafification of the presheaf
\[
    U \mapsto \Het^q(U, \mu_2^{\otimes j}).
\]
As a consequence of the Milnor conjectures, certain Bloch--Ogus sheaves can be identified with Milnor $K$-theory modulo $2$.
\begin{theorem}[\cite{Voevodsky-mod2}]\label{thm:milnor-conj-nr} 
For $i,j \geq 0$, let $\mathcal{H}^i_{\et}(j)$ denote the Nisnevich sheafification of the presheaf 
$U \mapsto \Het^i(U,\Z/2(j))$ on $\Sm_k$. Over a base field $k$, there is an isomorphism $\KM_i/2 \cong \mathcal{H}^i_{\et}(i)$ of strictly $\A^1$-invariant Nisnevich sheaves on $\Sm_k$.
\end{theorem}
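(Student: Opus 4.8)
The plan is to produce a natural morphism of Nisnevich sheaves $\KM_i/2 \to \mathcal{H}^i_{\et}(i)$ and verify that it is an isomorphism, with all of the genuine difficulty deferred to Voevodsky's resolution of the Milnor conjecture. First I would recall the norm residue (Galois) symbol: for a field $F$ in which $2$ is invertible, sending a symbol $\{a_1,\dots,a_i\}$ to the cup product $(a_1)\cup\cdots\cup(a_i)$ of the Kummer classes $(a_j)\in\Het^1(F,\mu_2)\cong F^{\times}/(F^{\times})^2$ defines a homomorphism $h^F_i\colon K^M_i(F)/2 \to \Het^i(F,\mu_2^{\otimes i})$ which is natural in $F$. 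Since $k$ is perfect and the étale motivic complex $\mathbb Z/2(i)$ is étale-locally the sheaf $\mu_2^{\otimes i}$ in degree $0$, the presheaf $U\mapsto\Het^i(U,\mathbb Z/2(i))$ agrees with $U\mapsto\Het^i(U,\mu_2^{\otimes i})$, so $\mathcal{H}^i_{\et}(i)$ is indeed the Nisnevich sheafification of the latter.

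The second step, passing from fields to sheaves, is formal. The sheaf $\KM_i/2$ is the zeroth cohomology sheaf of its Rost--Schmid/Gersten complex (see \Cref{subsec:rost-schmidt}), whose first terms are $\bigoplus_{x\in U^{(0)}}K^M_i(k(x))/2 \to \bigoplus_{x\in U^{(1)}}K^M_{i-1}(k(x))/2 \to \cdots$, and by the Bloch--Ogus theorem \cite{Bloch-Ogus} the sheaf $\mathcal{H}^i_{\et}(i)$ is the zeroth cohomology sheaf of the analogous Gersten complex for étale cohomology, with first terms $\bigoplus_{x\in U^{(0)}}\Het^i(k(x),\mu_2^{\otimes i}) \to \bigoplus_{x\in U^{(1)}}\Het^{i-1}(k(x),\mu_2^{\otimes i-1}) \to \cdots$. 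The one input needed here is that the symbol map commutes with the residue maps of a discrete valuation --- a classical computation, going back to Milnor and Bass--Tate --- so that the maps $h^{k(x)}_{\ast}$ assemble into a morphism of Gersten complexes, hence into a morphism of sheaves $\KM_i/2 \to \mathcal{H}^i_{\et}(i)$. Both sheaves are moreover strictly $\A^1$-invariant (Morel for Milnor K-theory; for the étale side because $\mu_2$-cohomology is $\A^1$-invariant and the Bloch--Ogus resolution exhibits $\mathcal{H}^i_{\et}(i)$ as such a sheaf), so the morphism automatically lives in the abelian category of strictly $\A^1$-invariant sheaves.

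It then remains to check that the morphism is an isomorphism, and here one invokes the Milnor conjecture at the prime $2$ in weight $i$ --- Voevodsky's theorem \cite{Voevodsky-mod2} --- to the effect that $h^F_i$ is an isomorphism for every field $F$ of characteristic different from $2$. Applying this to each residue field $k(x)$ of each smooth $k$-scheme makes every term of the morphism of Gersten complexes an isomorphism, and therefore the induced map on zeroth cohomology sheaves, namely $\KM_i/2 \to \mathcal{H}^i_{\et}(i)$, is an isomorphism.

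The main obstacle is, unsurprisingly, concentrated entirely in this last step: the statement that the norm residue symbol is an isomorphism over every field \emph{is} the Milnor conjecture, whose proof --- norm varieties, Rost motives, and the computation of the motivic Steenrod algebra --- lies far beyond what one would reconstruct here and is simply cited. Everything else (defining the symbol, checking compatibility with residues, and descending from fields to sheaves via Gersten resolutions) is routine once that theorem is granted. One could equivalently route the conclusion through the Beilinson--Lichtenbaum property at $2$, also due to Voevodsky and collaborators, which identifies $\mathcal{H}^i_{\et}(i)$ with the top motivic cohomology sheaf of $\mathbb Z/2(i)$ --- and that sheaf is $\KM_i/2$ by the Nesterenko--Suslin--Totaro computation --- but this merely reorganizes the same essential input.
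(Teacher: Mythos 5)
The paper does not prove this statement at all: it is imported wholesale as a citation to Voevodsky, so there is no internal argument to compare yours against. Your reconstruction is correct and is the standard way to promote the field-level Milnor conjecture to a statement about Nisnevich sheaves: define the norm residue symbol on fields, check compatibility with residue/tame symbols so that the field-level maps assemble into a morphism of Gersten complexes (Rost--Schmid on the Milnor side, Bloch--Ogus on the \'etale side), and then conclude termwise from Voevodsky's theorem that the induced map of degree-zero cohomology sheaves is an isomorphism. You also correctly isolate where all the depth lives, and the alternative routing through Beilinson--Lichtenbaum that you mention is the one the paper implicitly leans on elsewhere (e.g.\ in the proof of \Cref{lem:01} via Totaro's exact sequence), so both packagings are consistent with how the result is actually used.

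Two small points worth flagging. First, the statement as printed says ``over a base field $k$'' with no restriction, but your argument --- and the identification of the \'etale sheafification of $\Z/2(i)$ with $\mu_2^{\otimes i}$ --- genuinely requires $2$ invertible in $k$; in characteristic $2$ the correct analogue is Bloch--Kato--Gabber with logarithmic de Rham--Witt coefficients, not $\mu_2$-cohomology. This is a defect of the statement rather than of your proof, and it is harmless here because the paper's running conventions exclude characteristic $2$. Second, to assert that both sheaves are computed as the kernel of the first Gersten differential you need the Gersten complexes to be resolutions on local rings of smooth schemes; this is exactly what Bloch--Ogus (for the \'etale side) and Morel's Rost--Schmid machinery (for $\KM_i/2$, which is strictly $\A^1$-invariant) provide, so the step is justified, but it deserves the explicit citation you gesture at rather than being called purely formal.
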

In our setting, each term on the $E_\infty$-page of the Bloch--Ogus spectral sequence is finite: they are subquotients of \'etale cohomology, which is finite \cite[Theorem 19.1]{Milne}. We leverage this to prove finiteness or vanishing of certain terms on the $E_2$-page.
\begin{lemma} Let $X$ be a smooth affine fourfold over an algebraically closed field $k$. Then $\Hnis^2(X,\KM_4/2)=\Hnis^3(X,\KM_4/2)=0$.
\end{lemma}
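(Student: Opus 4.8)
The goal is to show that for a smooth affine fourfold $X$ over an algebraically closed field, $\Hnis^2(X,\KM_4/2) = \Hnis^3(X,\KM_4/2) = 0$. The plan is to exploit the Bloch--Ogus spectral sequence \Cref{E2-page} together with the identification $\KM_4/2 \cong \sHet^4(4)$ from \Cref{thm:milnor-conj-nr}, plus the fact that for $X$ of dimension $4$ over an algebraically closed field, \'etale cohomology with $\mu_2$-coefficients vanishes in degrees $> 4$ (cohomological dimension) and is finite in all degrees.

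First I would write down the $E_2$-page $\Hnis^p(X,\sHet^q(j)) \Rightarrow \Het^{p+q}(X,\mu_2^{\otimes j})$ with $j = 4$. Since $X$ is affine of dimension $4$, Nisnevich cohomological dimension is $4$, so the page is concentrated in $0 \le p \le 4$; moreover $\sHet^q(j) = 0$ for $q > 4$ by the same cohomological dimension bound applied to residue fields (the Bloch--Ogus sheaf $\sHet^q(j)$ is built from \'etale cohomology of function fields of subvarieties, all of which have cohomological dimension $\le 4$, and in fact $\le \dim$ of the corresponding subvariety). Then I would isolate the entries contributing to $\Het^2(X,\mu_2^{\otimes 4})$ and $\Het^3(X,\mu_2^{\otimes 4})$: the differentials into and out of $E_2^{2,4}$ and $E_2^{3,4}$ go to/from $E_2^{2-r, 4+r-1}$ and $E_2^{2+r, 4-r+1}$, i.e. the outgoing differential $d_r\colon E_r^{p,q} \to E_r^{p+r,q-r+1}$ and incoming $d_r\colon E_r^{p-r,q+r-1}\to E_r^{p,q}$. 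The incoming differentials to $E_r^{p,4}$ vanish because the source has second index $4 + r - 1 > 4$, hence is zero. So $E_2^{p,4}$ only receives zero differentials, and surjects onto $E_\infty^{p,4}$ after quotienting by images of outgoing differentials.

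The key point is then that $E_\infty^{p,4}$ is a subquotient of $\Het^{p+4}(X,\mu_2^{\otimes 4})$, which is $0$ for $p \ge 1$ since $p + 4 \ge 5$ exceeds the cohomological dimension $4$ of $X$ (here one uses that $X$ is a smooth affine variety over an algebraically closed field, so $\mathrm{cd}_2(X) \le \dim X = 4$; alternatively affineness gives the Artin vanishing bound). Thus $E_\infty^{2,4} = E_\infty^{3,4} = 0$. Since these $E_\infty$ terms are obtained from $E_2^{2,4} = \Hnis^2(X,\KM_4/2)$ and $E_2^{3,4} = \Hnis^3(X,\KM_4/2)$ only by quotienting by images of outgoing differentials $d_r$ (no subobjects are taken, as incoming differentials are zero), we conclude that $\Hnis^2(X,\KM_4/2)$ and $\Hnis^3(X,\KM_4/2)$ are each exhausted by the images of their outgoing differentials. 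I would then need to chase these outgoing differentials: $d_r$ out of $E_r^{2,4}$ lands in $E_r^{2+r, 5-r}$, which for $r = 2$ is $E_2^{4,3}$ and for $r \ge 3$ has first index $\ge 5$, hence zero; similarly $d_r$ out of $E_r^{3,4}$ lands in $E_r^{3+r,5-r}$, zero for $r \ge 2$ by the first-index bound $3 + r \ge 5$. So actually the outgoing differentials from $E^{3,4}$ all vanish, giving immediately $\Hnis^3(X,\KM_4/2) = E_\infty^{3,4} = 0$; and for $E^{2,4}$ the only possibly-nonzero outgoing differential is $d_2\colon E_2^{2,4}\to E_2^{4,3} = \Hnis^4(X,\sHet^3(4))$.

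The main obstacle is therefore to show that this last differential $d_2\colon \Hnis^2(X,\KM_4/2) \to \Hnis^4(X,\sHet^3(4))$ is zero, or at least that $\Hnis^2(X,\KM_4/2)$ injects into something that vanishes — equivalently, I need to rule out that $\Hnis^2(X,\KM_4/2)$ is killed by a nonzero differential. I expect the right move is to argue that $\Hnis^2(X,\KM_4/2)$ is already a subobject of $\Het^2(X,\mu_2^{\otimes 4})$: indeed, for a smooth affine fourfold the Bloch--Ogus sheaves $\sHet^q(j)$ vanish for $q \ge 5$, and one can run a truncation/degeneration argument (the spectral sequence is concentrated in $q \le 4$) showing that on the bottom row relevant to low total degree there are no differentials at all hitting $E^{2,4}$ and the only outgoing one, $d_2$ to $E_2^{4,3}$, must vanish — for instance because $\Het^2(X,\mu_2^{\otimes 4})$ is finite while a weight/coniveau argument forces the relevant edge map to be injective, or by appealing directly to the Gersten-type resolution showing $\Hnis^2(X,\KM_4/2)$ embeds in $\Het^2(k(X),\mu_2^{\otimes 4})$-controlled data. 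In any case, once one knows $E_\infty^{2,4}$ and $E_\infty^{3,4}$ are subquotients of the vanishing groups $\Het^{6}$, $\Het^{7}$ and that no differentials act nontrivially, both groups vanish; pinning down the vanishing of the single differential $d_2\colon E_2^{2,4}\to E_2^{4,3}$ is the one genuinely nontrivial input, and I would handle it by the observation that $E_2^{4,3} = \Hnis^4(X,\sHet^3(4))$ is itself a quotient of $\Het^7(X,\mu_2^{\otimes 4}) = 0$ up to the action of differentials, combined with the fact that all differentials into $E^{4,3}$ except possibly this $d_2$ have vanishing source or target — so $E_2^{4,3}$ surjects onto $E_\infty^{4,3} = 0$ with the only relevant differential being exactly $d_2$ from $E_2^{2,4}$, and a short diagram chase closes the loop.
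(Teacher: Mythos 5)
Your setup is right and most of the bookkeeping is correct: the identification $\KM_4/2 \cong \sHet^4(4)$, the vanishing of incoming differentials to the $q=4$ row, the fact that $E_\infty^{p,4}$ is a subquotient of $\Het^{p+4}(X,\mu_2^{\otimes 4})=0$ for $p\geq 1$, and the complete argument for $\Hnis^3(X,\KM_4/2)$ (all outgoing differentials from $E^{3,4}$ land in first index $\geq 5$). But your treatment of the one remaining differential $d_2\colon E_2^{2,4}\to E_2^{4,3}$ has a genuine gap, and the ``short diagram chase'' you propose does not close it. Run the chase carefully: since $E_r^{4,3}$ has no other incoming or outgoing differentials and $E_\infty^{4,3}$ is a subquotient of $\Het^7=0$, you get that $d_2$ is \emph{surjective} onto $E_2^{4,3}$; since $E_\infty^{2,4}=\ker(d_2)$ must vanish, you get that $d_2$ is \emph{injective}. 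The conclusion is that $d_2\colon E_2^{2,4}\to E_2^{4,3}$ is an isomorphism --- which is perfectly consistent with both groups being nonzero. The spectral sequence alone cannot distinguish this scenario from the desired vanishing, so your argument is circular at exactly the critical point. Your fallback suggestions (an injective edge map into $\Het^2(X,\mu_2^{\otimes 4})$, or an embedding of $\Hnis^2$ into data at the generic point) are not correct as stated: $\Hnis^2$ of a sheaf does not embed into generic-point data, and there is no obvious injective edge map here.

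The missing ingredient, which is the one substantive input in the paper's proof, is the Bloch--Ogus vanishing $\Hnis^p(X,\sHet^q(j))=0$ for $p>q$, a consequence of the Gersten resolution of $\sHet^q(j)$ having length $q$ (cited in the paper as Corollary 6.2 of Bloch--Ogus; see also \cite[Theorem 19.3]{MVW06}). This concentrates the $E_2$-page in the triangle $p\leq q$ rather than the rectangle $0\leq p,q\leq 4$ that you describe, and in particular gives $E_2^{4,3}=\Hnis^4(X,\sHet^3(4))=0$ outright. With that, $d_2=0$ for trivial reasons, $E_\infty^{2,4}=E_2^{2,4}$, and the subquotient argument against $\Het^6(X,\mu_2^{\otimes 4})=0$ finishes the proof. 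So the approach is the paper's, but the proof as written is incomplete without this vanishing.
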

\begin{proof} The $E_2$-terms $\Hnis^i(X, \mathcal{H}^m(j))$ of the Bloch--Ogus spectral sequence are zero for $i >m$ (cf. \cite[Corollary 6.2]{Bloch-Ogus},  \cite[Theorem 19.3]{MVW06}). Furthermore, it follows from \cite[Chapter VI, Theorem 7.2]{Milne} that the restriction of the sheaves $\mathcal{H}^m(j)$ to the small Zariski site of $X$ are zero for $m>4$; in particular $\Hnis^i(X, \mathcal{H}^m(j)) = \Hzar^i(X, \mathcal{H}^m(j)) = 0$ for $m>4$. We depict the $E_2$-page for $X$ in \Cref{fig:bloch-ogus}:
\begin{figure}[H]
\[ \begin{tikzcd}[column sep=.2em]
    {\Hnis^0(X, \sHet^4(j))} & {\Hnis^1(X, \sHet^4(j))} & {\Hnis^2(X, \sHet^4(j))} & {\Hnis^3(X, \sHet^4(j))} & {\Hnis^4(X, \sHet^4(j))}\\
    {\Hnis^0(X, \sHet^3(j))} & {\Hnis^1(X, \sHet^3(j))} & {\Hnis^2(X, \sHet^3(j))} & {\Hnis^3(X, \sHet^3(j))} & 0\\
    {\Hnis^0(X, \sHet^2(j))} & {\Hnis^1(X, \sHet^2(j))} & {\Hnis^2(X, \sHet^2(j))} & 0 & 0\\
    {\Hnis^0(X, \sHet^1(j))} & {\Hnis^1(X, \sHet^1(j))} & 0 & 0 & 0\\
    {\Hnis^0(X, \sHet^0(j))} & 0 & 0 & 0 & 0\\
\end{tikzcd} \]
\centering
\caption{The $E_2$-page for the Bloch--Ogus spectral sequence of a smooth affine variety of dimension at most four, converging to \'etale cohomology with coefficients in $\mu_2^{\otimes j}$. Convergence is along the anti-diagonals.}
\label{fig:bloch-ogus}
\end{figure}

Note that the term \[E_2^{4,4}=\Hnis^4(X, \sHet^4(j))\] is the only possibly nonzero term on the line $p+q=8$, 
converging to $\Het^8(X,\mu_2^{\otimes j})$. This term admits no nontrivial incoming or outgoing differentials, so 
\[E_{\infty}^{4,4}=\Hnis^4(X, \sHet^4(j)).\]
Since $X$ is affine of dimension $4$, $E_\infty^{4,4}\cong \Het^8(X,\mu_2) = 0$ by \cite[Theorem 15.1]{Milne}. 
Similarly, since $\Hnis^3(X, \sHet^4(\mu_2))$ also has no nontrivial incoming or outgoing differentials, it is identical to $E_\infty^{3,4}$ and therefore zero.
Lastly,  $\Hnis^2(X, \sHet^4(\mu_2))$ has no nontrivial incoming or outgoing differentials, hence is equal to $E_\infty^{2,4}$. 
This is a subquotient of $\Het^6(X,\mu_2)$, which is zero by dimension considerations. Therefore $\Hnis^2(X, \sHet^4(\mu_2)) = 0$.
\end{proof}

This completes the proof \Cref{lem:stage1}\Cref{item:H2KM3}.

\subsubsection{Unramified cohomology and the first stage}\label{subsec:c-proof}

We briefly recall {\em unramified cohomology}, defined as those cohomology classes on $X$ that vanish under specialization maps to codimension one points. First, note that the Bloch--Ogus spectral sequence \Cref{eqn:coniveau-SS-E1} with $p=0$ includes a differential
\begin{equation}\label{eq:d1q}
    d_1^q\: \Het^{q}(k(X), \mu_2^{\otimes j}) \to \bigoplus_{\substack{x \in X^{(1)}}}\Het^{q-1}(k(x),\mu_2^{\otimes j-1}).
\end{equation}
\begin{definition}[{\cite[Definition~1.1.1]{CTO}}]
\label{def:unramified-cohomology}
Let $X$ be a smooth connected variety over a field $k$, and assume $n$ is invertible in $k$. The $q$th \textit{unramified cohomology} of $X$ with coefficients in $\mu_2^{\otimes j}$, denoted $\Hnr^q(X,\mu_2^{\otimes j})$, is the kernel of $d_1^q$ from \Cref{eq:d1q}.
\end{definition}
\begin{rmk}\label{rmk:twists-alg-closed} Over an algebraically closed base field, $\mu_2^{\otimes j}\cong \mu_2$. Since we work over an algebraically closed field for all results in this paper, we could in principle omit the tensor powers in our notation. \end{rmk}
By the Bloch--Ogus theorem \cite[Theorem 6.1]{Bloch-Ogus},
\begin{equation}\label{eqn:Hnr-in-terms-of-BO-sheaves}
\begin{aligned}
    \Hnr^q(k(X),\mu_2^{\otimes j}) \cong \Hnis^0(X, \sHet^q(\mu_2^{\otimes j})).
\end{aligned}
\end{equation}
We now have the setup to prove \Cref{lem:stage1}\Cref{item:unr}.

\begin{lemma}\label{lem:fourfold-compare-terms} Let $X$ be a smooth fourfold over an algebraically closed field $k$ of characteristic not equal to $2$. Then $\Hnr^4(k(X),\mu_2^{\otimes j})$ is finite if and only if $\Hnis^2(X,\sHet^3(j))$ is finite.
\end{lemma}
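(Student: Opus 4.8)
I would read this off the Bloch--Ogus spectral sequence \Cref{E2-page}. By \Cref{eqn:Hnr-in-terms-of-BO-sheaves} we have $\Hnr^4(k(X),\mu_2^{\otimes j})\cong\Hnis^0(X,\sHet^4(j))=E_2^{0,4}$, while $\Hnis^2(X,\sHet^3(j))$ is the entry $E_2^{2,3}$, so the task is to compare finiteness of these two $E_2$-terms. The leverage is the fact, recalled in \Cref{subsec:bloch-ogus}, that every $E_\infty$-term of this spectral sequence is finite, being a subquotient of the finite groups $\Het^\ast(X,\mu_2^{\otimes j})$.

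First I would record the shape of the $E_2$-page, which is that of \Cref{fig:bloch-ogus} and needs no affineness hypothesis: $E_2^{p,q}=\Hnis^p(X,\sHet^q(j))$ vanishes for $p>q$ (Bloch--Ogus, \cite[Corollary 6.2]{Bloch-Ogus}), for $p>4$ (as $X^{(p)}=\varnothing$), and for $q>4$ (the stalks of $\sHet^q(j)$ are \'etale cohomology of residue fields of transcendence degree at most $4$ over an algebraically closed field). Thus the $E_2$-page is concentrated in the range $0\le p\le q\le 4$.

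Next I would chase the differentials around the entries $(0,4)$ and $(2,3)$. The entry $(0,4)$ admits no incoming differential, and for $r\ge 3$ its outgoing $d_r$ lands in position $(r,5-r)$ with $r>5-r$, where the $E_2$-page --- hence every later page --- vanishes; so the only relevant differential out of $(0,4)$ is $d_2\colon E_2^{0,4}\to E_2^{2,3}$. Symmetrically, the only differential into $(2,3)$ is this same $d_2$, while the outgoing differentials from $(2,3)$ land in positions with $p>q$. Hence both entries stabilize already at $E_3$, giving $E_\infty^{0,4}=\ker\!\big(d_2\colon E_2^{0,4}\to E_2^{2,3}\big)$ and $E_\infty^{2,3}=\coker\!\big(d_2\colon E_2^{0,4}\to E_2^{2,3}\big)$.

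Finally, writing $I=\im\!\big(d_2\colon E_2^{0,4}\to E_2^{2,3}\big)$, the short exact sequences $0\to E_\infty^{0,4}\to E_2^{0,4}\to I\to 0$ and $0\to I\to E_2^{2,3}\to E_\infty^{2,3}\to 0$, together with the finiteness of $E_\infty^{0,4}$ and $E_\infty^{2,3}$, show that $E_2^{0,4}$ is finite if and only if $I$ is finite, if and only if $E_2^{2,3}$ is finite --- which is the claim. I do not expect a genuine obstacle: the argument is pure spectral-sequence bookkeeping, and the only step demanding care is verifying that no differential other than $d_2\colon E_2^{0,4}\to E_2^{2,3}$ touches the two positions of interest, which follows entirely from the staircase vanishing of the $E_2$-page.
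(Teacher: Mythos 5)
Your proof is correct and follows essentially the same route as the paper: both arguments isolate the $d_2$ differential $E_2^{0,4}\to E_2^{2,3}$ as the only differential touching either position and use finiteness of the abutment $\Het^\ast(X,\mu_2^{\otimes j})$ to control its kernel and cokernel. The only (harmless) difference is that the paper additionally invokes $\Het^5(X,\mu_2^{\otimes j})=0$ to conclude $d_2$ is surjective, whereas you only use finiteness of $E_\infty^{2,3}$ --- which in fact matches the stated hypotheses (smooth, not necessarily affine) slightly better.
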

\begin{proof} Consider the Bloch--Ogus spectral sequence for $X$ as in \Cref{fig:bloch-ogus}. The $E_2$-page is depicted in \Cref{fig:bloch-ogus2}.

\begin{figure}[H]
\[ \begin{tikzcd}
    {\Hnis^0(X, \sHet^4(j))}\ar[drr,dashed] & {\Hnis^1(X, \sHet^4(j))}\ar[drr,two heads] & 0 & 0 \\
    {\Hnis^0(X, \sHet^3(\mu_2))} & {\Hnis^1(X, \sHet^3(j))} & {\Hnis^2(X, \sHet^3(j))} & {\Hnis^3(X, \sHet^3(j))} & \\
    \vdots & \vdots & \vdots & \vdots 
\end{tikzcd} \]
\centering
\caption{The $E_2$-page for the Bloch--Ogus spectral sequence of a smooth affine variety of dimension at most four, with differentials indicated.}
\label{fig:bloch-ogus2}
\end{figure}

There are no nontrivial incoming differentials to the term $\Hnis^0(X, \sHet^4(j))$, and the only outgoing differential appears on the $E_2$-page, so we have that the kernel
\begin{equation}\label[empty]{eq:ker1}
    \ker \left( \Hnis^0(X, \sHet^4(j)) \xto{d_2} \Hnis^2(X, \sHet^3(j)) \right)
\end{equation}
is a subquotient of $\Het^4(X,\mu_2^{\otimes j})$, so the kernel \Cref{eq:ker1} is finite by finiteness of \'etale cohomology \cite[Theorem 19.1]{Milne}. The cokernel of this map is zero as it is a subquotient of $\Het^5(X,\mu_2^{\otimes j}) \cong 0$.
Together, we have that the map $d_2\:\Hnis^0(X, \sHet^4(j)) \to \Hnis^2(X, \sHet^3(j))$ is surjective with finite kernel, therefore one of these groups is finite if and only if the other is.
\end{proof}

In particular, this proves \Cref{lem:stage1}\Cref{item:unr} and we conclude that a sufficient condition to have finitely many lifts of prescribed Chern classes to $\BGL_2^{(2)}$ as in \Cref{diag:lift2} is that the unramified cohomology $\Hnr^4(X,\mu_2^{\otimes4})$ is finite. We now study settings where these constraints on unramified cohomology are satisfied.

\begin{proposition}\label{prop:nr-compactify-text}
\label{prop:nr-compactify1} Suppose that $X$ is smooth affine of dimension $4$ over a field $k$, and $X \hookrightarrow \bar{X}$ is a compactification with boundary divisor $D$ that is smooth and irreducible of dimension $3$.  If $\Hnr^4(\bar{X},\mu_2^{\otimes j})$ and $\Hnr^3(D,\mu_2^{\otimes j-1})$ are finite (resp., zero), then so is $\Hnr^{4}(X,\mu_2^{\otimes j})$.
\end{proposition}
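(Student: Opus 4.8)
The plan is to obtain the proposition from a restriction--residue exact sequence for unramified cohomology attached to the smooth prime divisor $D$. Assuming (harmlessly, and as holds in all our applications, e.g.\ $\bar X = \P^4$) that $\bar X$ is smooth, so that $\Hnr^{\ast}(\bar X,-)$ is defined, I would first establish that for $q = 4$ there is an exact sequence
\[
0 \longrightarrow \Hnr^{q}(\bar X, \mu_2^{\otimes j}) \longrightarrow \Hnr^{q}(X, \mu_2^{\otimes j}) \xrightarrow{\ \partial_D\ } \Hnr^{q-1}(D, \mu_2^{\otimes j-1}).
\]
Granting this, the proposition is immediate: $\Hnr^{4}(X,\mu_2^{\otimes j})$ is an extension of a subgroup of $\Hnr^{3}(D,\mu_2^{\otimes j-1})$ by $\Hnr^{4}(\bar X,\mu_2^{\otimes j})$, hence finite (resp.\ zero) as soon as both of those groups are.

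Here $\partial_D$ is the residue at the generic point $\eta_D$ of $D$: since $D$ is a smooth prime divisor on the smooth $\bar X$, the local ring $\mathcal{O}_{\bar X,\eta_D}$ is a discrete valuation ring with fraction field $k(X) = k(\bar X)$ and residue field $k(D)$, and $\partial_D$ is the corresponding component (at the summand indexed by $\eta_D$) of the $p = 0$ differential of the Bloch--Ogus spectral sequence \Cref{eqn:coniveau-SS-E1} of $\bar X$. Exactness at the first two spots is bookkeeping with codimension-one points: because $D$ is irreducible and $\bar X \setminus X$ is the divisor $D$, we have $\bar X^{(1)} = X^{(1)} \sqcup \{\eta_D\}$, with the residues at points of $X^{(1)}$ computed identically for $X$ and for $\bar X$; hence a class in $\Het^{q}(k(X),\mu_2^{\otimes j})$ is unramified on $\bar X$ precisely when it is unramified on $X$ and killed by $\partial_D$.

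The one substantive point, which I expect to be the main obstacle, is that $\partial_D$ actually lands in the unramified subgroup $\Hnr^{q-1}(D,\mu_2^{\otimes j-1})$ --- i.e.\ that the residue along $D$ of a class unramified on $X$ is again unramified on $D$. I would deduce this from $d_1 \circ d_1 = 0$ on the $E_1$-page of the Bloch--Ogus spectral sequence of $\bar X$: for $\alpha \in \Hnr^{q}(X,\mu_2^{\otimes j})$, the element $d_1(\alpha) \in E_1^{1,q}$ has all components zero except the $\eta_D$-component, which is $\partial_D\alpha$, and the next differential $d_1\colon E_1^{1,q}\to E_1^{2,q}$ sends it to $0$. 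For each $y \in D^{(1)} \subseteq \bar X^{(2)}$, purity for the smooth closed immersion $D \hookrightarrow \bar X$ identifies the $\eta_D$-to-$y$ component of that differential with the residue of $\partial_D\alpha$ at $y$ computed in $D$; hence every codimension-one residue of $\partial_D\alpha$ on $D$ vanishes. The compatibility of coniveau differentials under a smooth closed immersion invoked here is standard (Bloch--Ogus; Colliot-Th\'el\`ene--Hoobler--Kahn), but it is exactly the place where smoothness of $D$ is used essentially, and it is the only non-formal ingredient.

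An equivalent, more structural route would be to note that purity yields a short exact sequence of Rost--Schmid complexes
\[
0 \longrightarrow C^{\ast-1}_{RS}\bigl(D,\sHet^{q-1}(j-1)\bigr) \longrightarrow C^{\ast}_{RS}\bigl(\bar X,\sHet^{q}(j)\bigr) \longrightarrow C^{\ast}_{RS}\bigl(X,\sHet^{q}(j)\bigr) \longrightarrow 0,
\]
whose long exact cohomology sequence in degree zero is precisely the sequence displayed above, using the identification $\Hnr^{q}(-,\mu_2^{\otimes j}) \cong \Hnis^{0}(-,\sHet^{q}(j))$ of \Cref{eqn:Hnr-in-terms-of-BO-sheaves}. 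Either way, specializing to $q = 4$ completes the proof.
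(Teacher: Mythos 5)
Your proposal is correct and takes essentially the same approach as the paper: the paper's proof is exactly your ``more structural route,'' namely the short exact sequence of $E_1$-row complexes $0 \to E_1^{\ast,3}(D)[-1] \to E_1^{\ast,4}(\bar X) \to E_1^{\ast,4}(X) \to 0$ and its long exact cohomology sequence, which places $\Hnr^4(X,\mu_2^{\otimes j})$ between $\Hnr^4(\bar X,\mu_2^{\otimes j})$ and a subgroup of $\Hnr^3(D,\mu_2^{\otimes j-1})$. Your first, unpacked version (identifying the substantive point as the compatibility of the coniveau differential with residues on the smooth divisor $D$, via $d_1\circ d_1=0$ and purity) is the same argument made more explicit than the paper's, which simply asserts the kernel-complex identification.
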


\begin{proof}
For any smooth $k$-variety $U$ and integer $j \geq 0$, consider the complex $E^{\ast,4}_{1}(U,j)$  on the $E^1$-page of the Bloch--Ogus spectral sequence associated to $U$. 
The degree zero cohomology of this complex is the fourth unramified cohomology group of $U$ with coefficients in $\mu_{2}^{\otimes j}$
 (cf. \Cref{eqn:coniveau-SS-E1}). The open immersion $X \to \bar{X}$ induces an epimorphism of complexes of abelian groups $E^{\ast,4}_{1}(\bar{X},4) \to E^{\ast,4}_{1}(X,4)$ 
whose kernel is precisely the complex $E^{\ast,3}_{1}(D,3)$ used in the definition of the third unramified cohomology of $D$ with coefficients in $\mu_{2}^{\otimes 3}$ shifted by $1$ degree. Now the long exact sequence of cohomology groups associated to the short exact sequence of complexes yields an exact sequence of abelian groups
\[ 0 \to \Hnr^d(\bar{X},\mu_2^{\otimes j})\to \Hnr^d(X,\mu_2^{\otimes j}) \xrightarrow{\partial} \Het^{d-1}(k(D), \mu_2^{\otimes j-1})\]
with
\[\label[diagram]{eq:bounding_image2}\op{Im}(\partial)\subset \op{Ker}\left(d_{1}^{3}\:\Het^{d-1}(k(D),\mu_2^{\otimes j-1}) \to \oplus_{i}\bigoplus_{y \in D^{(1)}} \Het^{d-2}(k(y),\mu_2^{\otimes j-2})\right),\]
where $d_{1}^{3}$ is the differential in $E_{1}^{\ast,3}(D)$ whose kernel computes the group $\Hnr^3(D,\mu_2^{\otimes j-1})$. 
\end{proof}

\begin{rmk}
Localization sequences are available for refined unramified cohomology groups as defined in \cite{Schreieder}, which generalize unramified cohomology groups. Instead of directly using the localization sequence for refined unramified cohomology groups given in \cite[Theorem 1.3]{Kok-Zhou}, we have given a formal reasoning in the proof of \Cref{prop:nr-compactify1} to make this paper more self-contained.
\end{rmk}
\begin{corollary}\label{cor:nr-compactify-alg-closed} With set-up as in \Cref{prop:nr-compactify1}, suppose that additionally $k$ is algebraically closed.  If $\Hnr^4(\bar{X},\mu_2)$ and $\Hnr^3(D,\mu_2)$ are finite (resp., zero), then so is $\Hnr^{4}(X,\mu_2)$.
\end{corollary}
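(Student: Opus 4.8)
The plan is to deduce the corollary as a direct specialization of \Cref{prop:nr-compactify1}, the only new ingredient being that over an algebraically closed field the Tate twists collapse. First I would invoke \Cref{rmk:twists-alg-closed}: since $k$ is algebraically closed, the choice of a primitive root of unity gives an isomorphism of \'etale sheaves $\mu_2^{\otimes j}\cong\mu_2$ on $\Sm_k$ for every $j$. This isomorphism is of course compatible with the construction of the Bloch--Ogus $E_1$-page and its differentials $d_1^q$, hence induces isomorphisms $\Hnr^q(Y,\mu_2^{\otimes j})\cong\Hnr^q(Y,\mu_2)$ for any smooth connected $k$-variety $Y$ and any $j$; in particular finiteness (resp. vanishing) of one side is equivalent to finiteness (resp. vanishing) of the other.

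Next I would simply feed this into \Cref{prop:nr-compactify1} with $j=4$. The hypotheses of that proposition are that $\Hnr^4(\bar X,\mu_2^{\otimes 4})$ and $\Hnr^3(D,\mu_2^{\otimes 3})$ be finite (resp.\ zero); by the previous paragraph these are exactly the hypotheses $\Hnr^4(\bar X,\mu_2)$ and $\Hnr^3(D,\mu_2)$ finite (resp.\ zero) appearing in the corollary. Applying the proposition gives that $\Hnr^4(X,\mu_2^{\otimes 4})$ is finite (resp.\ zero), and identifying $\mu_2^{\otimes 4}\cong\mu_2$ once more yields the claimed conclusion for $\Hnr^4(X,\mu_2)$.

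There is essentially no obstacle here: the content is entirely contained in \Cref{prop:nr-compactify1}, and the corollary is a bookkeeping statement recording the form it takes when the twists become irrelevant. If anything, the one point worth a half-sentence is the naturality of the coniveau/Bloch--Ogus spectral sequence in the \'etale coefficient sheaf, which guarantees that the isomorphism $\mu_2^{\otimes j}\cong\mu_2$ passes to unramified cohomology; this is standard. Thus the proof will be a two-line application of \Cref{rmk:twists-alg-closed} and \Cref{prop:nr-compactify1}.
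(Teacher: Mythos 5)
Your proposal is correct and matches the paper's (implicit) argument exactly: the corollary is stated without proof precisely because it follows from \Cref{prop:nr-compactify1} once \Cref{rmk:twists-alg-closed} identifies $\mu_2^{\otimes j}\cong\mu_2$ over an algebraically closed field. Your remark about naturality of the Bloch--Ogus construction in the coefficient sheaf is the right (and only) point to check, so nothing is missing.
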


\subsubsection{Examples of smooth affine fourfolds with finite fourth unramified cohomology}\label{subsubsec:finite-unramified} 
\Cref{prop:nr-compactify1} gives some immediate examples of smooth affine fourfolds with finite fourth unramified cohomology. Throughout, let $k$ be an algebraically closed field of characteristic different from $2$ unless otherwise specified.

\begin{proposition}[{\cite[Corollaire 6.2]{CTV12}}] 
\label{prop:CTV-uniruled-threefolds}
Let $Z$ be a smooth projective threefold over an algebraically closed field of characteristic zero. If $Z$ is uniruled then $\Hnr^3(Z,\mu_2)=0$.
\end{proposition}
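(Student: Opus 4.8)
The statement is \cite[Corollaire 6.2]{CTV12}; the plan is to recover it by combining the Colliot-Th\'el\`ene--Voisin identification of $\Hnr^3$ with the defect of the integral Hodge conjecture, together with Voisin's theorem on $1$-cycles on uniruled threefolds, after two standard reductions. First I would reduce to $k=\mathbb{C}$: a smooth projective uniruled threefold over an algebraically closed field of characteristic zero descends to a subfield finitely generated over $\mathbb{Q}$, hence to one admitting an embedding into $\mathbb{C}$, and the Bloch--Ogus sheaves $\sHet^q(j)$---and therefore unramified cohomology with finite coefficients---are insensitive to extension of an algebraically closed base field. Second I would reduce to $\mathbb{Q}/\mathbb{Z}(2)$-coefficients: by the Bloch--Kato--Beilinson--Lichtenbaum theorem (of which \Cref{thm:milnor-conj-nr} is the top-weight case) one has $\sHet^2(\mathbb{Z}/n(2))\cong\mathcal{H}^2(\mathbb{Z}(2))/n$ for the motivic complex $\mathbb{Z}(2)$---the $\mathbb{Z}/n$ universal-coefficient sequence has no $\mathrm{Tor}$ term because the motivic cohomology sheaf $\mathcal{H}^3(\mathbb{Z}(2))$ vanishes---so $\sHet^2(\mathbb{Q}/\mathbb{Z}(2))\cong\mathcal{H}^2(\mathbb{Z}(2))\otimes\mathbb{Q}/\mathbb{Z}$ is a divisible Nisnevich sheaf. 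Feeding the coefficient sequence $0\to\mu_2^{\otimes2}\to\mathbb{Q}/\mathbb{Z}(2)\xrightarrow{\times 2}\mathbb{Q}/\mathbb{Z}(2)\to0$ through the long exact sequence of Bloch--Ogus sheaves, applying $\Hnis^0(Z,-)$, and using this divisibility together with the identification $\Hnr^3(Z,-)\cong\Hnis^0(Z,\sHet^3(-))$ of \Cref{eqn:Hnr-in-terms-of-BO-sheaves}, I would obtain $\Hnr^3(Z,\mu_2^{\otimes2})\cong\Hnr^3(Z,\mathbb{Q}/\mathbb{Z}(2))[2]$; since $k$ is algebraically closed, $\mu_2^{\otimes2}\cong\mu_2$, so it suffices to prove $\Hnr^3(Z,\mathbb{Q}/\mathbb{Z}(2))=0$.

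To compute $\Hnr^3(Z,\mathbb{Q}/\mathbb{Z}(2))$ I would run the Bloch--Ogus spectral sequence $E_2^{p,q}=\Hnis^p(Z,\sHet^q(\mathbb{Q}/\mathbb{Z}(2)))\Rightarrow\Het^{p+q}(Z,\mathbb{Q}/\mathbb{Z}(2))$ of \Cref{E2-page}. Since $\dim Z=3$, the rows $q\ge4$ vanish (cohomological dimension of the residue fields) and $E_2^{p,q}=0$ for $p>q$; hence the corner term $E_2^{0,3}=\Hnr^3(Z,\mathbb{Q}/\mathbb{Z}(2))$ receives no incoming differential and supports exactly one outgoing differential $d_2\colon E_2^{0,3}\to E_2^{2,2}$, where $E_2^{2,2}\cong\CH^2(Z)\otimes\mathbb{Q}/\mathbb{Z}$ by Bloch's formula $\Hnis^2(Z,\KM_2/n)\cong\CH^2(Z)/n$. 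Comparing this spectral sequence with the analogous one computing integral singular cohomology of $Z(\mathbb{C})$, and matching the relevant edge and connecting maps with the cycle class map $\CH^2(Z)\to H^4(Z(\mathbb{C}),\mathbb{Z})$ and the Hodge decomposition---this comparison is the technical heart of \cite[\S 3]{CTV12}---I would obtain a canonical isomorphism between $\Hnr^3(Z,\mathbb{Q}/\mathbb{Z}(2))$ and the group $Z^4(Z)$ of integral degree-$4$ Hodge classes on $Z$ modulo the algebraic ones, that is, the defect of the integral Hodge conjecture for $1$-cycles on $Z$.

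Finally I would invoke Voisin's theorem, according to which the integral Hodge conjecture holds for $1$-cycles on every smooth projective uniruled (or Calabi--Yau) threefold over $\mathbb{C}$; thus $Z^4(Z)=0$, so $\Hnr^3(Z,\mathbb{Q}/\mathbb{Z}(2))=0$, and the reduction in the first paragraph gives $\Hnr^3(Z,\mu_2)=0$. \emph{The main obstacle} is that the two substantive inputs are genuinely deep and I would take both as black boxes: Voisin's theorem exploits the special geometry of uniruled threefolds (covering families of rational curves together with a specialization argument for $1$-cycles), and the Colliot-Th\'el\`ene--Voisin identification demands a careful comparison of the Bloch--Ogus, integral, and Deligne--Hodge spectral sequences. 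Everything else---the two reductions and the diagram-chase in the Bloch--Ogus spectral sequence in dimension $3$---is routine and closely parallels the computations already carried out in \Cref{subsec:bloch-ogus} of this paper.
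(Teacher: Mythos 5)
The paper does not prove this proposition: it is imported verbatim as \cite[Corollaire 6.2]{CTV12} and used as a black box, so there is no internal argument to compare against. What you have written is, in effect, a reconstruction of the proof in the cited reference, and as such it is essentially the right outline: reduce to $k=\mathbb C$ by rigidity of unramified cohomology with finite coefficients under extension of algebraically closed fields, pass from $\mu_2^{\otimes 2}$ to $\mathbb Q/\mathbb Z(2)$-coefficients using divisibility of $\sHet^2(\mathbb Q/\mathbb Z(2))$ (a consequence of Merkurjev--Suslin/Bloch--Kato in weight $2$), identify $\Hnr^3(Z,\mathbb Q/\mathbb Z(2))$ with the defect of the integral Hodge conjecture for $1$-cycles, and invoke Voisin's theorem that this defect vanishes for uniruled threefolds. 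One step is glossed over: the Colliot-Th\'el\`ene--Voisin comparison does not give an isomorphism $\Hnr^3(Z,\mathbb Q/\mathbb Z(2))\cong Z^4(Z)$ outright, but rather an exact sequence whose kernel is the divisible group $\Hnr^3(Z,\Z(2))\otimes\mathbb Q/\mathbb Z$, identifying $\Hnr^3(Z,\mathbb Q/\mathbb Z(2))$ modulo that subgroup with the \emph{torsion} of $Z^4(Z)$ (which for a threefold is all of $Z^4(Z)$, since the rational Hodge conjecture for $1$-cycles on threefolds holds by Lefschetz-type arguments). To conclude you must also kill the divisible kernel, which for a uniruled threefold follows from the Bloch--Srinivas decomposition of the diagonal, using that $\CH_0(Z)$ is supported on a surface; this is a genuine additional input beyond the two you flag, though it lives in the same part of \cite{CTV12} that you are already black-boxing. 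With that caveat your sketch is faithful to the source; for the purposes of this paper, simply citing the result, as the authors do, is the appropriate level of detail.
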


\begin{example}\label{exa:projective-hypersurface-complements-H4} 
By \cite{kollarmiyaokamori92}, a smooth projective variety $D$ over $\mathbb C$ with ample anticanonical bundle $-K_D$ is rationally connected and hence uniruled. Let $X$ be the complement of any of the following:
\begin{enumerate}
    \item a smooth hypersurface $Z \subseteq \P^4$ of degree $d\le 4$
    \item a smooth hypersurface $Z \subseteq \P^1 \times \P^3$ of bidegree $(a,b)$ with $a\le1$ and $b\le3$
    \item a smooth hypersurface $Z \subseteq \P^2 \times \P^2$ of bidegree $(a,b)$ with $a\le 2$ and $b\le 2$.
\end{enumerate}
In all these cases above, $\Hnr^3(Z,\mu_2)$ vanishes by \Cref{prop:CTV-uniruled-threefolds} and hence $\Hnr^4(X,\mu_2) = 0$ by \Cref{prop:nr-compactify1}.
\end{example}

\subsection{The second Moore--Postnikov stage}\label{sec:stage2}

We now consider conditions under which the number of $\mathbb A^1$-homotopy classes of lifts of a given morphism $X\to \BGL_2^{(2)}$ to $\BGL_2^{(3)}$ are finite. Our main result is the following:
\begin{theorem}\label{thm:stage2-secintro} Let $X$ be a smooth affine fourfold over an algebraically closed field $k$ of characteristic not equal to $2$ or $3$. Given a fixed 
homotopy class $X \to \BGL_2^{(2)}$, lifts to $\BGL_2^{(3)}$ are finite (resp. unique) if $\Ch^3(X)$ is finite (resp. zero).
\end{theorem}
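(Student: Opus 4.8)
First I would reduce the statement to a single cohomology computation. By the motivic Moore--Postnikov formalism recalled in \Cref{subsec:def-twists}, applied to the twisted tower \Cref{diag:post_GL} produced by \cite[Proposition 2.2.1]{AFH-alg19}, a homotopy class $f^{(2)}\colon X\to\BGL_2^{(2)}$ determines a $\mathbb{G}_m$-torsor on $X$, equivalently a line bundle $\mathcal{L}$, and whenever $f^{(2)}$ lifts to $\BGL_2^{(3)}$ the set of homotopy classes of such lifts is a torsor under $\Hnis^3(X,\piA{3}{\BGL_2}(\mathcal{L}))$. So the theorem follows as soon as this group is shown to be finite (resp. trivial) whenever $\Ch^3(X)$ is finite (resp. trivial); in fact I would prove the sharper assertion announced in \Cref{sec:Moore-Post-Chern}, that $\Hnis^3(X,\piA{3}{\BGL_2}(\mathcal{L}))$ is a \emph{quotient} of $\Ch^3(X)$.

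Next I would identify the coefficient sheaf. The split fiber sequence $\BSL_2\to\BGL_2\to B\mathbb{G}_m$ gives $\piA{i}{\BGL_2}\cong\piA{i}{\BSL_2}$ for $i\ge 2$ and identifies the group $\mathbb{G}_m$ acting in \Cref{diag:post_GL} with $\piA{1}{\BGL_2}$; combined with the $\A^1$-weak equivalence $\SL_2\xrightarrow{\sim}\A^2\setminus 0$ (projection onto a row, an affine bundle) and the looping isomorphism $\piA{i}{\BSL_2}\cong\piA{i-1}{\SL_2}$, this yields $\piA{3}{\BGL_2}\cong\piA{2}{\A^2\setminus 0}$. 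The point requiring care is that the $\mathbb{G}_m$-action carried along this chain agrees with the standard action of \Cref{ex:contract-act} on $\piA{2}{\A^2\setminus 0}$, which is the sort of compatibility established in \cite[Proposition 6.3]{AF-3fold}; granting it, $\Hnis^3(X,\piA{3}{\BGL_2}(\mathcal{L}))\cong\Hnis^3(X,\piA{2}{\A^2\setminus 0}(\mathcal{L}))$, and it remains to bound the right-hand side.

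For this I would invoke the Asok--Fasel description of $\piA{2}{\A^2\setminus 0}$ from \cite[Theorem 3]{AF-3fold}, which exhibits this sheaf as a short extension of strictly $\A^1$-invariant, $\mathbb{G}_m$-equivariant sheaves, one layer of Grothendieck--Witt type and the other built from Milnor $K$-theory modulo $2$ (whose twist by $\mathcal{L}$ is harmless, since the $\mathbb{G}_m$-action on Milnor $K$-theory is trivial by \Cref{ex:KM-act}). Taking the associated long exact sequence in $\mathcal{L}$-twisted Nisnevich cohomology, the bound reduces to two points: (i) the degree-$3$ $\mathcal{L}$-twisted cohomology of the Grothendieck--Witt layer is a quotient of $\Ch^3(X)$, which is (a twisted instance of) the surjection $\Ch^3(X)\twoheadrightarrow\Hnis^3(X,\GW^2_3(\mathcal{L}))$ of \Cref{GW-SES}, the relevant high-degree terms being controlled by the contraction properties $\GW^i_{i+1}$ of \Cref{prop:contractions-1}\Cref{GW-vanishing} together with \Cref{prop:gersten-complex-properties}\Cref{contract-isom}; and (ii) the higher $\mathcal{L}$-twisted cohomology of the Milnor $K$-theory layer over the affine fourfold $X$ is controlled by the Bloch--Ogus spectral sequence computation carried out in \Cref{sec:stage1} (where the analogous vanishing $\Hnis^2(X,\KM_4/2)=\Hnis^3(X,\KM_4/2)=0$ was proved). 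Assembling these, $\Hnis^3(X,\piA{2}{\A^2\setminus 0}(\mathcal{L}))$ is a quotient of $\Ch^3(X)$, which completes the proof.

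The step I expect to be the main obstacle is the passage from dimension $3$ to dimension $4$. In the threefold setting of \cite{AF-3fold} the layers of the Asok--Fasel extension have vanishing degree-$3$ cohomology essentially for cohomological-dimension reasons, so the corresponding argument there is short; over a fourfold the Milnor $K$-theory layer genuinely contributes in degree $3$ and this contribution must be killed by the more delicate Bloch--Ogus analysis of \Cref{sec:stage1} rather than by a mere dimension count, and one must additionally use the contraction behavior of Grothendieck--Witt sheaves to rule out residual degree-$4$ terms in the long exact sequence. A more routine but still necessary task is to track the line-bundle twist and the several $\mathbb{G}_m$-actions coherently through the identification $\piA{3}{\BGL_2}\cong\piA{2}{\A^2\setminus 0}$ and through the Asok--Fasel extension, so that every long exact sequence invoked is genuinely one of $\mathcal{L}$-twisted groups.
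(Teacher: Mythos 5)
Your overall architecture matches the paper's: reduce to $\Hnis^3(X,\piA{2}{\A^2\setminus 0}(\mathcal L))$ via the Moore--Postnikov formalism, feed in the Asok--Fasel extension $0\to \Tsheaf{4}\to \piA{2}{\A^2\setminus 0}\to \KSp_3\to 0$ of \Cref{thm:AF3}, and handle the $\KSp_3$ layer by the surjection $\Ch^3(X)\twoheadrightarrow \Hnis^3(X,\GW_3^2(\mathcal L))$ of \Cref{GW-SES}, exactly as in \Cref{prop:h3k3sp-finite-if-ch3-is}. That part is fine.

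The gap is in your point (ii), the treatment of the subsheaf layer. You describe it as ``built from Milnor $K$-theory modulo $2$'' and propose to kill its cohomology using the Bloch--Ogus vanishing $\Hnis^2(X,\KM_4/2)=\Hnis^3(X,\KM_4/2)=0$ from \Cref{sec:stage1}. But the layer is $\Tsheaf{4}$, which on a fourfold agrees with $\Ssheaf{4}$ (\Cref{cor:TS4}) and whose cohomology in degrees $3$ and $4$ is identified with $\Hnis^i(X,\KM_4/12)$, not $\Hnis^i(X,\KM_4/2)$ (\Cref{eqn:T4S4}, \Cref{TSKM4}). Since $\KM_4/12\cong \KM_4/4\oplus \KM_4/3$, no mod $2$ computation can see the $3$-primary part, so the cited vanishing does not close the argument; you also need the degree-$4$ group, which your citation does not cover. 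The paper instead proves $\Hnis^3(X,\KM_4/n)=\Hnis^2(X,\KM_4/n)=0$ for every $n$ invertible in $k$ by a purely divisibility-theoretic argument (\Cref{subsec:vanishing_km}), combining Srinivas's divisibility and torsion-freeness of $\CH^4(X)$ (\Cref{thm:chow-divisibility}) with Fasel's divisibility of $\Hnis^{3}(X,\KMW_4)$ and $\Hnis^{2}(X,\KMW_4)$ (\Cref{lem:fasel-KMW-divisibility}); the degree-$4$ vanishing is then $\CH^4(X)/12=0$. Your route could be repaired by rerunning the Bloch--Ogus argument with $\mu_{12}$-coefficients, invoking the full norm residue theorem rather than just the Milnor conjecture, and checking the $E_2^{4,4}$ term as well --- this would be a legitimate alternative to \Cref{subsec:vanishing_km} --- but as written the modulus is wrong and the $3$-torsion is unaccounted for.
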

From the discussion in \Cref{subsec:def-twists}, the number of lifts is a torsor for a quotient of \[\Hnis^3(X,\piA{3}{\BGL_2}(\mathcal L))\cong \Hnis^3(X,\piA{2}{\A^2\minus 0}(\mathcal L)),\] where $\mathcal L$ the line bundle determined by the composite $X\to \BGL_2^{(2)} \to B\mathbb G_m$. 

We start from the observation that 
$\SL_2\simeq \A^2\setminus 0 \simeq \Sp_2$ and use standard symplectic fiber sequences to understand $\piA{2}{\A^2\minus 0}$. We begin in \Cref{sseq-spheres} by recalling results from \cite{AF-3fold} in the language of spectral sequences. In \Cref{subsec:vanishing_km}, we prove vanishing results for cohomology with coefficients in $\KM_i/r$ for $r$ composite. In \Cref{subsec:stage2-proof}, we put together the proof of \Cref{thm:stage2-secintro}.

\subsubsection{A symplectic spectral sequence and unstable homotopy of spheres}\label{sseq-spheres}We give preliminaries that will be used both in the proof of \Cref{thm:stage2-secintro} and again in \Cref{sec:stage3}.

A way to access the unstable $\mathbb A^1$-homotopy sheaves of even punctured affine spaces $\A^{2n}\minus 0$ is via the the natural filtration 
\[ \cdots \to \Sp_{2n}\to \Sp_{2n+2} \cdots \to \Sp,\]
since subsequent terms in the filtration fit into fiber sequences
   $ \Sp_{2n} \to \Sp_{2n+2} \to \A^{2n+2}\minus0.$
For any integer $n \geq 0$, the punctured affine space $\A^{n+2}\minus0$ have first non-trivial homotopy sheaf $\piA{n+1}{\A^{n+2}\minus 0} \cong \K^{MW}_{n+2}$ \cite[Theorem 6.40]{Morel}. Following \cite[Section 2.2]{AF-spectralsequence}, the spectral sequence associated to the filtration above then takes the form
\begin{equation}\label[sseq]{eqn:Sp-SS}
\begin{aligned}
    E_1^{s,t}=\piA{s}{\A^{2t}\setminus 0} \implies \piA{s}{\Sp}.
\end{aligned}
\end{equation}
A differential $d_r$ on the $r$-th page has bidegree $(-1,-r)$, i.e., is a morphism $d_r^{s,t} \colon E_r^{s,t} \to E_r^{s-1,t-r}$.
\begin{rmk}
Note that our grading convention differs from the one in \cite[Section 2.2]{AF-spectralsequence}; by our grading convention, convergence occurs in columns. \end{rmk}
In \Cref{fig:Sp-seq} we write out relevant terms of the first page of \Cref{eqn:Sp-SS}. All terms to the left, below, and above the region shown in \Cref{fig:Sp-seq} are zero.
\begin{figure}[H]
\[\begin{tikzcd}[column sep=.1em]
  0 & 0 & 0 & 0 & \pia_5(\A^6\setminus 0)\arrow[dl,"d_1^{5,3}" description] \arrow[ddl,dashed]  \\
   0 & 0 & \pia_3(\A^4\setminus 0)\arrow["d_1^{3,2}" description]{dl} & \pia_4(\A^4\setminus 0)\arrow["d_1^{4,2}" description]{dl} & \pia_5(\A^4\setminus 0)\ar[dl,"d_1^{5,2}" description] \\
     \pia_1(\A^2\setminus 0) & \pia_2(\A^2\setminus 0) & \pia_3(\A^2\setminus 0) & \pia_4(\A^2\setminus 0) & \pia_5(\A^2\setminus 0)  \\
\end{tikzcd} \]
\centering
\caption{The $E_2$-page for the spectral sequence arising from the dimension filtration on $\Sp$. We depict $d_1$-differentials as solid arrows and a single $d_2$-differential as a dashed arrow.}
\label{fig:Sp-seq}
\end{figure}

\begin{rmk}\label{rmk:sseq-gm-action}  \Cref{eqn:Sp-SS} is a spectral sequence of strictly $\A^1$-invariant sheaves with $\mathbb G_m$-action as follows. By \cite[pp. 2577-2578]{AF-3fold}, there is an action of $\mathbb G_m$ on $\Sp_{2n}$ that
\begin{enumerate}
\item induces a $\mathbb G_m$-action on $\A^{2n}\setminus 0 \simeq \Sp_{2n}/\Sp_{2n-2}$. This action is given by $t\cdot (a_1,\ldots, a_{2n}) = (a_1,t^{-1}a_2, a_3, t^{-1}a_4,\ldots, a_{2n-1}, t^{-1}a_{2n})$;
\item stabilizes to the natural action of $\mathbb G_m$ on $\KSp_i\cong \GW_i^2$ given by composing the canonical map $\mathbb G_m \to \KMW_0 \cong \GW^0_0$ with the action of $(\GW_0^0)^\times$ on $\GW_i^2$ (see \cite[Lemma 4.7]{AF-3fold}). This $\mathbb G_m$-action agrees with the canonical $\mathbb G_m$-action on $\GW_i^2$ as a contraction as explained in Example \ref{ex:contract-act} (see \cite[Lemma 4.6]{AF-3fold}); and
\item induces the $\pia_1$-action of $\mathbb G_m \cong \piA{1}{\BGL_2}$ on $\piA{n}{\BGL_2}$ under the natural identification
\[\begin{tikzcd}
\piA{n}{\BSp_2}&\piA{n}{\BSL_2}\ar[r,"\cong"]\ar[l,"\cong" above]& \piA{n}{\BGL_2}
\end{tikzcd}
\]for $n\geq 2$.
\end{enumerate}
\end{rmk}
In addition to differentials, there are a few other morphisms that will be relevant. The \textit{edge morphisms} $e_s \colon E_{1}^{s,1} \to E_\infty^{s,1}$ take the following form:
\[
    e_s \colon \pi_s(\A^2\minus0) \to \pi_s(\Sp) = \KSp_{s+1}.
\]
The map from $\Sp \to \Sp/\Sp_{2n-2}$ induces a morphism
\begin{align*}
    i_{2n-1} \:\piA{2n-1}{\Sp}\cong \KSp_{2n} \to \piA{2n-1}{\Sp/\Sp_{2n-2}}\cong \piA{2n-1}{\A^{2n}\minus 0}.
\end{align*}
Note again that $\piA{2n-1}{\A^{2n}\minus 0} \cong \KMW_{2n}$ by \cite[Theorem 6.40]{Morel}. Under this identification, the morphism agrees with the map
   $ \phi_{2n} \colon \KSp_{2n} \to \KMW_{2n}$ of \cite[\S3]{AF-3fold}. 
\begin{definition} Following \cite[p.~2568]{AF-3fold}, let
$ \Tsheaf{2n} = \coker \left( \KSp_{2n} \xto{\phi_{2n}} \KMW_{2n} \right).$
\end{definition}

As a particular application of our spectral sequence, we can investigate $\piA{2}{\A^2\minus0}$ via its outgoing edge map $e_2$ and its incoming differential, and we obtain a short exact sequence
\[
    0 \to \im(d_1^{3,2}) \to \piA{2}{\A^2\minus0} \xto{e_2} \KSp_3 \to 0.\]
Note that $e_2$ is an epimorphism because $\piA{2}{\A^2\minus0}$. Moreover, convergence implies that there is an exact sequence
\[
    \KSp_4 \xto{\phi_4} \piA{3}{\A^4\minus 0} \xto{d_1^{3,2}} \piA{2}{\A^2\minus 0} \xto{e_2} \KSp_3 \to 0.
\]
Thus, we can identify
$    \im(d_1^{3,2}) \cong  \piA{3}{\A^4\minus 0}/\ker(d_1^{3,2}) \cong \coker(\phi_4) = \Tsheaf{4}.$
This yields the following result.

\begin{theorem}[{\cite[Theorem~3 and Corollary~4.9]{AF-3fold} }]\label{thm:AF3} There is an exact sequence
\begin{equation}\label[sequence]{eq:21}
0 \to \Tsheaf{4} \to \piA{2}{\A^2\minus 0} \to \KSp_3 \to 0
\end{equation} 
of strictly $\A^1$-invariant sheaves with $\mathbb G_m$-actions. The $\mathbb G_m$-action on $\Tsheaf{4}$ is trivial.
\end{theorem}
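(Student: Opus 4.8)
The plan is to read the short exact sequence off the symplectic dimension-filtration spectral sequence \Cref{eqn:Sp-SS}, or equivalently off the long exact sequence of the fibre sequence $\Sp_2\simeq\A^2\setminus 0\to\Sp_4\to\A^4\setminus 0$. First I would record the connectivity input: by \cite[Theorem 6.40]{Morel} the space $\A^m\setminus 0$ is $(m-2)$-$\A^1$-connected with $\piA{m-1}{\A^m\setminus 0}\cong\KMW_m$. Applied to the fibre sequences $\Sp_{2t}\to\Sp_{2t+2}\to\A^{2t+2}\setminus 0$, this gives $\piA{s}{\Sp_{2t}}\xrightarrow{\sim}\piA{s}{\Sp_{2t+2}}$ for $s\le 3$ and $t\ge 2$, whence $\piA{2}{\Sp_4}\cong\KSp_3$ and $\piA{3}{\Sp_4}\cong\KSp_4$, while $\piA{2}{\A^4\setminus 0}=0$ and $\piA{3}{\A^4\setminus 0}\cong\KMW_4$. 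The long exact sequence of $\Sp_2\to\Sp_4\to\A^4\setminus 0$ then reads
\[
\KSp_4\xrightarrow{\phi_4}\piA{3}{\A^4\setminus 0}\xrightarrow{\partial}\piA{2}{\A^2\setminus 0}\xrightarrow{e_2}\KSp_3\to 0,
\]
where the first map is $\phi_4$ of \cite[\S3]{AF-3fold} under the stabilization isomorphisms (as noted in the setup above), and $e_2$ is surjective because the next term $\piA{2}{\A^4\setminus 0}$ vanishes. Exactness at $\piA{3}{\A^4\setminus 0}$ gives $\ker\partial=\im\phi_4$, so $\partial$ induces an injection $\coker\phi_4=\Tsheaf{4}\hookrightarrow\piA{2}{\A^2\setminus 0}$ with image $\ker e_2$, yielding
\[
0\to\Tsheaf{4}\to\piA{2}{\A^2\setminus 0}\xrightarrow{e_2}\KSp_3\to 0.
\]
All three sheaves are strictly $\A^1$-invariant: the outer two evidently, and $\Tsheaf{4}$ as a cokernel in the abelian category of strictly $\A^1$-invariant sheaves \cite[6.24]{Morel}.

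For the $\mathbb G_m$-equivariance I would appeal to \Cref{rmk:sseq-gm-action}: the filtration $\Sp_2\to\Sp_4\to\cdots$ carries compatible $\mathbb G_m$-actions inducing the contraction action on each $\A^{2t}\setminus 0$, the standard action on $\KSp_\bullet\cong\GW_\bullet^2$ after stabilization, and the $\piA{1}{\BGL_2}$-action on the $\A^1$-homotopy sheaves of $\BGL_2$. In particular the induced action on $\piA{3}{\A^4\setminus 0}\cong\KMW_4$ is the standard $\mathbb G_m$-action of \Cref{ex:contract-act}. Hence the fibre sequence $\Sp_2\to\Sp_4\to\A^4\setminus 0$ is $\mathbb G_m$-equivariant, so its long exact sequence, and with it the short exact sequence above, consists of $\mathbb G_m$-equivariant maps, with the expected actions on the outer terms.

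The one substantive point — and the step I expect to be the real obstacle, since it genuinely needs the explicit structure of $\phi_4$ rather than formal input — is the triviality of the $\mathbb G_m$-action on $\Tsheaf{4}=\KMW_4/\im\phi_4$. Writing $\langle t\rangle=1+\eta[t]\in\KMW_0$ for the acting element, one has $(\langle t\rangle-1)x=\eta[t]x\in\eta\cdot\KMW_5$ for every $x\in\KMW_4$, and $\KMW_4/\eta\KMW_5\cong\KM_4$ by \Cref{eqn:M-MW-I-sequence}; thus the action on $\Tsheaf{4}$ is trivial as soon as $\eta\KMW_5\subseteq\im\phi_4$, i.e., exactly when $\Tsheaf{4}$ is a quotient of $\KM_4$, on which $\mathbb G_m$ acts trivially by \Cref{ex:KM-act}. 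Establishing the inclusion $\eta\KMW_5\subseteq\im\phi_4$ is precisely where one invokes the computation of $\im\phi_4$ in \cite[\S3]{AF-3fold}; this is the content of \cite[Corollary 4.9]{AF-3fold}, and absent that one would have to carry out the Rost--Schmid-level analysis of $\phi_4$ by hand. Everything else is routine bookkeeping with connectivity and long exact sequences.
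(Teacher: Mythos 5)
Your derivation of \Cref{eq:21} is essentially the paper's own: the text preceding \Cref{thm:AF3} reads the four-term sequence $\KSp_4\xrightarrow{\phi_4}\piA{3}{\A^4\setminus 0}\to\piA{2}{\A^2\setminus 0}\to\KSp_3\to 0$ off the symplectic filtration spectral sequence \Cref{eqn:Sp-SS}, which is exactly the long exact sequence of $\Sp_2\to\Sp_4\to\A^4\setminus 0$ that you use, and like you it imports the $\mathbb G_m$-equivariance and the triviality of the action on $\Tsheaf{4}$ from \cite[Corollary 4.9]{AF-3fold}.

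One correction to your side-remark about why the action on $\Tsheaf{4}$ is trivial. Your reduction to the inclusion $\eta\KMW_5=\I^5\subseteq\im(\phi_4)$ presumes that the relevant action on $\piA{3}{\A^4\setminus 0}\cong\KMW_4$ is the standard contraction action by $1+\eta[t]$, so that triviality must be created by passing to the quotient. But the action described in \Cref{rmk:sseq-gm-action} scales the two even-indexed coordinates of $\A^4\setminus 0$ by $t^{-1}$; such a diagonal matrix has determinant $t^{-2}$, so it acts on $\KMW_4$ by multiplication by $\langle t^{-2}\rangle=1$, i.e., the action is already trivial on $\KMW_4$ and hence on its equivariant quotient $\Tsheaf{4}$. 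This matters because the inclusion $\I^5\subseteq\im(\phi_4)$ you propose to verify would force the map $\I^5\to\Tsheaf{4}$ in \Cref{eqn:T4S4} to vanish and give $\Tsheaf{4}\cong\Ssheaf{4}$ unconditionally, which is not what Asok--Fasel prove (the paper needs the vanishing of $\I^5|_X$ on fourfolds to obtain \Cref{cor:TS4}). Since you ultimately defer the verification to the cited corollary, this does not break your proof, but the sketched route is not the one that works.
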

Moreover, Asok--Fasel show that there is an exact sequence
\begin{equation}\label[sequence]{eqn:T4S4}
\begin{aligned}
    \I^5 \to \Tsheaf{4} \to \Ssheaf{4} \to 0,
\end{aligned}
\end{equation}
where
\[
    \Ssheaf{4} = \coker (\KSp_{2n+2} \to \KMW_{2n+2} \tto \KM_{2n+2}).
\]
By \Cref{prop:Ij-vanishing}\Cref{a-Ij-vanishing}, $\I^5|_X\cong 0$ for $X$ a smooth affine fourfold. We deduce:

\begin{corollary}[\cite{AF-3fold}]\label{cor:TS4} Let $X$ be a smooth affine fourfold over an algebraically closed field of characteristic $\neq 2$. Then $\Ssheaf{4}|_X \cong \Tsheaf{4}|_X$.
\end{corollary}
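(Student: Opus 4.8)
The plan is to deduce the corollary directly from the right-exact sequence \Cref{eqn:T4S4} together with the vanishing of $\I^5$ on a smooth affine fourfold. First I would restrict the exact sequence
\[
\I^5 \to \Tsheaf{4} \to \Ssheaf{4} \to 0
\]
of strictly $\A^1$-invariant Nisnevich sheaves to the small Nisnevich site over $X$. Since restriction along the inclusion of the small Nisnevich site of $X$ into $\Sm_k$ is exact, this yields an exact sequence $\I^5|_X \to \Tsheaf{4}|_X \to \Ssheaf{4}|_X \to 0$ of sheaves on the small Nisnevich site over $X$.

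Next I would invoke \Cref{prop:Ij-vanishing}\Cref{a-Ij-vanishing} with $d = 4$ and $j = 5$. Because $j \ge d+1$, the restriction of $\I^5(\mathcal L)$ to the small Nisnevich site over $X$ vanishes for every line bundle $\mathcal L$ on $X$; taking $\mathcal L$ trivial gives $\I^5|_X = 0$. Consequently the leftmost arrow in the restricted exact sequence is the zero map, so $\Tsheaf{4}|_X \to \Ssheaf{4}|_X$ is injective (its kernel is the image of the zero map) as well as surjective (by right-exactness of \Cref{eqn:T4S4}), hence an isomorphism. Equivalently, $\Ssheaf{4}|_X$ is by definition the cokernel of $\I^5|_X \to \Tsheaf{4}|_X$, which is $\Tsheaf{4}|_X$ since the source is zero.

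There is no substantial obstacle here: the statement is a formal consequence of the two cited inputs. The only points worth a brief remark are that \Cref{eqn:T4S4} is merely right-exact, so one genuinely needs vanishing of the first \emph{sheaf} $\I^5|_X$ (not just of some cohomology group), and that \Cref{prop:Ij-vanishing} is stated for arbitrary smooth $k$-schemes of the relevant dimension, so no irreducibility or connectedness hypothesis on $X$ is required.
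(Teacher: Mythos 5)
Your argument is exactly the paper's: it deduces the corollary from the right-exact sequence \Cref{eqn:T4S4} together with the vanishing of $\I^5|_X$ from \Cref{prop:Ij-vanishing}\Cref{a-Ij-vanishing}, which is precisely how the text derives \Cref{cor:TS4}. The extra remarks on exactness of restriction and on why sheaf-level (not just cohomological) vanishing is needed are correct but not points the paper dwells on.
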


By \cite[3.2]{AF-3fold}, we have an epimorphism $
    \KM_4/12 \tto \Ssheaf{4}$ that becomes an isomorphism after 2-fold contraction. By \Cref{prop:gersten-complex-properties}\Cref{contract-isom}:
\begin{corollary}[\cite{AF-3fold}]\label{TSKM4} Let $X$ be a smooth affine fourfold over an algebraically closed field of characteristic $\neq 2$. Then $\Hnis^i(X,\Tsheaf{4}) \cong \Hnis^i(X,\KM_4/12)$ for $i=3,4$.
\end{corollary}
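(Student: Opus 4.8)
The plan is to combine the epimorphism $\KM_4/12 \tto \Ssheaf{4}$, which becomes an isomorphism after $2$-fold contraction, with the identification $\Ssheaf{4}|_X \cong \Tsheaf{4}|_X$ from \Cref{cor:TS4} and the cohomological consequence of contraction-isomorphisms recorded in \Cref{prop:gersten-complex-properties}\Cref{contract-isom}. The only bookkeeping is to check that the cohomological degrees of interest, namely $i = 3$ and $i = 4$, lie in the range where \Cref{contract-isom} applies.

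\begin{proof}
By \Cref{cor:TS4}, the restrictions of $\Ssheaf{4}$ and $\Tsheaf{4}$ to the small Nisnevich site over $X$ agree, so it suffices to prove $\Hnis^i(X,\Ssheaf{4}) \cong \Hnis^i(X,\KM_4/12)$ for $i = 3,4$. By \cite[3.2]{AF-3fold} there is an epimorphism $\KM_4/12 \tto \Ssheaf{4}$ of strictly $\A^1$-invariant sheaves that becomes an isomorphism after applying the $2$-fold contraction functor $(-)_{-2}$. Applying \Cref{prop:gersten-complex-properties}\Cref{contract-isom} with $n = 2$ (and $\mathcal L$ trivial), the induced map
\[
\Hnis^i(X,\KM_4/12) \to \Hnis^i(X,\Ssheaf{4})
\]
is an isomorphism for every $i \ge n+1 = 3$. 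In particular it is an isomorphism for $i = 3$ and $i = 4$, which completes the proof.
\end{proof}

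I expect there to be no real obstacle here: the corollary is a formal consequence of the two cited facts together with the elementary Rost--Schmid observation in \Cref{prop:gersten-complex-properties}. The only point requiring a moment's care is confirming that $\KM_4/12 \to \Ssheaf{4}$ is genuinely an isomorphism on $2$-fold contractions (as opposed to, say, only surjective), so that \Cref{contract-isom} applies; this is exactly the content of \cite[3.2]{AF-3fold}, which is invoked in the sentence preceding the statement.
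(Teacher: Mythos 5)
Your proposal is correct and matches the paper's (implicit) argument exactly: the text preceding the corollary cites the epimorphism $\KM_4/12 \tto \Ssheaf{4}$ becoming an isomorphism after $2$-fold contraction, invokes \Cref{prop:gersten-complex-properties}\Cref{contract-isom} to transfer this to cohomology in degrees $i \geq 3$, and uses \Cref{cor:TS4} to identify $\Ssheaf{4}|_X$ with $\Tsheaf{4}|_X$. Nothing is missing.
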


Putting these facts together, consider $X$ a smooth affine fourfold and $\mathcal L$ a line bundle over $X$. We obtain an exact sequence on cohomology
\begin{equation}\label[sequence]{LES008}
 \Hnis^3(X, \KM_4/12) \to \Hnis^3(X,\piA{2}{\A^2\minus0}(\mathcal L)) \xto{{e_2}_*} \Hnis^3(X,\KSp_3(\mathcal L)) \to 
\Hnis^4(X, \KM_4/12) 
\end{equation}
We will first show that $\Hnis^3(X, \KM_4/12)$ and $\Hnis^4(X, \KM_4/12)$ are zero in \Cref{subsec:vanishing_km}. Then, in \Cref{prop:h3k3sp-finite-if-ch3-is}, we relate finiteness (resp. triviality) of $\Hnis^3(X,\KSp_3(\mathcal L))$ to finiteness (resp. triviality) of $\Ch^3(X)$. This will complete the proof of \Cref{thm:stage2-secintro}.

\subsubsection{Vanishing of mod $n$ Milnor $K$-theory cohomology}\label{subsec:vanishing_km}

In this section, we prove vanishing results for cohomology of a smooth affine variety $X$ of dimension $d$ over an algebraically closed fields with coefficients in $\KMW_d/n$, where $d \geq 4$ and $n$ is prime to the characteristic of the base field. This is a key step in the proof of \Cref{thm:stage2-secintro}. 
\begin{definition}\label{mult-n-stuff}Let $\cdot  n\: \KM_d \to \KM_d$ denote multiplication by $n$. Let $\KM_d[n]$ denote the kernel of $\cdot n: \KM_d \to \KM_d$ and $n\KM_d$ denote its image. \end{definition}
We will use the short exact sequences of sheaves
\begin{equation}\label[sequence]{divide-SES-1} 0 \to \KM_d[n] \to \KM_d \to n\KM_d \to 0,
\end{equation}
and
\begin{equation}\label[sequence]{divide-SES-2} 0 \to n\KM_d \to \KM_d \to \KM_d/n \to 0.
\end{equation} 
For each $i$, \Cref{divide-SES-1} gives rise to an exact
\begin{equation}\label[sequence]{divide-LES-1}  \Hnis^{i-1}(X,n\KM_d) \xto{} \Hnis^{i}(X,\KM_d[n]) \xto{a_i} \Hnis^{i}(X,\KM_d) \xto{a'_i} \Hnis^{i}(X,n\KM_d) \xto{} \Hnis^{i+1}(X,\KM_d[n]) \end{equation}
while \Cref{divide-SES-2} gives rise to an exact sequence
\begin{equation}\label[sequence]{divide-LES-2}  \Hnis^{i-1}(X,\KM_d/n) \to \Hnis^{i}(X,n\KM_d) \xto{b_i} \Hnis^{i}(X,\KM_d) \xto{b'_i} \Hnis^{i}(X,\KM_d/n) \to \Hnis^{i+1}(X,n\KM_d) \end{equation}
for each $i$.
\begin{lemma}\label{lem-1} If $X$ is a smooth affine $d$-fold over an algebraically closed field $k$ and $n \geq 2$ is a positive integer that is invertible in $k$, then the natural map
\[b_d\:\Hnis^{d}(X,n\KM_d) \to \Hnis^{d}(X,\KM_d)\] is an isomorphism.
\end{lemma}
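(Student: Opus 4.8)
The plan is to play the two long exact sequences \Cref{divide-LES-1} and \Cref{divide-LES-2} against Srinivas's divisibility theorem \Cref{thm:chow-divisibility}. Decomposing $X$ into its connected components (over which $\Hnis^d(X,-)$ and the map $b_d$ split as direct sums, and over the low-dimensional components both sides vanish for degree reasons), I may assume $X$ is a smooth affine integral $k$-variety of dimension $d$; recall that in our range of interest $d\ge 4$, so \Cref{thm:chow-divisibility} applies.

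The key observation is that the multiplication-by-$n$ endomorphism $\cdot n\colon \KM_d\to\KM_d$ factors as the composite $\KM_d\tto n\KM_d\hookrightarrow\KM_d$ of the epimorphism occurring in \Cref{divide-SES-1} with the monomorphism occurring in \Cref{divide-SES-2}. Applying $\Hnis^d(X,-)$ and using functoriality together with additivity of cohomology, the first arrow induces $a'_d$ and the second induces $b_d$, so the composite $b_d\circ a'_d\colon \Hnis^d(X,\KM_d)\to\Hnis^d(X,\KM_d)$ is precisely multiplication by $n$ on $\Hnis^d(X,\KM_d)$. In top degree the Rost--Schmid complex identifies $\Hnis^d(X,\KM_d)$ with $\CH^d(X)$ (the last differential is the divisor map $\bigoplus_{x\in X^{(d-1)}}\kappa(x)^\times\to\bigoplus_{x\in X^{(d)}}\Z$), and this group is divisible and torsion-free by \Cref{thm:chow-divisibility}; hence multiplication by $n$ is bijective on it. Therefore $b_d\circ a'_d$ is an isomorphism, and in particular $a'_d$ is injective.

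It remains to observe that $a'_d$ is also surjective. In \Cref{divide-LES-1} the term immediately to the right of $\Hnis^d(X,n\KM_d)$ is $\Hnis^{d+1}(X,\KM_d[n])$, which vanishes because $\KM_d[n]$ is a strictly $\A^1$-invariant sheaf (the abelian category of such sheaves, so kernels exist there) and the Rost--Schmid complex of the $d$-dimensional scheme $X$ is concentrated in cohomological degrees $\le d$, there being no codimension-$(d+1)$ points. Thus $a'_d$ is an isomorphism, whence $b_d=(b_d\circ a'_d)\circ(a'_d)^{-1}$ is an isomorphism as well.

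I do not expect a genuine obstacle here: the whole argument reduces to the quoted input \Cref{thm:chow-divisibility}. The only points that need care are the bookkeeping verification that $a'_d$ and $b_d$ — arising from two different short exact sequences — really do compose to multiplication by $n$, and the elementary dimension count giving $\Hnis^{d+1}(X,-)=0$ for strictly $\A^1$-invariant coefficients.
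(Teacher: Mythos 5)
Your proof is correct and follows essentially the same route as the paper's: both use Srinivas's divisibility theorem (\Cref{thm:chow-divisibility}) to show $b_d\circ a_d'$ is an isomorphism (hence $a_d'$ injective), then the vanishing of $\Hnis^{d+1}(X,\KM_d[n])$ for dimension reasons to get $a_d'$ surjective, and conclude. You merely spell out some details the paper leaves implicit (the factorization of $\cdot n$, the reduction to the integral case so that \Cref{thm:chow-divisibility} applies).
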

\begin{proof}We refer to \Cref{divide-LES-1} and \Cref{divide-LES-2} for notation. By Theorem \ref{thm:chow-divisibility}, the group $\Hnis^d(X,\KM_d)\cong \CH^d(X)$ is divisible and torsion-free. This implies $b_d \circ a_d'$ is an isomorphism and $a_d'$ is injective. The map $a_d'$ is moreover surjective and thus an isomorphism since $\Hnis^{d+1}(X,\KM_d[n])$ is zero by dimension considerations. Thus $b_d$ is an isomorphism as was to be shown.
\end{proof}
Next, we prove a similar result for the $(d-1)$-st cohomology.
\begin{lemma}\label{Tariq-lem1}   If $X$ is a smooth affine $d$-fold over an algebraically closed field $k$ and $n \geq 2$ is a positive integer that is invertible in $k$, then the natural map
\[b_{d-1}\:\Hnis^{d-1}(X,n\KM_d) \to \Hnis^{d-1}(X,\KM_d)\] is an isomorphism.
\end{lemma}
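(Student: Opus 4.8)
The plan is to run the argument of \Cref{lem-1} essentially verbatim, with two of its inputs replaced. There one used that $\Hnis^d(X,\KM_d)\cong\CH^d(X)$ is divisible and torsion-free (\Cref{thm:chow-divisibility}) together with the vanishing of $\Hnis^{d+1}(X,\KM_d[n])$, which was automatic for dimension reasons. Here I will instead need that $\cdot n$ is an isomorphism on $\Hnis^{d-1}(X,\KM_d)$ and that $\Hnis^{d}(X,\KM_d[n])=0$. Neither is free, but both follow from facts already recorded in the paper.

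For the divisibility input: since $X$ is a smooth $d$-fold, $\I^{d+1}$ restricts to $0$ on the small Nisnevich site of $X$ by \Cref{prop:Ij-vanishing}\Cref{a-Ij-vanishing}, so the short exact sequence \Cref{eqn:M-MW-I-sequence} gives an isomorphism of sheaves $\KMW_d|_X\cong\KM_d|_X$, hence $\Hnis^i(X,\KM_d)\cong\Hnis^i(X,\KMW_d)$ for all $i$. By \Cref{lem:fasel-KMW-divisibility} (where the standing hypothesis $d\ge4$ enters), $\Hnis^{d-1}(X,\KMW_d)$ is uniquely divisible prime to the characteristic of $k$, so --- since $n$ is invertible in $k$ --- multiplication by $n$ on $\Hnis^{d-1}(X,\KM_d)$ is an isomorphism.

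For the vanishing input: $\KM_d[n]$ is strictly $\A^1$-invariant, being the kernel of a morphism of strictly $\A^1$-invariant sheaves, and contraction is exact, so iterating \Cref{prop:contractions-1}\Cref{KM-contractions} gives $(\KM_d[n])_{-d}\cong\KM_0[n]$, which vanishes because $\KM_0\cong\Z$ is torsion-free. Therefore \Cref{prop:gersten-complex-properties}\Cref{sheaf-contracts-to-zero} yields $\Hnis^i(X,\KM_d[n])=0$ for all $i\ge d$; in particular $\Hnis^{d}(X,\KM_d[n])=0$.

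Finally I would assemble these exactly as in \Cref{lem-1}. The composite $\KM_d\to n\KM_d\hookrightarrow\KM_d$ of the epimorphism from \Cref{divide-SES-1} followed by the inclusion from \Cref{divide-SES-2} is $\cdot n$, so $b_{d-1}\circ a'_{d-1}=\cdot n$ on $\Hnis^{d-1}(X,\KM_d)$, which is bijective; hence $a'_{d-1}$ is injective. The relevant segment of \Cref{divide-LES-1} at $i=d-1$ is
\[\Hnis^{d-1}(X,\KM_d)\xto{a'_{d-1}}\Hnis^{d-1}(X,n\KM_d)\to\Hnis^{d}(X,\KM_d[n])=0,\]
so $a'_{d-1}$ is also surjective, hence an isomorphism, whence $b_{d-1}=(\cdot n)\circ(a'_{d-1})^{-1}$ is an isomorphism. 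I expect the only genuinely delicate point --- the main difference from \Cref{lem-1} --- to be the vanishing $\Hnis^{d}(X,\KM_d[n])=0$: unlike the $\Hnis^{d+1}$ term used there it is not forced by cohomological dimension, and instead requires the full $d$-fold contraction computation $(\KM_d[n])_{-d}=0$.
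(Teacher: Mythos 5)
Your proposal is correct and follows essentially the same route as the paper's proof: both deduce that multiplication by $n$ is an isomorphism on $\Hnis^{d-1}(X,\KM_d)$ from \Cref{lem:fasel-KMW-divisibility} via \Cref{eqn:M-MW-I-sequence} and the vanishing of $\I^{d+1}|_X$, and both conclude by showing $a'_{d-1}$ is bijective using $\Hnis^{d}(X,\KM_d[n])=0$. The only difference is that you supply an explicit justification (the $d$-fold contraction $(\KM_d[n])_{-d}\cong\Z[n]=0$) for that last vanishing, which the paper asserts without proof, and you correctly flag that the $d\ge 4$ hypothesis of \Cref{lem:fasel-KMW-divisibility} is being used implicitly.
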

\begin{proof}  By \Cref{lem:fasel-KMW-divisibility}, multiplication by $n$ is an isomorphism on $\Hnis^{d-1}(X,\KMW_d)$. Using \Cref{eqn:M-MW-I-sequence} and \Cref{prop:Ij-vanishing}\Cref{a-Ij-vanishing}, the same is true for multiplication by $n$ on $\Hnis^{d-1}(X,\KM_d)$. Therefore $b_{d-1}\circ a_{d-1}'$ is an isomorphism and $b_{d-1}$ is surjective.

Since $a'_{d-1}$ is injective, it suffices to show that $a'_{d-1}$ is also surjective (and hence bijective). This follows from \Cref{divide-SES-1} with $i=d-1$ and the fact that $\Hnis^{d}(X, K^M_d [n]) = 0$.
\end{proof}

\begin{lemma}\label{tariq-lem0} If $X$ is a smooth affine $d$-fold over an algebraically closed field $k$ and $n \geq 2$ is a positive integer that is invertible in $k$, then $\Hnis^{d-1}(X,\KM_d/n)=0$.
\end{lemma}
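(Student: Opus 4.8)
The plan is to feed the two isomorphisms just established, namely $b_{d-1}$ from \Cref{Tariq-lem1} and $b_d$ from \Cref{lem-1}, into the long exact sequence \Cref{divide-LES-2} associated to \Cref{divide-SES-2}. Concretely, I would extract from \Cref{divide-LES-2} in cohomological degree $i=d-1$ the five-term fragment
\[
\Hnis^{d-1}(X,n\KM_d)\xto{b_{d-1}}\Hnis^{d-1}(X,\KM_d)\xto{b'_{d-1}}\Hnis^{d-1}(X,\KM_d/n)\xto{\partial}\Hnis^{d}(X,n\KM_d)\xto{b_d}\Hnis^{d}(X,\KM_d).
\]

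Since $b_{d-1}$ is surjective by \Cref{Tariq-lem1}, exactness at $\Hnis^{d-1}(X,\KM_d)$ forces $b'_{d-1}$ to be the zero map. Hence by exactness at $\Hnis^{d-1}(X,\KM_d/n)$, the connecting homomorphism $\partial$ is injective. On the other hand, $b_d$ is an isomorphism by \Cref{lem-1}, in particular $\ker(b_d)=0$; but $\ker(b_d)=\op{im}(\partial)$ by exactness at $\Hnis^{d}(X,n\KM_d)$, so $\partial$ is also the zero map. An injective zero map has trivial source, so $\Hnis^{d-1}(X,\KM_d/n)=0$, as claimed.

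There is no real obstacle at this stage: the statement is a formal consequence of the two preceding lemmas. The genuine content has already been spent in establishing those lemmas, which rest on Srinivas' divisibility and torsion-freeness of $\CH^d(X)$ (\Cref{thm:chow-divisibility}) and on Fasel's divisibility results for $\Hnis^{d-1}(X,\KMW_d)$ (\Cref{lem:fasel-KMW-divisibility}), together with the vanishing of $\Hnis^{d+1}(X,\KM_d[n])$ by dimension. The only thing worth double-checking is that all maps in sight are the twist-free ones, i.e.\ that we are genuinely applying the lemmas as stated with no line bundle twist; this is immediate since \Cref{divide-SES-1} and \Cref{divide-SES-2} are sequences of untwisted sheaves.
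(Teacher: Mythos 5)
Your proof is correct and matches the paper's argument essentially verbatim: both extract the same five-term fragment of the long exact sequence from \Cref{divide-SES-2}, use surjectivity of $b_{d-1}$ (\Cref{Tariq-lem1}) to kill $b'_{d-1}$ and injectivity of $b_d$ (\Cref{lem-1}) to kill the connecting map, and conclude that $\Hnis^{d-1}(X,\KM_d/n)$ is trapped between two zero maps. No differences worth noting.
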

\begin{proof} Consider \Cref{divide-LES-2} with $i=d-1$. Surjectivity of $b_{d-1}$ implies that $b_{d-1}'$ is zero. However, since $b_d$ is an isomorphism, $b_{d-1}'$ is also surjective and $\Hnis^{d-1}(X,\KM_d/n)$ is zero.
\end{proof}

\begin{lemma}\label{tariq-lem2}  If $X$ is a smooth affine $d$-fold over an algebraically closed field $k$ and $n \geq 2$ is a positive integer that is invertible in $k$,then $\Hnis^{d-2}(X,\KM_d/n)=0$. \end{lemma}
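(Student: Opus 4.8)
The plan is to run the same long-exact-sequence chase as in \Cref{lem-1,Tariq-lem1,tariq-lem0}, now in cohomological degree $d-2$, using the short exact sequences \Cref{divide-SES-1} and \Cref{divide-SES-2}. The crucial input is that multiplication by $n$ is \emph{surjective} on $\Hnis^{d-2}(X,\KM_d)$ --- although, in contrast to degree $d-1$, it need not be injective. To see this surjectivity, I would first note that $\I^{d+1}$ restricts to zero on the small Nisnevich site of $X$ by \Cref{prop:Ij-vanishing}\Cref{a-Ij-vanishing}, so \Cref{eqn:M-MW-I-sequence} identifies $\KMW_d|_X$ with $\KM_d|_X$ and hence yields $\Hnis^{d-2}(X,\KMW_d)\cong\Hnis^{d-2}(X,\KM_d)$ compatibly with multiplication by $n$; by \Cref{lem:fasel-KMW-divisibility} the former group is divisible prime to the characteristic of $k$, giving the claimed surjectivity.

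Next I would exploit that $\cdot n\colon\KM_d\to\KM_d$ factors as $\KM_d\tto n\KM_d\hookrightarrow\KM_d$. On $\Hnis^{d-2}(X,-)$ this means $\cdot n$ factors as $a'_{d-2}$ followed by $b_{d-2}$, in the notation of \Cref{divide-LES-1,divide-LES-2}. Since $\cdot n$ is surjective, $b_{d-2}\colon\Hnis^{d-2}(X,n\KM_d)\to\Hnis^{d-2}(X,\KM_d)$ is surjective as well, and therefore $b'_{d-2}=0$ in \Cref{divide-LES-2}.

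Finally, exactness of \Cref{divide-LES-2} at $\Hnis^{d-2}(X,\KM_d/n)$ together with $b'_{d-2}=0$ shows that the connecting map embeds $\Hnis^{d-2}(X,\KM_d/n)$ into $\ker\!\big(b_{d-1}\colon\Hnis^{d-1}(X,n\KM_d)\to\Hnis^{d-1}(X,\KM_d)\big)$. But $b_{d-1}$ is an isomorphism by \Cref{Tariq-lem1}, so this kernel vanishes and hence $\Hnis^{d-2}(X,\KM_d/n)=0$.

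This is essentially bookkeeping once the earlier lemmas are in place; the only subtlety --- and the spot where a careless argument would stall --- is that in degree $d-2$ one has only divisibility, not torsion-freeness, of $\Hnis^{d-2}(X,\KMW_d)$, so the argument must be arranged to use solely surjectivity of multiplication by $n$ and never its injectivity.
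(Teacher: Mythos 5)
Your proposal is correct and follows essentially the same route as the paper: both arguments use divisibility of $\Hnis^{d-2}(X,\KMW_d)$ (transported to $\KM_d$ via the vanishing of $\I^{d+1}|_X$) to show $b_{d-2}$ is surjective and hence $b'_{d-2}=0$, and then use the isomorphism $b_{d-1}$ from \Cref{Tariq-lem1} together with exactness of \Cref{divide-LES-2} to conclude. Your closing remark correctly identifies the one point of care --- only surjectivity of multiplication by $n$ is available in this degree --- and the paper's proof is arranged in the same way.
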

\begin{proof}This argument is similar
 to the proof of \Cref{tariq-lem0}. \Cref{lem:fasel-KMW-divisibility} implies that $b_{d-2}\circ a_{d-2}'$ is surjective and $b_{d-2}'$ is zero. By \Cref{Tariq-lem1}, $b_{d-1}$ is an isomorphism. So we conclude that $b_{d-2}'$ is surjective and zero. \end{proof}

From \Cref{LES008}, we conclude:
\begin{corollary}\label{cor:stage-2-reduces-to-GW} 
If $X$ is a smooth affine fourfold over an algebraically closed field $k$ of characteristic $\ne 2,3$, then $ \Hnis^2(X, \KM_4/12)=0$, $ \Hnis^3(X, \KM_4/12)=0$, and
 \[{e_2}_*\: \Hnis^3(X,\piA{2}{\A^2\minus0}(\mathcal L)) \to  \Hnis^3(X,\KSp_3(\mathcal L))\] is an isomorphism.
\end{corollary}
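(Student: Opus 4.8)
The plan is to read off all three assertions from the lemmas of \Cref{subsec:vanishing_km} together with the exact sequence \Cref{LES008}. Since $k$ has characteristic different from $2$ and $3$, the integer $n = 12$ is invertible in $k$, so \Cref{lem-1}, \Cref{Tariq-lem1}, \Cref{tariq-lem0} and \Cref{tariq-lem2} all apply with $d = 4$ and $n = 12$.

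First I would dispatch the two asserted vanishings directly: $\Hnis^2(X,\KM_4/12) = \Hnis^{d-2}(X,\KM_d/n) = 0$ by \Cref{tariq-lem2}, and $\Hnis^3(X,\KM_4/12) = \Hnis^{d-1}(X,\KM_d/n) = 0$ by \Cref{tariq-lem0}. For the isomorphism claim I also need the vanishing of $\Hnis^4(X,\KM_4/12)$, which is not among the asserted statements but follows by the same mechanism: taking $i = d = 4$ in the long exact sequence \Cref{divide-LES-2}, the term $\Hnis^{d+1}(X,n\KM_d)$ vanishes because the Nisnevich cohomological dimension of $X$ is at most $d$, so the sequence exhibits $\Hnis^d(X,\KM_d/n)$ as $\coker(b_d)$; and $b_d$ is an isomorphism by \Cref{lem-1}, whence $\Hnis^4(X,\KM_4/12) = 0$.

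Finally I would feed these vanishings into \Cref{LES008}. Recall that this sequence is obtained by twisting the short exact sequence \Cref{eq:21} by $\mathcal L$ — harmless because $\Tsheaf{4}$ carries the trivial $\mathbb G_m$-action by \Cref{thm:AF3}, so $\Tsheaf{4}(\mathcal L) \cong \Tsheaf{4}$ — and then identifying $\Hnis^i(X,\Tsheaf{4})$ with $\Hnis^i(X,\KM_4/12)$ for $i = 3,4$ via \Cref{cor:TS4} and \Cref{TSKM4}. Since the two outer terms $\Hnis^3(X,\KM_4/12)$ and $\Hnis^4(X,\KM_4/12)$ now vanish, \Cref{LES008} collapses to the short exact sequence
\[ 0 \to \Hnis^3(X,\piA{2}{\A^2\minus0}(\mathcal L)) \xto{{e_2}_*} \Hnis^3(X,\KSp_3(\mathcal L)) \to 0, \]
so ${e_2}_*$ is an isomorphism. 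I do not expect any genuine obstacle here: the substantive work was already carried out in the preceding lemmas, and the corollary is essentially bookkeeping on top of them. The only point that requires a moment's thought is the supplementary vanishing of $\Hnis^4(X,\KM_4/12)$, which is not stated explicitly but is what forces surjectivity of ${e_2}_*$.
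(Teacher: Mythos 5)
Your proposal is correct and follows essentially the same route as the paper: the two stated vanishings are exactly \Cref{tariq-lem2} and \Cref{tariq-lem0} with $d=4$, $n=12$, and the isomorphism follows by feeding the vanishing of $\Hnis^3(X,\KM_4/12)$ and $\Hnis^4(X,\KM_4/12)$ into \Cref{LES008}. You are also right to flag that the surjectivity of ${e_2}_*$ needs $\Hnis^4(X,\KM_4/12)=0$, which the paper promises but never isolates as a lemma; your derivation of it from \Cref{lem-1} together with the cohomological dimension bound is exactly the intended argument.
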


\subsubsection{Proof of \Cref{thm:stage2-secintro}}\label{subsec:stage2-proof}

By \Cref{cor:stage-2-reduces-to-GW}, finiteness (resp. uniqueness) of the lifts in stage 2 reduces to finiteness (resp. triviality) of the group $ \Hnis^3(X,\KSp_3)$. We now give conditions for finiteness (resp. triviality) of $ \Hnis^3(X,\KSp_3)$.

\begin{proposition}\label{prop:h3k3sp-finite-if-ch3-is} 
Let $X$ be a smooth affine fourfold over an algebraically closed field, and suppose that $\Ch^3(X)$ is finite (resp. zero). Then $ \Hnis^3(X, \KSp_3)$ is finite (resp. zero).
\end{proposition}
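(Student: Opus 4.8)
The plan is to relate $\Hnis^3(X,\KSp_3(\mathcal L))$ to Chow groups via the exact sequence of \Cref{GW-SES} together with periodicity of Grothendieck--Witt sheaves. Recall $\KSp_3 \cong \GW_3^2$ by \Cref{GW-KSp}. First I would invoke \Cref{GW-SES} in the twisted form: there is an exact sequence
\[
\Ch^2(X) \xrightarrow{\Sq^2_{\mathcal L}} \Ch^3(X) \to \Hnis^3(X,\GW_3^2(\mathcal L)) \to 0,
\]
so $\Hnis^3(X,\KSp_3(\mathcal L))$ is a quotient of $\Ch^3(X)$. From this the conclusion is essentially immediate: a quotient of a finite group is finite, and a quotient of the zero group is zero. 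So in the notation of the proposition, if $\Ch^3(X)$ is finite (resp. zero), then so is $\Hnis^3(X,\KSp_3(\mathcal L))$, and in particular $\Hnis^3(X,\KSp_3)$ (the untwisted case $\mathcal L=\mathcal O_X$).

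The one point that requires care is justifying the twisted version of \Cref{GW-SES}, since as stated in the excerpt that proposition is untwisted, whereas the group appearing in stage 2 is $\Hnis^3(X,\KSp_3(\mathcal L))$ for a possibly nontrivial line bundle $\mathcal L$. I would handle this exactly as Asok--Fasel do in \cite[\S4]{AF-3fold}: the relevant short exact sequences of sheaves relating $\GW_3^2$, $\KM_3$, and the mod-$2$ objects are $\mathbb G_m$-equivariant (this is the content of \Cref{rmk:act-compatibility} and \Cref{rmk:sseq-gm-action}), so twisting by the $\mathbb G_m$-torsor associated to $\mathcal L$ preserves exactness, and the twisted Steenrod square $\Sq^2_{\mathcal L}$ is precisely the operation appearing in the displayed sequence. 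The term on the left of the sequence, $\Ch^2(X)$, is untwisted because $\mu_2^{\otimes j}\cong \mu_2$ and mod-$2$ Chow groups carry trivial $\mathbb G_m$-action (cf. \Cref{ex:KM-act}, \Cref{rmk:twists-alg-closed}); the twist only affects the middle term via the target $\Ch^3(X)$, which is likewise untwisted for the same reason. Thus the surjection $\Ch^3(X) \tto \Hnis^3(X,\GW_3^2(\mathcal L))$ holds verbatim.

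I do not expect any serious obstacle here; the proposition is a soft consequence of \Cref{GW-SES} once the bookkeeping with twists is in place. The only thing one must be slightly careful about is not to claim more than a quotient — in general $\Sq^2_{\mathcal L}$ need not vanish, so $\Hnis^3(X,\KSp_3(\mathcal L))$ is a proper quotient of $\Ch^3(X)$ in general, which is fine for the finiteness/triviality statement we want. Combined with \Cref{cor:stage-2-reduces-to-GW}, which identifies $\Hnis^3(X,\piA{2}{\A^2\minus 0}(\mathcal L))$ with $\Hnis^3(X,\KSp_3(\mathcal L))$, this completes stage 2: the lifts in \Cref{diag:lift2} from $\BGL_2^{(2)}$ to $\BGL_2^{(3)}$ form a torsor over a quotient of $\Ch^3(X)$, hence are finite (resp. unique) when $\Ch^3(X)$ is finite (resp. zero), which is exactly the content of \Cref{thm:stage2-secintro}.
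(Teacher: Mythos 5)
Your proposal is correct and follows essentially the same route as the paper: identify $\KSp_3\cong\GW_3^2$ via \Cref{GW-KSp}, apply the exact sequence of \Cref{GW-SES} to exhibit $\Hnis^3(X,\GW_3^2(\mathcal L))$ as a quotient of $\Ch^3(X)$, and conclude. Your worry about justifying a twisted version is moot here, since \Cref{GW-SES} as stated in the paper already carries the twist by $\mathcal L$ (both in the coefficient sheaf $\GW_3^2(\mathcal L)$ and in the operation $\Sq^2_{\mathcal L}$), so no additional bookkeeping is needed.
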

\begin{proof} By \Cref{GW-KSp}, we have an isomorphism $\KSp_3 \simeq \GW_3^2$. Applying \Cref{GW-SES}, we have an exact sequence
\begin{equation}\label[sequence]{seq:sq2-L}
    \Ch^2(X) \to \Ch^3(X) \to  \Hnis^3(X,\GW^2_3(\mathcal L)) \to 0.
\end{equation}
In particular, if $\Ch^3(X)$ is finite (resp. zero), then so is $ \Hnis^3(X,\GW_3^2(\mathcal L))$.
\end{proof}
\begin{remark}\label{rmk:steenrod-sq} Recall that the first map in \Cref{seq:sq2-L} is the twisted Steenrod square $\Sq^2_{\mathcal L}\: \Ch^2(X) \to \Ch^3(X)$; see \cite[Theorem 4.17]{AF-3fold}. So $\Hnis^3(X,\GW^2_3(\mathcal L))\cong \coker(\Sq^2_{\mathcal L})$.
\end{remark}
Combining \Cref{cor:stage-2-reduces-to-GW} and \Cref{prop:h3k3sp-finite-if-ch3-is} complets the proof of \Cref{thm:stage2-secintro}.

\subsection{The third Moore--Postnikov stage}\label{sec:stage3}

The number of lifts from the second stage to the third is bounded by $ \Hnis^4(X, \piA{4}{\BGL_2}(\mathcal L))$. Note that $\piA{4}{\BGL_2} \cong \piA{3}{\SL_2}\cong \piA{3}{\A^2\minus 0}$. We will prove:  
\begin{theorem}\label{thm:stage3}
Let $X$ be a smooth affine fourfold over an algebraically closed field of characteristic not equal to $2$ or $3$. Let $\mathcal L$ be a line bundle on $X$. Then $ \Hnis^4(X,\piA{3}{\A^2\setminus 0}(\mathcal L))=0$.
\end{theorem}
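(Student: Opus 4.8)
The plan is to peel $\piA{3}{\A^2\minus 0}$ apart with the symplectic dimension-filtration spectral sequence \Cref{eqn:Sp-SS} together with the structural results of Asok--Fasel, and then to show that the top Nisnevich cohomology of each resulting piece vanishes. Since $\A^6\minus 0$ is highly connected, the only differential touching $E_1^{3,1}=\piA{3}{\A^2\minus 0}$ is the incoming $d_1^{4,2}\colon\piA{4}{\A^4\minus 0}\to\piA{3}{\A^2\minus 0}$, and there is no outgoing differential; running the spectral-sequence analysis that produced \Cref{eq:21} one homotopy degree higher, and using the identification of $E_\infty^{3,1}$ with $\ker(\phi_4\colon\KSp_4\to\KMW_4)$ coming from the filtration on $\piA{3}{\Sp}=\KSp_4$, yields a short exact sequence of strictly $\A^1$-invariant sheaves with $\mathbb G_m$-action
\[
0\longrightarrow\mathcal A\longrightarrow\piA{3}{\A^2\minus 0}\xrightarrow{\ e_3\ }\mathcal B\longrightarrow 0,\qquad \mathcal B:=\ker(\phi_4),
\]
where $\mathcal A=\im(d_1^{4,2})$ is a quotient of $\piA{4}{\A^4\minus 0}$. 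As $X$ is affine of dimension $4$, the associated long exact cohomology sequence reduces \Cref{thm:stage3} to the vanishing of $\Hnis^4(X,\mathcal A(\mathcal L))$ and of $\Hnis^4(X,\mathcal B(\mathcal L))$.

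For $\mathcal B$ I would use the four-term exact sequence $0\to\mathcal B\to\KSp_4\xrightarrow{\phi_4}\KMW_4\to\Tsheaf{4}\to 0$ coming from \Cref{thm:AF3} (its cokernel term is $\Tsheaf{4}$ by definition). The ``torsion'' part is already at hand: $\Tsheaf{4}$ has trivial $\mathbb G_m$-action and $\Hnis^i(X,\Tsheaf{4}(\mathcal L))\cong\Hnis^i(X,\KM_4/12)$ vanishes for $i=2,3,4$ --- for $i=3$ by \Cref{cor:stage-2-reduces-to-GW}, for $i=2$ by combining \Cref{cor:TS4}, the fact that $\KM_4/12\twoheadrightarrow\Ssheaf{4}$ is an isomorphism after two contractions, and \Cref{prop:gersten-complex-properties}\Cref{sheaf-contracts-to-zero}, and for $i=4$ by the argument of \Cref{lem-1} (the map $b_4$ is an isomorphism by Srinivas's \Cref{thm:chow-divisibility}, forcing $\Hnis^4(X,\KM_4/12)=0$). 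Since $\I^5|_X=0$ by \Cref{prop:Ij-vanishing}\Cref{a-Ij-vanishing}, the sequence \Cref{eqn:M-MW-I-sequence} gives $\KMW_4|_X\cong\KM_4|_X$ with trivial action. Splitting the four-term sequence through $\im(\phi_4)$ and inserting these vanishings, $\Hnis^4(X,\mathcal B(\mathcal L))=0$ becomes equivalent to: $(\phi_4)_*\colon\Hnis^3(X,\KSp_4(\mathcal L))\to\Hnis^3(X,\KM_4)$ is surjective and $(\phi_4)_*\colon\Hnis^4(X,\KSp_4(\mathcal L))\to\Hnis^4(X,\KM_4)=\CH^4(X)$ is injective (in fact $(\phi_4)_*$ should be an isomorphism in both degrees). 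For $\mathcal A$, since it is a quotient of $\piA{4}{\A^4\minus 0}$ it suffices to prove $\Hnis^4(X,\piA{4}{\A^4\minus 0}(\mathcal L))=0$; here $\A^4\minus 0$ is $2$-connected by Morel's connectivity theorem, so $\piA{4}{\A^4\minus 0}$ lies in the stable range and is assembled --- up to the sheaves $\I^{\geq 4}$, which are killed on $X$ by \Cref{prop:Ij-vanishing} --- from $\KM_\ast/n$-subquotients coming from the first motivic stable stem, whose fourth twisted Nisnevich cohomology over $X$ vanishes by \Cref{tariq-lem0} and the computation $\Hnis^4(X,\KM_4/n)=0$ above (note also $(\KM_j/n)_{-4}=0$ for $j<4$ by \Cref{prop:contractions-1}\Cref{KM-contractions}).

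The main obstacle is the pair of integral statements about $(\phi_4)_*$: unlike in Stages 1 and 2, the groups $\Hnis^{3}(X,\KSp_4(\mathcal L))$, $\Hnis^4(X,\KSp_4(\mathcal L))$, $\Hnis^3(X,\KM_4)$ and $\CH^4(X)$ are in general non-zero, so the desired vanishing does not come piece by piece and must be extracted from the exactness of the four-term sequence. I would prove these statements on the level of Rost--Schmid complexes: at a point $x\in X^{(p)}$ the residue field $k(x)$ has transcendence degree $4-p$ over the algebraically closed $k$, hence is a $C_{4-p}$ field by Tsen--Lang, so for $p\geq 2$ one has $\I^{2}|_{k(x)}=0$ and the relevant contractions of $\KSp_4$, $\KMW_4$ and $\KM_4$ all coincide there, while at a closed point $\phi_4$ contracts to the injective multiplication-by-$2$ map $\Z=\KMW_0(\bar k)\to\KMW_0(\bar k)$. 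Comparing the Rost--Schmid complexes of $\KSp_4(\mathcal L)$ and $\KMW_4(\mathcal L)$ through $\phi_4$, and invoking the explicit description of the maps $\phi_{2n}$ and of their contractions from \cite[\S3]{AF-3fold}, one identifies $(\phi_4)_*$ in degrees $3$ and $4$ with maps that are respectively surjective and injective. Executing this Rost--Schmid comparison carefully --- in particular checking compatibility with the twist by $\mathcal L$ --- together with pinning down the stable-range structure of $\piA{4}{\A^4\minus 0}$, is where the bulk of the argument lies.
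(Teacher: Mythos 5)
Your structural reduction is essentially the paper's: the same symplectic spectral sequence, the same splitting of $\piA{3}{\A^2\minus 0}$ into $\im(d_1^{4,2})$ and $\ker(\phi_4)$, the same four-term sequence through $\KSp_4$, $\KMW_4$ and $\Tsheaf{4}$, and the correct identification of the two statements that remain --- surjectivity of $\Hnis^3(X,\KSp_4)\to\Hnis^3(X,\KM_4)$ and injectivity of $\Hnis^4(X,\KSp_4)\to\Hnis^4(X,\im(\phi_4))\cong\CH^4(X)$ --- which are indeed jointly equivalent to the vanishing you need. (Two minor differences: the paper first strips the twist by $\mathcal L$ using \cite[Lemma 2.2.3]{fasel2021suslins} rather than carrying it through, and for $\Hnis^4(X,\im(d_1^{4,2}))=0$ it simply quotes $\Hnis^4(X,\piA{4}{\A^4\minus 0})=0$ from \cite[Theorem 3.0.3]{fasel2021suslins} instead of re-deriving the stable-range structure of $\piA{4}{\A^4\minus 0}$.)

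The genuine gap is in your endgame: both statements about $(\phi_4)_*$ are global phenomena and cannot be obtained from a pointwise Rost--Schmid comparison. First, even granting termwise surjectivity (resp.\ injectivity) of a map of Rost--Schmid complexes, this does not give surjectivity on $H^3$ (resp.\ injectivity on $H^4$): the short exact sequence of complexes leaves you with $H^4$ of the kernel complex (resp.\ the image of $H^3$ of the cokernel complex) to control, and your sketch says nothing about these. Second, and more fatally, the termwise statement in degree $3$ fails: under the hyperbolic identification $(\KSp_4)_{-3}\cong\GW_1^{-1}\cong\K_1^Q$ of \cite[Lemma 2.3]{Fasel-Rao-Swan}, the map to $(\KM_4)_{-3}=\KM_1$ is multiplication by an even integer on $F^\times$ (the relevant composite $\KM_4\to\K_4^Q\to\KSp_4\to\KMW_4\to\KM_4$ is multiplication by $12$), and $F^\times$ is not divisible for residue fields $F$ of positive transcendence degree over $\bar k$; the $C_{4-p}$ property kills $\I^2$ but does nothing for divisibility of units. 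The surjectivity the paper actually proves in \Cref{zeromap} holds only at the level of $\Hnis^3$, because $\Hnis^3(X,\KM_4)\cong\Hnis^3(X,\KMW_4)$ is uniquely divisible prime to the characteristic (\Cref{lem:fasel-KMW-divisibility}), so multiplication by $12$ is surjective there even though it is nowhere close to surjective termwise. Likewise the degree-$4$ statement is not proved in the paper via any injectivity of $(\phi_4)_*$ on Rost--Schmid terms; instead one shows the outgoing map $\Hnis^4(X,\piA{3}{\A^2\minus 0})\to\Hnis^4(X,\KSp_4)\cong\CH^4(X)$ (the isomorphism being \Cref{prop:HnGWn2n}) is zero because the source is torsion --- this uses the rational equivalence $\Omega c_2\colon\SL_2\to K(\Z(2),3)$ of \Cref{torsion_sheaves} --- while $\CH^4(X)$ is torsion-free by \Cref{thm:chow-divisibility}. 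You need these global divisibility and torsion inputs; without them the argument does not close.
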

We note a first reduction.
\begin{lemma}\label{thm:stage3-red} With set-up as in \Cref{thm:stage3}, suppose that $ \Hnis^4(X,\piA{3}{\A^2\setminus 0})=0$. Then $ \Hnis^4(X,\piA{3}{\A^2\setminus 0}(\mathcal L))=0$.
\end{lemma}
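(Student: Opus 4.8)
The plan is to deduce the twisted vanishing from the untwisted one by a localization argument in which the twist by $\mathcal L$ is absorbed into a normal bundle. Write $\mathbf A:=\piA{3}{\A^2\setminus 0}$. Since $\mathcal L$ enters the obstruction theory only through the contraction-type $\mathbb G_m$-action on $\mathbf A$ (\Cref{ex:contract-act}; compare \Cref{ex:c1xc2-act}), which factors through $a\mapsto\langle a\rangle\in(\KMW_0)^{\times}$ and hence kills squares, the twisted sheaf $\mathbf A(\mathcal L)$ depends on $\mathcal L$ only through its image in $\operatorname{Pic}(X)/2$; in particular we are free to replace $\mathcal L$ by $\mathcal L\otimes\mathcal M^{\otimes 2}$ for any line bundle $\mathcal M$. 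If $\mathcal L$ admits a nowhere-vanishing global section then $\mathcal L\cong\mathcal O_X$ and there is nothing to prove. Otherwise, as $X$ is affine the line bundle $\mathcal L$ is globally generated, so by Bertini's theorem we may write $\mathcal L\cong\mathcal O_X(D)$ for a smooth closed divisor $D\subset X$ of dimension $3$; set $U:=X\setminus D$, let $j\:D\hookrightarrow X$ be the inclusion, and let $s$ be the canonical section of $\mathcal O_X(D)$, so that $s$ trivializes $\mathcal L|_U$ and identifies $\mathbf A(\mathcal L)|_U\cong\mathbf A|_U$.

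Next I would compare the localization (Gysin) long exact sequences attached to $D$ for the coefficient sheaves $\mathbf A$ and $\mathbf A(\mathcal L)$. Morel's purity gives $H^{p}_{D}(X,\mathbf A(\mathcal L))\cong\Hnis^{p-1}(D,\mathbf A_{-1}(\omega_{D/X}\otimes\mathcal L|_D))$ and $H^{p}_{D}(X,\mathbf A)\cong\Hnis^{p-1}(D,\mathbf A_{-1}(\omega_{D/X}))$, where $\omega_{D/X}=\det N_{D/X}^{\vee}=N_{D/X}^{\vee}$; the crucial point is the canonical identification $\omega_{D/X}\otimes\mathcal L|_D=N_{D/X}^{\vee}\otimes N_{D/X}\cong\mathcal O_D$, which makes the twist on $D$ disappear in the first of these. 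Since $\dim D=3$, the groups $H^{5}_{D}(X,-)\cong\Hnis^{4}(D,-)$ vanish; feeding $\Hnis^{4}(X,\mathbf A)=0$ into the untwisted sequence therefore yields both $\Hnis^{4}(U,\mathbf A)=0$ and surjectivity of the residue $\partial\:\Hnis^{3}(U,\mathbf A)\to\Hnis^{3}(D,\mathbf A_{-1}(\omega_{D/X}))$, while feeding $\Hnis^{4}(U,\mathbf A)=0$ into the twisted sequence gives
\[\Hnis^{4}(X,\mathbf A(\mathcal L))\;\cong\;\coker\!\left(\partial^{\mathrm{tw}}\:\Hnis^{3}(U,\mathbf A)\longrightarrow\Hnis^{3}(D,\mathbf A_{-1})\right).\]
So it suffices to prove that $\partial^{\mathrm{tw}}$ is surjective.

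This last point is the delicate part. The maps $\partial$ and $\partial^{\mathrm{tw}}$ start from the same group but land in the cohomology of the threefold $D$ with coefficient sheaves differing by the (in general nontrivial) line bundle $\omega_{D/X}$; the expected bridge is that multiplication by $s$, which vanishes to order exactly one along $D$, intertwines the two Gysin pushforwards, the discrepancy being precisely the canonical adjunction isomorphism $\omega_{D/X}\otimes\mathcal O_X(D)|_D\cong\mathcal O_D$. This compatibility of residue maps with a change of twist along a divisor can be seen directly on Rost--Schmid complexes, where the residue at a prime divisor depends on a choice of local equation only up to a unit. Making it precise reduces surjectivity of $\partial^{\mathrm{tw}}$ to the statement, one dimension lower, that the vanishing of $\Hnis^{3}(D,\mathbf B(\mathcal N))$ for a strictly $\A^1$-invariant sheaf $\mathbf B$ is independent of the line bundle $\mathcal N$ on the smooth affine threefold $D$; I would prove this by induction on dimension, the inductive step being the same localization argument and the base case of $0$-dimensional schemes over the algebraically closed field $k$ being immediate (there every line bundle is trivial), again using the freedom from the first paragraph to adjust twists by squares. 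The main obstacle, and the step I expect to demand the most care, is exactly this compatibility of the Gysin/residue formalism with twisting; granting it, the remaining ingredients --- Bertini's theorem, Morel's purity, the localization long exact sequence, and the vanishing $H^{5}_{D}(X,-)\cong\Hnis^{4}(D,-)=0$ --- are standard.
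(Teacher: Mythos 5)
Your reduction in the first two paragraphs is sound: the identification $\Hnis^4(X,\mathbf A(\mathcal L))\cong\coker\bigl(\partial^{\mathrm{tw}}\bigr)$ with $\partial^{\mathrm{tw}}\:\Hnis^3(U,\mathbf A)\to\Hnis^3(D,\mathbf A_{-1})$ does follow from purity, the adjunction isomorphism $\omega_{D/X}\otimes\mathcal L|_D\cong\mathcal O_D$, and the vanishing $\Hnis^4(U,\mathbf A)=0$ extracted from the untwisted sequence (granting the Bertini step, which needs some care in positive characteristic). The genuine gap is the third paragraph. What the untwisted sequence gives you is surjectivity of $\partial\:\Hnis^3(U,\mathbf A)\to\Hnis^3(D,\mathbf A_{-1}(\omega_{D/X}))$; what you need is surjectivity of $\partial^{\mathrm{tw}}$, whose target differs from that of $\partial$ by the twist $\omega_{D/X}\cong\mathcal L^\vee|_D$, a generally nontrivial class in $\op{Pic}(D)/2$. ``Multiplication by $s$'' cannot bridge this: since $s$ vanishes to order one along $D$, the only canonical isomorphism it produces is the trivialization $\omega_{D/X}\otimes\mathcal L|_D\cong\mathcal O_D$, which you have already spent in identifying the target of $\partial^{\mathrm{tw}}$; it does not identify $\mathbf A_{-1}(\omega_{D/X})$ with $\mathbf A_{-1}$ on $D$, nor does it intertwine the two residues. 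On the Rost--Schmid level the two residues at the generic point of $D$ differ by the unit $s/\pi$ for a local equation $\pi$ of $D$, and the failure of such units to be coherently chosen squares is exactly the twist --- so the discrepancy you are trying to wave away is the entire content of the lemma.

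The proposed repair by induction does not close the gap either. Even granting the dimension-three statement that vanishing of $\Hnis^3(D,\mathbf B(\mathcal N))$ is independent of $\mathcal N$, you have nothing to feed into it: surjectivity of $\partial$ says only that $\Hnis^3(D,\mathbf A_{-1}(\omega_{D/X}))$ is hit by a map from $\Hnis^3(U,\mathbf A)$, not that it vanishes, so you cannot conclude $\Hnis^3(D,\mathbf A_{-1})=0$, and hence cannot conclude that $\partial^{\mathrm{tw}}$ is onto. The same circularity reappears at every stage of the induction, whose inductive step is exactly the statement being proved. For the record, the paper handles this lemma with a one-line citation of \cite[Lemma 2.2.3]{fasel2021suslins}, which is precisely the general twist-independence of top-degree vanishing that your argument would need; that result is proved by working directly with the Rost--Schmid complex over an algebraically closed base (where closed points have algebraically closed residue fields and the Picard groups of the relevant affine curves are divisible), not by a localization sequence. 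If you want a self-contained proof, that is the mechanism to reconstruct.
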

\begin{proof}
This follows from \cite[Lemma 2.2.3]{fasel2021suslins}.
\end{proof}
To prove that $ \Hnis^4(X,\piA{3}{\A^2\setminus 0})=0$ will require more work. We give background in \Cref{subsec:prelim3}. We then prove \Cref{thm:stage3} in \Cref{thm-stage-3-proof}.

\subsubsection{Preliminaries}\label{subsec:prelim3}
Recall \Cref{eqn:Sp-SS}, from which we obtain an exact sequence involving edge maps and differentials:
\begin{equation}\label[sequence]{eqn:complex-from-ES}
\begin{aligned}
    \cdots \to \piA{4}{\A^4\minus 0} \xto{d_1^{4,2}} \piA{3}{\A^2\minus 0} \xto{e_3} \KSp_4 \xto{\phi_4} \piA{3}{\A^4\minus0} \xto{d_{1}^{3,2}} \piA{2}{\A^2\minus 0} \to \cdots
\end{aligned}
\end{equation}

\begin{definition}\label{def:CD} Let $C=\coker(d_1^{4,2}) $ and $D= \ker(d_1^{3,2})$.\end{definition}

Considering \Cref{eqn:complex-from-ES} at the term $\KSp_4$, we get a short exact sequence
\[
    0 \to C \to \KSp_4 \to D\to 0.
\]
\begin{remark} By exactness, we have identifications
\begin{align*}
    C &\cong \piA{3}{\A^2\minus 0}/\ker(e_3) \\ &  \cong \ker(\phi_4) \\ & \cong \im(e_3), \\
    D &\cong \im(\phi_4). 
\end{align*}
\end{remark}

We therefore obtain the following diagram, where both vertical sequences and the diagonal sequence are short exact:
\begin{equation}\label[diagram]{eqn:LES-H4C}
\begin{tikzcd}[column sep=8mm, row sep=8mm]
    & 0\dar& & 0\dar & 0\\
    & \im(d_1^{4,2})\dar& & D\dar\ar[ur] \\
    & \piA{3}{\A^2\minus0}\dar\rar[dashed,"e_3" above] & \KSp_4\ar[ur]\rar[dashed,"\phi_4"] & \KMW_4\dar  \\
    & C\dar\ar[ur, crossing over]& & \Tsheaf{4}\dar \\
    0\ar[ur] & 0 & &  0 
\end{tikzcd}
\end{equation}

\subsubsection{Proof of \Cref{thm:stage3}}\label{thm-stage-3-proof}
We now prove the main theorem of this section; this involves the sheaves $C$ and $D$ introduced in \Cref{def:CD} and their relationship to the cohomology group of interest. 

\begin{lemma}\label{lem:red2} The map of sheaves $\piA{3}{\A^2\setminus 0} \to C$ induces an isomorphism
\begin{align*}
     \Hnis^4(X,\piA{3}{\A^2\setminus 0}) \xto{\sim}  \Hnis^4(X,C),
\end{align*}
for $X$ a smooth affine fourfold over a field with $2$ and $3$ invertible.
\end{lemma}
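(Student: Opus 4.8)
The map $\piA{3}{\A^2\setminus 0}\to C$ is, by \Cref{def:CD}, the projection onto the cokernel $C=\coker(d_1^{4,2})$, so there is a short exact sequence of strictly $\A^1$-invariant sheaves
\[
0 \to \im(d_1^{4,2}) \to \piA{3}{\A^2\setminus 0} \to C \to 0.
\]
I would feed this into the long exact sequence in Nisnevich cohomology of $X$ and use that, since $X$ is smooth of dimension $4$, the Rost--Schmid complex of any strictly $\A^1$-invariant sheaf on the small Nisnevich site of $X$ is concentrated in degrees $0,\dots,4$, so that $\Hnis^{i}(X,-)=0$ for $i\ge 5$. In particular $\Hnis^5(X,\im(d_1^{4,2}))=0$, so the long exact sequence already shows $\Hnis^4(X,\piA{3}{\A^2\setminus 0})\to\Hnis^4(X,C)$ is surjective, and it is an isomorphism precisely when $\Hnis^4(X,\im(d_1^{4,2}))=0$. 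The lemma therefore reduces to this single vanishing statement.

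I would then reduce that to a vanishing for a homotopy sheaf of $\A^4\setminus 0$. By construction of the differential in \Cref{eqn:Sp-SS} (see also \Cref{eqn:complex-from-ES}), $\im(d_1^{4,2})$ is a quotient of $E_1^{4,2}=\piA{4}{\A^4\setminus 0}$. Applying $\Hnis^\ast(X,-)$ to the short exact sequence $0\to\ker(d_1^{4,2})\to\piA{4}{\A^4\setminus 0}\to\im(d_1^{4,2})\to 0$ and again using the vanishing of $\Hnis^{\ge 5}(X,-)$ shows that $\Hnis^4(X,\piA{4}{\A^4\setminus 0})$ surjects onto $\Hnis^4(X,\im(d_1^{4,2}))$. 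So it suffices to prove $\Hnis^4(X,\piA{4}{\A^4\setminus 0})=0$.

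For the last step I would use that $\A^4\setminus 0\simeq S^{7,4}$ is $\A^1$-$2$-connected, so by the motivic Freudenthal suspension theorem $\piA{4}{\A^4\setminus 0}$ lies in the stable range and is isomorphic to the weight-$(-4)$ part of the first stable $\A^1$-homotopy sheaf of the motivic sphere spectrum. By the computation of R\"ondigs--Spitzweck--\O stv\ae r this first stable stem is an extension of a Hermitian/Witt-theoretic sheaf by a quotient of mod-$24$ Milnor $K$-theory, and in the negative weight $-4$ the Milnor $K$-theory contribution vanishes; one is left with a Witt-theoretic sheaf whose fourth Nisnevich cohomology over a smooth affine fourfold vanishes by \Cref{prop:Ij-vanishing} together with the contraction results of \Cref{prop:contractions-1} and \Cref{prop:gersten-complex-properties}. (One can also try to bypass the stable input and work inside \Cref{eqn:Sp-SS} directly, since the only differential landing in $E_1^{4,2}$ comes from $\piA{5}{\A^6\setminus 0}\cong\KMW_6$ and the surviving subquotient embeds into $\piA{4}{\Sp}\cong\KSp_5\cong\GW_5^2$; both $\Hnis^4(X,\KMW_6)$ and $\Hnis^4(X,\GW_5^2)$ are amenable to the vanishing tools already developed.) Either way $\Hnis^4(X,\piA{4}{\A^4\setminus 0})=0$, hence $\Hnis^4(X,\im(d_1^{4,2}))=0$ and the lemma follows.

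The main obstacle is this last step: identifying $\piA{4}{\A^4\setminus 0}$ precisely enough --- the relevant Hermitian/Witt-theoretic sheaf and its weight --- and checking that its fourth Nisnevich cohomology over a smooth affine fourfold over an algebraically closed field vanishes. The preceding reductions are routine manipulations with long exact sequences and the bound on Nisnevich cohomological dimension.
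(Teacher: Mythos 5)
Your two reductions are exactly the paper's: first the exact sequence
\[
\Hnis^4(X,\im(d_1^{4,2})) \to \Hnis^4(X,\piA{3}{\A^2\setminus 0}) \to \Hnis^4(X,C) \to 0
\]
coming from the left column of \cref{eqn:LES-H4C} and the dimension bound, and then the surjection $\Hnis^4(X,\piA{4}{\A^4\setminus 0}) \twoheadrightarrow \Hnis^4(X,\im(d_1^{4,2}))$ from the kernel--image sequence. The paper then finishes in one line by citing the vanishing $\Hnis^4(X,\piA{4}{\A^4\setminus 0})=0$ as \cite[Theorem 3.0.3]{fasel2021suslins}; you correctly identified this as the crux but tried to reprove it, and that is where your argument is incomplete.

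Two concrete issues with your sketch of the final step. First, the ``bypass'' inside \cref{eqn:Sp-SS} is circular: the incoming differential from $\piA{5}{\A^6\setminus 0}\cong\KMW_6$ and the embedding of $E_2^{4,2}$ into a subquotient of $\KSp_5$ only control $\ker(d_1^{4,2})$, whereas $\piA{4}{\A^4\setminus 0}$ is an extension of $\im(d_1^{4,2})$ by $\ker(d_1^{4,2})$ --- so bounding $\Hnis^4(X,\piA{4}{\A^4\setminus 0})$ this way requires precisely the vanishing of $\Hnis^4(X,\im(d_1^{4,2}))$ you are trying to establish. (Also, $\Hnis^4(X,\KMW_6)\cong\Hnis^4(X,\KM_6)$ on a fourfold is not obviously zero by ``the tools already developed''; its top Rost--Schmid term is a sum of groups $\KM_2(k(x))$ at closed points, which do not vanish over $\C$.) Second, the stable route (Freudenthal plus the R\"ondigs--Spitzweck--\O stv\ae r computation, identification of the correct weight, and the vanishing of the cohomology of the resulting Hermitian/Witt-theoretic piece) is essentially the content of the theorem of Fasel that the paper quotes; as written it is a plausible outline but not a proof, and you acknowledge as much. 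The cleanest fix is simply to invoke \cite[Theorem 3.0.3]{fasel2021suslins} at that point, after which your argument coincides with the paper's.
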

\begin{proof} By the leftmost vertical short exact sequence in \Cref{eqn:LES-H4C}, and the fact that $X$ has dimension four, we obtain an exact sequence of cohomology groups
\begin{equation}\label{eq:exactness1}  \Hnis^4(X, \im(d_1^{4,2})) 
\to  \Hnis^4(X, \piA{3}{\A^2\setminus 0})
\to  \Hnis^4(X, C)
\to 0,
\end{equation}
hence it suffices to argue that $ \Hnis^4(X,\im(d_1^{4,2})) = 0$. We consider the short exact sequence of sheaves
\begin{align*}
    0\to \ker(d_1^{4,2}) \to \piA{4}{\A^4\setminus 0} \to \im(d_1^{4,2})\to 0,
\end{align*}
giving rise to a long exact sequence
\begin{equation}\label[sequence]{eq:exactness2}
\cdots \to  \Hnis^4(X, \ker(d_1^{4,2})) \to  \Hnis^4(X,\piA{4}{\A^4\setminus 0}) \to  \Hnis^4(X,\im(d_1^{4,2}))\to 0.
\end{equation}
Since $X$ is a smooth affine fourfold, $ \Hnis^4(X, \piA{4}{\A^4\setminus0}) = 0$ by \cite[Theorem 3.0.3]{fasel2021suslins}. Together with \Cref{eq:exactness2}, this implies that $ \Hnis^4(X,\im(d_1^{4,2})) = 0$ and the result follows.
\end{proof}
Combining \Cref{eqn:LES-H4C} and \Cref{lem:red2}, we obtain
\begin{corollary}\label{rmk:H4C} There is an exact sequence
\begin{equation}\label[sequence]{eq:LESCD} 
\cdots \to  \Hnis^3(X,D) \to  \Hnis^4(X, \piA{3}{\A^2\minus 0}) \to  \Hnis^4(X, \KSp_4)\to \cdots\, .
\end{equation}
\end{corollary}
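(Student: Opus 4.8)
The plan is to feed the diagonal short exact sequence of sheaves appearing in \Cref{eqn:LES-H4C} into a long exact cohomology sequence and then rewrite one term using \Cref{lem:red2}. Concretely, \Cref{eqn:LES-H4C} supplies the short exact sequence of strictly $\A^1$-invariant Nisnevich sheaves
\[
0 \to C \to \KSp_4 \to D \to 0,
\]
in which $C \to \KSp_4$ is the factorization of the edge morphism $e_3 \colon \piA{3}{\A^2\minus 0} \to \KSp_4$ through the sheaf quotient $\piA{3}{\A^2\minus 0} \twoheadrightarrow C$. First I would pass to the associated long exact sequence in Nisnevich cohomology of $X$, and isolate the segment
\[
\cdots \to \Hnis^3(X,D) \to \Hnis^4(X,C) \to \Hnis^4(X,\KSp_4) \to \cdots .
\]

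Next I would invoke \Cref{lem:red2} --- which applies since $X$ is a smooth affine fourfold with $2$ and $3$ invertible --- to obtain the isomorphism $\Hnis^4(X,\piA{3}{\A^2\minus 0}) \xrightarrow{\sim} \Hnis^4(X,C)$ induced by $\piA{3}{\A^2\minus 0}\twoheadrightarrow C$. Substituting it into the segment above yields the asserted exact sequence. Since the composite $\piA{3}{\A^2\minus 0} \to C \to \KSp_4$ is precisely $e_3$, the resulting map $\Hnis^4(X,\piA{3}{\A^2\minus 0}) \to \Hnis^4(X,\KSp_4)$ is the one induced by $e_3$; and naturality of the long exact cohomology sequence in the coefficient sheaf guarantees that the incoming connecting map from $\Hnis^3(X,D)$ is transported correctly along the isomorphism of \Cref{lem:red2}.

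There is no genuine obstacle here: the corollary is a formal consequence of \Cref{lem:red2} together with the exactness already recorded in \Cref{eqn:LES-H4C}, and the only point requiring (minimal) care is the naturality of the cohomology long exact sequence just invoked. If one wishes, one may additionally observe that $\Hnis^5(X,C) = 0$ for dimension reasons, so the sequence in fact extends on the right as $\Hnis^4(X,\KSp_4) \to \Hnis^4(X,D) \to 0$; this refinement is not needed for the stated result but is convenient for the analysis that follows.
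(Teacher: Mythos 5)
Your proposal is correct and matches the paper's argument exactly: the corollary is obtained by taking the long exact cohomology sequence of the short exact sequence $0 \to C \to \KSp_4 \to D \to 0$ from \Cref{eqn:LES-H4C} and substituting $\Hnis^4(X,C)\cong \Hnis^4(X,\piA{3}{\A^2\minus 0})$ via \Cref{lem:red2}. Your additional remarks on naturality and the identification of the map with the one induced by $e_3$ are accurate but not needed beyond what the paper records.
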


To prove \Cref{thm:stage3} it suffices to argue that both the incoming map from $ \Hnis^3(X,D)$ and the outgoing map to $ \Hnis^4(X,\KSp_4)$ in \Cref{eq:LESCD} factor through zero. 
We next study the term $ \Hnis^4(X,\KSp_4)$. Recalling that $\GW_4^2\cong \KSp_4$ by 
\Cref{GW-KSp}, the following general result is applicable:

\begin{proposition}\label{prop:HnGWn2n}
Let $X$ be a smooth affine $n$-fold over a field $k$ of characteristic not equal to $ 2$. Then $$ \Hnis^n(X,\GW^{n-2}_n)\cong \CH^n(X)$$
\end{proposition}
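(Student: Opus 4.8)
The plan is to identify $\Hnis^n(X, \GW_n^{n-2})$ with a Chow group by exploiting the exact sequence relating $\GW_n^{n-2}$ to Milnor $K$-theory and the fundamental ideal, together with the vanishing results for $\I^j$ in high cohomological degree. First I would recall that, by the general structure of Grothendieck--Witt sheaves, there is a Karoubi-type exact sequence
\[
0 \to \I^{n-1} \to \GW_n^{n-2} \to \KM_n \to 0
\]
of strictly $\A^1$-invariant sheaves (this is the unstable analogue of the fiber sequence for Hermitian $K$-theory; for $n$ large relative to the $4$-periodicity one uses that $\GW_n^{n-2}$ sits between $\I^{n-1}$ and $\KM_n$ exactly as $\KMW_n$ does, compare \Cref{eqn:M-MW-I-sequence}). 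The point is that the ``symbol'' map $\GW_n^{n-2} \to \KM_n$ is surjective with kernel a power of the fundamental ideal of the appropriate weight.

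Next I would run the long exact sequence in Nisnevich cohomology. Since $X$ is a smooth affine $n$-fold over an algebraically closed field, we have by \Cref{prop:Ij-vanishing}\Cref{a-Ij-vanishing} that $\I^{n-1}|_X$ need not vanish, but its contraction behavior and \Cref{prop:Ij-vanishing}(2) give that $\Hnis^n(X, \I^{n-1}) = \Hnis^{n-1}(X,\I^{n-1}) = 0$ for the relevant weight $n-1 \geq n - 1$; more precisely, the statement we need is $\Hnis^n(X,\I^{n-1})=0$, which follows from \Cref{prop:Ij-vanishing}(2) applied with $j = n-1$ (one needs $j \geq n-1$, and the statement there is for $j \geq n$, so I would instead argue directly: the Rost--Schmid complex for $\I^{n-1}(\mathcal L)$ in top degree $n$ has terms $\I^{n-1-n}(x,\cdots) = \I^{-1} = \GW$ or uses $(\I^{n-1})_{-n}$, and since $n > n - 1$ the $n$-fold contraction of $\I^{n-1}$ vanishes, so $\Hnis^n(X,\I^{n-1}) = 0$ by \Cref{prop:gersten-complex-properties}\Cref{sheaf-contracts-to-zero}). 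Feeding this into the long exact sequence yields that
\[
\Hnis^n(X, \GW_n^{n-2}) \to \Hnis^n(X, \KM_n) \cong \CH^n(X)
\]
is an isomorphism, using also that $\Hnis^{n+1}$ vanishes for dimension reasons, and the identification $\Hnis^n(X,\KM_n) \cong \CH^n(X)$ from Bloch's formula.

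The main obstacle is pinning down the correct exact sequence for $\GW_n^{n-2}$ and verifying the contraction/vanishing input cleanly. One must be careful that the weight appearing in the kernel is $\I^{n-1}$ and not, say, $\I^n$ or $\I^{n+1}$; this is forced by \Cref{prop:contractions-1}\Cref{GW-contractions}, since contracting $\GW_n^{n-2}$ once gives $\GW_{n-1}^{n-3}$, and iterating, $(\GW_n^{n-2})_{-i} \cong \GW_{n-i}^{n-2-i}$, so that the "defect" from Milnor $K$-theory has the contraction behavior of $\I^{n-1}$ (which contracts to $\I^{n-1-i}$, matching $\GW_{n-i}^{n-2-i}$ versus $\KM_{n-i}$ in low degrees). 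Since there is no twist issue (the statement is for untwisted coefficients, and in any case $\KM_n(\mathcal L) \cong \KM_n$ by \Cref{ex:KM-act}), once the exact sequence and the vanishing $\Hnis^n(X,\I^{n-1}) = 0$ are in hand, the conclusion is immediate. I would also double-check the edge case against \Cref{prop:h3k3sp-finite-if-ch3-is} and \Cref{GW-SES} for $n = 3$ to make sure the statements are consistent (there $\Hnis^3(X,\GW_3^2)$ is only a quotient of $\Ch^3(X)$, not $\CH^3(X)$ — but that is the $n-1 = 2$ case where $\Hnis^3(X,\I^2)$ need not vanish, so no contradiction with the $n = 4$ statement needed here).
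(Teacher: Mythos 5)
There is a genuine gap: the short exact sequence $0 \to \I^{n-1}\to \GW_n^{n-2}\to \KM_n \to 0$ on which your whole argument rests is not established anywhere and is in fact false. Contraction is exact on strictly $\A^1$-invariant sheaves, so the $n$-fold contraction of your sequence would have to be exact; by \Cref{prop:contractions-1}\Cref{GW-contractions} and $4$-periodicity this reads $0 \to \I^{-1}\to \GW_0^{-2}\to \KM_0\to 0$, i.e.\ $0 \to \mathbf{W}\to \Z \to \Z \to 0$ (negative powers of the fundamental ideal stabilize at the Witt sheaf $\mathbf W$, and $\GW_0^{-2}\cong\GW_0^2=\KSp_0\cong\Z$ on fields), which would force $\mathbf W=0$. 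The heuristic you use to pin down the kernel also points the wrong way: the ``defect'' of $\GW_n^{n-2}$ from $K$-theory vanishes after $n-1$ contractions, because the hyperbolic map $\K^Q_i\to\GW_i^{i-2}$ is an isomorphism for $i=0,1$ (Fasel--Rao--Swan), whereas $\I^{n-1}$ survives every contraction as $\mathbf W$. Relatedly, your claim that $(\I^{n-1})_{-n}=0$ ``since $n>n-1$'' confuses the contraction behavior of $\I^j$ with that of $\KM_j$: one has $(\I^j)_{-1}=\I^{j-1}$ with $\I^m=\mathbf W$ for $m\le 0$, so the sheaf never contracts to zero. The vanishing $\Hnis^n(X,\I^{n-1})=0$ does hold over an algebraically closed field (via \Cref{eqn:I-I-M} and \Cref{prop:Ij-vanishing}), but the proposition is stated for an arbitrary field of characteristic $\neq 2$, where such vanishing fails (e.g.\ over $\R$), so even a repaired version of your route would prove less than claimed.

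The paper's proof uses the correct version of the idea you are circling: it compares $\GW_n^{n-2}$ not with $\KM_n$ via an $\I$-extension, but with Quillen $K$-theory $\K^Q_n$ via the hyperbolic map. Since $\Hnis^n$ of a strictly $\A^1$-invariant sheaf only sees the last two terms of the Rost--Schmid complex, and the relevant contractions there are $(\GW_n^{n-2})_{-(n-1)}\cong\GW_1^{-1}\cong\K_1^Q$ and $(\GW_n^{n-2})_{-n}\cong\GW_0^{-2}\cong\K_0^Q$, the hyperbolic map identifies the two complexes in degrees $n-1$ and $n$, giving $\Hnis^n(X,\GW_n^{n-2})\cong\Hnis^n(X,\K_n^Q)\cong\Hnis^n(X,\KM_n)\cong\CH^n(X)$ over any field of characteristic $\neq 2$. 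If you want to keep your ``isomorphism after enough contractions'' framing, this is exactly an application of \Cref{prop:gersten-complex-properties}\Cref{contract-isom} to the hyperbolic map $\K^Q_n\to\GW_n^{n-2}$, which is an isomorphism after $(n-1)$-fold contraction.
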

\begin{proof}
Let $F$ be a field. By \cite[Lemma 2.3]{Fasel-Rao-Swan} the hyperbolic map $$h\:\K_i^Q(F) \to \GW^{i-2}_i(F)\simeq \GW^{i+2}_i(F)$$ is an isomorphism for $i=0,1.$ Consider Rost--Schmid complexes computing $ \Hnis^n(X,\GW^{n-2}_n)$ and $ \Hnis^n(X,\K_n^Q)$, and the comparison map induced by the hyperbolic map:

\[\begin{tikzcd}
& \oplus_{x \in X^{(n-1)}}\K_{1}^M(k(x))\ar[d,"\simeq"] 
& \oplus_{x \in X^{(n)}}\K_{0}^M(k(x)) \ar[d,"\simeq"] 
\\
    \cdots\ar[r]
    & \oplus_{x \in X^{(n-1)}}(\K_{n}^Q)_{-(n-1)}(k(x)) \ar[r]\ar[d,"h"]
    & \oplus_{x \in X^{(n)}}(\K_{n}^Q)_{-n}(k(x))\ar[r]\ar[d,"h"]
    &0 
    \\
     \cdots\ar[r]
     & \oplus_{x \in X^{(n-1)}}(\GW^{n-2}_n)_{-(n-1)}(k(x)) \ar[d,"\simeq"]\ar[r]
     & \oplus_{x \in X^{(n)}}(\GW^{n-2}_n)_{-n}(k(x))\ar[r]\ar[d,"\simeq"]
     &0
     \\
      & \oplus_{x \in X^{(n-1)}}\GW^{-1}_{1}(k(x)) 
      & \oplus_{x \in X^{(n)}}\GW^{-2}_0(k(x)) 
      &.\\
\end{tikzcd}\]
So $ \Hnis^n(X,\GW^{n-2}_n)\simeq$ $ \Hnis^n(X,\K_n^Q)$ $\simeq$ $ \Hnis^n(X,\K_n^M)\simeq \CH^n(X)$.
\end{proof}
 We next consider cohomology with coefficients in $D$.
\begin{proposition}\label{prop:H3XD} Let $X$ be a smooth affine fourfold over an algebraically closed field $k$ of characteristic not equal to $ 2$ or $3$. Then the natural maps of sheaves $D \to \KMW_4 \to \KM_4$ induce isomorphisms
\begin{equation}\label[sequence]{sequencesequence009}
     \Hnis^3(X,D) \xto{\sim}  \Hnis^3(X,\KMW_4) \xto{\sim}  \Hnis^3(X,\KM_4).
\end{equation}
\end{proposition}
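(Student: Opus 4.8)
The plan is to analyze the sheaf $D = \ker(d_1^{3,2})$ through the two short exact sequences it sits in, namely $0 \to C \to \KSp_4 \to D \to 0$ and $0 \to D \to \KMW_4 \to \Tsheaf{4} \to 0$ (the latter reading off the diagonal-plus-vertical structure of \Cref{eqn:LES-H4C}, since $D \cong \im(\phi_4)$ injects into $\KMW_4$ with cokernel $\Tsheaf{4}$). The second sequence is the one that gives the cleanest handle on $\Hnis^3(X,D)$: its associated long exact sequence reads
\[
\Hnis^2(X,\Tsheaf{4}) \to \Hnis^3(X,D) \to \Hnis^3(X,\KMW_4) \to \Hnis^3(X,\Tsheaf{4}),
\]
and the outer terms should both vanish. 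For the left term, $\Hnis^2(X,\Tsheaf{4})$, I would use \Cref{cor:TS4} (so $\Tsheaf{4}|_X \cong \Ssheaf{4}|_X$) together with the fact that $\Ssheaf{4}$ is a quotient of $\KM_4/12$ after suitable contraction; since $(\KM_4/12)_{-2} \cong (\Ssheaf{4})_{-2}$ and $2 \le \dim X - 2$ is false here... so instead I would argue directly that $(\Tsheaf{4})_{-n}$ vanishes on the small Nisnevich site of $X$ for $n$ large enough, or use the mod-$12$ vanishing results from \Cref{subsec:vanishing_km} (\Cref{tariq-lem2} with $d=4$, $n=12$ gives $\Hnis^2(X,\KM_4/12) = 0$, hence $\Hnis^2(X,\Tsheaf{4}) = 0$ after checking the contraction comparison extends to degree $2$). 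For the right term, $\Hnis^3(X,\Tsheaf{4}) \cong \Hnis^3(X,\KM_4/12) = 0$ by \Cref{cor:stage-2-reduces-to-GW}. This establishes the first isomorphism $\Hnis^3(X,D) \xto{\sim} \Hnis^3(X,\KMW_4)$.

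For the second isomorphism $\Hnis^3(X,\KMW_4) \xto{\sim} \Hnis^3(X,\KM_4)$, I would use the short exact sequence \Cref{eqn:M-MW-I-sequence}, namely $0 \to \I^5 \to \KMW_4 \to \KM_4 \to 0$. Since $X$ is a smooth affine fourfold, $\I^5|_X \cong 0$ by \Cref{prop:Ij-vanishing}\Cref{a-Ij-vanishing}, so the map $\KMW_4|_X \to \KM_4|_X$ is already an isomorphism of sheaves on the small Nisnevich site of $X$, and the claim on cohomology is immediate.

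The main obstacle I anticipate is the vanishing of $\Hnis^2(X,\Tsheaf{4})$: unlike the degree-$3$ and degree-$4$ cohomology groups, which are controlled by the 2-fold contraction comparison with $\KM_4/12$ via \Cref{prop:gersten-complex-properties}\Cref{contract-isom} (valid in degrees $\ge 3$), degree $2$ falls outside the range where that comparison automatically applies. One would need either a sharper contraction estimate — showing $\KM_4/12 \to \Ssheaf{4}$ is an isomorphism after only one contraction, or that the relevant error term contracts to zero — or a direct Rost–Schmid argument bounding $C^2_{RS}(X;\Tsheaf{4})$. I expect this to be handled by combining \Cref{cor:TS4}, the $\I^5|_X = 0$ vanishing, and the explicit surjection $\KM_4/12 \tto \Ssheaf{4}$ whose kernel can be shown to have vanishing relevant contractions; alternatively, if the comparison genuinely fails in degree $2$, one falls back on the outer exact sequence $0 \to C \to \KSp_4 \to D \to 0$ and the vanishing $\Hnis^2(X,C) = 0$, which follows from \Cref{lem:red2}-type reasoning applied one degree lower using $\Hnis^3(X,\piA{4}{\A^4 \setminus 0}) = 0$.
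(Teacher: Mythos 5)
Your proposal follows essentially the same route as the paper: the long exact sequence of $0 \to D \to \KMW_4 \to \Tsheaf{4}\to 0$, vanishing of $\Hnis^2(X,\Tsheaf{4})$ and $\Hnis^3(X,\Tsheaf{4})$ via the mod-$12$ Milnor $K$-theory results of \Cref{subsec:vanishing_km}, and $\I^5|_X=0$ for the second isomorphism. The degree-$2$ worry you flag resolves exactly as you suspect: since $\KM_4/12 \to \Tsheaf{4}$ is an \emph{epimorphism} of sheaves that becomes an isomorphism after $2$-fold contraction, the induced map of Rost--Schmid complexes is surjective in degree $1$ and an isomorphism in degrees $2$ and $3$, which already forces $\Hnis^2(X,\KM_4/12)\to \Hnis^2(X,\Tsheaf{4})$ to be an isomorphism, so \Cref{tariq-lem2} applies and no fallback to the sequence $0\to C\to \KSp_4\to D\to 0$ is needed.
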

\begin{proof} Consider the long exact sequence on cohomology induced by the short exact sequence
\[ 0\to D \to \KMW_4 \to \Tsheaf{4} \to 0.\]
Note that $ \Hnis^2(X,\Tsheaf{4}) =0$ and $  \Hnis^3(X,\Tsheaf{4}) = 0$ by \Cref{tariq-lem0,tariq-lem2} together with \Cref{prop:gersten-complex-properties} and the fact that $\K^M_4 \to \textbf{T}'_4$ is an epimorphism that becomes an isomorphism after $2$-fold contraction. 
Thus, \Cref{sequencesequence009} gives an isomorphism $ \Hnis^3(X,D) \xto{\sim}  \Hnis^3(X,\KMW_4)$. 
\end{proof}

Combining \Cref{prop:HnGWn2n}, \Cref{prop:H3XD}, and \Cref{eq:LESCD} gives us the following result:
\begin{corollary}\label{eqn:SES-stage-3-revised} Let $X$ be a smooth affine fourfold over an algebraically closed field of characteristic not equal to $2$ or $3$. There is an exact sequence
\[\begin{aligned}
    \cdots\to  \Hnis^3(X,\KSp_4) \to  \Hnis^3(X,\KM_4) \to  \Hnis^4(X,\piA{3}{\A^2\minus0}) \to \CH^4(X) \to \cdots
\end{aligned}\]
\end{corollary}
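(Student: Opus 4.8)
The plan is to obtain the sequence by splicing the long exact Nisnevich cohomology sequence of the diagonal short exact sequence $0\to C\to \KSp_4\to D\to 0$ appearing in \Cref{eqn:LES-H4C} with the three identifications just established. First I would take the long exact sequence in $\Hnis^\ast(X,-)$ associated to $0\to C\to\KSp_4\to D\to 0$, which contains the four consecutive terms
\[\Hnis^3(X,\KSp_4)\to \Hnis^3(X,D)\to \Hnis^4(X,C)\to \Hnis^4(X,\KSp_4).\]
Next I would invoke \Cref{lem:red2}, whose isomorphism $\Hnis^4(X,\piA{3}{\A^2\setminus 0})\xrightarrow{\sim}\Hnis^4(X,C)$ (induced by the sheaf map $\piA{3}{\A^2\setminus 0}\to C$) lets me rewrite the term $\Hnis^4(X,C)$; this is precisely how \Cref{eq:LESCD} is produced in \Cref{rmk:H4C}, so at this point I have exactly the exact sequence \Cref{eq:LESCD}.

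It then remains to rewrite the two outermost groups of \Cref{eq:LESCD}. On the incoming side, \Cref{prop:H3XD} provides natural isomorphisms $\Hnis^3(X,D)\xrightarrow{\sim}\Hnis^3(X,\KMW_4)\xrightarrow{\sim}\Hnis^3(X,\KM_4)$ coming from the sheaf maps $D\to \KMW_4\to \KM_4$, so I substitute $\Hnis^3(X,\KM_4)$ for $\Hnis^3(X,D)$. On the outgoing side, $\KSp_4\cong \GW_4^2$ by \Cref{GW-KSp}, and since $2=4-2$ and the characteristic of $k$ is not $2$, the hypotheses of \Cref{prop:HnGWn2n} are satisfied with $n=4$, giving $\Hnis^4(X,\KSp_4)\cong \Hnis^4(X,\GW_4^{\,4-2})\cong \CH^4(X)$; I substitute $\CH^4(X)$ for $\Hnis^4(X,\KSp_4)$. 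Carrying out both substitutions in \Cref{eq:LESCD} yields the asserted exact sequence
\[\cdots\to \Hnis^3(X,\KSp_4)\to \Hnis^3(X,\KM_4)\to \Hnis^4(X,\piA{3}{\A^2\setminus 0})\to \CH^4(X)\to\cdots.\]

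I do not expect a genuine obstacle here: the argument is bookkeeping with a long exact sequence, and all three inputs have already been proved. The only things worth a sentence of care are that the long exact sequence of $0\to C\to\KSp_4\to D\to 0$ does extend far enough to the left to include the term $\Hnis^3(X,\KSp_4)$ (it does, being a genuine long exact sequence, as reflected in the leading ``$\cdots$'' of \Cref{eq:LESCD}), and that the Grothendieck--Witt indexing is correct, i.e.\ that $\KSp_4$ really is $\GW_4^{\,n-2}$ for $n=4$ so that \Cref{prop:HnGWn2n} applies. Since every intermediate isomorphism is induced by an honest morphism of strictly $\A^1$-invariant sheaves, the transition and connecting maps in the final sequence are the evident ones, so no extra naturality verification is needed.
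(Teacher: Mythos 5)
Your proposal is correct and follows exactly the route the paper intends: splice the long exact sequence of $0\to C\to \KSp_4\to D\to 0$ with \Cref{lem:red2}, then substitute via \Cref{prop:H3XD} and \Cref{prop:HnGWn2n} (with $n=4$, $\KSp_4\cong\GW_4^2$). The paper states the corollary as an immediate combination of these three results without further argument, and your bookkeeping fills in precisely that combination.
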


\begin{lemma}\label{torsion_sheaves} Let $X$ be a smooth affine fourfold over an algebraically closed field $k$. Then \[ \Hnis^4(X,\piA{3}{\SL_2})\otimes \Q =  \Hnis^3(X,\piA{3}{\SL_2}) \otimes \Q=0.\] In particular, all elements in $ \Hnis^4(X,\piA{3}{\SL_2})$ and $ \Hnis^3(X,\piA{3}{\SL_2})$ are torsion.
\end{lemma}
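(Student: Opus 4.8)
Since $\piA{3}{\SL_2}\cong\piA{3}{\A^2\setminus 0}$ and the assertion only concerns $\Hnis^3$ and $\Hnis^4$ over a fourfold, it suffices to show that $\Hnis^3(X,\piA{3}{\A^2\setminus 0})$ and $\Hnis^4(X,\piA{3}{\A^2\setminus 0})$ become zero after applying $-\otimes_{\Z}\Q$. The plan is to pin down the rationalization $\piA{3}{\A^2\setminus 0}\otimes\Q$ precisely enough to see that its Rost--Schmid complex over $X$ is concentrated in cohomological degrees $\leq 2$; since $X$ is four-dimensional, this forces the two groups in question to be torsion. (The same argument works for any line bundle twist, which is needed elsewhere, but is unnecessary here.)

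First I would rationalize the short exact sequences $0\to\im(d_1^{4,2})\to\piA{3}{\A^2\setminus 0}\to C\to 0$ and $0\to C\to\KSp_4\xto{\phi_4}D\to 0$ coming from \Cref{eqn:LES-H4C}; $-\otimes_{\Z}\Q$ is exact. Because $\coker(\phi_4)=\Tsheaf{4}$ restricts on $X$ to a torsion sheaf --- it is a quotient of $\KM_4/12$, cf.\ the discussion preceding \Cref{TSKM4} --- the map $\phi_4\otimes\Q$ is an epimorphism and $C\otimes\Q=\ker(\phi_4\otimes\Q)$. Over an algebraically closed field of characteristic $\neq 2$ the Witt ring $W(F)$ of every finitely generated field $F/k$ is torsion, so $\I^n\otimes\Q=0$ and $\KMW_n\otimes\Q\cong\KM_n\otimes\Q$; moreover the rational Grothendieck--Witt sheaves become summands of the rational Quillen $K$-theory sheaves, which split by motivic weight into the Nisnevich sheafifications of $U\mapsto\Hmot^m(U,\Q(j))$. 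Carrying out this weight bookkeeping for $\KSp_4=\GW_4^2$ shows that $\KSp_4\otimes\Q$ is pure of weight $4$, i.e.\ isomorphic to $\KM_4\otimes\Q$; under the identifications $\KSp_4\otimes\Q\cong\KM_4\otimes\Q\cong\KMW_4\otimes\Q$ the map $\phi_4\otimes\Q$ becomes a nonzero scalar endomorphism of $\KM_4\otimes\Q$ (a sheaf with endomorphism ring $\Q$), hence an isomorphism. Thus $C\otimes\Q=0$, so the first sequence gives $\piA{3}{\A^2\setminus 0}\otimes\Q\cong\im(d_1^{4,2})\otimes\Q$, which via $0\to\ker(d_1^{4,2})\to\piA{4}{\A^4\setminus 0}\to\im(d_1^{4,2})\to 0$ is a quotient of $\piA{4}{\A^4\setminus 0}\otimes\Q$.

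It remains to identify $\piA{4}{\A^4\setminus 0}\otimes\Q$ and conclude. Using the Asok--Fasel description of the second non-vanishing homotopy sheaf of a punctured affine space (the analogue of \Cref{thm:AF3}: an extension with torsion kernel and quotient $\GW_5^4$), the kernel dies rationally and $\piA{4}{\A^4\setminus 0}\otimes\Q\cong\GW_5^4\otimes\Q\cong\GW_5^0\otimes\Q$, which by the weight analysis above is the motivic cohomology sheaf $U\mapsto\Hmot^3(U,\Q(4))$. Now a formal computation finishes the argument: the $p$-fold contraction of this sheaf is $U\mapsto\Hmot^{3-p}(U,\Q(4-p))$ (by the $\G_m$-suspension formula for the motivic complexes), and $\Hmot^{3-p}(F,\Q(4-p))=0$ for every field $F$ once $p\geq 3$; since contractions of strictly $\A^1$-invariant sheaves preserve surjections (\cite[Lemmas 5.32 and 7.33]{Morel}), every quotient of this sheaf --- in particular $\piA{3}{\A^2\setminus 0}\otimes\Q$ --- has vanishing $p$-fold contraction for $p\geq 3$. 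Hence the Rost--Schmid complex of $\piA{3}{\A^2\setminus 0}\otimes\Q$ on $X$ is supported in degrees $\leq 2$, and $\Hnis^3(X,\piA{3}{\SL_2})\otimes\Q=\Hnis^4(X,\piA{3}{\SL_2})\otimes\Q=0$. I expect the main obstacle to be the middle paragraph: one must track the motivic weights of the rational Grothendieck--Witt sheaves and of $\phi_4$ carefully --- equivalently, verify that the comparison maps appearing in \Cref{eqn:LES-H4C} are controlled by multiplication by nonzero integers on the top terms of the relevant Rost--Schmid complexes --- and have the rational form of $\piA{4}{\A^4\setminus 0}$ in hand; the remaining steps are routine manipulation of exact sequences and contractions.
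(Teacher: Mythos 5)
Your overall strategy---reduce to vanishing of the high contractions of the rationalized homotopy sheaf and then invoke the Rost--Schmid complex---is sound, and your final paragraph would close the argument if the middle paragraph were correct. But the middle paragraph has a genuine gap, precisely at the point you flag as the main obstacle. The assertion that $\KSp_4\otimes\Q$ is pure of weight $4$, hence that $\phi_4\otimes\Q$ is injective and $C\otimes\Q=0$, is not routine bookkeeping. Rationally, $\GW_4^2$ is a sum of motivic cohomology sheaves of \emph{several} weights: besides the weight-$4$ summand $\KM_4\otimes\Q$ (which must be present, by the paper's multiplication-by-$12$ computation in \Cref{zeromap}), there is a weight-$2$ summand given on a finitely generated field $F/k$ by $\Hmot^{0}(F,\Q(2))$. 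The vanishing of that group is the Beilinson--Soul\'e conjecture, which is open for such fields; so you cannot conclude that $\phi_4\otimes\Q$ is a scalar endomorphism of $\KM_4\otimes\Q$, and $\ker(\phi_4\otimes\Q)$ may well be nonzero. The tension is visible against the paper's own proof: there, the rational $\A^1$-equivalence $\Omega c_2\colon \SL_2\to K(\Z(2),3)$ identifies $\piA{3}{\SL_2}\otimes\Q$ with exactly the sheafification of $U\mapsto \Hmot^0(U,\Q(2))$ --- the very sheaf your argument needs to be zero. Since $\im(d_1^{4,2})\otimes\Q$ carries no weight-$2$, degree-$0$ piece, this sheaf can only sit inside $C\otimes\Q$, so your claim $C\otimes\Q=0$ is equivalent to a case of Beilinson--Soul\'e.

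The gap is repairable without proving the conjecture: you never need $C\otimes\Q=0$, only that $\Hnis^3(X,C\otimes\Q)=\Hnis^4(X,C\otimes\Q)=0$, and the offending weight-$2$ summand $\mathcal H^{0,2}\otimes\Q$ has $m$-fold contractions $\mathcal H^{-m,2-m}\otimes\Q=0$ for all $m\geq 1$, so it contributes nothing to the Rost--Schmid complex in positive degrees; the same caveat (and the same fix) applies to your identification of $\GW_5^4\otimes\Q$ in the last paragraph. That said, the paper's route is shorter and avoids the Hermitian $K$-theory weight analysis entirely: it rationalizes $\SL_2$ itself via $\Omega c_2$, reads off $\piA{3}{\SL_2}\otimes\Q\cong\mathcal H^{0,2}\otimes\Q$, and kills the degree-$\geq 3$ Rost--Schmid terms using $(\mathcal H^{i,j})_{-m}\cong \mathcal H^{i-m,j-m}$ together with the vanishing of motivic cohomology in negative weights. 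I recommend either adopting that argument or restructuring yours so that every unidentified rational summand is handled by a contraction-vanishing estimate rather than by an (unavailable) vanishing theorem.
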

\begin{proof} Since tensoring with $\mathbb Q$ is exact, by considering the Rost--Schmid complex for computing cohomology of a strictly $\mathbb A^1$-invariant sheaf $\textbf{A}$ we find that $\Hnis^i(X,\textbf{A}) \otimes \mathbb Q \simeq \Hnis^i(X,\textbf{A} \otimes \mathbb Q)$. So, it suffices to prove that \[ \Hnis^3\left(X,\piA{3}{\SL_2}\otimes \mathbb Q\right)= \Hnis^4\left(X,\piA{3}{\SL_2})\otimes \mathbb Q\right)=0.\]
By \cite[Theorem 5.3.3]{loc-nilp}, loops on $c_2$ induces a rational $\mathbb A^1$-weak equivalence
$$\Omega c_2\: \SL_2 \to K(\Z(2),3).$$
By \cite[4.3.10]{loc-nilp} the induced maps 
$$\pia_i(\SL_2) \otimes \mathbb Q \to \pia_i\left( K(\Z(2),3)\right)\otimes \Q$$ are isomorphisms of sheaves. 
We note that \[ \Hnis^3(X,\piA{3}{K(\Z(2),3)}= \Hnis^4(X,\piA{3}{K(\Z(2),3)})=0,\] so the same is true with rationalized coefficients. To see this, recall that $\pi_k(K(\Z(j),i))\cong \mathcal H^{i-k,j},$ where $\mathcal H^{i,j}$ is the Nisnevich sheafification of the presheaf $U \mapsto \Hmot^{i}(U,\Z(j))$ on $\Sm_k$. In particular, $\pi_3(K(\Z(2),3)) \simeq \mathcal H^{0,2} $. The contractions of the sheaves $\mathcal H^{i,j}$ have a simple formula: by the Tate suspension isomorphism for motivic cohomology, one has
\[(\mathcal H^{i,j})_{-m}\cong\mathcal H^{i-m,j-m}.\]
Thus the term at degree $m$ in the Rost--Schmid complex computing cohomology with coefficients in $\piA{3}{K(\Z(2),3)}$ takes the form
$$\bigoplus_{x\in X^{(m)}} \mathcal H^{-m,2-m}(k(x))\otimes \Q.$$
If $m\geq 3$, then $2-m<0$ and each cohomology group $\Hmot^{-m}(k(x),\Z(2-m))$ is zero for all points $x$, so there is no cohomology at that term in the complex.
\end{proof}
By \Cref{thm:chow-divisibility} the group $\CH^4(X)$ is torsion-free. Using this observation, we obtain the following result by combining \Cref{eqn:SES-stage-3-revised} and \Cref{torsion_sheaves}:
\begin{corollary} For $X$ a smooth affine fourfold over an a field with $2$ and $3$ invertible, there is a surjection $\Hnis^3(X,\KM_4) \to \Hnis^3(X,\piA{3}{\A^2\setminus 0})$.
\end{corollary}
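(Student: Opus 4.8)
The plan is to read the surjection off the degree-$3$ segment of the long exact sequence underlying \Cref{eqn:SES-stage-3-revised}, and then to kill the single obstructing term using the torsion input of \Cref{torsion_sheaves} against the torsion-freeness supplied by \Cref{thm:chow-divisibility}.

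First I would assemble the relevant three-term segment
\[
\Hnis^3(X,\KM_4)\ \xrightarrow{\ \delta\ }\ \Hnis^3(X,\piA{3}{\A^2\minus 0})\ \xrightarrow{\ \psi\ }\ \CH^4(X)
\]
from the diagonal short exact sequence $0\to C\to \KSp_4\to D\to 0$ of \Cref{eqn:LES-H4C}, substituting the three identifications already in hand: \Cref{lem:red2} replaces the cohomology of $\piA{3}{\A^2\minus 0}$ by that of the sheaf $C$ of \Cref{def:CD}, \Cref{prop:H3XD} identifies $\Hnis^3(X,D)$ with $\Hnis^3(X,\KM_4)$, and \Cref{prop:HnGWn2n} identifies the cohomology of $\KSp_4\cong \GW_4^2$ with the Chow group $\CH^4(X)$. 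By exactness of this segment, surjectivity of $\delta$ is equivalent to the vanishing of $\psi$.

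The vanishing of $\psi$ I would obtain from a torsion dichotomy. The source $\Hnis^3(X,\piA{3}{\A^2\minus 0})=\Hnis^3(X,\piA{3}{\SL_2})$ is a torsion group by \Cref{torsion_sheaves}, so its image under $\psi$ is torsion; but the target $\CH^4(X)$ is torsion-free by \Cref{thm:chow-divisibility}. A torsion subgroup of a torsion-free group is trivial, forcing $\psi=0$ and hence $\delta$ onto, which is precisely the claimed surjection. The main obstacle is the compatibility bookkeeping in the first step: one must verify that the identifications of \Cref{lem:red2}, \Cref{prop:H3XD}, and \Cref{prop:HnGWn2n} intertwine correctly with the connecting and induced maps attached to $0\to C\to \KSp_4\to D\to 0$, so that the assembled segment is genuinely exact at $\Hnis^3(X,\piA{3}{\A^2\minus 0})$ and the outgoing map is the one landing in the torsion-free group $\CH^4(X)$. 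Once this compatibility is in place, no further computation is needed: the torsion-versus-torsion-free input of \Cref{torsion_sheaves} and \Cref{thm:chow-divisibility} closes the argument immediately.
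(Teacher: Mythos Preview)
Your argument is exactly the paper's, but there is a degree slip (inherited from a typo in the statement itself) that you should be aware of. The exact sequence of \Cref{eqn:SES-stage-3-revised} reads
\[
\cdots \to \Hnis^3(X,\KM_4) \to \Hnis^{4}(X,\piA{3}{\A^2\minus 0}) \to \CH^4(X) \to \cdots,
\]
so the target of the surjection is $\Hnis^{4}$, not $\Hnis^{3}$; indeed the immediately following \Cref{zeromap} speaks of ``the surjective map $\Hnis^3(X,\KM_4)\to\Hnis^{4}(X,\piA{3}{\A^2\setminus 0})$''. Correspondingly, \Cref{lem:red2} only supplies the identification $\Hnis^{4}(X,\piA{3}{\A^2\minus 0})\cong\Hnis^{4}(X,C)$ in degree $4$, so your assembled three-term segment with $\Hnis^3$ in the middle is not the one the cited lemmas produce. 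With the degree corrected to $4$, your reasoning is identical to the paper's one-line deduction: the outgoing map $\Hnis^{4}(X,\piA{3}{\A^2\minus 0})\to\CH^4(X)$ vanishes because its source is torsion by \Cref{torsion_sheaves} and its target torsion-free by \Cref{thm:chow-divisibility}, so exactness forces the incoming map from $\Hnis^3(X,\KM_4)$ to be onto.
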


So, to complete the proof of \Cref{thm:stage3}, it is enough to prove the next proposition.

\begin{proposition}\label{zeromap} The surjective map $ \Hnis^3(X,\KM_4) \to  \Hnis^4(X,\piA{3}{\A^2\setminus 0})$ is zero.
\end{proposition}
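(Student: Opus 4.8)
The plan is to recast the assertion as a surjectivity statement and then play divisibility against torsion. By the preceding corollary together with \Cref{eqn:SES-stage-3-revised}, the target $\Hnis^4(X,\piA{3}{\A^2\setminus 0})$ is the cokernel of the map
\[\alpha\colon \Hnis^3(X,\KSp_4)\longrightarrow \Hnis^3(X,\KM_4)\]
induced by $\phi_4\colon\KSp_4\to\KMW_4$ followed by the reduction $\KMW_4\to\KM_4$ (here $\Hnis^3(X,\KM_4)\cong\Hnis^3(X,\KMW_4)$ since $\I^5|_X=0$ by \Cref{prop:Ij-vanishing}, as in the proof of \Cref{prop:H3XD}), and the map in the proposition is the resulting quotient map. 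So the proposition holds if and only if $\alpha$ is surjective, equivalently if and only if $\Hnis^4(X,\piA{3}{\A^2\setminus 0})=\coker(\alpha)$ vanishes — which would complete the proof of \Cref{thm:stage3}.

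Two ingredients are already available. First, by \Cref{lem:fasel-KMW-divisibility} the group $\Hnis^3(X,\KMW_4)\cong\Hnis^3(X,\KM_4)$ is uniquely divisible prime to $\operatorname{char}(k)$; in particular, multiplication by any positive integer coprime to $\operatorname{char}(k)$ is an automorphism of it, and $\Hnis^3(X,\KM_4)$ is torsion free. Second, by \Cref{torsion_sheaves} the group $\coker(\alpha)=\Hnis^4(X,\piA{3}{\A^2\setminus 0})$ is torsion.

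The heart of the argument is to show that $\operatorname{im}(\alpha)$ contains $N\cdot\Hnis^3(X,\KM_4)$ for some integer $N$ coprime to $\operatorname{char}(k)$; by the divisibility above this forces $\operatorname{im}(\alpha)=\Hnis^3(X,\KM_4)$. I would deduce this from a \emph{hyperbolic} splitting of $\phi_4$ up to an integer: a morphism of strictly $\A^1$-invariant sheaves $h\colon\KMW_4\to\KSp_4$, coming from the symplectic structure (the inclusion $\A^2\setminus 0=\Sp_2\hookrightarrow\Sp$ and the symplectic periodicity element), such that the composite
\[\KMW_4\xrightarrow{\;h\;}\KSp_4\xrightarrow{\;\phi_4\;}\KMW_4\longrightarrow\KM_4\]
is multiplication by such an $N$. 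The relevant value of $N$ is governed by Asok--Fasel's analysis of the maps $\phi_{2n}$: the identifications $\Ssheaf{4}\cong\Tsheaf{4}$ with a quotient of $\KM_4/12$ (see \Cref{cor:TS4}, \Cref{TSKM4}, and \cite[3.2]{AF-3fold}) single out $N=12$, which is coprime to $\operatorname{char}(k)\neq 2,3$. Verifying that this composite really is multiplication by an explicit integer coprime to the characteristic — rather than some less tractable endomorphism of $\KM_4$ — is the main obstacle; this is a question purely about the sheaves $\KSp_4,\KMW_4,\KM_4$ and the maps among them, and the work lies in producing $h$ and computing $\phi_4\circ h$ by passing to contractions. (Alternatively, one may try to show directly that the $4$-fold contraction of $C=\ker(\phi_4)$ vanishes — i.e.\ that $(\phi_4)_{-4}\colon(\KSp_4)_{-4}\to(\KMW_4)_{-4}=\KMW_0$ is injective, which amounts to its image having nonzero rank — and then invoke \Cref{prop:gersten-complex-properties}\Cref{sheaf-contracts-to-zero} together with \Cref{lem:red2}.)

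Granting this, $\operatorname{im}(\alpha)\supseteq N\cdot\Hnis^3(X,\KM_4)=\Hnis^3(X,\KM_4)$, so $\alpha$ is surjective, $\coker(\alpha)=0$, the map in the proposition is zero, and hence $\Hnis^4(X,\piA{3}{\A^2\setminus 0})=0$.
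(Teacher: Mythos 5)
Your strategy is exactly the paper's: use the exact sequence of \Cref{eqn:SES-stage-3-revised} to reduce the claim to surjectivity of $\alpha\colon \Hnis^3(X,\KSp_4)\to\Hnis^3(X,\KM_4)$, and obtain that surjectivity by exhibiting a composite through $\KSp_4$ that acts on $\KM_4$ as multiplication by an integer coprime to the characteristic, then invoke the divisibility of $\Hnis^3(X,\KM_4)\cong\Hnis^3(X,\KMW_4)$ from \Cref{lem:fasel-KMW-divisibility}. You even identify the correct constant $N=12$. The one step you flag as ``the main obstacle'' --- producing the splitting $h$ and verifying that the round trip is multiplication by $12$ --- is precisely what the paper's proof supplies, and it does so not by constructing a map out of $\KMW_4$ but by precomposing with Milnor $K$-theory: one takes the composite $\KM_4\xrightarrow{\mu_4}\K_4^{Q}\xrightarrow{H_{4,2}}\KSp_4\xrightarrow{\phi_4}\KMW_4\to\KM_4$, where $H_{4,2}$ is the hyperbolic morphism from Quillen $K$-theory. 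By results of Asok--Fasel, $\phi_4$ followed by $\KMW_4\to\KM_4$ agrees with the forgetful map $f_{4,2}$ followed by $\psi_4\colon\K_4^Q\to\KM_4$; the composite $f_{4,2}\circ H_{4,2}$ is multiplication by $2$, and $\psi_4\circ\mu_4$ is multiplication by $6$, so the total composite is multiplication by $12$ on $\KM_4$. Since $\operatorname{char}(k)\neq 2,3$, this is surjective on $\Hnis^3(X,\KM_4)$, which closes the gap. Note that a genuine splitting $\KMW_4\to\KSp_4$ as you propose is not needed (and is not what exists naturally); starting from $\KM_4$ suffices because the isomorphism $\Hnis^3(X,\KMW_4)\cong\Hnis^3(X,\KM_4)$ lets you transport the divisibility. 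Your parenthetical alternative (showing $(\phi_4)_{-4}$ is injective) is not how the paper proceeds and would require a separate argument about contractions of $\KSp_4$ that you have not given.
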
 
\begin{proof} Consider the map of sheaves
\begin{equation}\label[empty]{eq:composite0101}\KSp_4 \xrightarrow{\phi_4} \KMW_4 \to \KM_4,\end{equation}
where $\phi_4$ is as in \Cref{eqn:complex-from-ES} and $\KMW_4 \to \KM_4$ is the natural map.
The composite \Cref{eq:composite0101} induces the first map in the exact sequence from \Cref{eqn:SES-stage-3-revised}. We will prove that the map induced map
\begin{equation}\label[empty]{eq:small-composite} \Hnis^3(X,\KSp_4) \to  \Hnis^3(X,\KM_4),\end{equation} which occurs in the exact sequence from \Cref{prop:H3XD}, is surjective in characteristic not equal to $2$ or $3$.

To do so, consider the composite
\begin{equation}\label[diagram]{long-composite}\KM_4 \xrightarrow{\mu_4} \K_4^{Q} \xrightarrow{H_{4,2}} \KSp_4 \xrightarrow{\phi_4} \KMW_4 \to \KM_4,\end{equation} where $\K_4^{Q}$ denotes the fourth Quillen $K$-theory sheaf, $\mu_4$ is the map induced from Milnor $K$-theory to Quillen $K$-theory by the natural identification $\KM_1 \to \K_1^Q$, and $H_{4,2}$ is a hyperbolic morphism. For more details on these morphisms, we refer the reader to the discussion preceding \cite[Lemma 4.2]{AF-3fold}. 

We will prove that the long composite in \Cref{long-composite} induces a surjection after applying $ \Hnis^3(X,-),$ which implies that \Cref{eq:small-composite} is also surjective.
By the discussion in the paragraph before \cite[Proposition 3.2]{AF-3fold}, the composite
\[\KSp_4 \to \KMW_4 \to \KM_4\]
coincides with the composite 
\[\KSp_4 \xto{f_{4,2}} \K^Q_4 \xto{\psi_4} \KM_4,\]
where $f_{4,2}$ is the forgetful morphism. So the long composition \Cref{long-composite} is the same as the composite
\begin{equation}\label[diagram]{long-composite2}\KM_4 \xrightarrow{\mu_4} \K_4^{Q} \xrightarrow{H_{4,2}} \KSp_4 \xto{f_{4,2}} \K^Q_4 \xto{\psi_4} \KM_4,\end{equation} 

By \cite[Lemma 4.2]{AF-3fold} the composition \Cref{long-composite2} is the same as the composite
\begin{equation}\label[diagram]{long-composite4}\KM_4 \xrightarrow{\cdot 2} \KM_4 \xrightarrow{\mu_4} \KSp_4 \xto{\psi_4} \KM_4.\end{equation} 

By the paragraph before \cite[Proposition 3.2]{AF-3fold}, $\psi_4 \circ \mu_4$ is multiplication by $6$ on $\KM_4$.
It follows that \Cref{long-composite} is multiplication by $12$ on $\KM_4$ and therefore induces multiplication by $12$ on cohomology. Away from characteristics $2$ and $3$, this map is surjective by \Cref{lem:fasel-KMW-divisibility}. 
\end{proof}

\section{Other cohomological approaches to classification}\label{subsec:euler}

The cohomological investigations in the previous section provide far more insight than is contained in the statement of \Cref{theorem-1}. In particular, we can deduce some classification results for rank $2$ bundles using Euler classes and (stable) symplectic $K$-theory classes. 

In \Cref{coh-conditions}, we observe that vanishing of $\Ch^3(X)$ guarantees a bijection between isomorphism classes of rank $2$ algebraic vector bundles on $X$ determined by their first Chern class and Euler class. Moreover, we show that any choice of determinant bundle and Euler class in an appropriately twisted Chow--Witt group can be achieved as an invariant of some rank $2$ algebraic vector bundle on $X$.  

In \Cref{classify-square} we study rank $2$ vector bundles with fixed determinant bundle $\mathcal L$ which is the square of another line bundle $\mathcal N$. We show that such bundles are determined by an associated (stable) symplectic $K$-theory class. Conceptually, this gives a cohomological classification of such bundles. 

\subsection{Cohomological criteria for the first Chern class and Euler class to uniquely determine a bundle}\label{coh-conditions}
Let $X$ be a smooth affine fourfold over an algebraically closed field $k$ with characteristic not equal to $2$ or $3$ and let $\mathcal L$ be a line bundle over $X$. Given any element \[e \in \CHW^2(X,\mathcal L),\] it is natural to ask whether there exists a rank $2$ bundle $\mathcal{E}$ over $X$ with $\det(\mathcal{E}) \cong \mathcal L$ and Euler class equal to $e$. We prove that this is indeed the case, and then study cohomological conditions under which determinant and Euler class completely classify algebraic vector bundles of rank $2$ over $X$. We refer the reader to \cite[Chapter 8]{Morel} for a detailed introduction to the theory of Euler classes in $\A^1$-homotopy theory.

In this section, we use the Postnikov tower of $\BGL_{2}$ in $\A^1$-homotopy theory (see \Cref{rmk:postnikov} or \cite[Theorem 6.1]{AF-3fold} for details). Abusing notation, we write $\BGL_{2}^{(i)}$ for the $i$-th stage of the Postnikov tower for $\BGL_{2}$. These spaces should not be confused with the $i$-th stages of the Moore--Postnikov factorization of the morphism $(c_{1},c_{2})\: \BGL_2 \to K(\KM_1,1) \times K(\KM_2,2)$ considered in the previous section. 
\begin{rmk}
This notation is justified since, for $i\geq 3$, the $i$-th Postnikov tower stage for $\BGL_2$ agrees with the $i$-th Moore--Postnikov tower stage for $\BGL_2 \to K(\KM_1,1) \times K(\KM_2,2)$. So, in all interesting cases considered below, no problem can arise from the potential ambiguity.
\end{rmk}
Recall that $\BGL_2$ is the homootpy limit of a system \[ \cdots \to \BGL_{2}^{(i+1)} \to \BGL_{2}^{(i)} \to \cdots .\] A morphism $X \to \BGL_{2}^{(i)}$ in $\mathcal H(k)$ can be lifted to a morphism $X \to \BGL_{2}^{(i+1)}$ in $\mathcal H(k)$ if and only if an associated element in $\Hnis^{i+2}(X,\piA{i+1}{\BGL_{2}}(\mathcal L))$ is zero, where $\mathcal{L}$ is the line bundle corresponding to the composite $X \to \BGL_{2}^{(i)} \to \BGL_{2}^{(1)} = B\mathbb{G}_m$. The choices of lifts are acted on transitively by $\Hnis^{i+1}(X,\piA{i+1}{\BGL_{2}}(\mathcal L))$. 
\begin{lemma}\label{lem:lift-third-stage} Let $X$ be a smooth affine fourfold over an algebraically closed field of characteristic not equal to $2$ or $3$. Then $[X,\BGL_2]\cong [X,\BGL_2^{(3)}]$.
\end{lemma}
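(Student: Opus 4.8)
The plan is to deduce this lemma from the vanishing theorem \Cref{thm:stage3} together with the cohomological dimension bound for an affine fourfold. Recall from \Cref{subsec:def-twists} (and the discussion preceding the statement) that $\BGL_2$ is the homotopy limit of its Postnikov tower $\cdots \to \BGL_2^{(i+1)}\to \BGL_2^{(i)}\to\cdots$, and that for a smooth affine variety $X$ of Nisnevich cohomological dimension at most $4$ one has $[X,\BGL_2]\cong[X,\BGL_2^{(4)}]$. It therefore suffices to show that the projection induces a bijection $[X,\BGL_2^{(4)}]\to[X,\BGL_2^{(3)}]$.

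First I would check surjectivity. Given a homotopy class $f^{(3)}\colon X\to\BGL_2^{(3)}$, let $\mathcal L$ be the line bundle on $X$ classified by the composite $X\xrightarrow{f^{(3)}}\BGL_2^{(3)}\to\BGL_2^{(1)}=B\mathbb G_m$. By the obstruction-theoretic formalism recalled above, a lift of $f^{(3)}$ to $\BGL_2^{(4)}$ exists if and only if an associated class in $\Hnis^5(X,\piA{4}{\BGL_2}(\mathcal L))$ vanishes; but this group is zero since $X$ is affine of dimension $4$ and hence has Nisnevich cohomological dimension at most $4$. So every $f^{(3)}$ lifts.

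Next I would check injectivity, i.e.\ that the lift is unique up to homotopy. The set of homotopy classes of lifts of $f^{(3)}$ is acted on transitively by $\Hnis^4(X,\piA{4}{\BGL_2}(\mathcal L))$. Using the identifications $\piA{4}{\BGL_2}\cong\piA{3}{\SL_2}\cong\piA{3}{\A^2\setminus 0}$, this group is $\Hnis^4(X,\piA{3}{\A^2\setminus 0}(\mathcal L))$, which vanishes by \Cref{thm:stage3} for every line bundle $\mathcal L$ on $X$. Hence the lift of each $f^{(3)}$ is unique, the map $[X,\BGL_2^{(4)}]\to[X,\BGL_2^{(3)}]$ is a bijection, and combining with $[X,\BGL_2]\cong[X,\BGL_2^{(4)}]$ gives the claim.

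The only genuine input here is the vanishing result \Cref{thm:stage3}; everything else is formal bookkeeping with the Postnikov tower of $\BGL_2$ and the fact that a smooth affine fourfold has Nisnevich cohomological dimension at most $4$. Accordingly, I do not expect any real obstacle within this lemma itself — all the difficulty has been front-loaded into the proof of \Cref{thm:stage3}.
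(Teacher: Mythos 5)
Your proof is correct and follows essentially the same route as the paper: reduce to $[X,\BGL_2]\cong[X,\BGL_2^{(4)}]$ via the cohomological dimension bound, kill the lifting obstruction in $\Hnis^5$ for dimension reasons, and use \Cref{thm:stage3} to show the torsor $\Hnis^4(X,\piA{4}{\BGL_2}(\mathcal L))\cong \Hnis^4(X,\piA{3}{\A^2\setminus 0}(\mathcal L))$ acting on lifts vanishes. The paper's proof is just a more compressed version of the same bookkeeping.
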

\begin{proof}
Recall that $[X, \BGL_{2}] = [X,\BGL_{2}^{(4)}]$. Furthermore, since $\Hnis^4(X,\piA{4}{\BGL_{2}}(\mathcal L)) \cong \Hnis^4(X,\piA{3}{\A^2\setminus 0}(\mathcal L)) = 0$ by \Cref{thm:stage3}, any two choices of lift of a given map $X\to \BGL_2^{(3)}$ to $\BGL_2^{(4)}$ are $\A^1$-homotopic. Moreover, the obstruction to lifting vanishes by cohomological dimension considerations. Thus, \[[X, \BGL_{2}] \cong [X,\BGL_2^{(4)}] \cong [X,\BGL_{2}^{(3)}].\]\end{proof}
\begin{lemma}\label{lem:every-class-is-euler-class-of-bundle} 
Let $X$ be a smooth affine fourfold over an algebraically closed field $k$ of characteristic not equal to $2$ or $3$ and let $\mathcal L$ be a line bundle on $X$. Then any cohomology class $e \in \CHW^2(X, \mathcal L)$ is the Euler class of a rank $2$ algebraic vector bundle on $X$ with determinant $\mathcal L$.
\end{lemma}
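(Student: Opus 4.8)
The plan is to realize the pair $(\mathcal L, e)$ by climbing the Postnikov tower of $\BGL_2$ used in this section. By \Cref{lem:lift-third-stage} we have $[X,\BGL_2]\cong[X,\BGL_2^{(3)}]$, so it is enough to produce a morphism $\tilde g\colon X\to\BGL_2^{(3)}$ whose composite to $\BGL_2^{(1)}=B\mathbb G_m$ classifies $\mathcal L$ and whose composite to $\BGL_2^{(2)}$ has Euler class $e$: the rank $2$ bundle $\mathcal E$ classified by $\tilde g$ via affine representability then satisfies $\det\mathcal E\cong\mathcal L$ and $e(\mathcal E)=e$, since the determinant and the Euler class are precisely the invariants read off from these first two Postnikov truncations of the classifying map (cf. \cite[Chapter 8]{Morel}, \cite[\S6]{AF-3fold}).

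I would first handle the second stage. Fix the morphism $\ell\colon X\to B\mathbb G_m$ classifying $\mathcal L$. A lift of $\ell$ to $\BGL_2^{(2)}$ exists, since the classifying map of $\mathcal L\oplus\mathcal O_X$ provides one; and this particular lift has Euler class $0$, because $\mathcal L\oplus\mathcal O_X$ has a nowhere-vanishing section. Consequently the set of homotopy classes of lifts of $\ell$ to $\BGL_2^{(2)}$ is a torsor under $\Hnis^2\!\bigl(X,\piA2{\BGL_2}(\mathcal L)\bigr)=\Hnis^2\!\bigl(X,\KMW_2(\mathcal L)\bigr)=\CHW^2(X,\mathcal L)$, and the Euler class trivializes this torsor, normalized so that the lift coming from $\mathcal L\oplus\mathcal O_X$ maps to $0$. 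Translating this basepoint by $e$ produces a morphism $g\colon X\to\BGL_2^{(2)}$ with determinant $\mathcal L$ and Euler class $e$.

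The main work is to lift $g$ to $\BGL_2^{(3)}$. The obstruction lies in $\Hnis^4\!\bigl(X,\piA3{\BGL_2}(\mathcal L)\bigr)\cong\Hnis^4\!\bigl(X,\piA2{\A^2\setminus0}(\mathcal L)\bigr)$, and the crux of the argument is that this group vanishes. Twisting the exact sequence $0\to\Tsheaf4\to\piA2{\A^2\setminus0}\to\KSp_3\to0$ of \Cref{thm:AF3} by $\mathcal L$ (the $\mathbb G_m$-action on $\Tsheaf4$ being trivial) and passing to cohomology reduces the claim to $\Hnis^4(X,\Tsheaf4)=0$ and $\Hnis^4\!\bigl(X,\KSp_3(\mathcal L)\bigr)=0$. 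The first holds by \Cref{TSKM4} together with $\Hnis^4(X,\KM_4/12)=0$, which follows from \Cref{lem-1} and the long exact sequence \Cref{divide-LES-2} in degree $4$ with $n=12$. The second holds because $\KSp_3\cong\GW_3^2$ by \Cref{GW-KSp}, $(\GW_3^2)_{-4}=0$ by \Cref{prop:contractions-1}\Cref{GW-vanishing}, and hence the twisted analog of \Cref{prop:gersten-complex-properties}\Cref{sheaf-contracts-to-zero} gives $\Hnis^i\!\bigl(X,\GW_3^2(\mathcal L)\bigr)=0$ for $i\ge4$. So the obstruction vanishes, $g$ lifts to a morphism $\tilde g\colon X\to\BGL_2^{(3)}$ still inducing $\ell$ on $\BGL_2^{(1)}$ and (up to homotopy) $g$ on $\BGL_2^{(2)}$, and \Cref{lem:lift-third-stage} then yields the desired bundle $\mathcal E$.

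The main obstacle is the vanishing $\Hnis^4\!\bigl(X,\piA2{\A^2\setminus0}(\mathcal L)\bigr)=0$; once that is in hand the argument is obstruction-theoretic bookkeeping, resting on the torsor structure of the second-stage lifts and on the identification of the determinant and the Euler class with the first two Postnikov truncations of a classifying map. The hypotheses on the characteristic enter through \Cref{lem:lift-third-stage} (hence \Cref{thm:stage3}) and through the use of mod $12$ coefficients in the Stage $2$ computations invoked above.
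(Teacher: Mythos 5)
Your proposal is correct and follows essentially the same route as the paper: reduce via \Cref{lem:lift-third-stage} to lifting a stage-$2$ map realizing $(\mathcal L, e)$ up to $\BGL_2^{(3)}$, and kill the obstruction in $\Hnis^4(X,\piA{2}{\A^2\setminus 0}(\mathcal L))$ using the sequence $0\to\Tsheaf{4}\to\piA{2}{\A^2\setminus 0}\to\KSp_3\to 0$, the identification $\Hnis^4(X,\Tsheaf{4})\cong\Hnis^4(X,\KM_4/12)=0$ (divisibility of $\CH^4(X)$), and $(\GW_3^2)_{-4}=0$ for the $\KSp_3$ term. The only cosmetic difference is that you construct the stage-$2$ map by hand via the torsor structure on lifts of $\ell\colon X\to B\mathbb G_m$ normalized at $\mathcal L\oplus\mathcal O_X$, where the paper instead cites \cite[Proposition 6.3]{AF-3fold} to identify $[X,\BGL_2^{(2)}]$ with pairs (determinant, Euler class).
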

\begin{proof}
Given \Cref{lem:lift-third-stage}, we must show that lifts of a given map $X \to \BGL_2^{(2)}$ to $\BGL_2^{(3)}$ exist and are homotopically unique. First, consider the obstruction to such a lift. This obstruction takes values in
\[
    \Hnis^4(X,\piA{3}{\BGL_{2}}(\mathcal{L}))= \Hnis^4(X,\piA{2}{\A^2 \setminus 0}(\mathcal{L})).
\]
By the long exact sequence on cohomology associated to \Cref{eq:21} and \Cref{cor:TS4}, it suffices to prove that $\Hnis^4(X,\Tsheaf{4})= \Hnis^4(X,\KSp_{3})=0$.

By \Cref{thm:AF3}, \[ \Hnis^4(X,\Tsheaf{4})\cong  \Hnis^4(X,\KM_4/12)\cong \CH^4(X)/12.\] By \Cref{thm:chow-divisibility}, $\CH^4(X)/12=0$. On the other hand, $\KSp_3 \cong \GW_3^2$. Since $(\GW_3^2)_{-4} = 0$ by \Cref{prop:contractions-1}, the fourth term in the Rost--Schmid complex computing Nisnevich cohomology with coefficients in $\GW_{3}^{2}$ is identically zero.

In particular, we have shown that the map $[X, \BGL_{2}] \to [X, \BGL_{2}^{(2)}]$ is surjective. It follows from \cite[Proposition 6.3]{AF-3fold} that this map associates to a rank $2$ bundle $X \to \BGL_2$ precisely its determinant bundle $\mathcal L$ and its Euler class in $\CH^2(X,\mathcal L)$. This finishes the proof.
\end{proof}

\begin{definition}\label{def:L-or}Let $\mathcal{L}$ be a line bundle over a smooth affine $k$-variety $X$. An $\mathcal{L}$-oriented vector bundle of rank $r$ over $X$ is a pair $(\mathcal{E}, \varphi)$, where $\mathcal{E}$ is a rank $r$ vector bundle over $X$ with an isomorphism $\varphi\: \det (\mathcal{E}) \xrightarrow{\cong} \mathcal{L}$. Two such $\mathcal{L}$-oriented vector bundles $(\mathcal{E}, \varphi)$ and $(\mathcal{E}', \varphi')$ are isomorphic if there is an isomorphism $i\: \mathcal{E} \xrightarrow{\cong} \mathcal{E}'$ such that $\varphi' \circ \det (i)= \varphi$. We let $\Vect^{\mathcal L}_r(X)$ denote the set of isomorphism classes of $\mathcal{L}$-oriented rank $r$ algebraic vector bundles on $X$. \end{definition} 

\begin{rmk} If $\mathcal{L} = \mathcal{O}_X$ in \Cref{def:L-or}, we recover oriented vector bundles in the usual sense.\end{rmk}

\begin{theorem}\label{thm:ch3-zero-determined-euler} Let $X$ be a smooth affine fourfold over an algebraically closed field $k$ of characteristic not equal to $2$ or $3$, and suppose that $\Ch^3(X) = 0$. Then, by taking the Euler class, we obtain a bijection
\begin{equation}\label[empty]{eqn:euler-class-map-1}
    e_{\mathcal L}\: \Vect^{\mathcal L}_2(X) \xto{\sim} \CHW^2(X, \mathcal L)
\end{equation}
for any line bundle $\mathcal{L}$ over $X$. In particular, an algebraic vector bundle $\mathcal{E}$ of rank $2$ on $X$ splits off a trivial vector bundle of rank $1$ if and only if the Euler class of $\mathcal E$ is zero in the Chow--Witt group $\CHW^{2}(X, \det ({\mathcal{E}}))$.
\end{theorem}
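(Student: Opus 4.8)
The plan is to assemble the stage-by-stage analysis of the Postnikov tower of $\BGL_2$ carried out in the previous section. By affine representability (\cite[Theorem 1]{AHW1}) together with the standard identification of $\mathcal L$-oriented rank $2$ bundles (\Cref{def:L-or}) with $\A^1$-homotopy classes of lifts of a fixed classifying map $\ell\colon X \to B\mathbb G_m$ for $\mathcal L$ along $\BGL_2 \to B\mathbb G_m = \BGL_2^{(1)}$ — the orientation $\det(\mathcal E)\xrightarrow{\cong}\mathcal L$ supplying the required homotopy — the set $\Vect^{\mathcal L}_2(X)$ is the set of such lifts. By \Cref{lem:lift-third-stage} we have $[X,\BGL_2]\cong[X,\BGL_2^{(3)}]$, with the vanishing of \Cref{thm:stage3} as the essential input, so $\Vect^{\mathcal L}_2(X)$ is identified with the homotopy classes of lifts of $\ell$ to $\BGL_2^{(3)}$. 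I will build such a lift in the two steps $\BGL_2^{(3)}\to\BGL_2^{(2)}\to\BGL_2^{(1)}=B\mathbb G_m$.

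For the bottom step, $\piA{2}{\BGL_2}\cong\KMW_2$, so the homotopy classes of lifts of $\ell$ to $\BGL_2^{(2)}$ form a torsor under $\Hnis^2(X,\KMW_2(\mathcal L))=\CHW^2(X,\mathcal L)$; this torsor is nonempty because $\mathcal L\oplus\mathcal O_X$ is a rank $2$ bundle with determinant $\mathcal L$. By \cite[Proposition 6.3]{AF-3fold} — as already used at the end of the proof of \Cref{lem:every-class-is-euler-class-of-bundle} — this torsor structure is the one under which the image in $\BGL_2^{(2)}$ of a rank $2$ bundle records precisely its determinant and its Euler class, so that lifts of $\ell$ to $\BGL_2^{(2)}$ are canonically in bijection with $\CHW^2(X,\mathcal L)$ via the Euler class, with $\mathcal L\oplus\mathcal O_X$ mapping to $0$.

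It remains to show that any given map $X\to\BGL_2^{(2)}$ admits a lift to $\BGL_2^{(3)}$ that is unique up to homotopy. Existence needs nothing beyond what was shown for \Cref{lem:every-class-is-euler-class-of-bundle}: the obstruction lies in $\Hnis^4(X,\piA{3}{\BGL_2}(\mathcal L))=\Hnis^4(X,\piA{2}{\A^2\setminus 0}(\mathcal L))$, which vanishes via the long exact sequence attached to \Cref{eq:21}, using $\Hnis^4(X,\Tsheaf{4}(\mathcal L))\cong\CH^4(X)/12=0$ (triviality of the $\mathbb G_m$-action on $\Tsheaf{4}$ from \Cref{thm:AF3}, together with \Cref{cor:TS4}, \Cref{TSKM4} and \Cref{thm:chow-divisibility}) and $\Hnis^4(X,\KSp_3(\mathcal L))=0$ (the degree-$4$ term of the twisted Rost--Schmid complex for $\KSp_3\cong\GW^2_3$ vanishes because $(\GW^2_3)_{-4}=0$ by \Cref{prop:contractions-1}); none of this uses $\Ch^3(X)=0$. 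The set of lifts is then a torsor under a quotient of $\Hnis^3(X,\piA{2}{\A^2\setminus 0}(\mathcal L))$, which by \Cref{cor:stage-2-reduces-to-GW} is isomorphic to $\Hnis^3(X,\KSp_3(\mathcal L))$ and, by \Cref{GW-SES} (see the proof of \Cref{prop:h3k3sp-finite-if-ch3-is}), is a quotient of $\Ch^3(X)$. Hence $\Ch^3(X)=0$ forces $\Hnis^3(X,\piA{2}{\A^2\setminus 0}(\mathcal L))=0$ and the lift is unique up to homotopy. Securing this uniqueness — and carrying the twist by $\mathcal L$ through each vanishing statement — is the main obstacle, although the genuinely difficult vanishing was established earlier; the hypothesis $\Ch^3(X)=0$ enters at exactly this point. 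Composing the bijections $\Vect^{\mathcal L}_2(X)\cong\{\text{lifts to }\BGL_2^{(3)}\}\xrightarrow{\sim}\{\text{lifts to }\BGL_2^{(2)}\}\xrightarrow{\sim}\CHW^2(X,\mathcal L)$ then shows that $e_{\mathcal L}$ is a bijection.

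For the final assertion: if $\mathcal E\cong\mathcal E'\oplus\mathcal O_X$ for a line bundle $\mathcal E'$, then $\mathcal E'\cong\det(\mathcal E)$, so $\mathcal E\cong\det(\mathcal E)\oplus\mathcal O_X$ carries a nowhere-vanishing section and hence has Euler class $0$ in $\CHW^2(X,\det(\mathcal E))$ (cf. \cite[Chapter 8]{Morel}). Conversely, if the Euler class of $\mathcal E$ vanishes, then $\mathcal E$, with the identity orientation of its determinant, and $\det(\mathcal E)\oplus\mathcal O_X$ are two $\det(\mathcal E)$-oriented rank $2$ bundles with the same image under $e_{\det(\mathcal E)}$, hence are isomorphic as oriented — in particular as plain — vector bundles, so $\mathcal E$ splits off a trivial line bundle.
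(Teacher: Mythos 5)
Your proposal is correct and follows essentially the same route as the paper: surjectivity via the obstruction-vanishing argument of \Cref{lem:every-class-is-euler-class-of-bundle} (none of which needs $\Ch^3(X)=0$), and injectivity from the vanishing of $\Hnis^3(X,\piA{2}{\A^2\setminus 0}(\mathcal L))$ obtained by combining \Cref{cor:stage-2-reduces-to-GW}, \Cref{prop:h3k3sp-finite-if-ch3-is} and the hypothesis $\Ch^3(X)=0$. You merely spell out more explicitly the identification of $\Vect^{\mathcal L}_2(X)$ with lifts along the Postnikov tower and the deduction of the splitting criterion, both of which the paper leaves implicit.
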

\begin{proof}
Surjectivity of \Cref{eqn:euler-class-map-1} follows from \Cref{lem:every-class-is-euler-class-of-bundle}. Up to homotopy, choices of lifts of a given map \[X \to \BGL_2^{(2)}\simeq K^{\mathbb G_m}(\KMW_2,2)\] to $\BGL_2^{(3)}$ are acted on transitively by \[\Hnis^3(X,\piA{3}{\BGL_{2}}(\mathcal{L})) \cong \Hnis^3(X,\piA{2}{\A^2 \minus 0}(\mathcal{L})),\] which is zero by \Cref{cor:stage-2-reduces-to-GW}, \Cref{prop:h3k3sp-finite-if-ch3-is} and by our assumption that $\Ch^3(X)$ is zero. In particular, it follows that under our assumptions the map $[X, \BGL_{2}] \to [X, \BGL_{2}^{(2)}]$ is bijective.
\end{proof}

\begin{remark}\label{rmk:fibers-ch3-finite} If $\Ch^3(X)$ is assumed only to be finite, but not necessarily zero, then $e_{\mathcal L}$ in \Cref{eqn:euler-class-map-1} is a surjection with finite fibers.
\end{remark}

\subsection{Symplectic stabilization}\label{classify-square}
Combining analysis of the Moore--Postnikov tower for $\BSp_2 \to \BSp$ with some of our previous computations, we are able to show that rank $2$ vector bundles on smooth affine fourfolds whose determinant bundle is a square are determined by their symplectic $K$-theory class. More precisely:
\begin{theorem}\label{symp-classification}Let $X$ be a smooth affine fourfold over an algebraically closed field $k$ of characteristic not equal to $2$ or $3$. Let $\mathcal L$ be a line bundle on $X$ such that $\mathcal L \simeq (\mathcal N)^{\otimes 2}$ for some line bundle $\mathcal N$ on $X$. Then there is an injective map
    $$\Vect^{\mathcal L}_2(X) \hookrightarrow \widetilde{\KSp}_0(X)$$ given by taking a vector bundle $\mathcal{V}$ with determinant $\mathcal L$ to the composite \[\left(X \xto{\mathcal V \otimes \mathcal N^{-1}} \BSL_2 \to \BSp\right)\in [X,\BSp].\]
\end{theorem}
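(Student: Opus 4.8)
The plan is to reduce to symplectic stabilisation and then run obstruction theory along the Moore--Postnikov tower of $\BSp_2\to\BSp$, feeding in the computation from the proof of \Cref{zeromap}. First, reduce to $\mathcal L=\mathcal O_X$: tensoring with $\mathcal N^{-1}$ is a bijection $\Vect^{\mathcal L}_2(X)\xrightarrow{\sim}\Vect^{\mathcal O_X}_2(X)$, the orientation of $\mathcal V$ and a fixed isomorphism $\mathcal L\cong\mathcal N^{\otimes 2}$ trivialise $\det(\mathcal V\otimes\mathcal N^{-1})$, and a trivialisation of the determinant of a rank $2$ bundle is the same datum as a symplectic form. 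Since $\SL_2=\Sp_2$, affine representability identifies $\Vect^{\mathcal O_X}_2(X)=[X,\BSL_2]=[X,\BSp_2]$ and $\widetilde{\KSp}_0(X)=[X,\BSp]$, under which the map in the statement becomes the stabilisation map $[X,\BSp_2]\to[X,\BSp]$. So it suffices to show this map is injective.

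The homotopy fiber of $\BSp_2\to\BSp$ is $\Sp/\Sp_2$. From the filtration $\Sp_2\subset\Sp_4\subset\cdots$, the fiber sequence $\A^4\setminus 0=\Sp_4/\Sp_2\to\Sp/\Sp_2\to\Sp/\Sp_4$, and \cite[Theorem 6.40]{Morel} (with $\Sp/\Sp_4$ being $\A^1$-$4$-connected), the space $\Sp/\Sp_2$ is $\A^1$-$2$-connected with $\piA{3}{\Sp/\Sp_2}\cong\piA{3}{\A^4\setminus 0}\cong\KMW_4$, and there is a surjection of sheaves $\piA{4}{\A^4\setminus 0}\to\piA{4}{\Sp/\Sp_2}$, so $\Hnis^4(X,\piA{4}{\Sp/\Sp_2})=0$ by \cite[Theorem 3.0.3]{fasel2021suslins}. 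Since $\piA{1}{\BSp}=0$ there is no twisting, and since $X$ has Nisnevich cohomological dimension at most $4$ we get $[X,\BSp_2]\cong[X,\BSp_2^{(4)}]$ by the formalism of \Cref{subsec:def-twists}. The first two stages of the tower are trivial; lifts along $\BSp_2^{(4)}\to\BSp_2^{(3)}$ (fiber $K(\piA{4}{\Sp/\Sp_2},4)$) form a torsor under a quotient of $\Hnis^4(X,\piA{4}{\Sp/\Sp_2})=0$; and lifts of a fixed map along $\BSp_2^{(3)}\to\BSp_2^{(2)}=\BSp$ (fiber $K(\KMW_4,3)$) form the quotient of $\Hnis^3(X,\KMW_4)$ by the image of the map $\delta_*\colon[X,\Omega\BSp]\cong[X,\Sp]\to\Hnis^3(X,\KMW_4)$ induced by the connecting map $\delta=\Omega k_3$. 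Hence injectivity reduces to the surjectivity of $\delta_*$.

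To prove $\delta_*$ surjective, note that the connecting map of $\Sp/\Sp_2\to\BSp_2\to\BSp$ is the quotient map $c\colon\Omega\BSp\simeq\Sp\to\Sp/\Sp_2$, and by naturality of connecting maps $\delta=\Omega k_3$ is the composite of $c$ with the $3$-truncation $\Sp/\Sp_2\to K(\KMW_4,3)$; in particular $\delta$ factors through $\Sp\to\tau_{\le 3}\Sp$. As $\piA{4}{\Sp}\cong\KSp_5$ and $\dim X=4$, the obstruction to lifting a map $X\to\tau_{\le 3}\Sp$ back to $\Sp$ lies in $\Hnis^5(X,\KSp_5)=0$, so $[X,\Sp]\to[X,\tau_{\le 3}\Sp]$ is surjective and it suffices to show $[X,\tau_{\le 3}\Sp]\to\Hnis^3(X,\KMW_4)$ is surjective. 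Restricting along the fiber inclusion $K(\KSp_4,3)\hookrightarrow\tau_{\le 3}\Sp$ of $\tau_{\le 3}\Sp\to\tau_{\le 2}\Sp$, this map becomes the morphism $K(\KSp_4,3)\to K(\KMW_4,3)$ induced by $\phi_4$ — a morphism of Eilenberg--Mac Lane sheaves is determined by its effect on $\piA{3}$, and on $\piA{3}$ the map $c$ is the map $i_3=\phi_4$ of \cite[\S 3]{AF-3fold}. So it is enough to prove $(\phi_4)_*\colon\Hnis^3(X,\KSp_4)\to\Hnis^3(X,\KMW_4)$ surjective, which is exactly the content of the proof of \Cref{zeromap}: $\I^5|_X=0$ gives $\KMW_4|_X\cong\KM_4|_X$, the composite $\KM_4\xrightarrow{\mu_4}\K_4^Q\xrightarrow{H_{4,2}}\KSp_4\xrightarrow{\phi_4}\KMW_4\to\KM_4$ is multiplication by $12$, and multiplication by $12$ is an isomorphism on $\Hnis^3(X,\KMW_4)$ when $\operatorname{char}k\ne 2,3$ by \Cref{lem:fasel-KMW-divisibility}. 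Thus the fibers of $[X,\BSp_2]\to[X,\BSp]$ are singletons and the theorem follows. The hard part is this last step — pinning down $\delta$ well enough (as loops on the first $k$-invariant, equivalently the $3$-truncation of $c$) that its induced map routes through $\Hnis^3(X,\KSp_4)$, where \Cref{zeromap} applies; an alternative is to compare, via \Cref{rmk:canonicity}, the tower of $\BSp_2\to\BSp$ with an auxiliary tower whose relevant $k$-invariant is visibly $\phi_4$.
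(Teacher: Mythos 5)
Your reduction of the theorem to injectivity of $[X,\BSp_2]\to[X,\BSp]$ is exactly the paper's (this is \Cref{thm:sp-inj}), but from there you take a genuinely different route. The paper first replaces $\BSp$ by $\BSp_4$ and then shows uniqueness of lifts from $\BSp_4$ to the stage $\mathcal E^{(3)}$ \emph{without} computing the connecting map: it compares the Moore--Postnikov tower of $\BSp_2\to\BSp_4$ with that of the Euler class $\BSL_2\to K(\KMW_2,2)$ (the ``alternative'' you gesture at in your last sentence) and shows the orbit map $[X,K(\KMW_4,3)]\to[X,\mathcal E^{(3)}]$ is constant, because it factors through $f_*\colon\Hnis^3(X,\KMW_4)\to\Hnis^3(X,\piA{2}{\A^2\setminus 0})$ --- which is zero since the source is uniquely $2$-divisible and the target is $2$-torsion --- and through the bijection $[X,\mathcal E^{(3)}]\to[X,\mathcal F^{(3)}]$. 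You instead identify $\Omega k_3$ with the $3$-truncation of $c\colon\Sp\to\Sp/\Sp_2$ and prove surjectivity of the resulting map $[X,\Sp]\to\Hnis^3(X,\KMW_4)$ by routing it through $(\phi_4)_*\colon\Hnis^3(X,\KSp_4)\to\Hnis^3(X,\KMW_4)$ and the multiplication-by-$12$ computation of \Cref{zeromap}. The individual ingredients are correct: $\Sp/\Sp_2$ is $\A^1$-$2$-connected with $\piA{3}{\Sp/\Sp_2}\cong\KMW_4$ and $\piA{4}{\A^4\setminus 0}\to\piA{4}{\Sp/\Sp_2}$ surjective; $\pi_3(c)$ is $\phi_4$ by the paper's identification of $i_3$; and $(\phi_4)_*$ is surjective on $\Hnis^3$ because $\KMW_4|_X\cong\KM_4|_X$ and multiplication by $12$ is invertible on $\Hnis^3(X,\KMW_4)$ away from characteristics $2$ and $3$. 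This is a legitimately different, and in some ways more explanatory, mechanism for injectivity.

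There is, however, one unjustified step: you identify the fiber of $[X,\BSp_2^{(3)}]\to[X,\BSp]$ over an \emph{arbitrary} class $y$ with the cokernel of $\delta_*=(\Omega k_3)_*$. The exact sequence $[X,\Omega\BSp]\xto{(\Omega k_3)_*}[X,K(\KMW_4,3)]\to[X,\BSp_2^{(3)}]\to[X,\BSp]$ only describes the fiber over the \emph{basepoint}. For general $y$ the set of lifts is $\Hnis^3(X,\KMW_4)$ modulo the image of the connecting homomorphism $\pi_1(\Map(X,\BSp),y)\to\Hnis^3(X,\KMW_4)$; after using the $H$-space structure of $\BSp$ to identify $\pi_1(\Map(X,\BSp),y)$ with $[X,\Sp]$, this homomorphism differs from $(\Omega k_3)_*$ by a correction term controlled by the deviation of $k_3$ from being an $H$-map, which is not obviously zero. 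Since the theorem requires \emph{every} fiber to be a singleton, surjectivity of the basepoint connecting map alone does not close the argument. To repair this you would need either to prove $k_3$ is primitive, or to restructure as the paper does: work with the principal action of $K(\KMW_4,3)$ on the stage and show the orbit map based at an \emph{arbitrary} lift is constant --- a formulation that is basepoint-free, but into which your surjectivity computation does not obviously convert. So the proposal is a viable alternative strategy with one real, though likely fixable, hole at this step.
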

We reduce \Cref{symp-classification} to the case that $\mathcal L$ is trivial by noting that tensoring by $\mathcal {N}^{-1}$ induces a bijection
\[\Vect_2^{\mathcal{O}_{X}}(X) \cong \Vect_2^{\mathcal L}(X).\]
Hence the proof of \Cref{symp-classification} is completed by the following:

\begin{theorem}\label{thm:sp-inj}
Let $X$ be a smooth affine fourfold over an algebraically closed field $k$ of characteristic not equal to $2$ or $3$. Then the natural map
\[
    [X,\BSp_2] \to [X,\BSp]
\]
is injective.
\end{theorem}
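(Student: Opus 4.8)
The plan is to prove injectivity by comparing the Postnikov tower of $\BSp_2\simeq\BSL_2$ with that of $\BSp$ over the common base $K(\KMW_2,2)$, and then feeding in cohomological vanishing results already established. First I would record the relevant connectivity. The $\mathbb A^1$-homotopy fiber of $\BSp_2\to\BSp$ is $\Sp/\Sp_2$; using the symplectic filtration of \Cref{eqn:Sp-SS}, whose subquotients are the homogeneous spaces $\Sp_{2n+2}/\Sp_{2n}\simeq\A^{2n+2}\minus 0$ (the first of which, $\A^4\minus 0$, is $\mathbb A^1$-$2$-connected), one sees that $\Sp/\Sp_2$ is $\mathbb A^1$-$2$-connected. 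Moreover $\BSp$ is $\mathbb A^1$-$1$-connected since $\piA1{\BSp}=\piA0{\Sp}$ is trivial, and $\BSp_2\to\BSp$ is therefore a $2$-connected map, so its $2$-truncation is an equivalence; I would fix identifications $\tau_{\le2}\BSp\simeq\tau_{\le2}\BSp_2\simeq K(\KMW_2,2)$ and work with the resulting homotopy-commutative triangle $\BSp_2\to\BSp\to K(\KMW_2,2)$, whose diagonal composite is the Euler class of $\BSp_2\simeq\BSL_2$.

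Next, I would apply the Moore--Postnikov comparison of \Cref{rmk:canonicity} to the square with rows $\BSp_2\to K(\KMW_2,2)$ and $\BSp\to K(\KMW_2,2)$ and left vertical map $\BSp_2\to\BSp$ --- the horizontal homotopy fibers being the $\mathbb A^1$-$2$-connected covers of $\BSL_2$ and of $\BSp$, so the hypotheses apply. This produces a map $\BSL_2^{(3)}\to\BSp^{(3)}$ of third Postnikov stages over $K(\KMW_2,2)$. Over $K(\KMW_2,2)$ the homotopy fiber of $\BSL_2^{(3)}$ is $K(\piA3{\BSL_2},3)=K(\piA2{\A^2\minus 0},3)$, and that of $\BSp^{(3)}$ is $K(\piA3{\BSp},3)=K(\piA2{\Sp},3)=K(\KSp_3,3)$; the induced map of fibers realizes the stabilization $\piA2{\A^2\minus 0}\to\piA2{\Sp}=\KSp_3$, which is the edge morphism $e_2$ of the spectral sequence. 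By \Cref{thm:AF3} this $e_2$ is an epimorphism with kernel $\Tsheaf{4}$, so the homotopy fiber of $\BSL_2^{(3)}\to\BSp^{(3)}$ is $K(\Tsheaf{4},3)$.

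Finally I would bring in the hypotheses on $X$. Since $X$ has Nisnevich cohomological dimension at most $4$ and $\piA4{\BSL_2}\cong\piA3{\A^2\minus 0}$ has vanishing Nisnevich cohomology on $X$ in degrees $4$ and $5$ (\Cref{thm:stage3} and dimension), the canonical maps give bijections $[X,\BSp_2]=[X,\BSL_2]\cong[X,\BSL_2^{(4)}]\cong[X,\BSL_2^{(3)}]$. On the other hand, for the fibration $\BSL_2^{(3)}\to\BSp^{(3)}$ with fiber $K(\Tsheaf{4},3)$, the homotopy classes of lifts of any fixed map $X\to\BSp^{(3)}$ form a torsor under $\Hnis^3(X,\Tsheaf{4})$, and $\Hnis^3(X,\Tsheaf{4})\cong\Hnis^3(X,\KM_4/12)=0$ by \Cref{TSKM4} and \Cref{cor:stage-2-reduces-to-GW} --- this is the step that uses the hypothesis that $k$ has characteristic different from $2$ and $3$. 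Hence $[X,\BSL_2^{(3)}]\to[X,\BSp^{(3)}]$ is injective. Since the comparison data identifies the composite $[X,\BSp_2]\to[X,\BSp]\to[X,\BSp^{(3)}]$ with the bijection $[X,\BSp_2]\cong[X,\BSL_2^{(3)}]$ followed by the injection $[X,\BSL_2^{(3)}]\hookrightarrow[X,\BSp^{(3)}]$, this composite is injective, and therefore so is $[X,\BSp_2]\to[X,\BSp]$.

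The main obstacle is the identification of the homotopy fiber $K(\Tsheaf{4},3)$ of the comparison map: this requires recognizing the stabilization $\piA3{\BSL_2}\to\piA3{\BSp}$ as the edge homomorphism $e_2$ and invoking the Asok--Fasel exact sequence \Cref{eq:21}. Once that is in place, the entire argument is driven by the single vanishing statement $\Hnis^3(X,\Tsheaf{4})=0$, which has already been repackaged as $\Hnis^3(X,\KM_4/12)=0$; no new cohomological computation is required, but some care is needed in keeping track of which space lies over which, and in noting that on a fourfold it is the third Postnikov stages --- and not the fourth --- that control the comparison.
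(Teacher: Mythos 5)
Your proof is correct, and it takes a genuinely different route from the paper's. The paper first replaces $\BSp$ by $\BSp_4$ (the fiber of $\BSp_4\to\BSp$ being highly connected) and then runs obstruction theory directly on the Moore--Postnikov tower of $\BSp_2\to\BSp_4$, whose fiber is $\A^4\minus 0$; the decisive step there is to show that the transitive action of $\Hnis^3(X,\KMW_4)$ on lifts to the first nontrivial stage is trivial, which is done by mapping that tower to the Euler-class tower of $\BSL_2\to K(\KMW_2,2)$ and observing that $\Hnis^3(X,\KMW_4)$ is uniquely $2$-divisible while its target $\Hnis^3(X,\piA{2}{\A^2\minus 0})$ is $2$-torsion, so the comparison map on cohomology vanishes. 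You instead compare the two towers over the common $2$-truncation $K(\KMW_2,2)$, identify the relative fiber of $\BSL_2^{(3)}\to\BSp^{(3)}$ as $K(\Tsheaf{4},3)$ via the edge map $e_2$ and \Cref{thm:AF3}, and conclude from the single vanishing $\Hnis^3(X,\Tsheaf{4})\cong\Hnis^3(X,\KM_4/12)=0$ of \Cref{TSKM4} and \Cref{cor:stage-2-reduces-to-GW}. These are two mechanisms for the same cancellation: $\Tsheaf{4}$ is precisely the image of the sheaf map whose induced map on $\Hnis^3$ the paper shows is zero, so your vanishing is the sharper statement at the coefficient-sheaf level (and is in fact established by the paper in the proof of \Cref{prop:H3XD}), whereas the paper's divisibility-versus-torsion trick sidesteps any computation with $\Tsheaf{4}$ at this stage. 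Both routes consume the same ultimate inputs --- \Cref{thm:stage3} to pass between the third and fourth stages, and the divisibility results of \Cref{lem:fasel-KMW-divisibility} --- but your packaging has the conceptual advantage of exhibiting the entire discrepancy between $[X,\BSp_2]$ and $[X,\BSp]$ on a fourfold as governed by one relative $k$-invariant with coefficients in $\Tsheaf{4}$. The one point requiring care, which you do address, is that the comparison square must be taken over the identity of $K(\KMW_2,2)$ so that the homotopy fiber of $\BSL_2^{(3)}\to\BSp^{(3)}$ is computed by the map of vertical fibers, i.e., by $K(e_2,3)$.
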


\begin{proof}
We first consider the morphism $\BSp_{4} \to \BSp$. Its homotopy fiber has its first nonzero $\A^1$-homotopy sheaf in degree $5$, so the obstruction theory involving the Moore--Postnikov factorization associated to the morphism $\BSp_{4} \to \BSp$ implies that $[X, \BSp_{4}] \to [X, \BSp]$ is bijective. In particular, it remains to show that $[X, \BSp_{2}] \to [X, \BSp_{4}]$ is injective.

The relevant stages of the Moore--Postnikov tower for $\BSp_2 \to \BSp_{4}$ take the form below (we refer the reader to \Cref{subsec:def-twists} for background on Moore--Postnikov towers):

\begin{equation}\label[diagram]{diag:MP-BSp}
\begin{tikzcd}
\mathcal E^{(4)}\ar[d]\\
\mathcal E^{(3)} \ar[d]\ar[r,"k_5"] & K(\piA{4}{\A^4 \setminus 0},5) \\
    \BSp_{4} \ar[r,"k_4"] &K(\KMW_4,4) 
\end{tikzcd}
\end{equation}
where $[X,\mathcal E^{(4)}] \cong [X, \BSp_2]$ for $X$ a smooth affine fourfold. Injectivity has to do with uniqueness of choices of lift from $\BSp_{4}$ to $\mathcal E^{(4)}$, which we count one stage at a time.
\begin{itemize}
    \item 
Lifts of a map $y\: X \to \BSp_{4}$ to $\mathcal E^{(3)}$ are a torsor for a quotient of $H^3(X,\KMW_4)$. 
\item Given a lift $\tilde{y}\: X \to \mathcal E^{(3)}$, lifts to $\mathcal E^{(4)}$ are a torsor for a quotient of $H^4(X, \piA{4}{\A^4 \setminus 0}),$ which is zero by \cite[Theorem 3.0.3]{fasel2021suslins}. Thus, $[X,\mathcal E^{(4)}] \cong [X,\mathcal E^{(3)}]$ via the obvious map.
\end{itemize}
 To show lifts of a map $X \to \BSp_{4}$ to $\mathcal E^{(3)}$ are unique, we compare the Moore--Postnikov tower for $\BSp_2 \to \BSp_{4}$ to that for the $\BSL_2 \to K(\KMW_2,2)$, where the latter morphism represents the Euler class (see \Cref{rmk:canonicity} for a discussion of comparison maps Moore--Postnikov towers). Note that we have a commutative diagram

 \begin{equation}\label[diagram]{diag:AB-square}
 \begin{tikzcd}
     \BSp_2 \ar[d,"A" left]\ar[r,"\simeq"]& \BSL_2\ar[d,"B"]\\
     \BSp_{4} \ar[r] & K(\KMW_2,2)
 \end{tikzcd}
 \end{equation}
Taking first stages in the Moore--Postnikov towers for $A$ and $B$ in \Cref{diag:AB-square}, we obtain a commutative diagram:
\begin{equation}\label[diagram]{diag:AB-first-stage}
\begin{tikzcd}
K(\KMW_4,3) \ar[r,"f"] \ar[d,"g"]& K(\piA{2}{A^2 \setminus 0},3)\ar[d] \\
    \mathcal E^{(3)}\ar[d] \ar[r] &\mathcal F^{(3)}\ar[d]\\
    \BSp_{4} \ar[r] & K(\KMW_2,2)
\end{tikzcd}
\end{equation}
where both columns are fiber sequences. Given a basepoint map $X\to \BSp_{4}$ and lift of this basepoint to a map $X\to \mathcal E^{(3)}$, apply $[X, -]$ to \Cref{diag:AB-first-stage} to obtain a commutative diagram
\begin{equation}\label[diagram]{diag:AB-first-stage-htpy}
\begin{tikzcd}
{[X,K(\KMW_4,3)]}
\ar[r,"{f_*}"] \ar[d,"{g_*}"]
 & {[X,K(\piA{2}{A^2 \setminus 0},3)]}
 \ar[d] 
\\
    {[X,\mathcal E^{(3)}]} \ar[d] \ar[r] &{[X,\mathcal F^{(3)}]} \ar[d]\\
    {[X,\BSp_{4}]} \ar[r] & {[X,K(\KMW_2,2)]}.
\end{tikzcd}
\end{equation}
where the columns are exact sequences of pointed sets. The set of lifts of the given basepoint is a torsor for the image of the map labeled $g_*$.

 By \Cref{cor:stage-2-reduces-to-GW} and \Cref{prop:h3k3sp-finite-if-ch3-is}, $H^3(X,\piA{2}{\A^2\setminus 0})$ is a $2$-torsion group, while $H^3(X,\KMW_4)$ is uniquely $2$-divisible by \cite[Theorem 4.0.3]{fasel2021suslins}. Thus $f_*$ is zero. 
 Lastly, note that the center horizontal map in \Cref{diag:AB-first-stage-htpy} is a bijection: we have a string of bijections
 \begin{align*}[X, \mathcal E^{(3)}]\cong [X,\mathcal E^{(4)}] \cong [X,\BSp_2] \cong [X,\mathcal F^{(4)}]  \cong [X,\mathcal F^{(3)}],\end{align*} where the last bijection uses \Cref{thm:stage3}.
Thus $g_*$ factors through zero and its image is zero.
\end{proof}
\begin{rmk} While \Cref{symp-classification} asserts the existence of an injection, we can say a bit more. The proof of \Cref{thm:sp-inj} shows that, for as in the statement of the theorem \Cref{symp-classification}, $\Vect^{\mathcal L}_2(X)$ is in bijection with the elements in $[X,\BSp]\cong [X,\BSp_4]$ such that $k_4(\mathcal E \otimes \mathcal N^{-1})=0$, where $k_4\: \BSp_4 \to K(\KMW_4,4)$ is the first nontrivial $k$-invariant for the Moore--Postnikov of \Cref{diag:MP-BSp}. \end{rmk}

We conclude this section with a cohomological classification of oriented vector bundles of rank $2$ over a smooth affine fourfold that are stably isomorphic to a fixed oriented bundle of rank $2$. For any integer $r \geq 0$, recall that $\Vect_r^{\mathcal{O}_{X}}(X)$ denotes the set of isomorphism classes of oriented vector bundles of rank $r$ over $X$. As for ordinary algebraic vector bundles, we have stabilization maps
\[
    \Phi^{o}_{r}\: \Vect_r^{\mathcal{O}_{X}}(X) \to \Vect_{r+1}^{\mathcal{O}_{X}}(X)
\]
given on representatives by 
\[(\mathcal{E}, \varphi) \mapsto (\mathcal{E} \oplus \mathcal{O}_{X}, \varphi^{+})\]
where $\varphi^{+}$ denotes the orientation $\varphi^{+}\: \det (\mathcal{E} \oplus \mathcal{O}_{X}) \xrightarrow{\cong} \det (\mathcal{E}) \xrightarrow{\varphi} \mathcal{O}_{X}$. We study the fibers ${(\Phi^o_r)}^{-1}[\mathcal{E} \oplus \mathcal{O}_{X}, \varphi^+]$ for some fixed oriented vector bundle $(\mathcal{E},\varphi)$ of rank $r$. Since $X$ is affine, we can describe this fiber in terms of commutative algebra.

\begin{definition} Let $R = \mathcal{O}_{X}(X)$ be the ring of regular functions on $X$ and let $(P, \theta)$ be the oriented projective $R$-module of rank $r$ corresponding to $(\mathcal{E},\varphi)$. Then let $\Um (P \oplus R)$ denote the set of surjective $R$-linear homomorphisms $P \oplus R \to R$. \end{definition}
Note that the group $\SL (P \oplus R)$ of $R$-linear automorphisms of $P \oplus R$ with determinant $1$ acts on the right on the set $\Um (P \oplus R)$ by precomposition. We denote by $\Um (P \oplus R)/\SL (P \oplus R)$ the corresponding orbit space. Following \cite[Section 2.A]{Sy1}, we consider the following construction.
\begin{construction}\label{const-bij-1}If $a \in \Um (P \oplus R)$, then its kernel $P_{a} = \ker (a)$ is a finitely generated projective $R$-module of rank $r$. Any section $s\colon R \to P \oplus R$ of $a$ then induces an isomorphism $i_{s}\colon P \oplus R \to P_{a} \oplus R$ 
and also an isomorphism $\det (P) \cong \det (P_{a})$ which is independent of the choice of $s$. 
We obtain an orientation of $\det(P_{a})$ by $\theta_{a}: \det (P_{a}) \xrightarrow{\cong} \det (P) \xrightarrow{\theta} R$. 
\end{construction}
As in \cite[Section 2.A]{Sy1}, \Cref{const-bij-1} induces a natural bijection
\[
    \Um(P \oplus R)/\SL (P \oplus R) \xrightarrow{\cong} {(\Phi^o_r)}^{-1}[\mathcal{E} \oplus \mathcal{O}_{X}, \varphi^+].
\]
Under the bijection above, the class of $a \in \Um (P \oplus R)$ is mapped to the class of the oriented vector bundle of rank $r$ over $X$ corresponding to the oriented projective $R$-module $(P_{a}, \theta_{a})$ of rank $r$.

We now focus on the case $r=2$ and we fix an oriented projective $R$-module $(P,\theta)$ of rank $2$. Recall that there is an abelian group $\tilde{V}_{\SL}(R)$ generated by triples of the form $[Q, \chi_{1}, \chi_{2}]$, where $\chi_{1}, \chi_{2}$ are non-degenerate alternating forms on some finitely generated projective $R$-module $Q$; the group $\tilde{V}_{\SL}(R)$ is isomorphic to the group $W_{\SL}(R)$ (cf. \cite[Section 2.C]{Sy1}), which is itself isomorphic to $\ker (K_{0}\Sp(R) \to K_{0}(R))$. Following \cite[Section 3.A]{Sy1}, there exists a well-defined map
\[
V_{\theta}\colon \Um (P \oplus R)/\SL (P \oplus R) \to \tilde{V}_{\SL}(R)
\]
called the generalized Vaserstein symbol modulo SL associated to $(P, \theta)$. 

The map $V_{\theta}$ above is defined as follows. Let $a \in \Um (P \oplus R)$ be an $R$-linear epimorphism and $P_{a} = \ker (a)$ the associated finitely generated projective $R$-module of rank $2$. The trivialization $\theta$ of $\det (P)$ induces a canonical non-degenerate alternating form $\chi$ on $P$ and a non-degenerate alternating form $\chi_{a}$ on $P_{a}$. As above, any section $s\colon R \to P \oplus R$ of $a$ induces an isomorphism $i_{s}\colon P \oplus R \to P_{a} \oplus R$. The triple
\[
    V_{\theta}(a) = [P \oplus R^2, \chi \perp \psi_{2}, {(i_{s} \oplus 1)}^{t} (\chi_{a} \perp \psi_{2}) {(i_{s} \oplus 1)}] \in \tilde{V}_{\SL}(R)
\]
is independent of the choice of $s$ and induces the generalized Vaserstein symbol modulo SL associated to $(P, \theta)$ above. We refer the reader to \cite[Section 3.A]{Sy1} for details. Injectivity and surjectivity of $V_{\theta}$ was studied in detail in \cite[Section 3.B]{Sy1}. The following theorem is now an easy consequence of \Cref{thm:sp-inj}:
\begin{theorem}\label{thm:Vaserstein}
Let $R$ be a smooth affine domain of dimension $4$ over an algebraically closed field $k$ of characteristic not equal to $2$ or $3$. Let $(P,\theta)$ be an oriented projective $R$-module of rank $2$ over $X$. Then the generalized Vaserstein symbol
\[
    V_{\theta}\colon \Um (P \oplus R)/\SL (P \oplus R) \to \tilde{V}_{\SL}(R)
\]
is bijective.
\end{theorem}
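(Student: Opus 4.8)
The plan is to recast the statement in $\mathbb A^1$-homotopy theory, using the dictionary set up in this section, and then feed in \Cref{thm:sp-inj}; injectivity will be formal, and surjectivity is where the real input lies. Write $X=\Spec R$ and let $(\mathcal E,\varphi)$ be the oriented rank $2$ bundle corresponding to $(P,\theta)$, so that $\det\mathcal E\cong\mathcal O_X$. The first step is to make explicit the homotopical content of $V_\theta$. Recall from \Cref{const-bij-1} the natural bijection $\Um(P\oplus R)/\SL(P\oplus R)\cong{(\Phi^o_2)}^{-1}[\mathcal E\oplus\mathcal O_X,\varphi^+]$ taking $[a]$ to the oriented bundle attached to $(P_a,\theta_a)$. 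Since $\Sp_2\cong\SL_2$, an orientation of a rank $2$ bundle is the same datum as a non-degenerate alternating form, so each $P_a$ carries the canonical symplectic form $\chi_a$ induced by $\theta_a$, and $\Vect^{\mathcal O_X}_2(X)=[X,\BSL_2]=[X,\BSp_2]$. Unwinding the defining formula for $V_\theta$ and cancelling the hyperbolic summand $(R^2,\psi_2)$ in $K$-theory, the image of $V_\theta([a])$ under the isomorphism $\tilde V_{\SL}(R)\cong W_{\SL}(R)\cong\ker(K_0\Sp(R)\to K_0(R))$ equals $[(P,\chi)]-[(P_a,\chi_a)]$. In other words, up to the fixed translation by the base point class $[(P,\chi)]$, $V_\theta$ is the composite
\[\Um(P\oplus R)/\SL(P\oplus R)\xrightarrow{\ \sim\ }{(\Phi^o_2)}^{-1}[\mathcal E\oplus\mathcal O_X,\varphi^+]\hookrightarrow[X,\BSp_2]\longrightarrow[X,\BSp],\]
which is the (sign-twisted) content of \cite[Section 3.A]{Sy1}.

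Injectivity is then immediate: if $V_\theta([a])=V_\theta([a'])$ then $[(P_a,\chi_a)]=[(P_{a'},\chi_{a'})]$ in $K_0\Sp(R)$, so $(P_a,\chi_a)$ and $(P_{a'},\chi_{a'})$ are stably symplectically isomorphic and hence define the same class in $[X,\BSp]$. By \Cref{thm:sp-inj} the map $[X,\BSp_2]\to[X,\BSp]$ is injective, so $(P_a,\chi_a)\cong(P_{a'},\chi_{a'})$ as symplectic bundles; since $\Sp_2\cong\SL_2$ this is an isomorphism of the underlying $\SL$-oriented modules $(P_a,\theta_a)\cong(P_{a'},\theta_{a'})$, whence $[a]=[a']$ under the bijection with the orbit space.

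For surjectivity I must show every $\alpha\in\tilde V_{\SL}(R)$ is hit. By the translation above this amounts to producing a rank $2$ symplectic bundle $(Q,\chi_Q)$ with stable symplectic class $[(P,\chi)]-\alpha$ and with $Q\oplus R\cong P\oplus R$ (so that $Q=P_a$ for the unimodular row $a$ obtained by composing an isomorphism $P\oplus R\xrightarrow{\sim}Q\oplus R$ with the projection to $R$). The class $[(P,\chi)]-\alpha$ lies in $[X,\BSp]\cong[X,\BSp_4]$, the identification used in the proof of \Cref{thm:sp-inj}. Since $\alpha$ has trivial image in $K_0(R)$, the underlying rank $4$ bundle of this class has the same $K_0$-class as $P\oplus R^2$, in particular vanishing fourth Chern class; as this Chern class computes the first Moore--Postnikov obstruction $k_4\in\Hnis^4(X,\KMW_4)$ to reducing the structure group from $\Sp_4$ to $\Sp_2$ (using $\I^5|_X=0$ to see $\Hnis^4(X,\KMW_4)\cong\CH^4(X)$), and the second obstruction lives in $\Hnis^5(X,\piA{4}{\A^4\setminus0})=0$ for dimension reasons, the class lifts, uniquely by \Cref{thm:sp-inj}, to a rank $2$ symplectic bundle $(Q,\chi_Q)$. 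It then remains to check $Q\oplus R\cong P\oplus R$; both are rank $3$ projective modules with the same class in $K_0(R)$, and the vanishing of the remaining obstruction to their isomorphism --- controlled by the same twisted cohomology groups $\Hnis^i(X,\piA{i}{\A^2\setminus0}(\mathcal L))$, $i=3,4$, analyzed in Section~3 --- is precisely the surjectivity criterion established in \cite[Section 3.B]{Sy1}, now unconditionally available via \Cref{thm:stage2-secintro} and \Cref{thm:stage3}.

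The main obstacle is this last point of surjectivity: matching the cohomological hypotheses of \cite[Section 3.B]{Sy1} to the groups computed here, and in particular controlling the failure of cancellation in the rank $3$ range, which is exactly the phenomenon \Cref{thm:stage3} is designed to resolve. A secondary, purely bookkeeping, issue is to keep careful track of base points and of the rank and orientation normalizations identifying $\tilde V_{\SL}(R)$ with $[X,\BSp]$ throughout the argument.
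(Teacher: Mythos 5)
Your injectivity argument is essentially the paper's: from $V_{\theta}(a)=V_{\theta}(a')$ one extracts a stable symplectic isomorphism between $(P_a,\chi_a)$ and $(P_{a'},\chi_{a'})$, and \Cref{thm:sp-inj} (injectivity of $[X,\BSp_2]\to[X,\BSp]$) destabilizes it; the paper phrases the first step as an isometry after adding a single hyperbolic plane via \cite[Lemma 3.4]{Sy1} rather than equality in $K_0\Sp(R)$, but the content is the same. The divergence, and the genuine gap, is in surjectivity. The paper does not prove surjectivity by obstruction theory at all: it is quoted outright from \cite[Theorem 3.2]{Sy1} together with Suslin's cancellation theorem, and none of the new cohomological input of this paper is needed for it. Your attempt to rebuild surjectivity homotopically does not close.

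Concretely: (i) you invoke \Cref{thm:stage2-secintro} as ``unconditionally available,'' but that theorem is conditional on $\Ch^3(X)$ being finite or zero, which is not assumed here; (ii) the isomorphism $Q\oplus R\cong P\oplus R$ is a statement about rank $3$ modules on a fourfold, so the relevant obstruction groups involve homotopy sheaves of $\A^3\setminus 0$ (or the fiber $\A^4\setminus 0$ of $\BGL_3\to\BGL_4$), not the groups $\Hnis^i(X,\piA{i}{\A^2\setminus0}(\mathcal L))$ computed in Section~3; stable isomorphism of rank $3$ modules in dimension $4$ is exactly the rank $d-1$ cancellation problem, which classical Suslin cancellation (rank $\geq d$) does not give you; and (iii) the identification of the primary obstruction to lifting along $\BSp_2\to\BSp_4$ with the vanishing of $c_4$ of the underlying bundle is asserted, not justified. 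Since you yourself flag this step as the ``main obstacle,'' the honest conclusion is that the surjectivity half of your proof is open; the efficient route is the paper's, namely to take surjectivity as input from \cite[Theorem 3.2]{Sy1} and reserve the new machinery (\Cref{thm:sp-inj}) for injectivity, which is where the prior literature was conditional.
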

\begin{proof}
The map $V_{\theta}$ is automatically surjective by \cite[Theorem 3.2]{Sy1} and Suslin's cancellation theorem \cite{Su1}. So it remains to show that $V_{\theta}$ is injective.

For this purpose, assume $a, a' \in \Um (P \oplus R)$ with sections $s,s'\colon R \to P \oplus R$ such that $V_{\theta}(a) = V_{\theta}(a')$. Then, following the formalism developed in \cite[Lemma 3.4]{Sy1}, we easily see that the non-degenerate alternating forms ${(i_{s} \oplus 1)}^{t} (\chi_{a} \perp \psi_{2}) {(i_{s} \oplus 1)}$ and ${(i_{s'} \oplus 1)}^{t} (\chi_{a'} \perp \psi_{2}) {(i_{s'} \oplus 1)}$ are isometric. In other words, the symplectic $R$-modules $(P \oplus R^2, {(i_{s} \oplus 1)}^{t} (\chi_{a} \perp \psi_{2}) {(i_{s} \oplus 1)}) \cong (P_{a} \oplus R^2, \chi_{a} \perp \psi_2)$ and $(P \oplus R^2, {(i_{s'} \oplus 1)}^{t} (\chi_{a'} \perp \psi_{2}) {(i_{s'} \oplus 1)}) \cong (P_{a'} \oplus R^2, \chi_{a'} \perp \psi_{2})$ are isomorphic. Now, by \Cref{thm:sp-inj}, it follows that $(P_{a}, \chi_{a})$ and $(P_{a'}, \chi_{a'})$ are isomorphic as symplectic and hence as oriented $R$-modules. Therefore, by the preceding paragraphs, the classes of $a$ and $a'$ are the same in the orbit space $\Um (P \oplus R)/\SL (P \oplus R)$ and $V_{\theta}$ is indeed injective.
\end{proof}
\begin{rmk} The special case of \Cref{thm:Vaserstein} when $P = R^2$ was already proven in \cite[Theorem 3.9]{Sy3} and was significantly easier.
\end{rmk}  \Cref{thm:Vaserstein} now implies the following cohomological description of 
${(\Phi^o_2)}^{-1}[\mathcal{E} \oplus \mathcal{O}_{X}, \varphi^+]$:
\begin{theorem}\label{thm:oriented-fibers}
Let $X$ be a smooth affine variety of dimension $4$ over an algebraically closed field $k$ of characteristic not equal to $2$ or $3$. Let $(\mathcal{E},\varphi)$ be an oriented vector bundle of rank $2$ over $X$. Then there is a natural bijection
\[
{(\Phi^o_2)}^{-1}[\mathcal{E} \oplus \mathcal{O}_{X}, \varphi^+ ]\cong \ker \left(\KSp_{0}(X) \to \mathbf{K}_{0}(X) \right).
\]
\end{theorem}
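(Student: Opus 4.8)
The plan is to obtain the bijection as a composite of three identifications, the substantive one being \Cref{thm:Vaserstein} (which itself rests on \Cref{thm:sp-inj}). Since $X$ is affine, set $R=\mathcal{O}_X(X)$, so that $\KSp_0(X)=K_0\Sp(R)$ and $\mathbf{K}_0(X)=K_0(R)$, and let $(P,\theta)$ be the oriented projective $R$-module of rank $2$ corresponding to $(\mathcal{E},\varphi)$.

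First, I would recall from \Cref{const-bij-1}, following \cite[Section 2.A]{Sy1}, the natural bijection
\[
\Um(P\oplus R)/\SL(P\oplus R)\ \xrightarrow{\ \cong\ }\ {(\Phi^o_2)}^{-1}[\mathcal{E}\oplus\mathcal{O}_X,\varphi^+],
\]
carrying the class of an epimorphism $a$ to the class of the oriented module $(P_a,\theta_a)$. Next, \Cref{thm:Vaserstein} asserts that the generalized Vaserstein symbol modulo $\SL$ associated to $(P,\theta)$,
\[
V_\theta\colon\ \Um(P\oplus R)/\SL(P\oplus R)\ \xrightarrow{\ \cong\ }\ \tilde V_{\SL}(R),
\]
is bijective. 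Finally, I would invoke the isomorphisms $\tilde V_{\SL}(R)\cong W_{\SL}(R)\cong\ker\!\big(K_0\Sp(R)\to K_0(R)\big)$ recalled in \cite[Section 2.C]{Sy1}. Composing these three maps and inverting the first yields
\[
{(\Phi^o_2)}^{-1}[\mathcal{E}\oplus\mathcal{O}_X,\varphi^+]\ \cong\ \ker\!\big(\KSp_0(X)\to\mathbf{K}_0(X)\big),
\]
which is the claimed bijection, and under it the class of $a$ goes to the class of the symplectic module $(P_a,\chi_a)$ in $\ker(K_0\Sp(R)\to K_0(R))$.

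The only remaining point is to check that this composite is natural in the asserted sense, that is, independent of the auxiliary choices and compatible with the relevant functoriality. Independence of the choice of section $s$ and of the chosen representative $a$ is already built into \Cref{const-bij-1} and into the definition of $V_\theta$; the functoriality in $R$ of the identifications $\tilde V_{\SL}(R)\cong W_{\SL}(R)\cong\ker(K_0\Sp(R)\to K_0(R))$ is standard. I do not anticipate any genuine obstacle: all of the real content lies in \Cref{thm:sp-inj} and \Cref{thm:Vaserstein}, and the present statement merely repackages \Cref{thm:Vaserstein} into $K$-theoretic language using the algebra of \cite[Section 2.C]{Sy1}.
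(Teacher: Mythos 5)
Your proposal is correct and matches the paper's argument exactly: the paper states \Cref{thm:oriented-fibers} as an immediate consequence of \Cref{thm:Vaserstein}, composing the bijection $\Um(P\oplus R)/\SL(P\oplus R)\cong{(\Phi^o_2)}^{-1}[\mathcal{E}\oplus\mathcal{O}_X,\varphi^+]$ from \Cref{const-bij-1} with $V_\theta$ and the identification $\tilde V_{\SL}(R)\cong W_{\SL}(R)\cong\ker(K_0\Sp(R)\to K_0(R))$ recalled from \cite[Section 2.C]{Sy1}. No issues.
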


\section{The number of algebraic vector bundles with a fixed topological class}

Let $X$ be a smooth affine variety of dimension $d$ over $\C$ and let $X^\mathrm{an}$ denote its associated analytic space (i.e., $X^\mathrm{an}=X(\mathbb{C})$ viewed as a complex manifold). Then complex realization induces natural maps
\[\mathfrak{R}_{r}(X):\mathcal{V}_{r}(X) \to \Vect^{\top}_{r}(X^\mathrm{an})\]
for all integers $r \geq 0$, where $\mathcal{V}_{r}(X)$ denotes the set of isomorphism classes of algebraic vector bundles of rank $r$ over $X$ and $\Vect^{\top}_{r}(X^\mathrm{an})$ denotes the set of isomorphism classes of complex topological vector bundles of rank $r$ over $X^{\mathrm{an}}$. Similarly, complex realization induces cycle class maps
\[ \cl_{i} : \CH^{i}(X) \to H^{2i}_{\sing}(X^\mathrm{an},\Z)\]
for all integers $i \geq 0$, where $H_{\sing}^i(-,\Z)$ denotes singular cohomology with integer coefficients. The maps $\mathfrak{R}_{r}(X)$ are not surjective in general. Even isomorphism classes of complex topological vector bundles of rank $r$ whose topological Chern classes lie in the image of the cycle class maps need not lie in the image of the map $\mathfrak{R}_{r}(X)$ (cf. \cite[Theorem 2]{AFH-alg19}).

It is natural to study the fibers of the maps $\mathfrak{R}_{r}(X)$, i.e., to determine the set of isomorphism classes of algebraic vector bundles of rank $r$ over $X$ realizing to the same isomorphism class of a complex topological vector bundle over $X^{\mathrm{an}}$. If $r=1$, then it is classical that topological and algebraic vector bundles are both uniquely determined by their first Chern classes (appropriately interpreted) and we have a commutative diagram
\[\begin{tikzcd}
    \mathcal{V}_{1}(X)\ar[d, "\mathfrak{R}_{1}(X)"]\ar[r, "{c_{1}}" above, "\cong" below] & \CH^1(X)\ar[d, "\cl_{1}"]\\
    \Vect^{\top}_{1} (X^{\mathrm{an}})\ar[r, "{c_{1}^{\top}}" above,"\cong" below] & H_{\sing}^2(X^{\mathrm{an}},\Z),
\end{tikzcd}\]
This immediately implies the following theorem:
\begin{proposition}
Let $X$ be a smooth affine variety of dimension $d$ over $\C$. Then there is a bijection between any non-empty fiber of the map $\mathfrak{R}_{1}(X)$ and the kernel of the cycle class map $\cl_{1}$. In particular, the non-empty fibers of the map $\mathfrak{R}_{1}(X)$ are all finite (resp. a singleton) if and only if the kernel of the map $\cl_{1}$ is finite (resp. trivial).
\end{proposition}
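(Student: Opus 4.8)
The plan is to perform a short diagram chase with the commutative square displayed just above, exploiting that its two horizontal arrows $c_1$ and $c_1^{\top}$ are bijections. First I would fix a topological line bundle $L \in \Vect^{\top}_{1}(X^{\mathrm{an}})$ that lies in the image of $\mathfrak{R}_1(X)$, and choose an algebraic line bundle $\mathcal M_0$ on $X$ with $\mathfrak{R}_1(X)(\mathcal M_0)=L$. By commutativity of the square, the bijection $c_1\colon \mathcal V_1(X)\xrightarrow{\cong}\CH^1(X)$ restricts to a bijection between the fiber $\mathfrak{R}_1(X)^{-1}(L)$ and the preimage $\cl_1^{-1}\bigl(c_1^{\top}(L)\bigr)\subseteq \CH^1(X)$.

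Next I would use that $\cl_1$ is a homomorphism of abelian groups: it is the cycle class map on $\CH^1(X)=\mathrm{Pic}(X)$ with values in $H_{\sing}^2(X^{\mathrm{an}},\Z)$, compatible with the respective group structures. Hence $\cl_1^{-1}\bigl(c_1^{\top}(L)\bigr)$ is a coset of $\ker(\cl_1)$; it is nonempty since it contains $c_1(\mathcal M_0)$, so subtracting $c_1(\mathcal M_0)$ gives a bijection $\cl_1^{-1}\bigl(c_1^{\top}(L)\bigr)\xrightarrow{\cong}\ker(\cl_1)$. Composing with the bijection from the previous step yields the desired bijection $\mathfrak{R}_1(X)^{-1}(L)\cong \ker(\cl_1)$.

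The ``in particular'' clause is then immediate: a nonempty fiber of $\mathfrak{R}_1(X)$ is finite (resp.\ a singleton) if and only if $\ker(\cl_1)$ is finite (resp.\ trivial), since the two sets are in bijection.

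There is essentially no obstacle here; the only point worth flagging is that the bijection constructed above depends on the choice of basepoint $\mathcal M_0$ in the fiber. One can make it canonical by noting that tensoring with line bundles whose topological class vanishes endows each nonempty fiber of $\mathfrak{R}_1(X)$ with the structure of a torsor under $\ker(\cl_1)$, which is the natural formulation underlying the stated (non-canonical) bijection.
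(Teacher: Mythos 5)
Your argument is correct and is exactly the diagram chase the paper has in mind: the paper presents the commutative square with $c_1$ and $c_1^{\top}$ bijective and simply asserts that the proposition follows immediately, which is the chase you carry out (fiber of $\mathfrak{R}_1(X)$ corresponds under $c_1$ to a coset of $\ker(\cl_1)$, hence is in bijection with $\ker(\cl_1)$). Your closing remark about the torsor structure being the canonical formulation is a nice touch but not needed.
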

If $r=2$ and $d \leq 3$, then by the Andreotti--Frankel theorem \cite[Theorem 1]{andreotti1959lefschetz} and  \cite[Proposition 2.29]{AntieauElmanto}, $c_1^{\top}$ is bijective. By \cite[Theorem 1]{AF-3fold}, $(c_1,c_2)$ is also bijective. Thus, we have a commutative diagram:
\[\begin{tikzcd}
    \mathcal{V}_{2}(X)\ar[d, "\mathfrak{R}_{2}(X)" left]\ar[r, "{(c_{1},c_{2})}" above, "\cong" below] & \CH^1(X) \times \CH^{2}(X)\ar[d, "\cl_{1} \circ \mathrm{pr}_{\CH^{1}(X)}"]\\
    \Vect^{\top}_{2} (X^{\mathrm{an}})\ar[r, "{c_{1}^{\top}}" above,"\cong" below] & H_{\sing}^2(X^{\mathrm{an}},\Z).
\end{tikzcd}\]
Note that the group $\CH^{2}(X)$ is trivial if $d=1$. The diagram immediately yields the following result:
\begin{proposition}
Let $X$ be a smooth affine variety of dimension $d \leq 3$ over $\C$. Then there is a bijection between any non-empty fiber of the map $\mathfrak{R}_{2}(X)$ and the product $\ker (\cl_{1})\times \CH^{2}(X)$. In particular, the non-empty fibers of the map $\mathfrak{R}_{2}(X)$ are all finite (resp. a singleton) if and only if $\ker(cl_{1})$ and $\CH^{2}(X)$ are finite (resp. trivial).
\end{proposition}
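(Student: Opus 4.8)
The plan is to read the statement off the commutative square displayed immediately above it, so the proof is essentially bookkeeping. Both horizontal arrows in that square are bijections: $(c_1,c_2)$ by the bijectivity results recalled in the introduction (in particular \cite[Theorem~1]{AF-3fold} when $d=3$), and $c_1^{\top}$ by the Andreotti--Frankel theorem \cite[Theorem~1]{andreotti1959lefschetz} together with \cite[Proposition~2.29]{AntieauElmanto}. Consequently, up to these two identifications, the realization map $\mathfrak{R}_2(X)$ \emph{is} the map
\[
\cl_1 \circ \mathrm{pr}_{\CH^1(X)}\colon \CH^1(X)\times \CH^2(X) \longrightarrow H^2_{\sing}(X^{\mathrm{an}},\Z),
\]
so it is enough to identify the non-empty fibers of this last map.

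I would then argue as follows. Fix a class $\beta \in H^2_{\sing}(X^{\mathrm{an}},\Z)$ in the image of $\mathfrak{R}_2(X)$. The fiber of $\cl_1 \circ \mathrm{pr}_{\CH^1(X)}$ over $\beta$ is the product $\cl_1^{-1}(\beta)\times \CH^2(X)$, and non-emptiness of the fiber is precisely the assertion that $\cl_1^{-1}(\beta)$ is non-empty. Since $\cl_1$ is a homomorphism of abelian groups, the choice of any $a_0 \in \cl_1^{-1}(\beta)$ produces a bijection $\ker(\cl_1)\xrightarrow{\ \sim\ }\cl_1^{-1}(\beta)$ sending $x\mapsto x+a_0$. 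Composing, any non-empty fiber of $\mathfrak{R}_2(X)$ is in bijection with $\ker(\cl_1)\times \CH^2(X)$, which is the first assertion. This is the same elementary mechanism that proves the preceding proposition for $\mathfrak{R}_1(X)$, now with the extra factor $\CH^2(X)$ contributed by the projection.

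For the ``in particular'' clause I would simply observe that a product of two sets is finite exactly when both factors are finite, and is a singleton exactly when both factors are singletons; and since $\ker(\cl_1)$ and $\CH^2(X)$ are abelian groups, being a singleton is equivalent to being trivial. I do not expect any genuine obstacle: all the substance lies in the bijectivity of the two horizontal maps in the square, which has already been established before the statement, and what remains is the trivial observation that the fibers of a composite $f\circ \mathrm{pr}_1$ on a product split as products of fibers of $f$ with the other factor, together with the fact that the fibers of a group homomorphism are cosets of its kernel.
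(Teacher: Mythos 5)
Your proposal is correct and follows exactly the route the paper intends: the paper simply displays the commutative square with both horizontal arrows bijective and asserts that it ``immediately yields'' the proposition, and your write-up supplies precisely the implicit bookkeeping (fibers of $\cl_1\circ\mathrm{pr}_{\CH^1(X)}$ split as $\cl_1^{-1}(\beta)\times\CH^2(X)$, and a non-empty fiber of the group homomorphism $\cl_1$ is a coset of $\ker(\cl_1)$). No gaps.
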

Now let $r=2$ and $d=4$. Then \cite[Theorem 1]{andreotti1959lefschetz} and \cite[Proposition 2.29]{AntieauElmanto} still apply and the map $(c_{1}^{\top},c_{2}^{\top})$ is bijective, but the map $(c_{1},c_{2})$ is no longer injective in general. We obtain a commutative diagram of the form
\begin{equation}\label[diagram]{diag:top-alg-rk-2(2)} \begin{tikzcd}[column sep=4em]
    \Vect_2(X)\ar[d, "\mathfrak{R}_{2}(X)"]\ar[r, "{(c_{1},c_{2})}" above] & \CH^1(X) \times \CH^2(X)\ar[d, "cl_{1} \times cl_{2}"]\\
    \Vect^{\top}_2(X(\C))\ar[r, "{(c_{1}^{\top},c_{2}^{\top})}" above,"\cong" below] & {H_{\sing}^2(X,\Z) \times H_{\sing}^4(X,\Z)},
\end{tikzcd} \end{equation}
Although $(c_{1},c_{2})$ is not bijective in general, the map $\mathfrak{R}_{2}(X)$ can be understood in terms of $(c_{1},c_{2})$ and $cl_{1} \times cl_{2}$. Using \Cref{theorem-1}, we can deduce the following theorem on the fibers of $\mathfrak{R}_{2}(X)$ from \Cref{diag:top-alg-rk-2(2)}:

\begin{theorem} Let $X$ be a smooth affine variety of dimension $4$ over $\C$. If $\ker (cl_{1})$, $\ker (cl_{2})$, $\Ch^3(X)$ and $\Hmot^{5}(X, \Z/2(3))$ are finite abelian groups (resp. trivial), then every non-empty fiber of the map $\mathfrak{R}_{2}(X)$ is finite (resp. a singleton).

If $\Ch^3(X)$ and $\Hmot^5(X,\Z/2(3))$ are zero, then there is a bijection between any non-empty fiber of $\mathfrak{R}_{2}(X)$ and $\ker(\cl_1)\times \ker(\cl_2)$. 
\end{theorem}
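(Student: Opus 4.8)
The plan is to extract the statement from the commutative square~\Cref{diag:top-alg-rk-2(2)} together with \Cref{theorem-1}. Fix a topological bundle $\xi$ in the image of $\mathfrak{R}_{2}(X)$ and set $(a^{\top},b^{\top})=(c_{1}^{\top},c_{2}^{\top})(\xi)$; since $(c_{1}^{\top},c_{2}^{\top})$ is a bijection, $\xi$ is the unique topological rank $2$ bundle with these Chern classes. Commutativity of~\Cref{diag:top-alg-rk-2(2)} gives $c_{i}^{\top}(\mathfrak{R}_{2}(X)(\mathcal{E}))=\cl_{i}(c_{i}(\mathcal{E}))$ for every $\mathcal{E}\in\Vect_{2}(X)$, so injectivity of the bottom arrow shows that $\mathfrak{R}_{2}(X)(\mathcal{E})=\xi$ holds if and only if $(c_{1}(\mathcal{E}),c_{2}(\mathcal{E}))$ lies in $S:=(\cl_{1}\times\cl_{2})^{-1}(a^{\top},b^{\top})$. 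Hence $\mathfrak{R}_{2}(X)^{-1}(\xi)=(c_{1},c_{2})^{-1}(S)$. The set $S\subseteq\CH^{1}(X)\times\CH^{2}(X)$ is a coset of $\ker(\cl_{1})\times\ker(\cl_{2})$, and it is non-empty since $\xi$ is in the image of $\mathfrak{R}_{2}(X)$; fixing a preimage $\mathcal{E}_{0}$ of $\xi$ and translating by $(c_{1}(\mathcal{E}_{0}),c_{2}(\mathcal{E}_{0}))$ yields a (non-canonical) bijection $S\cong\ker(\cl_{1})\times\ker(\cl_{2})$.

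For the first assertion I would write $\mathfrak{R}_{2}(X)^{-1}(\xi)=\bigsqcup_{(a,b)\in S}(c_{1},c_{2})^{-1}(a,b)$ and invoke \Cref{theorem-1}: every non-empty summand $(c_{1},c_{2})^{-1}(a,b)$ is finite (resp.\ a singleton) once $\Hmot^{5}(X,\Z/2(3))$ and $\Ch^{3}(X)$ are finite (resp.\ trivial). If moreover $\ker(\cl_{1})$ and $\ker(\cl_{2})$ are finite, then $S$ is finite, so $\mathfrak{R}_{2}(X)^{-1}(\xi)$ is a finite union of finite sets and hence finite. If all four groups vanish, then $S=\{(c_{1}(\mathcal{E}_{0}),c_{2}(\mathcal{E}_{0}))\}$ and its unique summand is a singleton containing $\mathcal{E}_{0}$, so the fiber is a singleton.

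For the second assertion, assume $\Ch^{3}(X)=0$ and $\Hmot^{5}(X,\Z/2(3))=0$. By \Cref{theorem-1} the map $(c_{1},c_{2})$ is injective, so it identifies $\mathfrak{R}_{2}(X)^{-1}(\xi)$ with a subset of $S$; since any $(a,b)\in S$ realized as the Chern data of some $\mathcal{E}\in\Vect_{2}(X)$ automatically satisfies $\mathfrak{R}_{2}(X)(\mathcal{E})=\xi$, it suffices to prove that $(c_{1},c_{2})\colon\Vect_{2}(X)\to\CH^{1}(X)\times\CH^{2}(X)$ is surjective under the hypothesis $\Ch^{3}(X)=0$. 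Given $(a,b)$, I would choose a line bundle $\mathcal{L}$ with $c_{1}(\mathcal{L})=a$ (using $\CH^{1}(X)\cong\operatorname{Pic}(X)$) and then realize $b$ as follows: by \Cref{lem:every-class-is-euler-class-of-bundle} every class in $\CHW^{2}(X,\mathcal{L})$ is the Euler class of a rank $2$ bundle with determinant $\mathcal{L}$, and such a bundle has first Chern class $a$ and second Chern class equal to the image of its Euler class under $\CHW^{2}(X,\mathcal{L})\to\CH^{2}(X)$; so it is enough that this last map be surjective. Its cokernel embeds into $\Hnis^{3}(X,\I^{3}(\mathcal{L}))$ via the long exact sequence attached to $0\to\I^{3}(\mathcal{L})\to\KMW_{2}(\mathcal{L})\to\KM_{2}\to0$, and $\Hnis^{3}(X,\I^{3}(\mathcal{L}))\cong\Ch^{3}(X)=0$ by the degree-$3$ version of the argument in \Cref{prop:reduce-KMI} --- combine \Cref{eqn:I-I-M} with \Cref{prop:Ij-vanishing}, the vanishing of $\Hnis^{3}(X,\KM_{4}/2)$, the identification $\Hnis^{4}(X,\KM_{4}/2)\cong\CH^{4}(X)/2=0$ coming from \Cref{thm:chow-divisibility}, and $\Hnis^{3}(X,\KM_{3}/2)\cong\Ch^{3}(X)$. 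Thus $(c_{1},c_{2})$ is a bijection onto $\CH^{1}(X)\times\CH^{2}(X)$, and consequently $\mathfrak{R}_{2}(X)^{-1}(\xi)\cong S\cong\ker(\cl_{1})\times\ker(\cl_{2})$.

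Almost all of this is formal manipulation of~\Cref{diag:top-alg-rk-2(2)} and \Cref{theorem-1}; the one step carrying real content is the surjectivity of $(c_{1},c_{2})$ when $\Ch^{3}(X)=0$, which comes down to the vanishing $\Hnis^{3}(X,\I^{3}(\mathcal{L}))\cong\Ch^{3}(X)=0$, the degree-$3$ analogue of \Cref{lem:stage1}\Cref{item:H2KM3}. I expect this identification, rather than any of the diagram chases, to be where attention is required.
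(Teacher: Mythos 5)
Your argument is correct and follows essentially the same route as the paper, which states this theorem as an immediate consequence of \Cref{diag:top-alg-rk-2(2)} and \Cref{theorem-1} without writing out the details. The one substantive ingredient you supply beyond the diagram chase --- surjectivity of $(c_{1},c_{2})$ when $\Ch^3(X)=0$, obtained from \Cref{lem:every-class-is-euler-class-of-bundle} together with the vanishing of $\Hnis^{3}(X,\I^{3}(\mathcal L))\cong\Ch^3(X)$ --- is exactly how the paper realizes arbitrary Chern data in \Cref{sec:examples}, so your treatment matches the intended proof.
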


\section{Examples}\label{sec:examples}
On a given smooth affine fourfold $X$ over an algebraically closed field of characteristic not equal to $2$ or $3$, \Cref{theorem-1} reduces the study of rank $2$ algebraic vector bundles with fixed Chern classes to understanding $\Ch^3(X)$ and $\Hnr^4(X,\mu_2)$. If we instead study vector bundles with prescribed first Chern class and Euler class, the answers take a slightly different form. Below, we use  \Cref{thm:ch3-zero-determined-euler} to give a number of explicit cases where rank $2$ vector bundles on $X$ are completely classified by their first Chern class and Euler class. We also give a number of examples where there are only finitely many vector bundles with prescribed Chern classes or first Chern class and Euler class.

As a preliminary remark, we note that a number of concrete examples are provided by work of Asok--Fasel in the case that a given smooth affine variety has Nisnevich cohomological dimension at most $3$. For example, let $X$ be a smooth affine fourfold over an algebraically closed field of characteristic not $2$ having the $\A^1$-homotopy type of a smooth surface (e.g., if $X$ is the Jouanolou device over any smooth projective surface, cf.~\cite[3.3.6]{AF-survey}) or of a smooth affine threefold. Then any rank $2$ vector bundle over $X$ is uniquely determined by its first two Chern classes \cite[Theorem 1]{AF-3fold}. 

The remainder of this section will focus on examples that are not covered by previous literature. In \Cref{subsec:ex-hypersurface} we focus on complements of hypersurfaces in $\P^4$,  $\P^1 \times \P^3$, and $\P^2 \times \P^2$. In \Cref{ex:MK}, we discuss the Mohan Kumar example in dimension $4$. In \Cref{ex:cyclic}, we study rank $2$ algebraic vector bundles on cyclic coverings.

Throughout this section, let $k$ be an algebraically closed base field of characteristic different from $2$ or $3$.  We will further specialize the field as necessary.

\subsection{Rank $2$ vector bundles on hypersurface complements}\label{subsec:ex-hypersurface}
The examples that follow are all smooth affine fourfolds of the form $Y \setminus D$ for $Y$ a smooth projective variety of dimension $4$ and $D$ a hypersurface. We organize them according to the ambient projective variety $Y$.

\begin{example}[Hypersurface complements in $\P^4$]\label{ex:p4}  Consider $X = \P^4 \minus D$ where $D$ is a hypersurface of degree $d$. From the localization sequence on Chow groups, $\CH^3(X)$ is a quotient of $\Z/d$, and hence $\Ch^3(X)$ is finite. By \Cref{rmk:fibers-ch3-finite}, there are only finitely many vector bundles with a given first Chern class and Euler class. We can consider a number of additional hypotheses that allow us to draw stronger conclusions:
\begin{enumerate}
\item If $d$ is odd, $\Ch^3(X)=0$ and rank $2$ vector bundles on $X$ are uniquely determined by their first Chern class and Euler class by \Cref{thm:ch3-zero-determined-euler}.
\item If $d\le 5$, then $\Ch^3(X)$ is zero. This follows from observing that the Fano variety of lines on $D$ is nonempty \cite[Theorem~8]{Barth-VandeVen}. Again, rank $2$ vector bundles on $X$ are determined by their first Chern class and Euler class.
\item Appealing to \Cref{prop:nr-compactify-text}, if $D$ is smooth and $H^3_\nr(D,\mu_2^{3})$ is finite (resp., zero), then $H_\nr^4(X,\mu_4^{\otimes4})$ is finite (resp., zero) as well. By \Cref{theorem-1} and \Cref{lem:stage1}, there are only finitely many rank $2$ bundles over $X$ with given Chern classes. 
\item By \Cref{exa:projective-hypersurface-complements-H4}, if $k=\mathbb C$, $D$ is smooth, and $d\leq 4$, then $\Hnr^4(X,\mu_2)=0$ and the previous item implies Chern finiteness for rank $2$ bundles on $X$, i.e., there are only finitely many rank $2$ bundles over $X$ with given Chern classes.
\item Let $k={\mathbb C}$ and $X=\mathbb P^4 \setminus D$ for $D$ a smooth hypersurface of degree $\leq 4$. Then it follows from (a) and \cite[Theorem 2.2.2]{AFH-alg19} that any pair $(c_{1},c_{2}) \in \CH^1 (X) \times \CH^{2}(X)$ can be realized as the first two Chern classes of an algebraic vector bundle of rank $2$ over $X$. Combining (b) and (c) above with \Cref{theorem-1} and \Cref{lem:stage1}, we then find that vector bundles of rank $2$ over $X$ are completely classified by their Chern classes.
\end{enumerate}
\end{example}

We highlight a strong consequence of the previous example, part (e):

\begin{theorem}\label{thm:p4-complement-threefold-fourfold} 
Suppose $k= \mathbb C$ and $X$ is the complement of a smooth hypersurface $D$ of degree $d \leq 4$ in $\P^4$. Then there are exactly $d^2$ isomorphism classes of algebraic vector bundles of rank $2$ over $X$, determined uniquely by their first two Chern classes in $\CH^1 (X) \times \CH^2 (X) \cong (\Z/d)^{\times 2}$.
\end{theorem}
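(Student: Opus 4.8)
This theorem will follow from \Cref{ex:p4}, part (e): the two things to establish are that the Chern class map $(c_1,c_2)\colon\mathcal V_2(X)\to\CH^1(X)\times\CH^2(X)$ is a bijection for $X=\P^4\setminus D$ with $D$ smooth of degree $d\le4$, and that the target has cardinality $d^2$. The plan is to compute the Chow groups first and then to recall the ingredients behind the bijection.

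\emph{Chow groups.} I would use the localization sequence for the closed immersion $i\colon D\hookrightarrow\P^4$ with open complement $X$. Writing $H$ for the hyperplane class, right-exactness of $\CH^{n-1}(D)\xrightarrow{i_\ast}\CH^n(\P^4)\to\CH^n(X)\to 0$ together with $[D]=dH$ immediately gives $\CH^1(X)\cong\Z/d$. For $\CH^2(X)$ the key input is the Grothendieck--Lefschetz theorem for Picard groups: since $D$ is a smooth hypersurface of dimension $3$ in $\P^4$, one has $\CH^1(D)=\operatorname{Pic}(D)=\Z\cdot\mathcal O_D(1)$, and by the projection formula $i_\ast\mathcal O_D(1)=dH^2$ in $\CH^2(\P^4)$; hence $\CH^2(X)\cong\Z/d$. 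Therefore $\CH^1(X)\times\CH^2(X)\cong(\Z/d)^{\times2}$, a set with exactly $d^2$ elements.

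\emph{The bijection.} By \Cref{theorem-1}, injectivity of $(c_1,c_2)$ follows once $\Hmot^5(X,\Z/2(3))$ and $\Ch^3(X)$ are shown to vanish, and by \cite[Theorem 2.2.2]{AFH-alg19} surjectivity follows from $\Ch^3(X)=0$ as well. To see $\Ch^3(X)=0$ I would note that localization exhibits $\CH^3(X)$ as a cyclic quotient of $\CH^3(\P^4)=\Z\cdot H^3$, and for $d\le5$ — in particular for $d\le4$ — the smooth threefold $D$ contains a line $L$ by Barth--Van de Ven, whose pushforward to $\CH^3(\P^4)$ is $H^3$; hence $\CH^3(X)=0$ and $\Ch^3(X)=0$. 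To see $\Hmot^5(X,\Z/2(3))=0$ I would observe that for $d\le4$ the anticanonical bundle $\mathcal O_D(5-d)$ of $D$ is ample, so $D$ is uniruled and $\Hnr^3(D,\mu_2)=0$ by \Cref{prop:CTV-uniruled-threefolds}; applying \Cref{prop:nr-compactify1} to the compactification $X\hookrightarrow\P^4$ with boundary divisor $D$ then yields $\Hnr^4(X,\mu_2)=0$ (this is \Cref{exa:projective-hypersurface-complements-H4}), and the epimorphism with finite kernel of \Cref{lem:stage1}\Cref{item:unr} forces $\Hnis^2(X,\I^3(\mathcal L))=0$, which equals $\Hmot^5(X,\Z/2(3))$ by \Cref{lem:stage1}\Cref{item:mot53}. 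Thus $(c_1,c_2)$ is a bijection, and combined with the count of the target the theorem follows.

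\emph{Main obstacle.} Essentially everything substantive is imported from results stated earlier, so there is no serious difficulty; the only honest computation is $\CH^2(X)\cong\Z/d$, where the one point that needs care is the applicability of the Grothendieck--Lefschetz theorem — which is fine here since $\dim D=3\ge3$. Beyond that it only remains to confirm that the hypotheses of \Cref{theorem-1}, \Cref{exa:projective-hypersurface-complements-H4} and \Cref{lem:stage1} are in force, i.e.\ that $\operatorname{char}k\ne 2,3$ and $D$ is smooth of degree $\le4$, which holds by assumption.
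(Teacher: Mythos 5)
Your proposal is correct and follows essentially the same route as the paper, which deduces the theorem from \Cref{ex:p4}(e): vanishing of $\Ch^3(X)$ via the Barth--Van de Ven line argument and of $\Hmot^5(X,\Z/2(3))$ via uniruledness of $D$, \Cref{prop:CTV-uniruled-threefolds}, \Cref{prop:nr-compactify1} and \Cref{lem:stage1}, with surjectivity from \cite[Theorem 2.2.2]{AFH-alg19}. You additionally write out the localization/Grothendieck--Lefschetz computation of $\CH^1(X)\times\CH^2(X)\cong(\Z/d)^{\times 2}$, which the paper leaves implicit, and you correctly route the $\Ch^3(X)=0$ step through the $d\le 5$ case rather than the odd-degree case.
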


\begin{question} What can be said about the vanishing or finiteness of $H^3_\nr(X,\mu_2)$ when $X$ is the complement of a generic or a specific quintic threefold?
\end{question}

\begin{example}[Hypersurface complements in $\P^1 \times \P^3$]\label{ex:p1p3} Consider $X=\P^1 \times \P^3 \setminus D$, where $D$ is a hypersurface of bidegree $(a,b)$. Note that $\CH^3(X)$ is a quotient of $\Z/g \oplus \Z/\frac{b^{2}}{g}$, where $g = \op{gcd}(a,b)$. Hence $\Ch^3(X)$ is finite. By \Cref{rmk:fibers-ch3-finite}, there are only finitely many vector bundles with a given first Chern class and Euler class. Again, we consider hypotheses under which we may conclude more:
\begin{enumerate}
\item If $b$ is odd, then $\Ch^3(X)$ is trivial and by \Cref{thm:ch3-zero-determined-euler} vector bundles on $X$ are determined by their first Chern class and Euler class.
\item By \Cref{prop:nr-compactify-text}, if $D$ is smooth and $H^3_\nr(D,\mu_2)$ is finite (resp., zero) then $H_\nr^4(X,\mu_4)$ is finite (resp. zero) as well. Therefore, by \Cref{theorem-1} and \Cref{lem:stage1}, there are only finitely many rank $2$ bundles over $X$ with given Chern classes. 
\item If $k=\mathbb C$ and $D$ is smooth, \Cref{exa:projective-hypersurface-complements-H4} applies: if $a \leq 1$ and $b\leq 3$, $-K_D$ is ample and hence $\Hnr^4(X,\mu_2)=0$. In particular, the number of isomorphism classes of rank $2$ bundles over $X$ with given Chern classes is finite.
\item Consider the specific case of $D$ a smooth bidegree $(1,3)$ hypersurface over $k=\mathbb C$. Then it follows from (a) and \Cref{lem:every-class-is-euler-class-of-bundle} that any pair $(c_{1},c_{2}) \in \CH^1 (X) \times \CH^{2}(X)$ can be realized as the first two Chern classes of an algebraic vector bundle of rank $2$ over $X$. Combining (a) and (c) of this example with \Cref{theorem-1} and \Cref{lem:stage1}, we find that isomorphism classes of rank $2$ vector bundles over $X$ are in bijection with choices $(c_1,c_2) \in \CH^1(X) \times \CH^2(X)$. 
\end{enumerate}
\end{example}

\begin{example}[Hypersurface complements in $\P^2 \times \P^2$]\label{ex:p2p2} Consider $X=\P^2 \times \P^2 \setminus D$, where $D$ is a smooth hypersurface of bidgree $(a,b)$. Let $g = gcd(a,b)$ denote the greatest common divisor of $a$ and $b$. Then $\CH^3(X)$ is a finite abelian group which is $g^2$-torsion. To see this, let $x,y$ be generators of $\CH^1(X)$ such that $ax+by=0$. Note also that we may choose $x$ and $y$ so that $x^3=y^3=0$ and $\CH^3(X)$ is generated by $xy^2$ and $yx^2$. Moreover, $axy^2=0$ and $byx^2=0$. We can write $g=an+bm$ for some $m,n \in \Z$ and we find that:
$$g^2(xy^2)=b^2m^2xy^2=m^2(by)^2x= m^2(ax)^2 x= m^2 a^2 x^3=0$$ and similarly we deduce that $g^2(yx^2)=0$. 

In any case, $\CH^3(X)$ is finite and there are only finitely many vector bundles with a given first Chern class and Euler class. However, we can sometimes say more:
\begin{enumerate}
\item If the greatest common divisor of $a$ and $b$ is odd, then $\Ch^3(X)$ is trivial and by \Cref{thm:ch3-zero-determined-euler} vector bundles on $X$ are determined by their first Chern class and Euler class.
\item By \Cref{prop:nr-compactify-text}, if $D$ is smooth and $H^3_\nr(D,\mu_2^{3})$ is finite (resp., zero), then $H_\nr^4(X,\mu_4^{\otimes4})$ is finite (resp., zero) as well. Therefore, by \Cref{theorem-1} and \Cref{lem:stage1}, there are only finitely many rank $2$ bundles over $X$ with given Chern classes.
\item If $k=\mathbb C$, \Cref{exa:projective-hypersurface-complements-H4} applies. For $D$ smooth and $a,b \leq 2$, we find that $-K_X$ is ample and hence $\Hnr^4(X,\mu_2)$ is trivial. In particular, there are only finitely many vector bundles of rank $2$ with fixed Chern classes.
\item Consider the specific case $k=\mathbb C$, $D$ smooth, and $a=1$ and $b=2$. Then it follows from (a) and \Cref{lem:every-class-is-euler-class-of-bundle} that any pair $(c_{1},c_{2}) \in \CH^1 (X) \times \CH^{2}(X)$ can be realized as the first two Chern classes of an algebraic vector bundle of rank $2$ over $X$. Combining (a) and (c) of this example with \Cref{theorem-1} and \Cref{lem:stage1}, we find that isomorphism classes of rank $2$ vector bundles over $X$ are in bijection with choices $(c_1,c_2) \in \CH^1(X) \times \CH^2(X)$.
 \end{enumerate}
\end{example}

\begin{example}[The Mohan Kumar example]\label{ex:MK}
In \cite{MK1} and \cite{MK}, N. Mohan Kumar gives an amazing construction of a smooth complex fourfold with non-trivial but stably trivial rank $2$ vector bundle. In fact, Mohan Kumar's construction applies more generally to give smooth affine $(p+2)$-folds with non-trivial but stably trivial rank $p$ bundles for any prime $p$, and his construction works over any algebraically closed base field. Some additional perspective on such examples is provided in \cite{WendtMK21} using motivic methods.

In the case $p=2$, the variety in question is the complement of a union of three hypersurfaces in $\mathbb P^3 \times \A^1$. While Mohan Kumar's construction provides defining equations for these hypersurfaces, explicit computations are still extremely difficult and thus it is challenging to understand stably trivial vector bundles on the Mohan Kumar examples. It follows from \cite[Theorem 3.9]{Sy3} that non-trivial stably trivial oriented rank $2$ bundles over the Mohan Kumar variety $X_{\MK}$ are in bijection with the Hermitian $K$-theory group $W_{\SL}(X_{\MK}) \cong \ker (\KSp_{0}(X_{\MK}) \to \mathbf{K}_{0}(X_{\MK}))$; this was done by studying classical Hermitian $K$-theory groups. \Cref{thm:oriented-fibers} further generalizes this and shows that all the fibers $({\Phi_{2}^{o}})^{-1}([\mathcal{E},\varphi])$ for an oriented vector bundle $(\mathcal{E},\varphi)$ of rank $2$ over $X_{\MK}$ are in bijection with the group $W_{\SL}(X_{\MK})$.
\Cref{thm:sp-inj} shows that oriented vector bundles of rank $2$ over $X_{\MK}$ are determined by their symplectic $K$-theory class. More generally, \Cref{symp-classification} implies that vector bundles $\mathcal E$ on $X_{\MK}$ with fixed determinant $\mathcal N^{\otimes 2}$ are determined by the symplectic $K$-theory class of $\mathcal E \otimes \mathcal N^{-1}$. 
\end{example}

\subsection{Rank $2$ vector bundles over cyclic coverings}\label{ex:cyclic}
Let $X$ be a smooth affine variety over a field $k$ which admits an embedding $\iota\colon k \hookrightarrow \C$. Then one may use this embedding to define an associated complex manifold $X_{\iota}^{\mathrm{an}}$. The variety $X$ is called topologically contractible if the complex manifold $X_{\iota}^{\mathrm{an}}$ is a contractible topological space for every embedding $\iota\colon k \hookrightarrow \C$. Affine spaces over $k$ are the primordial examples of topologically contractible smooth affine $k$-varieties. The study of topologically contractible varieties has stimulated a wealth of research over many decades and its long history is directly related to important questions in algebraic geometry such as the Zariski cancellation problem, the linearization problem, the generalized van de Ven question or the generalized Serre question on algebraic vector bundles \cite[Section 5.1.2]{AsokOestvaer}.

Answering a question raised by J.-P. Serre (cf. \cite[p. 243]{S1}), D. Quillen and A. Suslin independently proved that algebraic vector bundles over affine spaces are always trivial \cite{Q}, \cite{Su2}. The generalized Serre question asks whether algebraic vector bundles over topologically contractible smooth affine complex varieties are always trivial \cite[Question 6]{AsokOestvaer}. While the generalized Serre question is known to have a positive answer in dimensions $\leq 2$, the question remains completely open in higher dimensions \cite[Section 5.5.2]{AsokOestvaer}.

Many concrete examples of topologically contractible smooth affine complex varieties in the literature are given by cyclic coverings \cite[Section 5]{Z}, defined as follow. Let $X$ be a smooth affine variety over an algebraically closed field $k$ of characeristic $0$, $n>0$ be an integer and $f \in \mathcal{O}_{X}(X)$ be a regular function. In general, one assumes that
\begin{itemize}
\item the closed subscheme $F_0$ of $X$ defined by $f$ is smooth over $k$,
\item the polynomial $u^n -f$ is prime in both the polynomial rings $\mathcal{O}_{X}(X)[u]$ and $k(X)[u]$, where $k(X)$ is the field of fractions of the domain $\mathcal{O}_{X}(X)$, and
\item there is a $\mathbb{G}_m$-action on $X$ which makes $f$ a quasi-invariant of weight $d \in \mathbb{Z}$ with $\langle d,n \rangle = \mathbb{Z}$.
\end{itemize}
Under these general assumptions, one obtains a smooth affine $k$-variety defined by \[Y_{n}:= \{ u^n - f = 0\} \subset X \times \mathbb{A}^1,\] where $u$ is the variable of $\mathbb{A}^1$. The projection morphism $\varphi_{n}\colon Y_{n} \rightarrow X$ is called a \emph{cyclic covering} of $X$ of order $n$ with respect to $f$. As indicated above, many examples of topologically contractible smooth affine complex varieties can be constructed as cyclic coverings; indeed, if $k=\mathbb{C}$, if the variety $X$ is topologically contractible and if other natural assumptions are satisfied, the smooth affine $k$-variety $Y_{n}$ is topologically contractible \cite[Theorem 5.1]{Z}.

The motivic cohomology groups of cyclic coverings over algebraically closed fields of characteristic $0$ were studied in \cite{Sy2}. Combining \cite[Theorem 3.21]{Sy2} and \cite[Corollary 3.25]{Sy2} for $\Z/2$-coefficients, one obtains that if $n$ is odd the cyclic covering morphism $\varphi_{n}$ induces isomorphisms \[\CH^{i}(X) \otimes_{\Z} \Z/2 \cong \CH^{i}(Y_{n}) \otimes_{\Z} \Z/2\] for $i \geq 0$. If furthermore $\CH^{2}(X) \otimes_{\Z} \Z/2 = 0$, then $\varphi_{n}$ also induces an isomorphism \[\Hmot^{5}(X, \Z/2(3)) \cong \Hmot^{5}(Y_{n}, \Z/2(3)).\] In particular, it follows that whenever $\CH^{i}(X) \otimes_{\Z} \Z/2 = 0$ for all $i \geq 1$ and $\Hmot^{5}(X, \Z/2(3)) = 0$ holds, the same will hold for $Y_{n}$. We give some concrete examples:
\begin{enumerate}
\item For example, consider $\alpha_{0}, \alpha_{1}, \alpha_{2}, \alpha_{3} \geq 2$ are pairwise coprime integers with $\alpha_{0}$ odd. Let $n \geq 2$ be an integer coprime to $\alpha_{0}$ and let $$\alpha_{3} = 1 + m \alpha_{1} \alpha_{2}$$ for some $m > 0$. We obtain a smooth affine subvariety 
\[Y_{\alpha_{0}} =  \{ x + x^n z^{\alpha_{0}} + y_{1}^{\alpha_{1}} + y_{2}^{\alpha_{2}} + y_{3}^{\alpha_{3}} = 0 \} \subset \A^5.\] 
Let \[X=\{x + x^n z + y_{1}^{\alpha_{1}} + y_{2}^{\alpha_{2}} + y_{3}^{\alpha_{3}} =0\} \subset \A^5.\] The  variety $Y$ is a cyclic covering of $X$
of order $\alpha_{0}$ along the function $z \in \mathcal{O}_{X}(X)$. The variety $X$ is stably $\A^1$-contractible by \cite[Theorem 1.19]{DPO} and hence has the motivic cohomology of $\Spec(k)$. In particular, $\CH^{i}(X) \otimes_{\Z} \Z/2 = 0$ for all $i \geq 1$ and $\Hmot^{5}(X, \Z/2(3)) = 0$. By the preceding paragraph, one therefore also has $\CH^{i}(Y_{\alpha_{0}}) \otimes_{\Z} \Z/2 = 0$ for all $i \geq 1$ and $\Hmot^{5}(Y_{\alpha_{0}}, \Z/2(3)) = 0$. 
As a consequence, we conclude that algebraic vector bundles of rank $2$ over $Y_{\alpha_{0}}$ are uniquely determined up to isomorphism by their Chern classes. If $k = \C$, the variety $Y_{\alpha_{0}}$ is topologically contractible by \cite[Example 6.2]{Z}. We refer the reader to \cite[Section 4.3]{Sy2} for details.

\item Let $\alpha_{0}, \alpha_{1}, \alpha_{2}, \alpha_{3} \geq 2$ are pairwise coprime integers. Consider the smooth affine subvariety
\[Y =\{ x + x^2 {(u^{\alpha_{0}} + v^{\alpha_{1}})} + z^{\alpha_{2}} + t^{\alpha_{3}} = 0\} \subset \A^5,\]  and the subariety
 \[X=\{x+x^2 y+ z^{\alpha_{2}} + t^{\alpha_{3}} = 0\} \subset \A^4.\] 
Then $Y$ is a cyclic covering of the product of $X\times \A^1$ of order $\alpha_{0}$ with respect to the function $y - v^{\alpha_{1}} \in \mathcal{O}_{X}(X)[v],$ where we consider $v$ as the coordinate on $\A^1$. Analogously, $Y$ is also a cyclic covering of the product of $X\times \A^1$ of order $\alpha_{1}$ with respect to the function $y - u^{\alpha_{0}} \in \mathcal{O}_{X}(X)[u]$, where we consider $u$ as the coordinate on $\A^1$.. The variety $X$ is a Koras--Russell threefold of the first kind and is $\A^1$-contractible by \cite[Theorem 1]{DubFas}. In particular, $X$ as well as $X \times \A^1$ have the motivic cohomology of $\Spec(k)$. As either $\alpha_{0}$ or $\alpha_{1}$ is odd, it follows that $\CH^{i}(Y) \otimes_{\Z} \Z/2 = 0$ for all $i \geq 1$ and $\Hmot^{5}(Y, \Z/2(3)) = 0$. As a consequence, algebraic vector bundles of rank $2$ over $Y$ are uniquely determined up to isomorphism by their Chern classes. As a matter of fact, one can even prove that $\CH^{i}(Y) = 0$ for $i \geq 1$; together with the vanishing of $\Hmot^{5}(Y, \Z/2(3))$ mentioned above, this implies that all vector bundles over $Y$ are actually trivial! If $k = \C$, the variety $Y$ is topologically contractible by \cite[Example 6.2]{Z}. We refer the reader to \cite[Section 4.4]{Sy2} for details.  
\end{enumerate}

\bibliographystyle{amsalpha}
\bibliography{twists}{}

\end{document}